\newcommand{\ract}{\rightarrow\mapsfromchar}
\newtheorem{theorem}{Theorem}[section]
\newtheorem{proposition}[theorem]{Proposition}
\newtheorem{corollary}[theorem]{Corollary}
\newtheorem{lemma}[theorem]{Lemma}
\theoremstyle{definition}
\newtheorem{definition}[theorem]{Definition}
\newtheorem{example}[theorem]{Example}
\newtheorem{remark}[theorem]{Remark}
\newtheorem{warning}[theorem]{Warning}
\newtheorem{notation}[theorem]{Notation}
\newtheorem{construction}[theorem]{Construction}
\newcommand{\actrm}{\mathrm{act}}
\newcommand{\intrm}{\mathrm{int}}
\newcommand{\elrm}{\mathrm{el}}
\newcommand{\oprm}{\mathrm{op}}
\newcommand{\finset}{\mathbf{FinSet}}
\newcommand{\wproperad}{\mathbf{WPpd}}
\newcommand{\modular}{\mathbf{ModOp}}
\newcommand{\set}{\mathbf{Set}}
\newcommand{\spaces}{\mathbf{Space}}
\newcommand{\cat}{\mathbf{Cat}}
\newcommand{\gcat}{\mathbf{U}}
\newcommand{\gcatemb}{\gcat_{\mathrm{int}}}
\newcommand{\gcatembo}{\gcat^{\mathrm{int}}_{/\opshf}}
\newcommand{\gcatact}{\gcat_{\actrm}}
\newcommand{\gcatcyc}{\gcat_{\mathrm{cyc}}}
\newcommand{\gcatnought}{\gcat_{0}}
\newcommand{\egcat}{\widetilde{\mathbf{U}}}
\newcommand{\opshf}{\mathfrak{o}}
\newcommand{\egcato}{\egcat_{/\opshf}}
\newcommand{\gcato}{\gcat_{/\opshf}}
\newcommand{\pgcat}{\mathbf{G}}
\newcommand{\pgcatsc}{\mathbf{G}_{\mathrm{sc}}}
\DeclareMathOperator{\id}{id}
\DeclareMathOperator{\dom}{dom}
\DeclareMathOperator{\nbhd}{nb}
\newcommand{\emb}{\operatorname{Emb}}
\newcommand{\exedge}{{\updownarrow}}
\newcommand{\oldnew}{\mathfrak{N}}
\newcommand{\newold}{\mathfrak{O}}
\newcommand{\inp}{\operatorname{in}}
\newcommand{\out}{\operatorname{out}}
\newcommand{\ssub}{\operatorname{sSb}}
\newcommand{\rat}{\rightarrowtail}
\newcommand{\uamalg}[1]{\underset{#1}{{\amalg}}} 
\newcommand{\mydef}[1]{\textbf{#1}}
\newcommand{\CC}{\mathbf{C}}
\newcommand{\DD}{\mathbf{D}}
\newcommand{\fstarop}{\mathbf{F}_\ast^\oprm}
\newcommand{\finsetstarop}{\finset_\ast^\oprm}
\newcommand{\lr}[1]{\langle #1 \rangle}
\newcommand{\PP}{\mathbf{P}}
\tikzset{
  act /.tip = >|
}
\DeclareFontFamily{U} {MnSymbolC}{}
\DeclareFontShape{U}{MnSymbolC}{m}{n}{
  <-6> MnSymbolC5
  <6-7> MnSymbolC6
  <7-8> MnSymbolC7
  <8-9> MnSymbolC8
  <9-10> MnSymbolC9
  <10-12> MnSymbolC10
  <12-> MnSymbolC12}{}
\DeclareFontShape{U}{MnSymbolC}{b}{n}{
  <-6> MnSymbolC-Bold5
  <6-7> MnSymbolC-Bold6
  <7-8> MnSymbolC-Bold7
  <8-9> MnSymbolC-Bold8
  <9-10> MnSymbolC-Bold9
  <10-12> MnSymbolC-Bold10
  <12-> MnSymbolC-Bold12}{}
\DeclareSymbolFont{MnSyC} {U} {MnSymbolC}{m}{n}
\DeclareMathSymbol{\medstar}{\mathbin}{MnSyC}{130}
\title{Categories of graphs for operadic structures}
\author{Philip Hackney}
\address{Department of Mathematics, University of Louisiana at Lafayette, USA}
\email{philip@phck.net} 
\urladdr{http://phck.net}
\thanks{This work was supported by a grant from the Simons Foundation (\#850849)}
\subjclass[2020]
{18M85, 
18F20, 
18M60, 
55P48, 
55U10, 
05C20} 
\keywords{Segal condition, dendroidal set, modular operad, wheeled properad, cyclic operad, properad, presheaf}
\begin{document}
\begin{abstract}
We recall several categories of graphs which are useful for describing homotopy-coherent versions of generalized operads (e.g.\ cyclic operads, modular operads, properads, and so on), and give new, uniform definitions for their morphisms.
This allows for straightforward comparisons, and we use this to show that certain free-forgetful adjunctions between categories of generalized operads can be realized at the level of presheaves. 
This includes adjunctions between operads and cyclic operads, between dioperads and augmented cyclic operads, and between wheeled properads and modular operads.
\end{abstract}

\maketitle

\setcounter{tocdepth}{1}
\tableofcontents

\section{Introduction}

\begin{figure}
\includegraphics[width=0.8\textwidth]{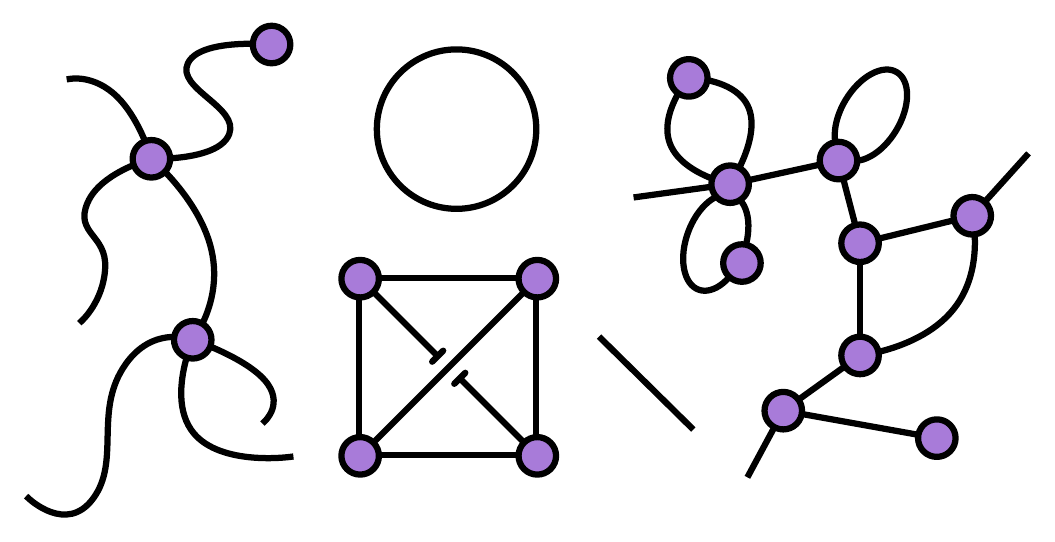}
\caption{A graph with loose ends}\label{fig loose ends}
\end{figure}

It is often convenient to encode weak categorical or algebraic structures as certain kinds of presheaves on a shape category that models the situation in question, just as we model the homotopy theory of spaces using simplicial sets.
For example, the simplicial indexing category $\mathbf{\Delta}$ is used in $(\infty,1)$-category theory \cite{Rezk:MHTHT,JoyalTierney:QCSS,Lurie:HTT} and Joyal's cell category $\mathbf{\Theta}_n$ is used for $(\infty,n)$-categories \cite{Rezk:CPWnC,Ara:HQCHRS}. 
For $(\infty,1)$-operads, the Moerdijk--Weiss dendroidal category $\mathbf{\Omega}$ \cite{MoerdijkWeiss:DS}, which is a category of rooted trees and an enlargement of $\mathbf{\Delta}$, underlies several of the many equivalent models \cite{CisinskiMoerdijk:DSMHO,CisinskiMoerdijk:DSSIO,CisinskiMoerdijk:DSSO,HeutsHinichMoerdijk,Barwick:FOCHO,ChuHaugsengHeuts:TMHTIO}.
The present paper is concerned with shape categories that have been used to model higher versions of certain important types of generalized operads.

The shapes we study will be certain graphs with loose ends (\cref{fig loose ends}), which might come with additional data (e.g.\ orientations of edges, planar structures) or restrictions (e.g.\ connected, acyclic).
Such graphs are well suited to describing the kinds of operations appearing in generalized operads via the mechanism of `graph substitution' or `graph insertion.'
Given two graphs $G$ and $H$ and a bijection between the germs of edges at a vertex of $G$ and the loose ends of $H$, we can form a new graph $G\{H\}$ where a small neighborhood of $v$ has been replaced by $H$.
The categories we focus on have graph substitution built in at a fundamental level, as is the case for several other useful categories involving such graphs (see, for instance, \cite{BataninMarkl:OCDDC,KaufmannWard:FC}).
However, in this paper we shift attention in our definitions away from graph substitution and towards \emph{embeddings} of graphs.

The two main graph categories we will discuss are related to modular operads \cite{GetzlerKapranov:MO} (also known as compact symmetric multicategories \cite{JoyalKock:FGNTCSM}), and to wheeled properads \cite{MarklMerkulovShadrin}.
The objects of our shape categories are connected graphs with loose ends, which are undirected in the first case, and directed in the second.

The graphical category $\gcat$ from \cite{HRY-mod1}, whose objects are undirected graphs, is intimately connected with modular operads \cite{GetzlerKapranov:MO,JoyalKock:FGNTCSM}, in that there is an associated nerve theorem (proved in \cite{HRY-mod2}, but see also \cite{Raynor:DLCSM}).
This theorem says that there is a fully-faithful inclusion of $\modular$ into the category of presheaves $\widehat{\gcat} \coloneqq \mathrm{Fun}(\gcat^\oprm, \set)$, whose essential image consists of those presheaves satisfying a Segal condition (\cref{def segal}).
This is the modular operad analogue of the dendroidal nerve theorem from \cite{MoerdijkWeiss:OIKCDS,Weber:F2FPRA}. 
One can also utilize the category $\gcat$ to give notions of $(\infty,1)$-modular operads, as in \cite{HRY-mod1,Strumila:GDHC,ChuHaugseng:HCASC}.
Our first main theorem is a new description of the graphical category $\gcat$. 
{
\renewcommand{\thetheorem}{1}
\begin{theorem}[\cref{old new equivalence} and \cref{thm extended graph cat}]\label{main theorem undirected}
Let $G$ and $G'$ be undirected connected graphs.
A graphical map $\varphi \colon G \to G'$ is the same thing as a pair consisting of an involutive function $\varphi_0\colon A_G \to A_{G'}$ and a function $\hat \varphi \colon \emb(G) \to \emb(G')$, so that this pair is compatible with respect to boundaries and the function $\hat \varphi$ preserves unions, vertex disjoint pairs of embeddings, and edges.
Composition of graphical maps is given by composition of pairs of functions.
\end{theorem}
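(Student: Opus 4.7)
The plan is to produce mutually inverse assignments between the graphical maps $\varphi\colon G \to G'$ in the existing sense (defined via graph substitution) and the pairs $(\varphi_0, \hat\varphi)$ of the stated form, then show composition is transported correctly under this bijection.

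First, I would describe the forward direction. Given a graphical map $\varphi$, one extracts $\varphi_0 \colon A_G \to A_{G'}$ as the underlying action on arcs, noting that any such action is automatically involutive (since arc involution is part of the structure of an undirected graph and graphical maps respect it). To define $\hat\varphi$, I would use the fact that in the graph-substitution picture, $\varphi$ is a vertex-labeled decomposition $G' \cong G\{H_v\}_{v\in V(G)}$ together with an arc-compatibility; any subembedding $K \rat G$ then picks out a vertex-disjoint subfamily of the $H_v$ and a collection of arcs, which assembles into an embedding $\hat\varphi(K) \rat G'$. One verifies directly that this $\hat\varphi$ preserves unions, preserves vertex-disjoint pairs, preserves edge-embeddings (since $\varphi_0$ is defined on arcs), and is compatible with boundaries via $\varphi_0$.

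For the reverse direction, given a pair $(\varphi_0, \hat\varphi)$ satisfying the listed conditions, I would recover a graphical map as follows. Apply $\hat\varphi$ to each vertex-star subembedding $\{v\} \rat G$ to get an embedding $\hat\varphi(\{v\}) \rat G'$ with prescribed boundary (determined by $\varphi_0$ applied to the germs at $v$). The key point is that distinct vertices of $G$ are vertex-disjoint as embeddings, so by the disjoint-pair preservation axiom their images in $G'$ are vertex-disjoint; edges of $G$ are likewise sent to edges by the edge-preservation axiom. This should produce precisely the data of a graph-substitution decomposition of (an embedded subgraph of) $G'$ indexed by $V(G)$, which is the same thing as a graphical map. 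Union preservation then upgrades this local data to an action on all embeddings that agrees with $\hat\varphi$; one uses the fact that every embedding is a union of its vertex-embeddings and edge-embeddings to prove this agreement. The boundary compatibility with $\varphi_0$ nails down the arc-level behavior.

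Once the two assignments are shown to be mutually inverse, the compositional statement is nearly automatic: both $\varphi_0$ and $\hat\varphi$ are honest set-functions, and composability of pairs $(\psi_0 \varphi_0, \hat\psi \hat\varphi)$ reduces to showing that the composite pair again satisfies the boundary/union/disjointness/edge axioms, which are each preserved by ordinary function composition. The main obstacle I anticipate is the reverse direction, specifically verifying that the stated minimal list of axioms on $\hat\varphi$ is in fact \emph{sufficient} to guarantee a valid graphical map — in particular, that preservation of unions plus vertex-disjoint pairs plus edges plus boundary-compatibility forces $\hat\varphi$ to respect the more subtle combinatorics of embeddings (such as intersections and how arcs of $G$ are distributed among the $H_v$ after substitution). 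This will likely require a careful case analysis showing every embedding decomposes canonically into unions of its vertex- and edge-subembeddings in a way that $\hat\varphi$ must respect, so that $\hat\varphi$ is determined by, and consistent with, its values on these generators.
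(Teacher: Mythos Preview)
Your overall strategy matches the paper's: define assignments $\oldnew$ (graphical map $\mapsto$ pair) and $\newold$ (pair $\mapsto$ graphical map) and show they are inverse. The forward direction you describe via graph substitution is essentially the paper's use of the active--inert factorization (\cref{cns old to new}), though note your statement ``$G' \cong G\{H_v\}$'' is only correct for \emph{active} maps; in general one has an embedding $G\{H_v\} \rat G'$.

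There are, however, genuine gaps in your reverse direction. First, you do not address condition~\eqref{old graph def collapse} of \cref{def graphical map} (the non-collapse condition when $\eth(G)=\varnothing$). The paper deduces this from the key technical result \cref{prop sum splitting new graph}, which says $\varsigma\hat\varphi[\ell] = \sum_{v\in V_L}\varsigma\hat\varphi[\ell\iota_v]$ for any embedding $\ell\colon L\rat G$. This formula is also what establishes condition~\eqref{old graph def vertices} and, crucially, what shows $\newold$ is \emph{injective}: two pairs with the same restriction to vertex-stars must agree on all embeddings (this is not obvious from union preservation alone, because unions in the sense of \cref{def a union} are \emph{not unique}, so ``every embedding is the union of its vertex- and edge-subembeddings'' does not immediately pin down $\hat\varphi$). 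The paper proves \cref{prop sum splitting new graph} by an induction that peels off one vertex at a time (\cref{lem find deletable vertex}), using vertex-disjointness and union preservation together; your sketch does not supply a substitute for this argument.

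Second, the statement also covers the extended graphical category $\egcat$ (\cref{thm extended graph cat}), where nodeless loops appear; these require a separate enumeration of the few maps involving them, which you do not mention. Finally, for composition the paper takes the cleaner route of showing directly that $\oldnew$ is functorial via the orthogonal factorization system (\cref{lem functoriality of hatting}) and then transporting composition across the bijection, rather than verifying axiom-closure under composition of pairs as you suggest.
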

}
This theorem holds uniformly for maps in both the graphical category $\gcat$ and the extended graphical category $\egcat$ from \cite{HRY-mod1}.
This has aesthetic appeal, as the previous definitions of these categories relied on separate ad-hoc conditions to prevent `collapse.'
A key insight of this paper is the correct notion of unions of embeddings that make this theorem hold, and this appears in \cref{section unions}.
We will state the precise conditions for the maps appearing in \cref{main theorem undirected} in \cref{def new graph map}.

One upshot of \cref{main theorem undirected} is that composition becomes easy, and another is that it makes more transparent the active-inert factorization system on $\gcat$ (see \cref{rmk factorization}). 

There is a parallel story for directed graphs and wheeled properads.
Wheeled properads are a variant of dioperad or polycategory which are capable of modeling both parallel processes and feedback of processes.
In \cite{HRYbook}, the wheeled properadic graphical category was introduced as a category whose morphisms are certain maps between the free wheeled properads generated by directed graphs.
As for $\gcat$, there is a nerve theorem for wheeled properads, allowing us to regard $\wproperad$ as a full subcategory of presheaves on the wheeled properadic graphical category.
The following is an analogue of \cref{main theorem undirected} for directed graphs.
{
\renewcommand{\thetheorem}{1'}
\begin{theorem}[\cref{thm oriented wheeled equiv} and \cref{thm oriented wheeled equiv extended}]\label{main theorem directed}
Let $G$ and $G'$ be directed connected graphs.
A map $\varphi \colon G \to G'$ in the wheeled properadic graphical category is the same thing as a pair consisting of a function $\bar\varphi_0\colon E_G \to E_{G'}$ and a function $\hat \varphi \colon \emb(G) \to \emb(G')$, so that this pair is compatible with respect to inputs/outputs and the function $\hat \varphi$ preserves unions, vertex disjoint pairs of embeddings, and edges.
Composition is given by composition of pairs of functions.
\end{theorem}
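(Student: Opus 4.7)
The plan is to reduce \cref{main theorem directed} to the undirected \cref{main theorem undirected} by passing to underlying undirected graphs. A directed connected graph $G$ has an underlying undirected graph $UG$ with the same arcs and the same involution, together with the additional datum of a partition of each arc pair into an input and an output; equivalently, each edge of $G$ is oriented, and at every vertex $v$ the adjacent arcs split into $\inp(v)$ and $\out(v)$.

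The first step is to set up a correspondence between morphisms $\varphi \colon G \to G'$ in the wheeled properadic graphical category and those morphisms $U\varphi \colon UG \to UG'$ in $\gcat$ satisfying an orientation-preservation condition: the involutive arc function $\varphi_0 \colon A_G \to A_{G'}$ furnished by \cref{main theorem undirected} must send inputs to inputs and outputs to outputs. Such an orientation-preserving $\varphi_0$ is equivalent data to a function $\bar{\varphi}_0 \colon E_G \to E_{G'}$ on oriented edges, since the orientations on both sides allow $\varphi_0$ to be reconstructed from $\bar{\varphi}_0$ and vice versa.

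The second step is to check that the compatibility axioms match up. The undirected boundary-compatibility at each vertex relates $\varphi_0$ on arcs incident to $v$ with $\hat{\varphi}$ on the embedding of the star-neighborhood of $v$; under the orientation-preservation condition this refines into compatibility with both $\inp$ and $\out$ separately, giving precisely the input/output compatibility in the directed statement. The remaining conditions on $\hat{\varphi}$ (preserving unions, vertex-disjoint pairs of embeddings, and edges) are structural properties of embeddings, are phrased identically in both settings, and carry across without change. Composition then follows because composition of pairs in the directed case reduces to the corresponding fact for $\gcat$.

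The main obstacle I expect is the initial identification between morphisms in the wheeled properadic graphical category (originally defined in \cite{HRYbook} via maps of free wheeled properads) and orientation-preserving morphisms of underlying undirected graphs in $\gcat$. This requires unwinding the free-wheeled-properad definition to check that the data of such a map is precisely that of a $\gcat$-morphism respecting orientations, and is likely best isolated in a preliminary lemma before \cref{main theorem undirected} is invoked. For the extended version \cref{thm oriented wheeled equiv extended}, additional bookkeeping is needed for exceptional edges $\exedge$, which must carry orientations themselves; this step should parallel the extension used for the undirected case in \cref{thm extended graph cat}.
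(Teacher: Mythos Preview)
Your overall strategy matches the paper's: first identify maps in $\gcato$ with pairs $(\bar\varphi_0,\hat\varphi)$ satisfying the directed axioms (this is \cref{prop oriented graph cat}, and as you say it follows essentially formally from \cref{main theorem undirected} via the orientation presheaf), and then identify $\gcato$ with the wheeled properadic graphical category.

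However, you have correctly located the hard step and then skipped over it. The ``preliminary lemma'' you allude to---that a morphism of free wheeled properads lying in the wheeled properadic graphical category is the same as an orientation-preserving morphism in $\gcat$---is in fact the bulk of the argument, and ``unwinding the free-wheeled-properad definition'' is not enough. The paper needs three ingredients: a functor $F\colon \gcato \to \wproperad$ (\cref{prop functor to wproperad}); a bijection between embeddings in $\gcato$ and \emph{subgraphs} in the sense of \cite[Definition~9.50]{HRYbook} (\cref{lem emb subgraph}), which in turn requires the existence of \emph{graph complements} (\cref{def graph compl}, \cref{lem graph complement}); and finally the characterization \cite[Theorem~9.62]{HRYbook} of wheeled properadic maps as those maps of free wheeled properads sending every subgraph to a subgraph. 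Only after assembling these can one show that the image \eqref{image of H} of an embedding under a wheeled properadic map is again an embedding, which is what produces $\hat\varphi$ and establishes the bijection. Your outline does not mention any of this machinery, and without it the comparison with the original \cite{HRYbook} definition does not go through.

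A smaller point: for the extended version the new object is the \emph{nodeless loop} (\cref{ex dir nodeless loop}), not an ``exceptional edge $\exedge$''. The edge $\exedge$ is already present in the non-extended categories. The paper handles the extension by enumerating all maps involving a nodeless loop (\cref{thm oriented wheeled equiv extended}), paralleling \cref{thm extended graph cat} as you anticipated, but the relevant bookkeeping concerns loops, not edges.
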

}
This gives a new, purely categorical/combinatorial definition of the wheeled properadic graphical category from \cite{HRYbook,HRYfactorizations} which does not rely on the category of wheeled properads.
We identify the wheeled properadic graphical category as a comma category $\gcato$ for a certain orientation $\gcat$-presheaf $\opshf$.

With this identification, the forgetful functor $\gcato \to \gcat$ becomes relatively easy to understand and analyze. 
We use this to establish that the adjunction $\wproperad \rightleftarrows \modular$ between the categories of wheeled properads and modular operads can be well understood at the presheaf level.

{
\renewcommand{\thetheorem}{2}
\begin{theorem}[\cref{prop restriction segal} and \cref{thm lke segal}]\label{main theorem segal}
Let $f\colon \gcato \to \gcat$ be the forgetful functor from the category of directed graphs to the category of undirected graphs.
At the level of presheaves, both left Kan extension
\[f_! \colon \widehat{\gcato} \to \widehat{\gcat}\] and restriction \[f^* \colon \widehat{\gcat} \to \widehat{\gcato}\] preserve Segal objects.
\end{theorem}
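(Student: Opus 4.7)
The plan is to exploit the identification $\gcato = \gcat_{/\opshf}$ (the category of elements of the orientation presheaf) together with the canonical equivalence $\widehat{\gcato} \simeq \widehat{\gcat}_{/\opshf}$. Under this equivalence, a presheaf $Z$ on $\gcato$ is the same data as a morphism $p \colon P \to \opshf$ in $\widehat{\gcat}$; the standard adjoint formulas then give $f_! Z \simeq P$ (forgetting the structure map) and $f^* Y \simeq (\mathrm{pr}_2\colon Y \times \opshf \to \opshf)$. I would record at the outset the combinatorial fact that elementary subgraphs of a directed graph $(G,\sigma)$ correspond bijectively to elementary subgraphs of the underlying undirected $G$, each carrying the orientation induced by $\sigma$; this identifies the indexing diagrams appearing in the two Segal conditions.

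The claim for $f^*$ is then essentially immediate: if $Y$ is Segal on $\gcat$ and $(G,\sigma) \in \gcato$, then
\[
(f^*Y)(G,\sigma) = Y(G) \simeq \lim_E Y(E) = \lim_E (f^*Y)(E, \sigma|_E),
\]
where $E$ ranges over the elementary subgraphs.

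For $f_!$ the crucial preliminary step is to verify that the orientation presheaf $\opshf$ is itself Segal on $\gcat$: intuitively, an orientation of a connected undirected graph is nothing but a choice of direction on each edge, so it is local data that assembles uniquely from its restrictions to the elementary pieces. Granting this, suppose $Z$ is Segal on $\gcato$ and corresponds to $p\colon P \to \opshf$. Then $P(G) = \coprod_{\sigma \in \opshf(G)} Z(G,\sigma)$, and an element of $\lim_E P(E)$ is a compatible family of pairs $(\tau_E \in \opshf(E),\, z_E \in Z(E,\tau_E))$. Segalness of $\opshf$ glues the $\tau_E$ to a unique global $\sigma \in \opshf(G)$, and Segalness of $Z$ at $(G,\sigma)$ then glues the $z_E$ to a unique element of $Z(G,\sigma) \subseteq P(G)$, giving the desired bijection $P(G) \simeq \lim_E P(E)$. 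The main obstacle is precisely the lemma that $\opshf$ is Segal, which requires unpacking the action of $\opshf$ on the morphisms appearing in the Segal diagram (both inert subgraph inclusions and any active pieces) and checking that a compatible family of local orientations determines exactly one global orientation; once this is in hand, the remainder reduces to the standard observation that set-valued limits commute with disjoint unions indexed over a Segal presheaf.
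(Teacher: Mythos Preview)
Your proposal is correct and follows essentially the same route as the paper: the paper establishes the coproduct formula $(f_!Z)_G = \coprod_{x\in \opshf_G} Z_{(G,x)}$ (your identification of $f_!Z$ with the total space $P$), then proves the exchange-of-limit-and-coproduct isomorphism by showing that a compatible family of local orientations on the elementary pieces glues to a unique global orientation (your ``$\opshf$ is Segal'' lemma), while the $f^*$ direction in both cases reduces to the equivalence $(\gcato)^{\elrm}_{/(G,x)} \simeq \gcat^{\elrm}_{/G}$. Your packaging of the key step as ``$\opshf$ is a Segal presheaf'' is a cleaner way to name what the paper proves by hand inside its exchange lemma, but the content is the same.
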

}
The first part of this theorem is somewhat unusual, and arises from a new, particularly simple formula for the left Kan extension when passing from directed to undirected graphs.
We only discovered this formula because of the relationship in the descriptions from \cref{main theorem undirected} and \cref{main theorem directed}.

We are also interested in several graph categories suitable for modeling other operadic structures: dioperads/symmetric polycategories \cite{Gan:KDD,Garner:PPDL}, properads/compact symmetric polycategories \cite{Vallette:KDP,Duncan:TQC}, and (augmented) cyclic operads/$\ast$-polycategories \cite{GetzlerKapranov:COCH,HinichVaintrob:COACD,Shulman:2Chu}. 
In order to streamline the paper, discussion of these categories is mostly deferred to \cref{sec tree cat} and \cref{sec properadic gcat}.
We also have a third appendix, \cref{sec extended}, which addresses the extended (oriented) graphical category, where nodeless loops (that is, vertex-free graphs whose geometric realization is a circle) are present.

In \cref{sec tree cat} we show how to use \cref{main theorem undirected} to give a description of categories of simply-connected undirected graphs in a very similar manner to the `complete morphisms' version of the unrooted tree category $\mathbf{\Xi}$ from \cite{HRY-cyclic}.
In \cref{sec properadic gcat}, we show how to identify the properadic graphical category as a subcategory of the wheeled properadic graphical category, and we show that the dioperadic graphical category is a full subcategory of the wheeled properadic graphical category.
Finally, in \cref{sec extended}, we show that the results of \cref{sec new graph maps} and \cref{sec directed} also hold when nodeless loops are considered.
The results in \cref{sec extended} are important, and arguably the `correct' versions of the theorems, but due to the inadequacies of the underlying definitions of graphs themselves they are best treated separately. 

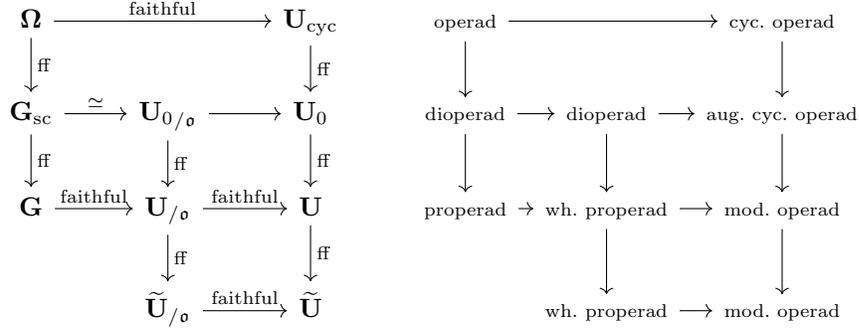
\begin{figure}[htb]
\[ \begin{tikzcd}
& \mathbf{\Omega} \ar[rr,"\text{faithful}"] \dar{\text{ff}} & &  \gcatcyc \dar{\text{ff}} 
& \text{\scriptsize operad} \ar[rr] \dar &[-1.8em] &[-1.8em]  \text{\scriptsize cyc.\ operad} \dar \\
& \pgcatsc \rar["\simeq"] \dar{\text{ff}} & {\gcatnought}_{/\opshf} \rar \dar{\text{ff}}& \gcatnought  \dar{\text{ff}} 
& \text{\scriptsize dioperad} \rar \dar & \text{\scriptsize dioperad} \rar \dar & \text{\scriptsize aug.\ cyc.\ operad}  \dar \\
& \pgcat  \rar{\text{faithful}}  & \gcato \rar{\text{faithful}} \dar{\text{ff}}  & \gcat \dar{\text{ff}} 
& \text{\scriptsize properad}  \rar  & \text{\scriptsize wh.\ properad} \rar \dar  & \text{\scriptsize mod.\ operad} \dar \\
& & \egcato \rar{\text{faithful}} & \egcat
& & \text{\scriptsize wh.\ properad} \rar & \text{\scriptsize mod.\ operad}
\end{tikzcd}
\]
\caption{Functors between graph categories}\label{fig graph cat functors}
\end{figure}

A diagram relating the graph categories from this paper appears in \cref{fig graph cat functors}. 
The categories in the top right are from \cref{sec tree cat}, the categories on the middle left are in \cref{sec properadic gcat}, and the two categories on the bottom are in \cref{sec extended}.

\subsection*{Further directions}
One motivation for this work is the hope that an alternative definition of the graph categories could help facilitate comparisons to other, related graph categories in the literature.
Of special interest are the operadic categories of graphs appearing in \cite{BataninMarkl:OCDDC,BataninMarkl:OCNEKD,BataninMarklObradovic:MMGROA}, which should be closely related to subcategories of active maps by \cite[Proposition 3.2]{Berger:MCO}.
We are also interested in the apparent differences with the graph categories arising from a twisted arrow construction in \cite{Burkin:TACOSC}.

Another motivation for this paper is to provide a blueprint for work on operadic structures based on disconnected graphs. 
Such operadic structures include props, wheeled props, and non-connected modular operads.
We expect that the definitions in this paper generalize to yield plausible graph categories for these three structures, once the appropriate notion of embedding is established. 
Moreover, the active-inert factorization systems on these categories, which are used to formulate the Segal condition, should arise in a natural and transparent way (see \cref{rmk factorization}).
This would all play a role in developments related to \cite{Raynor:BDMOGNT}, which concerns the circuit algebras of Bar-Natan and Dansco \cite{MR3623232}.
These circuit algebras are a generalization of the planar algebras of Jones \cite{JonesPlanarAlgebras}, and turn out to be the same thing as wheeled props \cite{MR4265709}.
In addition, it would be interesting to compare the kinds of infinity-props arising from this Segal condition with those appearing in \cite{HaugsengKock}.
	
Lastly, the reader will notice that we have almost entirely avoided any homotopical issues, but these should be addressed later.
We would like to know how the formulas for left Kan extensions in \cref{subsec lke segal} behave with respect to Quillen model structures for higher operadic structures. 
Can the formula from \cref{rem omega cyc} be leveraged to resolve \cite[Remark 6.8]{DrummondColeHackney:DKHTCO}, which seems to be needed for a conditional result of Walde \cite[Remark 5.0.20]{Walde:2SSIIO}?
It would also be interesting to know if \cref{main theorem segal} can be promoted to the $\infty$-categories of Segal objects in spaces from \cite{ChuHaugseng:HCASC}, and also to the enriched context following \cite{ChuHaugseng:EIO,GepnerHaugseng}.

\subsection*{Structure of the paper}
\Cref{sec prelim} mostly contains preliminary material which has previously appeared elsewhere.
\Cref{section unions} contains a definition of unions of embeddings, but only \cref{def a union,example order} are needed for the main narrative of the paper; the rest of the section is examining questions of existence of unions and may be skipped.
\Cref{sec new graph maps} introduces the new description of graphical maps at the beginning, and the remainder of the section is devoted to the proof of \cref{main theorem undirected}.

Directed graphs appear in \cref{sec directed}, and the new definition of the wheeled properadic graphical category appears as \cref{def oriented graphical cat,prop oriented graph cat}. 
A reader who is not already familiar with wheeled properads and the wheeled properadic graphical category may wish to stop reading this section after \cref{rem broad poset} (to skip the proof of \cref{main theorem directed}).

The final section of the main narrative of the paper is \cref{sec hypermoment}, which turns its attention to Segal presheaves which (via nerve theorems of the author, Robertson, and Yau) are the same thing as modular operads and wheeled properads, respectively.
\Cref{main theorem segal} is proved in \cref{subsec lke segal}.
\Cref{sec hypermoment} also deals with several other graph categories from the appendices, though relatively little knowledge is needed about them to follow this.
It is not necessary to read the appendices to understand most of this section.

The paper contains three appendices, which are largely independent and may be read or skipped based on the interest of the reader.
\Cref{sec tree cat} is about a characterization of maps in categories of undirected trees, and can be read at any point after \cref{sec new graph maps}.
\Cref{sec properadic gcat} exhibits the properadic graphical category as a particular non-full subcategory of $\gcato$ whose objects are acyclic directed graphs, and can be tackled after \cref{prop oriented graph cat}.
Finally, \cref{sec extended} is about the `extended' versions of graphical categories that include the nodeless loop as an object.
The first part can be read after \cref{sec new graph maps}, whereas \cref{subsec extended oriented} relies on \cref{subsec wheeled properads and oriented}.

\subsection*{Acknowledgments}
This work owes a tremendous amount to numerous conversations about related topics with friends, colleagues, and collaborators over the years.
Marcy Robertson, in particular, deserves special thanks for encouraging me to write this up and to pursue some of the avenues of inquiry within. 

\section{Preliminaries}\label{sec prelim}
The purpose of this section is to recall the most essential information about the graphical category $\gcat$ from \cite{HRY-mod1}.
The following definition of a combinatorial model for `graphs with loose ends' is due to Joyal and Kock, and appears in \cite[\S3]{JoyalKock:FGNTCSM} under the name \emph{Feynman graphs}.

\begin{definition}[Graphs]\label{def jk graphs}
Write $\mathscr{I}$ for the category freely generated by the graph \[\begin{tikzcd}[column sep=small] \mathtt{a} \arrow[loop left] & \lar \mathtt{d} \rar & \mathtt{v} \end{tikzcd}\]
subject to the relation that the nontrivial endomorphism of $\mathtt{a}$ squares to the identity.
\begin{itemize}
\item 
A \mydef{graph} is a functor $\mathscr{I} \to \finset$ so that the self-map of $\mathtt{a}$ goes to a fixpoint-free involution and the generating map $\mathtt{a} \leftarrow \mathtt{d}$ goes to a monomorphism.
\end{itemize}
A graph $G$ will be written as
\[\begin{tikzcd}[column sep=small] A_G \arrow[loop left,"\dagger"] & \lar[hook'] D_G \rar{t} & V_G \end{tikzcd}\]
and we usually behave as though the leftward arrow is a subset inclusion $D_G \subseteq A_G$.
\begin{itemize}
\item
If $v\in V_G$ is a vertex, then we write $\nbhd(v) \coloneqq t^{-1}(v) \subseteq D_G$ for the \mydef{neighborhood} of $v$.
	\item 
The \mydef{boundary} of a graph $G$ is the set $\eth(G) = A_G \setminus D_G$.
\item 
The set of \mydef{edges} 
\[
	E_G = \{ [a,a^\dagger] \mid a \in A_G\} \cong A_G / {\sim}
\]
is the set of $\dagger$-orbits, and has half the cardinality of $A_G$.
\item
An edge $[a,a^\dagger]$ is an \mydef{internal edge} if neither of $a,a^\dagger$ are elements of $\eth(G)$, otherwise it is a \mydef{boundary edge}.
\end{itemize}
\end{definition}

In \cref{sec extended} we will use a more general (but also more fiddly) version of graph (Definition 4.1 of \cite{HRY-mod1}), where the boundary is additional specified data.

\begin{example}\label{ex edge star}
We give two basic examples of graphs that we will use repeatedly (see Example 1.4 \cite{HRY-mod1}).
\begin{itemize}
\item We write $G = {\exedge}$ for the \mydef{edge}, which has exactly two arcs $A_G = \{ \sharp, \flat \}$, and $D_G = \varnothing = V_G$. Note that $\eth(G) = A_G$.
\item If $n\geq 0$, we write $G = \medstar_n$ for the \mydef{$n$-star}. This graph has $V_G$ a one-point set, $D_G = \{1, \dots, n \}$, and $A_G = \{ 1, 1^\dagger, \dots, n, n^\dagger \}$. Note that $\eth(G) = \{ 1^\dagger, \dots, n^\dagger \}$. The $5$-star is depicted on the left of \cref{fig contr corolla}.
\end{itemize}
\end{example}

From a graph $G$, one can obtain a topological space as a quotient space of \[ V_G \amalg \left( \coprod_{D_G} [0,{\textstyle\frac{2}{3}}) \right) \amalg \left( \coprod_{\eth(G)} (0,{\textstyle\frac{2}{3}}) \right) \]
by identifying, $x \sim 1-x$ where $x\in (\frac{1}{3},\frac{2}{3})$ is in the $a$-component and $1-x\in (\frac{1}{3},\frac{2}{3})$ is in the $a^\dagger$-component, and letting $t$ identify $0$ in the $d$-component with $t(d) \in V_G$.
As vertices may have arity two, in order to not lose any information one should consider this as a pair of spaces $\mathscr{X} = (X,V)$ with $V$ finite.
Notice that we can recover all of the data of $G$ from the homeomorphism class of $\mathscr{X}$ (see \S1 of \cite{HRY-mod1}).

The following definition of \'etale map appeared in \cite{JoyalKock:FGNTCSM}; these are the maps of graphs which preserve arity of vertices.

\begin{definition}\label{def etale embed undirected}
An \mydef{\'etale map} between graphs is a natural transformation of functors 
\[ \begin{tikzcd}
A_G\dar  \arrow[loop left,"\dagger"] & \lar[hook'] D_G\dar \ar[dr, phantom, "\lrcorner" very near start]  \rar & V_G\dar  \\
A_{G'} \arrow[loop left,"\dagger'"] & \lar[hook'] D_{G'} \rar & V_{G'}
\end{tikzcd} \]
so that the right hand square is a pullback.
An \mydef{embedding} between connected graphs is an \'etale map which is injective on vertices.
\end{definition}

`Connected' in the preceding definition means in the usual sense for an object in $\finset^{\mathscr{I}}$, namely $G$ is connected if and only if given any coproduct splitting $G \cong G_1 \amalg G_2$ precisely one of $G_1$ or $G_2$ is empty.
This is equivalent to $G$ being non-empty and every two elements have a path between them (see \cref{def paths and trees}), and is also equivalent to the associated topological space being connected.

\'Etale maps from the edge $\exedge$ classify arcs, as they are determined by where $\sharp$ is sent. 
\'Etale maps from stars weakly classify vertices: if $\nbhd(v)$ has cardinality $n$, then there are precisely $n!$ \'etale maps $\medstar_n \to G$ which send the unique vertex to $v$.
See also \cref{example embeddings}.

\begin{remark}\label{rem etale local homeo}
Each \'etale map $G\to G'$ induces a local homeomorphism $\mathscr{X} \to \mathscr{X}'$ of the corresponding topological graphs, and each embedding between connected graphs induces an injective local homeomorphism.
If we insist that we can recover the original \'etale map from the local homeomorphism, then this local homeomorphism is essentially unique up to deformation through a family of local homeomorphisms.
In contrast, there are two distinct (orientation-preserving) embeddings $(0,1) \amalg (0,1) \to (0,1)$ up to deformation through families of embeddings, while there is only one when considered up to deformation through families of local homeomorphisms.
This hints at why \'etale maps may be ill-suited for describing embeddings with disconnected domain.
\end{remark}

\begin{example}[Examples of embeddings]\label{example embeddings}
If $G$ is a graph, then every edge and every vertex determines an embedding.
\begin{itemize}
\item Suppose that $e = [a,a^\dagger]$ is an edge of $G$. 
We can define a graph, denoted $\exedge_e$, with $V = D = \varnothing$, and $A = \eth(\exedge_e) = \{a,a^\dagger\}$ together with the unique fixpoint-free involution.
Inclusion of arc sets yields a canonical embedding ${\exedge}_e \rightarrowtail G$.
\item If $f \colon H \rightarrowtail G$ is any embedding with $H$ isomorphic to $\exedge$ (which happens if and only if $V_H = \varnothing$), we say that $f$ \mydef{is an edge}.
Such an $f$ will be isomorphic to one of the inclusions ${\exedge}_e \rightarrowtail G$.
\item Suppose $v$ is a vertex of $G$. Define $\medstar_v$ to be the graph with $V_{\medstar_v} = \{ v \}$, $D_{\medstar_v} = \nbhd(v) = \{d_1, \dots, d_n \} \subseteq D_G$, $\eth(\medstar_v) = \{b_1, \dots, b_n\}$ (which are new, formally defined elements), and $d_i^\dagger \coloneqq b_i$.
Then $\medstar_v$ is isomorphic to $\medstar_n$, and there is an embedding $\iota_v \colon \medstar_v \rightarrowtail G$ which sends $d_i$ to $d_i \in D_G$ and $b_i$ to $d_i^\dagger \in A_G$.
\item It is not necessary for an embedding to be injective on arcs. As an example, take the quotient $G$ of $\medstar_5$ (see \cref{ex edge star}) by identifying the arcs $2 \sim 5^\dagger$ and $2^\dagger \sim 5$ but keeping the rest of the structure the same. 
This is depicted in \cref{fig contr corolla}.
The canonical map $\medstar_5 \rat G$ is an embedding but not a monomorphism.
Likewise, if $v$ is an arbitrary vertex of a graph $G$, then $\iota_v \colon \medstar_v \rat G$ is injective on arcs if and only if there are no loop edges at $v$.
\end{itemize}
This last point essentially indicates the only way an embedding can fail to be a monomorphism on arcs and edges --- by identifying pairs of boundary edges (\cref{mod1 lem 1.22}). 
Another example appears in \cref{fig no joins} below.
\end{example}

\begin{figure}[tb]
\labellist
\small\hair 2pt
 \pinlabel {$1$} [B] at 74 45 
 \pinlabel {$1^\dagger$} [B] at 96 45 
 \pinlabel {$2$} [B] at 74 65
 \pinlabel {$2^\dagger$} [B] at 90 84
 \pinlabel {$3$} [B] at 51 73
 \pinlabel {$3^\dagger$} [B] at 40 91
 \pinlabel {$4$} [B] at 36 62
 \pinlabel {$4^\dagger$} [B] at 14 62
 \pinlabel {$5$} [Bl] at 40 36
 \pinlabel {$5^\dagger$} [Bl] at 40 17
 \pinlabel {$1$} [B] at 187 45
 \pinlabel {$1^\dagger$} [B] at 209 45
 \pinlabel {$3$} [B] at 164 73
 \pinlabel {$3^\dagger$} [B] at 153 91
 \pinlabel {$4$} [B] at 149 62
 \pinlabel {$4^\dagger$} [B] at 127 62
 \pinlabel {$\bar 2$} [B] at 187 65 
 \pinlabel {$\bar 5$} [Bl] at 153 36
\endlabellist
\centering
\includegraphics[scale=0.8]{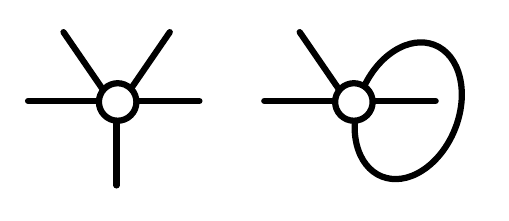}
\caption{An embedding $\medstar_5 \rat G$}
\label{fig contr corolla}
\end{figure}

\begin{lemma}\label{lem embed iso}
If $f \colon H \rightarrowtail G$ is an embedding and $H\cong G$, then $f$ is an isomorphism.
\end{lemma}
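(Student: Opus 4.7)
The plan is to exploit the finiteness of all sets involved, together with the pullback condition, to show that each of the three component maps of $f$ is a bijection. Since an isomorphism $H \cong G$ induces bijections on each value, we have $|V_H| = |V_G|$, $|D_H| = |D_G|$, and $|A_H| = |A_G|$, all finite. The embedding $f$ is by definition injective on vertices, so by the vertex cardinality equality $f_V$ is a bijection. Because the right-hand square in \cref{def etale embed undirected} is a pullback, the map $f_D$ is the base change of $f_V$ along $D_G \to V_G$, hence is also a bijection.

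The main remaining task is to show $f_A \colon A_H \to A_G$ is bijective; by the cardinality equality it suffices to prove surjectivity. I would split into two cases. If $G \cong \exedge$, then $V_G = D_G = \varnothing$ and $|A_G| = 2$; in this case $f_A$ is forced to be a $\dagger$-equivariant map between two-element sets equipped with the unique fixpoint-free involution, so is automatically a bijection. Otherwise $G$ (and hence $H$) has at least one vertex, and here I would invoke connectedness to argue that for every $b \in \eth(G)$ we must have $b^\dagger \in D_G$. Indeed, if both $b$ and $b^\dagger$ lay in $\eth(G)$, then the subfunctor of $G$ with arc set $\{b, b^\dagger\}$ and empty dart and vertex sets would be a nonempty direct summand isomorphic to $\exedge$, contradicting connectedness (since the complementary summand is also nonempty, containing a vertex). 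The same holds in $H$.

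With this in hand, every $a \in A_G$ lies in $D_G \cup D_G^\dagger$: either $a \in D_G$, in which case $a = f_D(d') = f_A(d')$ for the unique preimage $d' \in D_H$, or $a = d^\dagger$ for some $d \in D_G$, in which case $\dagger$-equivariance of $f_A$ gives $a = f_A((d')^\dagger)$. Thus $f_A$ is surjective, hence bijective, and $f$ is an isomorphism of functors $\mathscr{I} \to \finset$. The only genuinely non-routine step is the connectedness deduction about boundary pairs, but this is a direct application of the Joyal--Kock notion of connectedness to the candidate splitting $G = G_1 \amalg \exedge$; everything else is a bookkeeping exercise using finite cardinalities and the pullback condition.
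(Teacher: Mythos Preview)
Your proof is correct and follows essentially the same route as the paper: cardinality plus injectivity for $V$ and $D$, a separate edge case, and then the connectedness observation $\eth(G)^\dagger \subseteq D_G$ in the non-edge case. The only minor difference is the endgame for $f_A$: the paper shows $f(\eth(H)) \subseteq \eth(G)$ (implicitly invoking that $\eth(H) \to A_G$ is always injective for an embedding, a fact quoted elsewhere from \cite{HRY-mod1}), whereas you argue surjectivity of $f_A$ directly from $A_G = D_G \cup D_G^\dagger$ and $\dagger$-equivariance, which is slightly more self-contained.
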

\begin{proof}
The map $f$ provides injections of finite sets $D_H \hookrightarrow D_G$ and $V_H \hookrightarrow V_G$ which must be isomorphisms.
To show that $A_H \to A_G$ is an injection (and hence an isomorphism), it is enough to show that the injection $\eth(H) \to A_G$ lands in $\eth(G)$.
This is clear if $G$ is an edge, since then $A_G = \eth(G)$. 
Assume that $G$ is not an edge.
Given $a\in \eth(G)$, we know $a^\dagger \in D_G$, so $a^\dagger = f(d)$ for some unique $d\in D_H$.
Notice that $d^\dagger \in \eth(H)$, for otherwise $a = f(d^\dagger)$ would be an element of $D_G$.
This shows that $f(\eth(H)) \subseteq \eth(G)$ as desired, so $f \colon A_H \to A_G$ is an isomorphism.
\end{proof}

In Definition 1.28 of \cite{HRY-mod1}, a set $\emb(G)$ is defined by considering all embeddings with codomain $G$, and then identifying isomorphic embeddings. That is, $f\sim h$ just when there exists a (unique) isomorphism $z$ with $f=hz$.
We next give an alternative definition of this set which reveals additional structure.

Given a (small) category $\mathbf{C}$, the posetal reflection of $\mathbf{C}$ is obtained by identifying objects $c_0$ and $c_1$ if and only if there are arrows $c_0 \to c_1$ and $c_1 \to c_0$ in $\mathbf{C}$, and declaring that on equivalence classes $[c] \leq [c']$ just when there is an arrow $c \to c'$ in $\mathbf{C}$.

\begin{proposition}
Let $\mathbf{\acute{E}}$ be the category of graphs and \'etale maps between them, let $G$ be a connected graph, and let $\mathbf{C} \subseteq \mathbf{\acute{E}}_{/G}$ be the full subcategory of the slice on the embeddings with codomain $G$.
Then $\emb(G)$ is the underlying set of the posetal reflection of (a skeleton of) $\mathbf{C}$.
\end{proposition}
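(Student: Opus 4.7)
The plan is to compare two equivalence relations on the objects of $\mathbf{C}$: the ``isomorphism in the slice'' relation, whose quotient set is (by definition) $\emb(G)$, and the ``existence of morphisms in both directions'' relation, whose quotient set is the underlying set of the posetal reflection. Passing to a skeleton is simply a device to ensure we have a genuine small category before taking posetal reflection; it does not affect either equivalence class, so I will argue on $\mathbf{C}$ itself. One containment is immediate: an isomorphism $z$ in $\mathbf{C}$ witnesses morphisms in both directions via $z$ and $z^{-1}$. The substantive task is the reverse implication.

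So suppose we have morphisms $z\colon (H,f) \to (H',h)$ and $z'\colon (H',h) \to (H,f)$ in $\mathbf{C}$, meaning $hz = f$ and $fz' = h$. I would first observe that $z$ is itself an embedding: it is étale by definition of $\mathbf{\acute{E}}$, and it is vertex-injective because $z(v_1) = z(v_2)$ implies $f(v_1) = h(z(v_1)) = h(z(v_2)) = f(v_2)$, which forces $v_1 = v_2$ since the embedding $f$ is vertex-injective. Symmetrically, $z'$ is an embedding. Note too that both $H$ and $H'$ are connected, as required by the definition of embedding, because they admit étale maps to the connected graph $G$.

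The composite $z'z\colon H \to H$ is then an embedding whose domain and codomain are equal (hence certainly isomorphic), so \cref{lem embed iso} makes it an automorphism of $H$. By the same reasoning $zz'$ is an automorphism of $H'$. Consequently $z$ has $(z'z)^{-1}z'$ as a left inverse and $z'(zz')^{-1}$ as a right inverse, so $z$ is an isomorphism of graphs. The relation $hz = f$ then yields $fz^{-1} = h$, so $z^{-1}$ is also a morphism in $\mathbf{C}$, and $z$ is an isomorphism in $\mathbf{C}$, as required.

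The only nontrivial ingredient here is \cref{lem embed iso}, which handles the potential obstacle that an embedding need not be a monomorphism on arcs (cf.\ the third and fourth bullets of \cref{example embeddings}); the possible arc identifications would prevent a direct cardinality argument, but the lemma bypasses this by using injectivity on darts and vertices together with the boundary argument given in its proof. Everything else reduces to formal manipulation of composites in the slice category.
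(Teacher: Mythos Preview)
Your proof is correct and follows essentially the same path as the paper: show that morphisms in $\mathbf{C}$ are themselves embeddings, then apply \cref{lem embed iso} to the composites. The paper pushes slightly further and observes that the self-embedding $z'z$ lives over $G$, hence is the identity on $D_H$, $V_H$, and $\eth(H)$ (these maps being injective for an embedding), so $z'z = \id$ literally rather than merely an automorphism; this gives $z' = z^{-1}$ directly without your formal left/right-inverse manipulation, but the content is the same.

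One small correction: your justification that $H$ and $H'$ are connected ``because they admit \'etale maps to the connected graph $G$'' is not valid in general (an \'etale map from a disconnected graph to a connected one is perfectly possible). Connectedness of $H$ and $H'$ is instead part of the \emph{definition} of embedding (\cref{def etale embed undirected}), and objects of $\mathbf{C}$ are embeddings by hypothesis.
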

\begin{proof}
First, notice that for a morphism 
\[ \begin{tikzcd}[column sep=small]
H \ar[rr,"f"] \ar[dr,tail,"h"'] & & K \ar[dl,tail,"k"] \\
& G
\end{tikzcd} \]
from $h$ to $k$, we have that the \'etale map $f$ is an embedding.
Suppose that we have two morphisms $f$ and $g$ of $\mathbf{C}$
\[ \begin{tikzcd}[column sep=small]
H \ar[tail,r,"f"] \ar[dr,tail,"h"'] & K \dar[tail,"k"] \ar[tail,r,"g"] & H  \ar[dl,"h"] \\
& G
\end{tikzcd} \]
then $gf \colon H \to H$ is an isomorphism by \cref{lem embed iso}.
Likewise, $fg \colon K \to K$ is an isomorphism.
Since $fg$ and $gf$ are isomorphisms, we conclude that $f$ is an isomorphism and hence $[h]=[k]$ in $\emb(G)$.

If $h = kz$ for some isomorphism $z$, then $h$ and $k$ are identified in the posetal reflection.
\end{proof}

In order to give the definition of the graphical category $\gcat$, we first need some notation.

\begin{definition}[Boundary and vertex sum of embeddings]\label{def vertex sum}
If $S$ is a set, write $\wp(S)$ for the power set, that is, for the set whose elements are the subsets of $S$.
Let $\mathbb{N}S$ denote the free commutative monoid on $S$, whose elements are finite unordered lists of elements of $S$. 
When $S$ is finite, regard $\wp(S)$ as the subset of $\mathbb{N}S$ containing those lists which do not have repeated elements.
Now suppose $G$ is a graph.
\begin{itemize}
\item Write $\varsigma \colon \emb(G) \to \mathbb{N}V_G$ for the map of sets which sends the class of an embedding $f\colon H \rat G$ to $\varsigma[f] = \sum_{v\in V_H} f(v)  \in \mathbb{N}V_G$.
As embeddings are injective on vertices, this comes from a function $\varsigma \colon \emb(G) \to \wp(V_G)$ sending $[f]$ to $f(V_H)$.
\item Write $\eth \colon \emb(G) \to \wp(A_G) \subseteq \mathbb{N}A_G$ for the map which sends the class of an embedding $f \colon H \rat G$ to $\eth([f]) = f(\eth(H)) \subseteq A_G$.
\end{itemize}
\end{definition}
By \cite[Lemma 1.20]{HRY-mod1}, $\eth(H) \to A_G$ is injective, so $\eth([f])$ is isomorphic to $\eth(H)$ and may be written as $\sum_{b\in \eth(H)} f(b) \in \mathbb{N}A_G$.
See \cref{subsec embeddings} below for more on embeddings.
The following appears as Definition 1.31 of \cite{HRY-mod1}.

\begin{definition}[Graphical map]\label{def graphical map}
Let $G$ and $G'$ be connected (undirected) graphs.
A \mydef{graphical map} $\varphi \colon G \to G'$ is a pair $(\varphi_0, \varphi_1)$ consisting of an involutive function $\varphi_0 \colon A_G \to A_{G'}$ and a function $\varphi_1 \colon V_G \to \emb(G')$, so that:
\begin{enumerate}[label=(\roman*),ref=\roman*]
\item The inequality
\[
	\sum_{v\in V_G} \varsigma(\varphi_1(v)) \leq \sum_{w\in V_{G'}} w
\]
holds in $\mathbb{N}V_{G'}$ (i.e.,\ $\sum \varsigma(\varphi_1(v)) \in \wp(V_{G'})$). \label{old graph def vertices}
\item For each $v \in V_G$, there is a bijection making the diagram 
\[ \begin{tikzcd} 
\nbhd(v) \rar[hook, "\dagger"] \dar[dashed, "\cong"] & A_G \dar{\varphi_0} \\
\eth(\varphi_1(v)) \rar[hook] & A_{G'}
\end{tikzcd} \]	
commute.\label{old graph def boundary}
\item If $\eth(G)$ is empty, then there exists $v \in V_G$ so that $\varphi_1(v)$ is not an edge.\label{old graph def collapse}
\end{enumerate}
\end{definition}

Connected graphs and graphical maps form a category denoted by $\gcat$.
The composition, which we will rarely need to deal with explicitly, appears in Definition 1.44 of \cite{HRY-mod1}.
See the second paragraph of \cref{remark composition} below for a description that will be used in the proof of \cref{lem functoriality of hatting}.
The purpose of condition \eqref{old graph def collapse} is to avoid the situation where graph substitution would result in a nodeless loop (\cref{ex undirected nodeless loop}).

\begin{definition}[Inert and active maps]\label{def emb act}
Let $H,G,G'$ be connected graphs.
\begin{itemize}
\item If $f\colon H \rat G$ is an embedding, then there is an associated graphical map $H \to G$ whose action on arcs agrees with $f$ and whose action on vertices is given by the composite
\[
	V_H \xrightarrow{f} V_G \hookrightarrow \emb(G).
\]
We often also call such a map \mydef{inert} and write $\gcatemb \subset \gcat$ for the wide subcategory consisting of the inert maps. (In \cite{HRY-mod1,HRY-mod2} this category was denoted by $\gcat_{\mathrm{emb}}$.)
\item A graphical map $\varphi \colon G \to G'$ is called \mydef{active} if $\varphi_0$ induces a bijection 
\[ \begin{tikzcd}
\eth(G) \rar[hook] \dar[dashed, "\cong"] & A_G \dar{\varphi_0} \\
\eth(G') \rar[hook] & A_{G'}
\end{tikzcd} \]
between boundary sets.
We write $\gcatact \subset \gcat$ for the wide subcategory consisting of all of the active maps, and we use the notation $G\ract G'$.
\end{itemize}
\end{definition}

\begin{example}\label{ex graphical star}
If $G$ is a graph and $\eth(G)$ has cardinality $n$, then there are $n!$ different active maps $\medstar_n \ract G$, and these are determined solely by a choice of bijection $\eth(\medstar_n) \cong \eth(G)$.
Given any two such active maps $\alpha, \alpha' \colon \medstar_n \ract G$, there is a unique automorphism $z$ of $\medstar_n$ with $\alpha' z = \alpha$.
\end{example}

The following is Theorem 2.15 of \cite{HRY-mod1}.
\begin{proposition}\label{prop U ofs}
The pair $(\gcatact,\gcatemb)$ is an orthogonal factorization system on $\gcat$.
That is, both subcategories contain all isomorphisms, every graphical map $\varphi \colon G \to G'$ factors as an active map followed by an embedding
\[ \begin{tikzcd}[column sep=small]
G \ar[rr,"\varphi"] \ar[dr, -act] & & G' \\
& K \ar[ur,tail] 
\end{tikzcd} \]
and this factorization is unique up to unique isomorphism.
\qed
\end{proposition}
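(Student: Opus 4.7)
The plan is to verify the three parts of the statement in turn: inclusion of isomorphisms, existence of factorization, and uniqueness up to unique isomorphism.

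First I would handle the easy part about isomorphisms. If $\varphi\colon G \to G'$ is an isomorphism in $\gcat$, then $\varphi_0$ is a bijection of arc sets, so in particular it restricts to a bijection $\eth(G) \to \eth(G')$; thus $\varphi$ is active. On the other hand, $\varphi_1$ lands in the image of $V_{G'} \hookrightarrow \emb(G')$ (via the vertex-star embeddings $\iota_w$) and is bijective there, which together with condition \eqref{old graph def boundary} of \cref{def graphical map} shows $\varphi$ arises from an embedding via \cref{def emb act}, so $\varphi$ is inert as well.

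Next I would construct the factorization. Given $\varphi = (\varphi_0,\varphi_1)\colon G \to G'$, I would build the intermediate object $K$ as the union of the family of embeddings $\{\varphi_1(v) \colon H_v \rat G'\}_{v \in V_G}$ in the sense of \cref{section unions}. Condition \eqref{old graph def vertices} of \cref{def graphical map} is exactly what ensures the vertex sets $\varsigma(\varphi_1(v))$ are pairwise disjoint in $V_{G'}$, and condition \eqref{old graph def boundary} gives the gluing data that turns internal arc pairs $d, d^\dagger$ of $G$ into internal arcs of $K$ (by identifying the appropriate boundary arcs among the $H_v$'s). The union tautologically comes equipped with an embedding $K \rat G'$, and the same data $(\varphi_0,\varphi_1)$, now corestricted, yields a map $\alpha\colon G \to K$: on arcs, $\varphi_0$ factors through $A_K \hookrightarrow A_{G'}$; on vertices, each $\varphi_1(v)$ refines to an embedding into $K$. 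To see that $\alpha$ is active, I would check that $\eth(G) \to \eth(K)$ is a bijection, using the fact that all non-boundary arcs of $K$ arise from matched pairs of boundary arcs among the $H_v$'s, which by \eqref{old graph def boundary} come precisely from the internal arcs of $G$. Condition \eqref{old graph def collapse} then ensures that in the boundary-free case we obtain an honest vertex-containing $K$ rather than a collapse to an edge, so $\alpha$ is a valid graphical map on the nose.

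Finally I would handle uniqueness. Suppose $G \xrightarrow{\alpha_1} K_1 \xrightarrow{f_1} G'$ and $G \xrightarrow{\alpha_2} K_2 \xrightarrow{f_2} G'$ are two active-inert factorizations of $\varphi$. Both $f_1,f_2$ realize $G'$-subgraphs whose vertex sets are exactly $\bigcup_v \varsigma(\varphi_1(v))$ and whose arc sets are precisely those forced by the vertex-stars plus the boundary image of $\varphi_0$; applying the universal property of the union (or equivalently, a direct combinatorial comparison using \eqref{old graph def boundary}) one obtains a map $K_1 \to K_2$ over $G'$ which is necessarily an embedding, and symmetrically $K_2 \to K_1$. \Cref{lem embed iso} then forces both composites to be identities, yielding the required unique isomorphism.

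The main obstacle is the construction of the union in the factorization step, which is why the paper isolates it in its own section. In particular, handling condition \eqref{old graph def collapse} and tracking which boundary arcs of the $H_v$'s get identified versus remain boundary in $K$ is the most delicate bookkeeping, and carrying this out cleanly is what would drive the technical length of a full proof.
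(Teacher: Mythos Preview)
The paper does not give a proof of this proposition: the line before the statement reads ``The following is Theorem~2.15 of \cite{HRY-mod1}'' and the statement is closed with \qed. The only hint in this paper about how the proof goes is \cref{remark composition}, which says the intermediate graph may be taken to be the graph substitution $K = G\{H_v\}$, equipped with its canonical embedding into $G'$. Your sketch follows exactly this route.

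One terminological caution: describing $K$ as ``the union of the family of embeddings $\{\varphi_1(v)\}$ in the sense of \cref{section unions}'' is misleading. That section treats only \emph{pairwise} unions, explicitly shows they are not unique, and its construction uses only the ambient graph $G'$. What you actually describe in the next sentence---gluing the $H_v$'s along pairs of boundary arcs dictated by the internal arcs of $G$ via condition~\eqref{old graph def boundary}---is graph substitution $G\{H_v\}$, which uses the combinatorics of the source $G$. These can genuinely differ: two of the $H_v$'s might share a boundary edge in $G'$ that does \emph{not} correspond to an internal edge of $G$, and the pairwise-union machinery would try to glue them while graph substitution would not. So drop the reference to \cref{section unions} and name the construction as graph substitution.

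For uniqueness, your invocation of a ``universal property of the union'' does not exist (unions in this paper have no such property). The argument you want is the one you parenthetically gesture at: both $K_1 \rat G'$ and $K_2 \rat G'$ have the same boundary (namely $\varphi_0(\eth(G))$, since the active part induces a bijection on boundaries) and the same vertex image $\bigcup_v \varsigma(\varphi_1(v))$, so by \cref{mod1 prop 1.25} they represent the same class in $\emb(G')$. The isomorphism $K_1 \cong K_2$ over $G'$ is then unique by the posetal-reflection argument in the proof of Proposition~2.6.
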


\begin{remark}\label{remark composition}
If $\varphi \colon G \to G'$ is a graphical map and $\varphi_v \colon H_v \rat G'$ represents $\varphi_1(v)$, condition \eqref{old graph def boundary} of \cref{def graphical map} provides a bijection between $\nbhd(v)$ and $\eth(H_v)\cong \eth(\varphi_1(v))$.
This is the data necessary to construct the \emph{graph substitution} $G\{H_v\}$ (see, for instance, \cite[Chapter 5]{YauJohnson:FPAM}, \cite[\S1.2]{HRY-mod1}, and \cite[\S13]{BataninBerger:HTAPM}).
Condition \eqref{old graph def collapse} is included to guarantee that $G\{H_v\}$ is not a nodeless loop, while condition \eqref{old graph def vertices} ensures that there is an injection $V_{G\{H_v\}} \cong \amalg V_{H_v} \hookrightarrow V_{G'}$.
This graph substitution comes equipped with an embedding $G\{H_v\} \rat G'$ canonically factoring all of the embeddings $\varphi_v \colon H_v \rat G'$, and we can take $K= G\{H_v\}$ in the factorization from \cref{prop U ofs}.

Composition of graphical maps was formulated in \cite[Definition 1.44]{HRY-mod1} by first defining precomposition with inert maps, and then utilizing the above graph substitutions.
This means that if $\psi \colon G' \to G''$ is another graphical map and $v\in V_G$, then $(\psi \circ \varphi)_1(v) \in \emb(G'')$ is represented by the right vertical embedding in the diagram below.
\[ \begin{tikzcd}
& H_v \rar[-act] \dar["\varphi_v"', tail] & \bullet \dar[tail] \\
G \rar{\varphi} & G' \rar{\psi} & G''
\end{tikzcd} \]
\end{remark}

\subsection{Lemmas concerning embeddings} \label{subsec embeddings}
We now give three lemmas governing the behavior of embeddings which will be useful several times in this paper.
The first appears as Lemma~1.22 of \cite{HRY-mod1}, and the reader should note that it is not possible to have $i=j$.
\begin{lemma}\label{mod1 lem 1.22}
Suppose $f \colon G \rat G'$ is an embedding and $a_1\neq a_2$ are distinct arcs of $G$ with $f(a_1) = f(a_2)$.
Then there are indices $i,j \in \{1,2\}$ so that $a_i, a_j^\dagger \in \eth(G)$ and $a_i^\dagger, a_j \in D_G$. \qed
\end{lemma}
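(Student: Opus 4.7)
The plan is to combine the étale/pullback condition with $\dagger$-equivariance and the connectedness hypothesis, in three short steps.

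First, I would prove that any embedding $f \colon G \rat G'$ is automatically injective on the dart set $D_G$. This follows from the pullback square in \cref{def etale embed undirected}: for each vertex $v$, the restriction $\nbhd(v) \to \nbhd(f(v))$ is a bijection. If $f(d_1) = f(d_2)$ for $d_1, d_2 \in D_G$, then $f(t(d_1)) = t(f(d_1)) = t(f(d_2)) = f(t(d_2))$, and injectivity on $V_G$ (which is part of being an embedding) gives $t(d_1) = t(d_2)$; the neighborhood bijection then forces $d_1 = d_2$.

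Second, given distinct $a_1, a_2$ with $f(a_1) = f(a_2)$, the previous step shows at least one of them lies in $\eth(G)$, say $a_1$. I would then use connectedness of $G$ to deduce $a_1^\dagger \in D_G$. If instead $a_1^\dagger \in \eth(G)$, the edge $[a_1, a_1^\dagger]$ would have no dart attached at either end and hence split off as a connected component; since $G$ is connected this would force $G = \exedge$, so the only possibility for $a_2 \neq a_1$ is $a_2 = a_1^\dagger$, which makes $f(a_1) = f(a_2) = f(a_1)^\dagger$ and contradicts fixpoint-freeness of the involution on $A_{G'}$.

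Finally, I apply $\dagger$ to the equation $f(a_1) = f(a_2)$ to obtain $f(a_1^\dagger) = f(a_2^\dagger)$. Since $a_1^\dagger \in D_G$ and $a_1 \neq a_2$, the injectivity of $f$ on darts rules out $a_2^\dagger \in D_G$, so $a_2^\dagger \in \eth(G)$; the argument of step two applied to $a_2^\dagger$ in place of $a_1$ then yields $a_2 \in D_G$. Taking $(i,j) = (1,2)$ gives the conclusion (and shows in particular that $i \neq j$). I expect the main obstacle to be the connectedness argument in step two, where one must carefully handle the degenerate $G = \exedge$ case; the rest is a formal consequence of the pullback property and $\dagger$-equivariance.
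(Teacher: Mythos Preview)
Your argument is correct. The paper does not actually prove this lemma: it is stated with a bare \qed and attributed to Lemma~1.22 of \cite{HRY-mod1}, so there is no in-paper proof to compare against. Your three-step outline (injectivity on $D_G$ from the pullback square and vertex-injectivity; connectedness forcing $a_1^\dagger \in D_G$ via the $\exedge$ degenerate case; then applying $\dagger$ and repeating) is exactly the natural route and matches the argument one finds in the cited source.

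One small remark on your step three: when you invoke ``the argument of step two applied to $a_2^\dagger$,'' the cleanest way to close it is not to re-derive the fixpoint contradiction but simply to observe that you already established $a_1^\dagger \in D_G$, so $D_G \neq \varnothing$ and hence $G \not\cong \exedge$; this immediately rules out $a_2 \in \eth(G)$ by the connectedness argument. This is a cosmetic point and does not affect correctness.
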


The second, which appears as Proposition 1.25 of \cite{HRY-mod1}, says that embeddings are nearly determined by their boundary, up to some ambiguity about whether or not they contain a vertex.
\begin{lemma}\label{mod1 prop 1.25}
Let $G$ be a connected graph, and $E_G \hookrightarrow \emb(G)$ be the inclusion of edges into embeddings from \cref{example embeddings}.
Then the composites
\begin{align*}
E_G \hookrightarrow \emb(G) & \xrightarrow{\eth} \mathbb{N}A_G \\
\emb(G) \setminus E_G \hookrightarrow \emb(G) & \xrightarrow{\eth} \mathbb{N}A_G
\end{align*}
are injective. \qed
\end{lemma}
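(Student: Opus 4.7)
The plan is to handle the two composites separately. Injectivity of $\eth$ restricted to $E_G$ is essentially immediate: the boundary of $\exedge_e \rat G$ for $e = [a, a^\dagger]$ is the two-element set $\{a, a^\dagger\} \in \mathbb{N}A_G$, and distinct edges give distinct $\dagger$-orbits. For the restriction to non-edges, I would first observe that the class of a non-edge connected embedding $f\colon H \rat G$ is reconstructible from the pair consisting of the vertex image $W_f = f(V_H) \subseteq V_G$ and the boundary $B = \eth([f]) \subseteq A_G$. The pullback condition in \cref{def etale embed undirected} identifies $D_H$ with $\{d \in D_G : t(d) \in W_f\}$, \cite[Lemma 1.20]{HRY-mod1} gives a bijection $\eth(H) \cong B$ via $f$, and the involution on $A_H = D_H \sqcup \eth(H)$ is determined by the position of each $d^\dagger$ (in $A_G$) relative to $W_f$ and $B$; the collision case of \cref{mod1 lem 1.22} arises precisely when $d^\dagger$ points into $W_f$ yet lies in $B$.

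The key step is showing that $B$ alone determines $W_f$. Suppose $f\colon H \rat G$ and $g\colon K \rat G$ are two non-edge connected embeddings with the same boundary $B$, and let $w \in W_f \setminus W_g$. For any $a \in A_G$ with $t(a^\dagger) = w$, I claim $t(a) \in W_f \setminus W_g$. First, $a \notin B$: if $a = g(b)$ with $b \in \eth(K)$, the partner $b^\dagger \in D_K$ would give $w = t(a^\dagger) \in W_g$, a contradiction. Writing $a^\dagger = f(d)$ for the unique $d \in D_H$ given by the pullback, $f(d^\dagger) = a \notin f(\eth(H)) = B$ forces $d^\dagger \in D_H$, so $a \in D_G$ with $t(a) \in W_f$. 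If additionally $t(a) \in W_g$, the parallel argument on the $g$-side would produce $a^\dagger \in B$; writing $a^\dagger = f(b')$ with $b' \in \eth(H)$, the identity $f(b'^\dagger) = a = f(d^\dagger)$ and the injectivity of $f$ on $D_H$ (\cite[Lemma 1.20]{HRY-mod1}) would force $b' = d \in D_H \cap \eth(H) = \varnothing$, a contradiction. Hence $W_f \setminus W_g$ is closed under adjacency in $G$.

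The hard part is closing the argument using connectedness. A path in $H$ between vertices above $W_f \cap W_g$ and $W_f \setminus W_g$ would cross an edge of $G$ with endpoints in each set, contradicting the closure just established; so either $W_f \cap W_g = \varnothing$ or $W_f \setminus W_g = \varnothing$. The disjoint case is ruled out: applied to all of $W_f$ (and symmetrically to $W_g$), the closure makes both subsets closed under adjacency in $G$, forcing both to equal $V_G$ by connectedness of $G$ (using that both are nonempty since $H, K$ are not edges), contradicting disjointness. Hence $W_f \subseteq W_g$, and by symmetry $W_f = W_g$. The main technical challenge is the case analysis in the adjacency step, particularly tracking where $d^\dagger$ and $a^\dagger$ live under the respective embeddings and the role of potential collisions.
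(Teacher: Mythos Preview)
Your argument is correct. Note that the paper does not actually prove this lemma: it is imported from \cite[Proposition 1.25]{HRY-mod1} and stated with a bare \qed, so there is no in-paper proof to compare against. Your reconstruction is a reasonable direct proof of the cited result.

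A few comments on the execution. The reconstruction of $[f]$ from $(W_f,B)$ is correct but the delicate point---what happens when $d^\dagger$ lands in $B\cap D$---deserves one more sentence. Using \cref{mod1 lem 1.22}, the preimage $f^{-1}(d^\dagger)$ then consists of exactly one element of $D_H$ and one of $\eth(H)$, and the same lemma forces the $D_H$-copy of $d$ to be the $\dagger$-partner of the $\eth(H)$-copy of $d^\dagger$; so the involution is indeed determined and there is no ambiguity.

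Your connectedness step is more elaborate than necessary. Once you have established that $W_f\setminus W_g$ is closed under adjacency \emph{in $G$} (which is what your argument actually shows---you never used anything specific to $H$), connectedness of $G$ alone forces $W_f\setminus W_g$, if nonempty, to equal all of $V_G$; this immediately gives $W_g=\varnothing$, contradicting that $K$ has a vertex. There is no need to pass through paths in $H$ or to separately treat the case $W_f\cap W_g=\varnothing$. In fact your adjacency claim establishes something slightly stronger: every arc incident to a vertex of $W_f\setminus W_g$ lies in $D_G$, so no boundary arc of $G$ can touch that set, which makes the connectedness argument entirely internal to the vertex set.
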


The third is a generalization of Lemma 1.26 of \cite{HRY-mod1}, which says that an embedding whose domain has empty boundary must be an isomorphism.

\begin{lemma}\label{lem boundary inclusion}
If $f \colon H \rightarrowtail G$ is an embedding and $\eth(f) \subseteq \eth(G)$, then $f$ is an isomorphism.
In particular, $f$ must be an isomorphism if $\eth(H) = \varnothing$.
\end{lemma}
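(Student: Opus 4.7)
The second statement follows at once from the first, since $\eth(H) = \varnothing$ makes the inclusion $\eth(f) = f(\eth(H)) \subseteq \eth(G)$ vacuous. For the main claim, my strategy is to show $f$ induces bijections on each of $V$, $D$, and $A$, so that $f$ is an isomorphism in $\finset^{\mathscr{I}}$. The driving force will be connectedness of $G$: I will argue that the image $f(V_H) \subseteq V_G$ is closed under edge-adjacency, hence equals $V_G$, and then exploit the \'etale pullback and \cref{mod1 lem 1.22} to deal with darts and arcs.

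First I would dispose of the case $V_H = \varnothing$. Since $H$ is connected, it must be an edge $\exedge$, so $A_H = \{a, a^\dagger\}$ and by hypothesis both $f(a)$ and $f(a^\dagger)$ lie in $\eth(G)$. The edge $[f(a), f(a^\dagger)]$ of $G$ then has no dart endpoints in $D_G$, so it constitutes a connected component with no vertices. By connectedness $G$ equals this edge and $f$ is an iso. This observation will be reused below to derive contradictions.

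Now assume $V_H \neq \varnothing$. Given $v = f(w) \in f(V_H)$ and an arc $a \in \nbhd(v)$, \'etale-ness writes $a = f(d)$ with $d \in \nbhd(w)$, and since $f$ commutes with $\dagger$ we have $a^\dagger = f(d^\dagger)$. Either $d^\dagger \in D_H$, in which case $t(a^\dagger) = f(t(d^\dagger)) \in f(V_H)$, or $d^\dagger \in \eth(H)$, in which case $a^\dagger \in f(\eth(H)) \subseteq \eth(G)$ and $a^\dagger$ has no opposite vertex in $G$ at all. So every vertex of $G$ edge-adjacent to $f(V_H)$ lies in $f(V_H)$; by connectedness of $G$ and non-emptiness of $f(V_H)$, we conclude $f(V_H) = V_G$. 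Combined with injectivity on vertices and the pullback square on darts, $V_H \to V_G$ and $D_H \to D_G$ are bijections.

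It remains to handle arcs, and this is where I expect the subtlety to sit, since embeddings can fail to be injective on arcs. If $f$ were non-injective on $A_H$, \cref{mod1 lem 1.22} would furnish $a_1 \neq a_2$ with $f(a_1) = f(a_2)$ so that (after relabeling) $a_1, a_2^\dagger \in \eth(H)$. Then $f(a_1)$ and $f(a_1)^\dagger = f(a_2^\dagger)$ both sit in $\eth(G)$, so $[f(a_1), f(a_1)^\dagger]$ is a vertex-free component of $G$, contradicting $V_G \ne \varnothing$. For surjectivity, every $d \in D_G$ is already in $f(D_H)$, while for $b \in \eth(G)$ we have $b^\dagger \in D_G = f(D_H)$, say $b^\dagger = f(d)$, giving $b = f(d^\dagger) \in f(A_H)$. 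Hence $A_H \to A_G$ is a bijection, and $f$ is an isomorphism of graphs. The main obstacle is the small case analysis driven by \cref{mod1 lem 1.22}; once the "isolated pure edge" trick from the $V_H = \varnothing$ case is in hand, the rest is routine.
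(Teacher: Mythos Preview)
Your proof is correct and follows essentially the same route as the paper's: dispose of the edge case, then show $V_H \to V_G$ is surjective via connectedness, deduce $D_H \to D_G$ is bijective from the \'etale condition, and finish with the arcs. The only minor difference is that the paper handles arcs by showing $\eth(H) \to \eth(G)$ is surjective (injectivity being implicit from \cite[Lemma~1.20]{HRY-mod1}) and then splicing $A = D \amalg \eth$, whereas you invoke \cref{mod1 lem 1.22} to get injectivity on all of $A_H$ directly and then argue surjectivity; both work and the underlying ideas are the same.
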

\begin{proof}
If $H$ is an edge, then $\eth(G)$ contains a pair $\{a,a^\dagger\} = \eth(f)$. 
Since $G$ is connected, it must be an edge as well.

Now suppose $V_H$ is inhabited.
The argument from the proof of \cite[Lemma 1.26]{HRY-mod1} shows that $V_H \to V_G$ is surjective, hence a bijection.
Since $f$ is \'etale, $D_H \to D_G$ is an isomorphism as well.
If there were an element $a\in \eth(G) \setminus \eth(f)$, then since $G$ is not an edge we would have $a^\dagger \in D_G$, hence $a^\dagger = f(d)$ for some (unique) $d\in D_H$.
But then $f(d^\dagger) = a \in \eth(G)$, and only elements of $\eth(H)$ can map to elements of $\eth(G)$, so $d^\dagger \in \eth(H)$. This contradicts the assumption on $a$.
It follows that $\eth(H) \hookrightarrow \eth(G)$ is also surjective, hence $A_H \to A_G$ is an isomorphism.
\end{proof}

\section{Unions of embeddings}\label{section unions}
In this section we consider a notion of unions of embeddings into a graph $G$.
Despite the fact that $\emb(G)$ is a poset, it is not particularly well-behaved, and our notion has little to do with any joins that happen to exist.
As an example, the embeddings $h$ and $k$ in \cref{fig no joins} do not have a least upper bound, since the domain of such would necessarily be disconnected.
Even when a least upper bound exists, it may be too big to be a union (see the paragraph following Proposition 2.2.8 of \cite{ChuHackney} for one example).
See also \cref{ex lub not preserved}.

\begin{definition}\label{def a union}
Suppose that $h\colon H \rightarrowtail G$ and $k\colon K \rightarrowtail G$ are two embeddings.
An embedding $\ell \colon L \rightarrowtail G$ is called \mydef{a union of $h$ and $k$} if 
\begin{enumerate}
\item $[\ell]$ is an upper bound for both $[h]$ and $[k]$ in the poset $\emb(G)$ (that is, there is a factorization $H \rightarrowtail L \rightarrowtail G$ of $h$ and likewise for $k$), and \label{def a union ub}
\item $\ell (V_L) = h(V_H) \cup k(V_K)$. \label{def a union cup}
\end{enumerate}
\end{definition}

Unions in this sense are frequently not unique, as we can see in \cref{fig no joins}. 
\begin{figure}[tb]
\labellist
\small\hair 2pt
 \pinlabel {$\downarrow h$} at 406 139
 \pinlabel {$\downarrow k$} at 574 139
 \pinlabel {$\rightarrow$} at 316 55
 \pinlabel {$\leftarrow$} at 665 55
 \pinlabel {$\ell_1$} [B] at 316 30
 \pinlabel {$\ell_2$} [B] at 665 30
 \pinlabel {$L_1$} [B] at 127 -15
 \pinlabel {$G$} [B] at 476 -15
 \pinlabel {$L_2$} [B] at 844 -15
\endlabellist
\centering
\includegraphics[width=\textwidth]{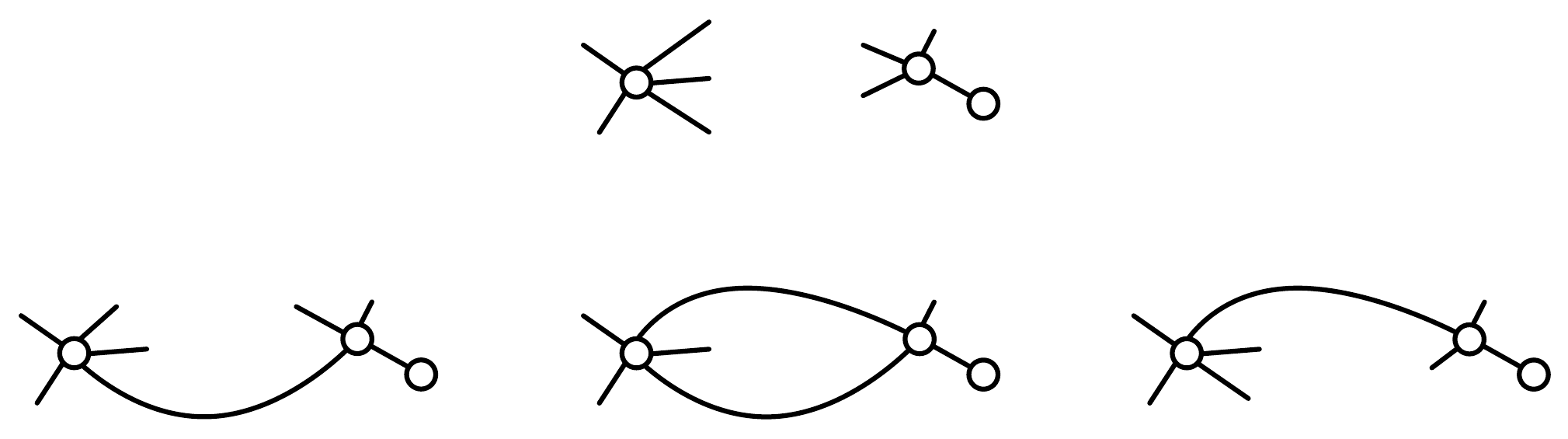}
\caption{Each of $\ell_1$, $\ell_2$, and $\id_G$  is a union for $h$ and $k$.}
\label{fig no joins}
\end{figure}

\begin{example}\label{example order}
If $[h] \leq [k]$ in the poset $\emb(G)$, then $k$ is a union of $h$ and $k$.
\end{example}

We now show that if two embeddings have any intersection at all, then they have at least one union.
\begin{proposition}\label{prop unions exist}
Suppose $h\colon H \rat G$ and $k\colon K \rat G$ are embeddings.
If either $h(A_H) \cap k(A_K)$ or $h(V_H) \cap k(V_K)$ is inhabited, then there exists at least one union $\ell \colon L \rat G$ of $h$ and $k$.
\end{proposition}
This proposition follows from \cref{lem: L is graph} and \cref{lem: ell is embedding} below.
It is useful if one wishes to directly construct the active-inert factorizations from \cref{prop U ofs} using only the description of graphical maps from \cref{def new graph map}.
Existence of unions isn't strictly necessary for the rest of this paper, and we recommend skipping ahead to \cref{sec new graph maps} on first reading.

Suppose we have two embeddings $h\colon H \rightarrowtail G$ and $k\colon K \rightarrowtail G$.
We can form the pullback 
\[ \begin{tikzcd}
J \rar \dar \ar[dr, phantom, "\lrcorner" very near start] & H \dar{h} \\
K \rar{k} & G
\end{tikzcd} \]
in the category $\finset^{\mathscr{I}}$.
The object $J$ will still be a graph, though in general it does not need to be connected.
In fact, since embeddings are not always monomorphisms, we may have that $J$ is not connected even when $h=k$.
See \cref{ex loop vertex} below.

\begin{lemma}\label{lem: L is graph}
Suppose $h$ and $k$ are embeddings.
Form an object $L\in \finset^{\mathscr{I}}$ by taking a pullback followed by a pushout.
\[ 
\begin{tikzcd}
J \rar \dar \ar[dr, phantom, "\lrcorner" very near start] & H \dar{h} \\
K \rar{k} & G
\end{tikzcd}
\qquad \rightsquigarrow \qquad
\begin{tikzcd}
J \rar \dar \ar[dr, phantom, "\ulcorner" very near end] & H \dar \\
K \rar & L 
\end{tikzcd} \]
The object $L$ is a graph.
\end{lemma}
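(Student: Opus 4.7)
The plan is to use the fact that limits and colimits in the presheaf category $\finset^{\mathscr{I}}$ are computed objectwise, which reduces verifying that $L$ is a graph to checking the two axioms of \cref{def jk graphs}: that the induced involution on $A_L$ is fixpoint-free, and that $D_L \to A_L$ is a monomorphism. The principal tool throughout is the canonical morphism $\ell \colon L \to G$ produced by the universal property of the pushout (the cocone comes from $h$ and $k$, which agree on $J$ by the pullback).

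For the involution condition, $\ell$ is $\dagger$-equivariant on arcs by construction, so any fixpoint in $A_L$ would produce a fixpoint in $A_G$; since $G$ is a graph, $A_G$ has no fixpoints, and neither does $A_L$.

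The monomorphism condition takes more care. The key input is that embeddings are injective on vertices, combined with the defining pullback square of an \'etale map. This immediately yields that the components $D_H \to D_G$ and $D_K \to D_G$ are injective, being pullbacks of $V_H \hookrightarrow V_G$ and $V_K \hookrightarrow V_G$ along $D_G \to V_G$. Since pullbacks in $\finset^{\mathscr{I}}$ are pointwise, $D_J = D_H \times_{D_G} D_K$; identifying $D_H$ and $D_K$ with their images in $D_G$ identifies $D_J$ with the intersection $D_H \cap D_K$. The pushout of finite sets along two monomorphisms is their union, so $D_L = D_H \cup D_K$ is realized as a subset of $D_G$, and in particular the induced map $D_L \to D_G$ is injective.

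To conclude, the composite $D_L \to A_L \to A_G$ factors as $D_L \hookrightarrow D_G \hookrightarrow A_G$, using that $G$ is a graph for the second inclusion, hence is injective. Since $D_L \to A_L$ is a factor of an injective map, it is itself injective, giving the monomorphism condition. The main obstacle is the bookkeeping at the $D$-level: one must carefully align the pullback square computing $J$ with the pullback clause of \'etale-ness in order to transport injectivity from vertices down to darts. Once this is in place, the identification of $D_L$ with $D_H \cup D_K \subseteq D_G$ falls out cleanly and the comparison with $A_L$ is immediate from functoriality.
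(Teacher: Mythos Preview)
Your proof is correct and follows essentially the same approach as the paper: both use the induced map $\ell \colon L \to G$ to rule out fixpoints, and both establish that $D_L \to D_G$ is injective (via the pushout-over-pullback description at the $D$-level) so that the composite $D_L \to A_L \to A_G$ is injective, forcing $D_L \to A_L$ to be a monomorphism. You supply slightly more detail than the paper in explaining why $D_H \to D_G$ and $D_K \to D_G$ are injective (invoking the \'etale pullback square and injectivity on vertices), but the argument is otherwise identical.
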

\begin{proof}
Consider the map $\ell$ induced by the pushout property:
\[ \begin{tikzcd}
J \rar \dar \ar[dr, phantom, "\ulcorner" very near end] & H \dar \ar[ddr,"h", bend left] \\
K \rar \ar[drr,"k", bend right] & L \ar[dr,dotted,"\ell"] \\ 
& & G
\end{tikzcd} \]
First note that the involution on $A_L$ does not have a fixed point: suppose, to the contrary, that $a\in A_L$ satisfies $a = a^\dagger$.
Then $\ell(a) = \ell(a^\dagger) = \ell(a)^\dagger$, implying that $\ell(a)$ is a fixed point of $A_G$, a contradiction.

It remains to show that $D_L \to A_L$ is injective.
But this map fits into the diagram
\[ \begin{tikzcd}
D_L  \dar{\cong} \rar & A_L \dar{\cong}  \\
D_H \uamalg{D_J} D_K \rar \dar & A_H \uamalg{A_J} A_K \dar \\
D_G \rar & A_G.
\end{tikzcd} \]
Since $D_H \to D_G$ and $D_K\to D_G$ are injective and $D_J$ is $D_H \times_{D_G} D_K$, the left vertical map is injective. 
The bottom map is injective because $G$ is a graph, hence the top map is injective as well.
\end{proof}

\begin{lemma}\label{lem: ell is embedding}
Consider the diagram
\[ \begin{tikzcd}
J \rar \dar \ar[dr, phantom, "\ulcorner" very near end] & H \dar{f} \ar[ddr,"h", bend left] \\
K \rar[swap]{g} \ar[drr,"k", bend right] & L \ar[dr,dotted,"\ell"] \\ 
& & G
\end{tikzcd} \]
from the previous lemma. 
If $J$ is inhabited, then $L$ is connected and $f,g,$ and $\ell$ are embeddings.
\end{lemma}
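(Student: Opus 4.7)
The plan is to verify all three conclusions by working componentwise, using that pushouts in $\finset^{\mathscr{I}}$ are computed pointwise on $V$, $D$, and $A$. The key simplification comes from the étale hypothesis: since $h$ and $k$ are embeddings, we have canonical identifications $D_H = D_G \times_{V_G} V_H$ and $D_K = D_G \times_{V_G} V_K$, and from the pullback $J = H \times_G K$ we likewise get $V_J = V_H \times_{V_G} V_K$ and $D_J = D_G \times_{V_G} V_J$.

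First I would show $\ell$ is an embedding. For the vertex injection, $V_L = V_H \uamalg{V_J} V_K$ is a pushout of monos along a mono in $\finset$, so it coincides with the union $V_H \cup V_K$ inside $V_G$; in particular $V_L \to V_G$ is injective. For the étale square, I need $D_L \cong D_G \times_{V_G} V_L$. The functor $D_G \times_{V_G} (-) \colon \finset/V_G \to \finset/D_G$ is a left adjoint, hence preserves the pushout $V_L = V_H \uamalg{V_J} V_K$; combined with the identifications above, this gives exactly $D_L = D_H \uamalg{D_J} D_K \cong D_G \times_{V_G} V_L$. The cases of $f$ and $g$ then follow formally: both are injective on vertices (the composites with $V_L \hookrightarrow V_G$ are), and the étale condition for $f$ is the calculation $D_L \times_{V_L} V_H = (D_G \times_{V_G} V_L) \times_{V_L} V_H = D_G \times_{V_G} V_H = D_H$, using étaleness of $\ell$ just established; the argument for $g$ is symmetric.

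Finally I turn to connectedness of $L$. Since $J$ is inhabited as a graph, either $V_J$ or $A_J$ is nonempty. In the first case a vertex of $J$ identifies a vertex of $H$ with one of $K$ in the pushout, so connectedness of $H$ and $K$ forces connectedness of $L$. In the second case an arc of $J$ identifies arcs of $H$ and $K$, and again the connectedness of the components propagates through this identification. The main thing to be careful about is the commutation of the pushout $V_L = V_H \uamalg{V_J} V_K$ with pullback along $D_G \to V_G$, but this is standard since $\finset$ is locally cartesian closed; once this is in hand, the rest is bookkeeping, and the connectedness argument is a quick case split.
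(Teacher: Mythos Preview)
Your proof is correct and follows essentially the same route as the paper. The only difference is cosmetic: for the \'etale condition on $\ell$, the paper verifies directly that $h(D_H)\cup k(D_K) \to h(V_H)\cup k(V_K)$ is the pullback of $D_G \to V_G$, whereas you package the same computation as ``pullback along $D_G \to V_G$ preserves pushouts''; both arguments then deduce \'etaleness of $f$ and $g$ by the pasting law, and the connectedness argument (pick an arc or vertex of $J$ and use that $H$, $K$ are connected to rule out a nontrivial coproduct splitting of $L$) is the same in both.
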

\begin{proof}
Suppose $J$ contains an arc or a vertex, classified by an \'etale map $E \to J$ from $E = \exedge$ or $E = \medstar_n$.
If $L$ splits as a coproduct $L = L_1 \amalg L_2$, then since $H$ and $K$ are connected, $f$ factors through exactly one $L_i$ and $g$ factors through exactly one $L_j$. 
Since both of these factor the composition $E \to J \to L$ and $E$ is connected, we have $i=j$ and hence the other term is empty. Thus $L$ is connected.

We next show that $\ell$ is an embedding.
We have $V_J \cong V_H \times_{V_G} V_K \cong h(V_H) \cap k(V_K)$, so $V_L = V_H \amalg_{V_J} V_K$ maps monomorphically to $h(V_H) \cup k(V_K) \subseteq V_G$.
Similarly, we have $D_L \cong h(D_H) \cup k(D_K)$.
As $h$ and $k$ are embeddings, the bottom square in the diagram
\[ \begin{tikzcd}
h(D_H)  \rar\dar  & h(V_H)  \dar \\
h(D_H) \cup k(D_K) \rar\dar  \ar[dr, phantom, "\lrcorner" very near start] & h(V_H) \cup k(V_K) \dar \\
D_G \rar & V_G 
\end{tikzcd} \]
is a pullback, hence $\ell$ is an embedding.
Using the pasting law for pullbacks and the fact that $\ell$ and $h$ are embeddings, the top square is also a pullback, hence $f$ is an embedding.
A similar proof shows $g$ is an embedding.
\end{proof}

The construction from \cref{lem: L is graph} produces a union from embeddings $h$ and $k$ which intersect, but it may not be the one you'd expect.
For example, even when $h=k$ it may be that $[\ell]$ is strictly larger than $[h]$.

\begin{example}\label{ex loop vertex}
Suppose that $h=k \colon H=K=\medstar_2 \rat G$ picks out the vertex in the loop with one vertex (see \cref{fig loop one vertex}).
One can compute that the pullback $J$ has
\[
	A_J = \{ (1,1), (1^\dagger,1^\dagger), (2,2), (2^\dagger, 2^\dagger), (1,2^\dagger), (1^\dagger,2), (2^\dagger,1), (2,1^\dagger) \}
\]
while $D_J = \{1,2\}$ and $V_J = \{ \bullet \}$.
It follows that $\ell \colon L \rightarrowtail G$ is an isomorphism, hence $[h] < [\id_G] = [\ell]$.
\end{example}

\begin{figure}[tb]
\includegraphics[scale=0.6]{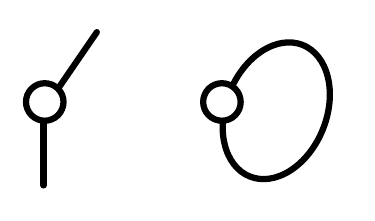}
\caption{The embedding $h=k \colon \medstar_2 \rat G$ from \cref{ex loop vertex}}
\label{fig loop one vertex}
\end{figure}

\section{A new description of graphical maps}\label{sec new graph maps}

In this section we give an alternative definition of graphical map as a pair of functions with certain properties, which makes composition transparent.
These new maps encode the same data as those from \cref{def graphical map}, but encode it in a different way.
After presenting the definition, we spend the remainder of the section showing how to go back and forth between the two descriptions, and finally give the equivalence in \cref{old new equivalence}.

\begin{definition}\label{def vertex disjoint}
Two embeddings $h \colon H \rightarrowtail G$ and $k \colon K \rightarrowtail G$ are said to be \mydef{vertex disjoint} if $h(V_H) \cap k(V_K) \subseteq V_G$ is empty.
In this case we also say the pair $[h],[k]\in \emb(G)$ is vertex disjoint, and we note that this property does not depend on a choice of representatives.
\end{definition}

\begin{definition}\label{def new graph map}
Let $G$ and $G'$ be connected graphs.
A \mydef{new graph map} $\varphi \colon G \to G'$ is a pair $(\varphi_0, \hat \varphi)$ consisting of an involutive function $\varphi_0 \colon A_G \to A_{G'}$ and a function $\hat \varphi \colon \emb(G) \to \emb(G')$, so that:
\begin{enumerate}[label=(\roman*),ref=\roman*]
\item The function $\hat \varphi$ sends edges to edges. \label{new graph def edges}
\item The function $\hat \varphi$ preserves unions: 
if $[\ell]$ is a union of $[h]$ and $[k]$ in $\emb(G)$, then $\hat \varphi[\ell]$ is a union of $\hat \varphi[h]$ and $\hat \varphi[k]$ in $\emb(G')$. \label{new graph def union}
\item The function $\hat \varphi$ takes vertex disjoint pairs to vertex disjoint pairs. \label{new graph def intersect}
\item The diagram
\[ \begin{tikzcd}
\emb(G) \rar{\eth} \dar{\hat \varphi}& \mathbb{N}A_G \dar{\mathbb{N} \varphi_0 }\\
\emb(G') \rar{\eth} & \mathbb{N}A_{G'}
\end{tikzcd} \]
commutes. \label{new graph def boundary}
\end{enumerate}
Composition of new graph maps is given by composition of the two constituent functions.
\end{definition}

\begin{remark}\label{rem graph def boundary}
Condition \eqref{new graph def boundary} is equivalent to insisting that, for each $[h] \in \emb(G)$, there is a (necessarily unique) bijection as displayed to the left:
\[ \begin{tikzcd}
\eth([h]) \rar[hook] \dar[dashed, "\cong"] & A_G \dar{\varphi_0} \\
\eth(\hat\varphi [h]) \rar[hook] & A_{G'}
\end{tikzcd} \]	
\end{remark}

By \cref{example order}, condition \eqref{new graph def union} in particular implies that $\hat \varphi \colon \emb(G) \to \emb(G')$ preserves the partial order.
Notice that condition \eqref{new graph def edges} is nearly the same as asking that $\hat \varphi$ preserves minimal elements: indeed, for almost all connected graphs $G$, the set of edges coincides with the set of minimal elements of $\emb(G)$.
The lone exception is the case when $G = \medstar_0$ is an isolated vertex, which has no edges -- $\emb(\medstar_0)$ has a unique element, and \cref{lem boundary inclusion} implies that this element maps to the \emph{maximal} element of $\emb(G')$.

Our aim in this section is to show that new graph maps are equivalent to graphical maps from \cref{def graphical map}.
This work culminates in \cref{old new equivalence}, which shows that the following two constructions are inverses.

\begin{construction}\label{cns new to old}
Given a new graph map $\varphi \colon G \to G'$, we obtain the composite
\[
	\varphi_1 \colon V_G  \hookrightarrow \emb(G) \xrightarrow{\hat \varphi} \emb(G'). 
\]
We write $\newold$ for the assignment that takes a new graph map $\varphi = (\varphi_0, \hat \varphi)$ and produces the pair $\newold \varphi \coloneqq (\varphi_0, \varphi_1)$.
\end{construction}

\begin{construction}\label{cns old to new}
Suppose that $\varphi = (\varphi_0, \varphi_1) \colon G \to G'$ is a graphical map.
Define a function
\[
	\hat \varphi \colon \emb(G) \to \emb(G')
\]
by declaring that $\hat \varphi [f] \coloneqq [g]$, where $g$ sits inside the active-inert factorization of $\varphi \circ f$ from \cref{prop U ofs}:
\[ \begin{tikzcd}
\bullet \dar[tail,"f"] \rar[-act] & \bullet \dar[tail,"g"] \\
G \rar{\varphi} & G'.
\end{tikzcd} \]
As we are working with an orthogonal factorization system, the element $[g]$ does not depend on the choice of representative for $[f]$ or the chosen factorization.
We write $\oldnew$ for the assignment that takes a graphical map $\varphi = (\varphi_0, \varphi_1)$ and produces the pair $\oldnew \varphi \coloneqq (\varphi_0, \hat \varphi)$.
\end{construction}

With this construction in hand, we give an example to show that graphical maps do not preserve least upper bounds of embeddings. 

\begin{example}\label{ex lub not preserved}
Consider the graphs $G$ (on the left) and $G'$ (on the right) of \cref{fig: first counterexample}, and let $\varphi \colon G \to G'$ be the embedding.
\begin{figure}[htb]
\labellist
\small\hair 2pt
 \pinlabel {$v$} at 3 94
 \pinlabel {$w$} at 3 49
 \pinlabel {$v$} at 160 56
 \pinlabel {$w$} at 260 56
 \pinlabel {$e_0$} at 50 109
 \pinlabel {$e_0$} at 137 87
 \pinlabel {$e_1$} at 34 72
 \pinlabel {$e_1$} at 208 118
\endlabellist
\centering
\includegraphics[scale=0.6]{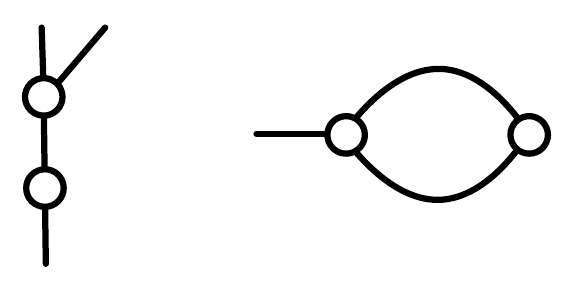}
\caption{Graphs $G$ and $G'$}
\label{fig: first counterexample}
\end{figure}
Now $\iota_v \colon \medstar_v \rightarrowtail G$ and $\iota_w \colon \medstar_w \rightarrowtail G$ have a least upper bound, namely $\id_G$.
But 
\begin{align*}
	\hat\varphi [\iota_v] &= [\iota_v \colon \medstar_v \to G'] \\
	\hat\varphi [\iota_w] &= [\iota_w \colon \medstar_w \to G'] \\
	\hat\varphi [\id_G] &= [\varphi \colon G \to G'].
\end{align*}
The element $\hat\varphi [\id_G] = [\varphi]$ is not the least upper bound for the two star inclusions in $G'$.
Indeed, by snipping the edge $e_1$ in $G'$ we obtain another embedding which factors the two star inclusions, but which does not factor $\varphi$. 
(As in \cref{fig no joins}, the least upper bound does not exist.)
\end{example}

\subsection{From new graph maps to graphical maps}\label{subsec new to old}
Our aim in this section is to show that $\newold$ takes new graph maps to graphical maps.

\begin{theorem}\label{thm new graph to graphical}
If $\varphi = (\varphi_0, \hat \varphi)$ is a new graph map, then $\newold\varphi = (\varphi_0,\varphi_1)$ is a graphical map in the sense of \cref{def graphical map}.
\end{theorem}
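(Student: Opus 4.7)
The plan is to show that $\newold\varphi = (\varphi_0, \varphi_1)$, with $\varphi_1(v) \coloneqq \hat\varphi[\iota_v]$ per \cref{cns new to old}, satisfies the three conditions of \cref{def graphical map}. Conditions \eqref{old graph def boundary} and \eqref{old graph def vertices} are nearly direct translations, from \cref{rem graph def boundary} and clause \eqref{new graph def intersect} of \cref{def new graph map} respectively. The main obstacle is \eqref{old graph def collapse} --- the ``no collapse'' clause --- since it is a global statement about all of $V_G$, not one vertex at a time, and appears to require one to recognize $[\id_G]$ itself as an iterated union of the $[\iota_v]$'s.

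For \eqref{old graph def boundary}, \cref{rem graph def boundary} applied to $[\iota_v]$ produces a bijection $\eth([\iota_v]) \cong \eth(\varphi_1(v))$ compatible with $\varphi_0$; precomposing with the canonical $\dagger$-bijection $\nbhd(v) \cong \eth([\iota_v])$ gives the dashed arrow required by \cref{def graphical map}. For \eqref{old graph def vertices}, distinct $v \neq w$ produce vertex disjoint $[\iota_v], [\iota_w]$ (their vertex images are the singletons $\{v\}, \{w\}$), so clause \eqref{new graph def intersect} forces $\varphi_1(v)$ and $\varphi_1(w)$ to be vertex disjoint in $G'$; summing then gives $\sum_v \varsigma(\varphi_1(v)) \in \wp(V_{G'})$, which is exactly what \eqref{old graph def vertices} asserts.

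Turning to \eqref{old graph def collapse}, assume $\eth(G) = \varnothing$ and suppose for contradiction that every $\varphi_1(v)$ is an edge. \Cref{rem graph def boundary} applied to $[\id_G]$ gives $\eth(\hat\varphi[\id_G]) = \varphi_0(\varnothing) = \varnothing$, so by \cref{lem boundary inclusion} we have $\hat\varphi[\id_G] = [\id_{G'}]$; the same remark also rules out $G' = \exedge$, since the unique element of $\emb(\exedge)$ has boundary of size two. Thus $V_{G'}$ is inhabited. The plan is now to exhibit $[\id_G]$ as an iterated union of $[\iota_{v_i}]$'s, whereupon iterated application of clause \eqref{new graph def union} of \cref{def new graph map} gives $V_{G'} = \varsigma(\hat\varphi[\id_G]) = \bigcup_i \varsigma(\varphi_1(v_i)) = \varnothing$ under our assumption, a contradiction.

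To build the iterated union, fix a breadth-first enumeration $V_G = \{v_1,\dots,v_n\}$ rooted at $v_1$, so that each $v_i$ with $i\geq 2$ is adjacent in $G$ to an earlier $v_j$. Set $L_1 = \medstar_{v_1}$ and $\ell_1 = \iota_{v_1}$, and inductively form $L_i$ from $\ell_{i-1}$ and $\iota_{v_i}$ via the pushout-along-pullback of \cref{lem: L is graph,lem: ell is embedding}; the adjacency condition furnishes an arc in the pullback, so \cref{lem: ell is embedding} applies and $\ell_i \colon L_i \rat G$ is an embedding with $[L_i]$ a union of $[L_{i-1}]$ and $[\iota_{v_i}]$ in the sense of \cref{def a union}. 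After $n$ steps $\varsigma([L_n]) = V_G$. It need not be the case that $[L_n] = [\id_G]$ (compare \cref{ex loop vertex}), but this is not required: $[\id_G]$ is an upper bound of $[L_n]$ and $[\iota_{v_1}]$ satisfying $V_G = \varsigma([L_n]) \cup \{v_1\}$, so by the flexibility in \cref{def a union} (cf.\ \cref{fig no joins}) $[\id_G]$ is itself a valid union of $[L_n]$ and $[\iota_{v_1}]$. Iterated application of clause \eqref{new graph def union} to these $n$ unions then yields the desired contradiction.
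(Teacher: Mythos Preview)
Your proof is correct, but it proceeds differently from the paper's. The paper establishes a single auxiliary result, \cref{prop sum splitting new graph}, asserting that $\varsigma \hat\varphi[\ell] = \sum_{v \in V_L} \varsigma \hat\varphi[\ell\iota_v]$ for every embedding $\ell \colon L \rat G$, and then applies it with $\ell = \id_G$ to handle both \eqref{old graph def vertices} and \eqref{old graph def collapse} in one stroke. That proposition is proved by a top-down induction (\cref{lem find deletable vertex}): given $L$ with an internal edge, peel off one vertex so that the remainder is still connected, and recurse.

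Your argument is more hands-on in two places. For \eqref{old graph def vertices} you bypass \cref{prop sum splitting new graph} entirely and observe that pairwise vertex disjointness of the $\varphi_1(v)$, coming directly from clause \eqref{new graph def intersect}, already forces $\sum_v \varsigma(\varphi_1(v))$ to lie in $\wp(V_{G'})$; this is genuinely simpler than what the paper does. For \eqref{old graph def collapse} you run essentially the dual induction: build up from a single star by iteratively adjoining vertices along the pushout-of-pullback construction of \cref{lem: L is graph,lem: ell is embedding}, then close the gap between $[L_n]$ and $[\id_G]$ by noting that $[\id_G]$ is itself a union of $[L_n]$ and any $[\iota_{v_1}]$. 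This is correct and self-contained.

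The trade-off is that the paper's \cref{prop sum splitting new graph} is reusable --- it is invoked again in the proof of \cref{old new equivalence} to show $\oldnew\newold = \id$ --- whereas your bottom-up union does the job here but is not packaged for later use.
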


The following statement involves the vertex sum function $\varsigma \colon \emb(G) \to \mathbb{N}V_G$ from \cref{def vertex sum}.

\begin{lemma}\label{lem vertex sums}
Given a diagram of embeddings, 
\[ \begin{tikzcd}
H \rar[tail,"f"]  \ar[dr,"h"', tail] & L \dar["\ell", tail] &
K \ar[dl,"k", tail] \lar[tail,"g"'] \\
& G
\end{tikzcd} \]
we have
\begin{equation}
\varsigma[\ell] \geq \varsigma[h] + \varsigma[k] - \sum_{v\in f(V_H) \cap g(V_K)} \ell(v)
\end{equation}
in $\mathbb{N}V_G$. 
Equality holds if and only if $\ell$ is a union of $h$ and $k$.
If $h$ and $k$ are vertex disjoint and $\ell$ is a union of $h$ and $k$, then $\varsigma[\ell] = \varsigma[h] + \varsigma[k]$.
\end{lemma}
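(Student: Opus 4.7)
The plan is to reduce everything to a multiset identity using the fact that embeddings are injective on vertices, so that each $\varsigma[-]$ can be treated as an element of $\wp(V_G) \subseteq \mathbb{N}V_G$ rather than a genuine multiset. Once that is observed, the two statements become combinatorial statements about unions and intersections of finite subsets of $V_G$.

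First I would unpack the three quantities. Because $\ell$, $h = \ell f$, $k = \ell g$ are all embeddings (and $f$, $g$ are embeddings as remarked in the setup), each of $\varsigma[\ell]$, $\varsigma[h]$, $\varsigma[k]$ equals, respectively, $\ell(V_L)$, $\ell(f(V_H))$, and $\ell(g(V_K))$, viewed as subsets of $V_G$ and hence as repetition-free elements of $\mathbb{N}V_G$. Since $\ell$ is injective on $V_L$, the intersection $h(V_H) \cap k(V_K)$ inside $V_G$ is exactly $\ell(f(V_H) \cap g(V_K))$, and the sum $\sum_{v\in f(V_H)\cap g(V_K)} \ell(v)$ is just the subset $\ell(f(V_H) \cap g(V_K)) = h(V_H)\cap k(V_K)$ itself.

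Next I would compute in $\mathbb{N}V_G$: adding the indicator functions of $h(V_H)$ and $k(V_K)$ and subtracting the indicator function of their intersection gives the indicator function of $h(V_H) \cup k(V_K)$ (inclusion–exclusion for two sets). Thus the right-hand side of the displayed inequality equals $h(V_H)\cup k(V_K) = \ell(f(V_H) \cup g(V_K))$, which sits inside $\ell(V_L) = \varsigma[\ell]$ since $f(V_H), g(V_K) \subseteq V_L$. This establishes the inequality.

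For the equality clause, the above shows that equality in $\mathbb{N}V_G$ is equivalent to $\ell(f(V_H)\cup g(V_K)) = \ell(V_L)$, and since $\ell$ is injective on vertices this is equivalent to $f(V_H) \cup g(V_K) = V_L$, which rephrases as $h(V_H) \cup k(V_K) = \ell(V_L)$ — i.e.\ condition \eqref{def a union cup} of \cref{def a union}. Since the hypothesis of the lemma already supplies the factorizations that witness condition \eqref{def a union ub}, this is exactly the statement that $\ell$ is a union of $h$ and $k$. Finally, if $h$ and $k$ are vertex disjoint then $h(V_H)\cap k(V_K) = \varnothing$, so the subtracted term vanishes and the equality case reads $\varsigma[\ell] = \varsigma[h] + \varsigma[k]$. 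There is no real obstacle here beyond carefully identifying $\varsigma[-]$ with its image as a subset; the content is just two-set inclusion–exclusion lifted through the injective map $\ell$.
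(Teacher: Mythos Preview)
Your proof is correct and follows essentially the same approach as the paper: both arguments use that embeddings are injective on vertices to rewrite $\varsigma[h] + \varsigma[k]$ via inclusion--exclusion as the sum over $f(V_H)\cup g(V_K)$ plus the sum over $f(V_H)\cap g(V_K)$, then compare the union term to $\varsigma[\ell]$. The paper presents this as a direct chain of equalities and an inequality on sums, while you phrase it in terms of indicator functions and subsets, but the content is identical.
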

\begin{proof}
We compute, using that embeddings are injective on vertices:
\begin{align*}
\varsigma[h] + \varsigma[k] &= \sum_{u\in V_H} h(u) + \sum_{w\in V_K} k(w) = \sum_{u'\in f(V_H)} \ell(u') + \sum_{w'\in g(V_K)} \ell(w') \\
&= \sum_{v\in f(V_H) \cup g(V_K)} \ell(v) + \sum_{v\in f(V_H) \cap g(V_K)} \ell(v)  \leq \varsigma[\ell] + \sum_{v\in f(V_H) \cap g(V_K)} \ell(v)
\end{align*}
The last inequality is an equality if and only if $f(V_H) \cup g(V_K) = V_L$, that is, if and only if $\ell$ is a union of $h$ and $k$.
If $h$ and $k$ are vertex disjoint then $f(V_H) \cap g(V_K)$ is empty, so the final statement holds.
\end{proof}

\begin{lemma}\label{lem find deletable vertex}
Given an embedding $\ell \colon L \rightarrowtail G$ where $L$ has an internal edge, there exists a vertex $v \in L$ and an embedding $g \colon K \rightarrowtail L$ with $g(V_K) = V_L \setminus \{v\}$.
In particular, $\ell$ is a union of $h = \ell \iota_v \colon \medstar_v \rightarrowtail G$ and $k = \ell g \colon K \rightarrowtail G$.
\end{lemma}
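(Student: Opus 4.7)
The plan is to split on $|V_L|$. Since $L$ has an internal edge, either $|V_L| = 1$ with at least one loop at the unique vertex, or $|V_L| \geq 2$. In the first case I would let $v$ be the unique vertex, choose any loop $e = [a,a^\dagger]$ at $v$, and take $g$ to be the canonical embedding $\exedge_e \rightarrowtail L$ from \cref{example embeddings}; then $V_K = \varnothing = V_L \setminus \{v\}$ as required.

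The main work lies in the case $|V_L| \geq 2$. I would form the multigraph on vertex set $V_L$ whose edges are the non-loop internal edges of $L$; connectedness of $L$ (together with the fact that boundary edges do not join vertices) forces this multigraph to be connected. Choose a spanning tree $T$ and let $v$ be a leaf of $T$, which exists because $T$ has at least $|V_L| - 1 \geq 1$ edges. I then define $K$ as the combinatorial deletion of $v$ from $L$ by
\[
V_K \coloneqq V_L \setminus \{v\}, \quad D_K \coloneqq D_L \setminus \nbhd(v), \quad A_K \coloneqq A_L \setminus \{a \in \nbhd(v) : a^\dagger \in \nbhd(v)\},
\]
with involution inherited from $L$. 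The removed subset is $\dagger$-closed and disjoint from $D_K$, so $A_K$ inherits a fixpoint-free involution containing $D_K$; combined with the obvious identification $D_L \times_{V_L} V_K \cong D_K$, this makes the inclusion $K \hookrightarrow L$ an \'etale map that is injective on vertices. Finally, $K$ is connected because $T$ with the leaf $v$ deleted is still a spanning tree of $V_K$, so $g \colon K \rightarrowtail L$ is the desired embedding.

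For the final clause, the factorizations $\medstar_v \xrightarrow{\iota_v} L \xrightarrow{\ell} G$ and $K \xrightarrow{g} L \xrightarrow{\ell} G$ exhibit $[\ell]$ as an upper bound for both $[h]$ and $[k]$ in $\emb(G)$, while injectivity of $\ell$ on vertices gives
\[
h(V_{\medstar_v}) \cup k(V_K) = \{\ell(v)\} \cup \ell(V_L \setminus \{v\}) = \ell(V_L),
\]
verifying condition \eqref{def a union cup} of \cref{def a union}.

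The main obstacle I expect is the bookkeeping around $A_K$ in the second case: one must check, by a short case analysis over loop arcs at $v$, non-loop darts at $v$ and their partners, boundary arcs of $L$, and arcs disjoint from $v$, that the removed subset is $\dagger$-closed and that the resulting inclusion is genuinely \'etale. Everything else reduces to routine verification.
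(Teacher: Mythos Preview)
Your single-vertex case and your spanning-tree idea for selecting $v$ are both fine, and the final clause is correctly handled. The genuine gap is in your definition of $A_K$ in the multi-vertex case.

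You set $A_K = A_L \setminus \{a \in \nbhd(v) : a^\dagger \in \nbhd(v)\}$, i.e.\ you remove only the loop arcs at $v$. But $v$ may have \emph{boundary edges} (legs): arcs $d \in \nbhd(v)$ with $d^\dagger \in \eth(L)$. For such a $d$, neither $d$ nor $d^\dagger$ lies in your removed set, so both remain in $A_K$; yet $d \notin D_K$ (you removed all of $\nbhd(v)$ from $D_K$) and $d^\dagger \notin D_K$ (it was never in $D_L$). Hence $\{d,d^\dagger\} \subseteq \eth(K)$ and the edge $[d,d^\dagger]$ is an isolated edge component of $K$. Your spanning-tree argument only shows the vertex-supported part of $K$ is connected; it cannot detect these stray edge components. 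Since embeddings require connected domain, $g$ is not an embedding in this situation. Concretely, take $L$ to be two vertices joined by a single internal edge, each carrying one leg: both vertices are leaves of the spanning tree, and whichever you delete produces a $K$ consisting of a star together with a disjoint edge.

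The fix is exactly what the paper does: also remove from $A_K$ the arcs of legs at $v$, i.e.\ the set $S \cup S^\dagger$ where $S = \nbhd(v)^\dagger \cap (\eth(L) \cup \nbhd(v))$. One must then verify separately that $K$ has no edge components, which the paper does by checking $\eth(K)^\dagger \subseteq D_K$. Your acknowledgment that the ``bookkeeping around $A_K$'' needs care was well-placed; the specific obstacle is not the $\dagger$-closure or \'etaleness you flagged, but connectedness of $K$.
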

\begin{proof}
If $L$ has only a single vertex $v$, then since $L$ contains an edge, we can form
\[ \begin{tikzcd}
\medstar_v \rar[tail,"\iota_v"]  \ar[dr,"h"', tail] & L \dar["\ell", tail] &
K = \exedge \ar[dl,"k", tail] \lar[tail,"g"'] \\
& G
\end{tikzcd} \]
where $g$ classifies some arc of $L$. The conclusion follows.

Now suppose that $L$ has two or more vertices.
Consider the classical graph $\mathrm{core}(L)$ obtained by deleting the $\dagger$-orbit of $\eth(L)$ (Definition 1.9 of \cite{HRY-mod1}).
As $\mathrm{core}(L)$ is connected and has more than one vertex, there exists a vertex $v$ so that deleting $v$ and all edges incident to it from $\mathrm{core}(L)$ will still give a connected graph. 
Setting
\[
S = \nbhd(v)^\dagger \cap (\eth(L)\cup \nbhd(v)),
\] we have $S^\dagger \subsetneq \nbhd(v)$; also notice that $S \cap S^\dagger = \nbhd(v) \cap \nbhd(v)^\dagger$.
Define a graph $K$ by
\begin{align*}
V_K &= V_L \setminus \{ v\} \\
D_K &= D_L \setminus \nbhd(v) \\
A_K &= A_L \setminus (S \cup S^\dagger).
\end{align*}
We compute the boundary of $K$ as
\[
	\eth(K) = (\eth(L) \setminus S) \cup (\nbhd(v) \setminus S^\dagger),
\]
and use this to show that $\eth(K)^\dagger \subseteq D_K$.
If $a\in \eth(L)\setminus S$, then $a^\dagger \notin \nbhd(v)$, hence $a^\dagger \in D_K$.
If $a \in \nbhd(v) \setminus S^\dagger$, then we can't have $a^\dagger \in \eth(L)$ for then $a^\dagger \in \nbhd(v)^\dagger \cap \eth(L) \subseteq S$, and we also can't have $a^\dagger \in \nbhd(v)$ since then $a \in \nbhd(v)\cap\nbhd(v)^\dagger \subseteq S^\dagger$; it follows that $a^\dagger\in D_K$.
We have thus shown that $K$ does not contain any edge components. As $\mathrm{core}(K) = \mathrm{core}(L) - v$ is connected, we conclude that $K$ is connected.
The inclusions of subsets give our desired embedding $g\colon K \rightarrowtail L$.
\end{proof}

\begin{proposition}\label{prop sum splitting new graph}
Given an embedding $\ell \colon L \rat G$ and a new graph map $\varphi \colon G \to G'$, we have
\[
	\varsigma \hat \varphi [\ell] = \sum_{v\in V_L} \varsigma \hat \varphi [\ell \iota_v].
\]
\end{proposition}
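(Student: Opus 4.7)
The plan is to induct on $|V_L|$, using \cref{lem find deletable vertex} to peel off one vertex at a time and converting the resulting union decomposition of $\ell$ into one of $\hat\varphi[\ell]$ via property \eqref{new graph def union}.

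For the base case $|V_L| = 0$, the graph $L$ must be the edge $\exedge$, so $\hat\varphi[\ell]$ is again an edge by condition \eqref{new graph def edges}; hence $\varsigma\hat\varphi[\ell] = 0$, matching the empty sum on the right.

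For the inductive step, assume $|V_L| \geq 1$. If $L$ has no internal edges, then since $L$ is connected it must be a corolla $\medstar_n$ with a single vertex $v$ and no loops, so $\iota_v$ is an isomorphism and the right-hand side is the single term $\varsigma\hat\varphi[\ell\iota_v] = \varsigma\hat\varphi[\ell]$. Otherwise $L$ has an internal edge (note this covers the case where $L$ has one vertex with a loop), and \cref{lem find deletable vertex} supplies a vertex $v \in V_L$ and an embedding $g \colon K \rat L$ with $g(V_K) = V_L \setminus \{v\}$ such that $\ell$ is a union of $\ell \iota_v$ and $\ell g$. Since $\ell$ is injective on vertices, the images of $\ell\iota_v$ and $\ell g$ in $V_G$ are the disjoint sets $\{\ell(v)\}$ and $\ell(V_L) \setminus \{\ell(v)\}$, so these two embeddings form a vertex-disjoint pair. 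Conditions \eqref{new graph def union} and \eqref{new graph def intersect} then imply that $\hat\varphi[\ell]$ is a union of the vertex-disjoint pair $\hat\varphi[\ell\iota_v]$, $\hat\varphi[\ell g]$ in $\emb(G')$, so by \cref{lem vertex sums}
\[ \varsigma\hat\varphi[\ell] \;=\; \varsigma\hat\varphi[\ell\iota_v] + \varsigma\hat\varphi[\ell g]. \]

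Finally, the inductive hypothesis applies to $\ell g \colon K \rat G$ since $|V_K| = |V_L| - 1$, giving $\varsigma\hat\varphi[\ell g] = \sum_{w \in V_K} \varsigma\hat\varphi[\ell g \iota_w]$. Reindexing along the bijection $g \colon V_K \xrightarrow{\cong} V_L \setminus \{v\}$ and using that $[g \iota_w] = [\iota_{g(w)}]$ in $\emb(L)$ (both represent embeddings of a star with image vertex $g(w)$, so they are isomorphic over $L$ by \cref{lem embed iso}) combines with the $v$-term to yield the desired formula. The main point to check carefully will be this last identity $[g\iota_w] = [\iota_{g(w)}]$, which ensures the reindexing matches the target sum; everything else is a routine combination of the union-preservation and vertex-disjoint-preservation conditions with the additivity of $\varsigma$.
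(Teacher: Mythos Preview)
Your proof is correct and follows essentially the same approach as the paper: induct on the number of vertices, handle the edge and star cases separately, and for graphs with an internal edge apply \cref{lem find deletable vertex} together with conditions \eqref{new graph def union}, \eqref{new graph def intersect}, and \cref{lem vertex sums}. One minor point: the identity $[g\iota_w] = [\iota_{g(w)}]$ does not follow from \cref{lem embed iso} (which concerns embeddings between isomorphic graphs), but rather from the fact that any two star embeddings into $L$ hitting the same vertex represent the same class in $\emb(L)$---see \cref{example embeddings}, or use \cref{mod1 prop 1.25} since both have the same $\eth$ and neither is an edge.
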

\begin{proof}
If $L$ is an edge, so is $\hat \varphi[\ell]$ by  \cref{def new graph map}\eqref{new graph def edges}, hence $\varsigma \hat \varphi[\ell] = 0$ as desired.
If $L$ is a star, then the formula is immediate. 
We have thus established the formula whenever $L$ does not contain an internal edge.

For the general result, we induct on the number of vertices of $L$, which we assume to have an internal edge (in other words, we are investigating what happens when $L$ is not an edge and not a star).
Applying \cref{lem find deletable vertex}, we obtain a diagram
\[ \begin{tikzcd}
H = \medstar_v \rar[tail,"\iota_v"]  \ar[dr,"h"', tail] & L \dar["\ell", tail] &
K \ar[dl,"k", tail] \lar[tail,"g"'] \\
& G
\end{tikzcd} \]
with $\ell$ a union of $h$ and $k$, and $h, k$ vertex disjoint.
Applying $\hat \varphi$, we obtain embeddings
\[ \begin{tikzcd}
H' \rar[tail]  \ar[dr,"\hat h"', tail] & L' \dar["\hat \ell", tail] &
K' \ar[dl,"\hat k", tail] \lar[tail] \\
& G
\end{tikzcd} \]
with $\hat \ell$ a union of $\hat h$ and $\hat k$ (by \cref{def new graph map}\eqref{new graph def union}) and $\hat h, \hat k$ vertex disjoint (by \cref{def new graph map}\eqref{new graph def intersect}), hence 
\[ \varsigma \hat \varphi [\ell] = \varsigma [\hat \ell] = \varsigma [\hat k] + \varsigma [\hat h] = \varsigma \hat \varphi [k] + \varsigma \hat \varphi [\ell \iota_v] \]
by \cref{lem vertex sums}.
As $K$ has one less vertex than $L$, by the induction hypothesis we have the first equality below:
\[
	\varsigma \hat \varphi [k] = \sum_{w\in V_K} \varsigma \hat \varphi [k \iota_w] = \sum_{w\in V_K} \varsigma \hat \varphi [\ell g \iota_w] = \sum_{u\in V_L \setminus \{ v\} } \varsigma \hat \varphi [\ell \iota_u].
\]
The result follows by combining the previous two displays.
\end{proof}

\begin{proof}[Proof of \cref{thm new graph to graphical}]
Suppose $(\varphi_0, \hat \varphi) \colon G \to G'$ is a new graph map.
We verify each of the three conditions from \cref{def graphical map}.
\begin{enumerate}[label=(\roman*),ref=\roman*]
	\item Applying \cref{prop sum splitting new graph} to the identity embedding $\id_G \colon G \rat G$, we find
	\[
		\sum_{v\in V_G} \varsigma(\varphi_1(v)) = \sum_{v\in V_G} \varsigma \hat \varphi[\iota_v] = \varsigma \hat \varphi[\id_G] \leq \varsigma [\id_{G'}] = \sum_{w\in V_{G'}} w.
	\]
	\item This is the special case of the interpretation of \cref{def new graph map}\eqref{new graph def boundary}
	from \cref{rem graph def boundary} where we take $h = \iota_v$:

	\[ \begin{tikzcd}
\nbhd(v) \rar["\dagger", "\cong" swap] & \eth([\iota_v]) \rar[hook] \dar[dashed, "\cong"] & A_G \dar{\varphi_0} \\
\eth(\varphi_1(v)) \rar{=} & \eth(\hat\varphi [\iota_v]) \rar[hook] & A_{G'}
\end{tikzcd} \]	
\item Suppose the boundary of $G$ is empty, and consider an embedding $h \colon H \rat G'$ which represents $\hat \varphi [\id_G]$.
By \cref{def new graph map}\eqref{new graph def boundary} we have that $\eth(H)$ is empty, hence $h$ is an isomorphism by \cref{lem boundary inclusion}.
By \cref{prop sum splitting new graph} we have
\[
	\sum_{v\in V_G} \varsigma \varphi_1(v) = \varsigma \hat \varphi [\id_G] = \varsigma [h] = \varsigma [\id_{G'}] = \sum_{w\in V_{G'}} w.
\]
Since $\eth(G')$ is empty, $G'$ contains at least one vertex, implying that there is at least one $v\in V_G$ with $\varsigma \varphi_1(v) \neq 0$. \qedhere
\end{enumerate}
\end{proof}

\subsection{From graphical maps to new graph maps}\label{subsec old to new}
Recall \cref{cns old to new} which takes a graphical map $\varphi = (\varphi_0, \varphi_1) \colon G \to G'$ to the pair $\oldnew \varphi = (\varphi_0, \hat \varphi)$.
This function $\hat \varphi$ is defined so that $\hat \varphi [f] = [g]$ where $g$ sits inside an active-inert factorization of $\varphi f$:
\[ \begin{tikzcd}
\bullet \dar[tail,"f"] \rar[-act] & \bullet \dar[tail,"g"] \\
G \rar{\varphi} & G'.
\end{tikzcd} \]
We will show that $\oldnew \varphi$ is a new graph map in \cref{thm oldnew graph map}, but first we first establish functoriality of the construction.
After establishing that $\oldnew \varphi$ is a new graph map, we will show in \cref{old new equivalence} that $\oldnew$ and $\newold$ are inverses.

\begin{lemma}\label{lem functoriality of hatting}
Given a composable pair of graphical maps
\[
	G \xrightarrow{\varphi} H \xrightarrow{\psi} K
\]
we have that $\hat \psi \circ \hat \varphi = \widehat{\psi \circ \varphi}$.
Further, $\widehat {\id_G} = \id_{\emb(G)}$.
\end{lemma}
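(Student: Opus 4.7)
The plan is to argue directly from the orthogonal factorization system structure provided by \cref{prop U ofs}, using the two key properties that $\gcatact$ is closed under composition and that active–inert factorizations are unique up to unique isomorphism.

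For the identity case, given any embedding $f\colon F \rat G$, the composite $\id_G \circ f = f$ factors as $f = f \circ \id_F$ with $\id_F$ active (being an isomorphism) and $f$ inert. By uniqueness of the factorization, $\widehat{\id_G}[f] = [f]$, so $\widehat{\id_G} = \id_{\emb(G)}$.

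For the composition identity, fix an embedding $f \colon F \rat G$ and choose representatives witnessing the defining factorizations of $\hat\varphi[f]$ and $\hat\psi\hat\varphi[f]$:
\[
\begin{tikzcd}
F \rar[-act, "\alpha"] \dar[tail, "f"'] & F' \dar[tail, "g"] \rar[-act, "\beta"] & F'' \dar[tail, "h"] \\
G \rar{\varphi} & H \rar{\psi} & K
\end{tikzcd}
\]
Here $[g] = \hat\varphi[f]$ comes from the left square and $[h] = \hat\psi[g]$ comes from the right square, so by definition $\hat\psi\hat\varphi[f] = [h]$. Since $\gcatact$ is a (wide) subcategory of $\gcat$, the composite $\beta\alpha$ is active, and the outer rectangle exhibits $\psi\varphi f = h \circ (\beta\alpha)$ as an active–inert factorization. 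By uniqueness, $\widehat{\psi\varphi}[f] = [h]$, which agrees with $\hat\psi\hat\varphi[f]$.

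The argument is essentially a diagrammatic pasting in an OFS, and I anticipate no serious obstacle. The one point that requires care is well-definedness of $\hat\varphi$ on equivalence classes in $\emb(G)$ (independence of the choice of representative for $[f]$ and of the choice of factorization), but this is already handled in \cref{cns old to new} via the same uniqueness-up-to-unique-isomorphism principle we are invoking here.
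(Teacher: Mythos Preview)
Your proof is correct and follows essentially the same route as the paper: both arguments paste two active--inert factorizations to obtain an active--inert factorization of $(\psi\varphi)f$, using that active maps are closed under composition and that such factorizations are unique up to unique isomorphism. The identity case is likewise handled the same way in both.
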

\begin{proof}
The second statement is clear since we have an active-inert orthogonal factorization system on $\gcat$.
The first statement also follows from the factorization system: suppose $\ell \colon L \rightarrowtail G$ is an embedding.
\[ \begin{tikzcd}
& \bullet \rar[-act,"q"] \ar[dr,tail,"f"] & \bullet \ar[dr,tail,"h"] \\
L \rar[tail,"\ell"] \ar[ur,-act,"p"] & G \rar{\varphi} & H \rar{\psi} & K
\end{tikzcd} \]
First factor $\varphi \ell$ as $f\circ p$, and then factor $\psi f$ as $h\circ q$, and notice that $h \circ (qp)$ is an active-inert factorization of $\psi \varphi \ell$.
It follows that $\hat \varphi [\ell] = [f]$, $\hat \psi [f] = [h]$, and $\widehat {\psi \circ \varphi} [\ell] = [h]$.
\end{proof}

Our next goal is to show that $\oldnew$ from \cref{cns old to new} takes graphical maps to new graph maps.
For this purpose, consider a graphical map $\varphi \colon G \to G'$ and embeddings $h,k,\ell$ into $G$ with $[h] \leq [\ell]$ and $[k]\leq [\ell]$.
We form an active-inert factorization $\hat \ell \circ \alpha$ of $\varphi\ell$, and then active-inert factorizations of $\alpha f$ and $\alpha g$ as displayed below-right.
\begin{equation}\label{eq upper bounds names}
\begin{tikzcd}
H \dar[tail,"f"']  \ar[dr,"h", tail] &  \\
L \rar["\ell", tail] & G \rar["\varphi"] & G' \\
K \ar[ur,"k", tail] \uar[tail,"g"] &
\end{tikzcd} 
\qquad \rightsquigarrow \qquad
\begin{tikzcd}
H \rar[-act,"\beta"] \dar[tail,"f"'] & H'\dar[tail,"f'"']  \ar[dr,"\hat h", tail] \\
L \rar[-act,"\alpha"] & L' \rar[tail,"\hat \ell"] & G' \\
K \rar[-act,"\gamma"] \uar[tail,"g"] & K' \uar[tail,"g'"] \ar[ur,"\hat k", tail]
\end{tikzcd} \end{equation}
In particular, we have that $\hat \varphi [\ell] = [\hat \ell]$ is an upper bound for both $\hat \varphi [h]$ and $\hat \varphi [k]$.

\begin{lemma}
If $\ell$ is a union of $h$ and $k$, then $\hat \varphi [\ell]$ is a union of $\hat \varphi [h]$ and $\hat \varphi [k]$.
\end{lemma}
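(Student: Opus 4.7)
The plan is to keep the setup and notation of diagram \eqref{eq upper bounds names}. Condition~\eqref{def a union ub} of \cref{def a union} is exactly the upper bound statement proved in the paragraph preceding the lemma (from which $f'$ and $g'$ are obtained). So the whole task is to verify condition~\eqref{def a union cup}, namely the equality
\[
\hat\ell(V_{L'}) = \hat h(V_{H'}) \cup \hat k(V_{K'})
\]
of subsets of $V_{G'}$.

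My strategy is to trace vertices through the graph substitution description of the active-inert factorization from \cref{remark composition}. Because $\alpha\colon L \ract L'$ is the active part of the factorization of $\varphi\ell$, one has $L' = L\{H_v\}$ where $H_v$ represents $\alpha_1(v) = \varphi_1(\ell(v))$; in particular $V_{L'}$ is the \emph{disjoint} union $\bigsqcup_{v\in V_L} V_{H_v}$, and under the identification $V_{H_v} = \varsigma(\alpha_1(v)) \subseteq V_{L'}$ this becomes
\[
V_{L'} = \bigsqcup_{v\in V_L} \varsigma(\alpha_1(v)).
\]
Applying the same observation to $\beta$ and chasing the commutative square $f'\beta = \alpha f$ (interpreted through the composition formula of \cref{remark composition}: $(f'\beta)_1(u) = f'_*\beta_1(u)$ and $(\alpha f)_1(u) = \alpha_1(f(u))$), the images recorded by the inert map $f'$ are
\[
f'(V_{H'}) = \bigcup_{u\in V_H} \varsigma(\alpha_1(f(u))) = \bigcup_{v\in f(V_H)} \varsigma(\alpha_1(v)),
\]
and symmetrically for $g'$.

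Using the union hypothesis $V_L = f(V_H) \cup g(V_K)$, this gives
\[
f'(V_{H'}) \cup g'(V_{K'}) = \bigcup_{v\in f(V_H) \cup g(V_K)} \varsigma(\alpha_1(v)) = \bigcup_{v\in V_L} \varsigma(\alpha_1(v)) = V_{L'}.
\]
Postcomposing with the embedding $\hat\ell$ (which is injective on vertices and satisfies $\hat\ell f' = \hat h$, $\hat\ell g' = \hat k$) yields the desired $\hat h(V_{H'}) \cup \hat k(V_{K'}) = \hat\ell(V_{L'})$.

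The main obstacle is conceptual rather than computational: active maps do not act on vertices as honest vertex maps but via the graph substitution picture, so one has to trust \cref{remark composition} to convert the commutative squares $f'\beta=\alpha f$ and $g'\gamma=\alpha g$ into statements about honest subsets of $V_{L'}$. Once that dictionary is set up, the rest is a short set-theoretic manipulation using that the decomposition of $V_{L'}$ indexed by $V_L$ is disjoint, which is exactly what makes a vertex-set union downstairs translate to a vertex-set union upstairs.
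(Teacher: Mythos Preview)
Your argument is correct and is essentially the paper's proof rewritten in ``batch'' form: where the paper fixes a single vertex $v\in V_{L'}$, locates the unique $w\in V_L$ with $v\in\varsigma(\alpha_1(w))$, lifts $w$ to $V_H$ or $V_K$, and then uses the triangle $M_w \rightarrowtail H' \rightarrowtail L'$ to place $v$ in $f'(V_{H'})$, you instead record the full decomposition $V_{L'}=\bigsqcup_{v\in V_L}\varsigma(\alpha_1(v))$ and the analogous description of $f'(V_{H'})$, $g'(V_{K'})$, and take the union at once. One small slip: the equality $\alpha_1(v)=\varphi_1(\ell(v))$ is not literally true (the left side lies in $\emb(L')$, the right in $\emb(G')$; they are related by postcomposition with $\hat\ell$), but this is harmless since your argument only uses the decomposition of $V_{L'}$ coming from $\alpha$.
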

\begin{proof}
Suppose $v\in V_{L'}$. 
Our goal is to show that $\hat \ell (v)$ is an element of $\hat h(V_H') \cup \hat k(V_{K'})$.
Since $\alpha \colon L\to L'$ is active, by \cite[Proposition 2.3]{HRY-mod1} there is a unique vertex $w\in V_L$ so that $\alpha_w \colon M_w \rightarrowtail L'$ (representing $\alpha_1(w)$) has $v$ in its image.
As $V_H \amalg V_K \to V_L$ is surjective, there is a vertex $w'$ of $H$ or $K$ mapping to $w$; without loss of generality we assume $w'\in V_H$.
Considering the square
\[ \begin{tikzcd}
H \rar[-act,"\beta"] \dar[tail,"f"] & H'\dar[tail,"f'"] \\
L \rar[-act,"\alpha"] & L' \\
\end{tikzcd} \]
of \eqref{eq upper bounds names}, 
we get that the triangle
\[ \begin{tikzcd}
M_w \rar[tail,"\beta_{w'}"] \ar[dr,"\alpha_w"',tail] & H' \dar[tail,"f'"] \\
& L'
\end{tikzcd} \]
commutes by the descriptions of composing with embeddings from \cite[Definition 1.34]{HRY-mod1}.
Hence $v$ is in the image of $H' \rightarrowtail L'$, so $\hat \ell (v)$ is an element of $\hat h(V_{H'})$.
\end{proof}

A similar line of argument gives the following.

\begin{lemma}
If $h$ and $k$ are vertex disjoint, then so are $\hat h$ and $\hat k$.
\end{lemma}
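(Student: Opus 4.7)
The approach is to mirror the strategy of the preceding lemma and close the argument by invoking condition \eqref{old graph def vertices} of \cref{def graphical map}, which forces distinct vertices of $G$ to have disjoint vertex-sums under $\varphi_1$. Throughout, I will work with the notation and factorizations in \eqref{eq upper bounds names}.

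The plan is to argue by contradiction. Assume there exists $v \in \hat h(V_{H'}) \cap \hat k(V_{K'})$. Because embeddings are injective on vertices, there are unique $v_1 \in V_{H'}$ and $v_2 \in V_{K'}$ with $\hat h(v_1) = v = \hat k(v_2)$. Since $\beta \colon H \ract H'$ is active, \cite[Proposition 2.3]{HRY-mod1} produces a unique $w_1 \in V_H$ such that a representative of $\beta_1(w_1) \in \emb(H')$ has $v_1$ in its image; postcomposing with the embedding $\hat h$ gives an embedding $M_{w_1} \rat G'$ whose image contains $v$. Using $\varphi h = \hat h \beta$ and the composition formulas for graphical maps with inert maps (as in \cite[Definition 1.34]{HRY-mod1}), this embedding represents $(\hat h \beta)_1(w_1) = (\varphi h)_1(w_1) = \varphi_1(h(w_1))$. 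An identical argument applied to $\gamma$ and $\hat k$ produces $w_2 \in V_K$ such that a representative of $\varphi_1(k(w_2))$ contains $v$ in its image.

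Now I would combine this with the hypothesis: vertex-disjointness of $h$ and $k$ forces $h(w_1) \neq k(w_2)$ in $V_G$. On the other hand, condition \eqref{old graph def vertices} of \cref{def graphical map} says that the sum $\sum_{u \in V_G} \varsigma(\varphi_1(u))$ lies in $\wp(V_{G'}) \subseteq \mathbb{N}V_{G'}$, which is to say that the subsets $\varsigma(\varphi_1(u)) \subseteq V_{G'}$ are pairwise disjoint as $u$ varies over $V_G$. But $v$ lies in both $\varsigma(\varphi_1(h(w_1)))$ and $\varsigma(\varphi_1(k(w_2)))$, which is a contradiction, so no such $v$ exists and $\hat h, \hat k$ are vertex disjoint.

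The only mild subtlety I anticipate is verifying that the embedding $\hat h \circ (\text{representative of } \beta_1(w_1))$ genuinely represents $\varphi_1(h(w_1))$; this is just the interaction of the active-inert factorization with pre- and postcomposition by inert maps, and is exactly the same bookkeeping used in the previous lemma, so it should go through without trouble.
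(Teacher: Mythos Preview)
Your proof is correct and follows essentially the same approach as the paper: argue by contradiction, use \cite[Proposition 2.3]{HRY-mod1} to lift the common vertex back through the active maps $\beta$ and $\gamma$, then invoke condition~\eqref{old graph def vertices} to force an equality that contradicts vertex-disjointness. The only minor difference is that the paper applies condition~\eqref{old graph def vertices} to the intermediate active map $\alpha \colon L \ract L'$ (concluding $f(v) = g(w)$ in $V_L$, hence $h(v) = \ell f(v) = \ell g(w) = k(w)$), whereas you apply it directly to $\varphi$ (concluding $h(w_1) = k(w_2)$ in $V_G$); your route is slightly more streamlined since it bypasses the auxiliary $L'$, and the bookkeeping you flag as a ``mild subtlety'' is indeed routine once one observes that $\hat h \circ \beta$ is the active-inert factorization of $\varphi h$.
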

\begin{proof}
We prove the contrapositive.
Suppose $\hat h(V_{H'}) \cap \hat k(V_{K'})$ is inhabited and let $\hat h(v') = \hat k(w')$ be a vertex in this intersection.
Since $\beta \colon H \to H'$ is active, by \cite[Proposition 2.3]{HRY-mod1}, there exists a unique vertex $v \in V_H$ so that $\beta_{v} \colon M_{v} \rightarrowtail H'$ (with $[\beta_v] = \beta_1(v)$) has $v'$ in its image.
Likewise, there exists a unique vertex $w\in V_K$ so that $\gamma_w \colon N_w \rightarrowtail K'$ has $w'$ in its image.
We have $\alpha_1(f(v)) = [f' \circ \beta_v]$ and $\alpha_1(g(w)) = [g' \circ \gamma_w]$ by Remark 1.45 of \cite{HRY-mod1}.
\[ \begin{tikzcd}[column sep=tiny]
M_v \ar[dr, tail, "\beta_v"'] \ar[rr,"\alpha_{f(v)}", tail] &  & L' & 
N_w \ar[dr, tail, "\gamma_w"'] \ar[rr,"\alpha_{g(w)}", tail] &  & L' & 
\\
& H' \ar[ur, tail, "f'"'] & & 
& K' \ar[ur, tail, "g'"'] & & 
\end{tikzcd} \]
Now $f'(v')$ and $g'(w')$ are in the images of both $\alpha_1(f(v))$ and $\alpha_1(g(w))$; since $\alpha$ is a graphical map, \cref{def graphical map}\eqref{old graph def vertices} gives $f(v) = g(w)$.
Hence $h(v) = \ell f(v) = \ell g(w) = k(w)$, so $h$ and $k$ are not vertex disjoint.
\end{proof}

\begin{theorem}\label{thm oldnew graph map}
If $\varphi = (\varphi_0, \varphi_1) \colon G \to G'$ is a graphical map, then $\oldnew\varphi = (\varphi_0, \hat\varphi)$ is a new graph map, where $\hat \varphi$ is as defined in \cref{cns old to new}.
\end{theorem}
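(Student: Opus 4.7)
The plan is to verify each of the four conditions of \cref{def new graph map} for the pair $(\varphi_0, \hat\varphi)$. The two preceding lemmas already handle condition \eqref{new graph def union} (preservation of unions) and condition \eqref{new graph def intersect} (preservation of vertex disjoint pairs), so the only remaining work is to establish the edge condition \eqref{new graph def edges} and the boundary condition \eqref{new graph def boundary}.

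For \eqref{new graph def edges}, I would start with an arbitrary edge embedding $\exedge_e \rat G$ classified by an arc pair $\{a, a^\dagger\}$, and exhibit the obvious factorization of $\varphi \circ \exedge_e$ as an automorphism of $\exedge$ followed by the edge embedding $\exedge_{e'} \rat G'$, where $e'$ is the edge $[\varphi_0(a), \varphi_0(a)^\dagger]$ of $G'$. Since every automorphism of $\exedge$ is trivially a bijection on the boundary (the boundary of $\exedge$ is all of $A_\exedge$), the first leg is active, and the second is inert by construction. By the uniqueness clause of \cref{prop U ofs}, this IS the active-inert factorization of $\varphi \circ \exedge_e$, so $\hat\varphi[\exedge_e] = [\exedge_{e'}]$ is an edge.

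For \eqref{new graph def boundary}, I would pick a representative $f \colon H \rat G$ of a class $[f] \in \emb(G)$, form the active-inert factorization $\varphi f = g \alpha$ with $\alpha \colon H \ract H'$ and $g \colon H' \rat G'$ as in \cref{cns old to new}, and then chase arcs. On $A_H$ we have $\varphi_0 \circ f_0 = g_0 \circ \alpha_0$, and because $\alpha$ is active its arc function restricts to a bijection $\eth(H) \xrightarrow{\cong} \eth(H')$. Applying $g$ to this bijection yields
\[
\varphi_0(f(\eth(H))) = g(\alpha_0(\eth(H))) = g(\eth(H')) = \eth(\hat\varphi[f]),
\]
which is exactly the desired identity $\mathbb{N}\varphi_0 \circ \eth = \eth \circ \hat\varphi$ (equivalently, the bijection recorded in \cref{rem graph def boundary}).

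The genuine combinatorial content of the theorem was already extracted in the two preceding lemmas, where one has to coherently relate the active-inert factorizations of $\varphi\ell$, $\varphi h$, and $\varphi k$; the two remaining verifications above are essentially bookkeeping against \cref{prop U ofs}, and I do not foresee any obstacle beyond keeping clear which leg of the factorization is responsible for which piece of the claim.
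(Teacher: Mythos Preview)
Your proposal is correct and follows the same overall structure as the paper: conditions \eqref{new graph def union} and \eqref{new graph def intersect} are delegated to the two preceding lemmas, while \eqref{new graph def edges} and \eqref{new graph def boundary} are handled separately. The only difference is that the paper dispatches \eqref{new graph def edges} and \eqref{new graph def boundary} by citing external results (Lemma~2.2 and Lemma~1.47 of \cite{HRY-mod1}), whereas you give the direct verifications inline; your arguments are essentially what those lemmas say, so there is no genuine divergence.

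One small terminological point: in your argument for \eqref{new graph def edges}, the first leg of the factorization is an \emph{isomorphism} $\exedge_e \to \exedge_{e'}$ rather than an automorphism of a single object $\exedge$ (the arc sets differ unless $\varphi_0$ happens to fix $a$). This does not affect the argument, since isomorphisms are active.
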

\begin{proof}
We verify the four conditions from \cref{def new graph map}.
Condition \eqref{new graph def edges} follows from Lemma 2.2 of \cite{HRY-mod1}.
The preceding two lemmas give \eqref{new graph def union} and \eqref{new graph def intersect}, respectively.
The interpretation of \eqref{new graph def boundary} given in \cref{rem graph def boundary} is Lemma 1.47 of \cite{HRY-mod1}.
\end{proof}

\begin{theorem}\label{old new equivalence}
Suppose $G$ and $G'$ are connected graphs.
The assignments $\oldnew$ and $\newold$ are inverse bijections between new graph maps from $G$ to $G'$ and graphical maps from $G$ to $G'$.
Moreover, these assignments identify the category of connected graphs and new graph maps with the category $\gcat$.
\end{theorem}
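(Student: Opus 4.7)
The plan is to show that $\newold$ and $\oldnew$ are inverse bijections on morphisms and then upgrade the bijection to an isomorphism of categories. For $\newold \circ \oldnew = \id$, given a graphical map $\varphi = (\varphi_0,\varphi_1)$ and a vertex $v \in V_G$, I need $\hat\varphi[\iota_v] = \varphi_1(v)$. Unwinding \cref{cns old to new} via the formula for precomposition with an inert map recalled in the second paragraph of \cref{remark composition}, the active-inert factorization of $\varphi \circ \iota_v$ has inert part an embedding representing $\varphi_1(v)$, which is exactly what is needed.

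For $\oldnew \circ \newold = \id$, given a new graph map $(\varphi_0, \hat\varphi)$, set $(\varphi_0, \varphi_1) \coloneqq \newold(\varphi_0, \hat\varphi)$ and $(\varphi_0, \hat\varphi') \coloneqq \oldnew(\varphi_0, \varphi_1)$; both pairs are new graph maps by \cref{thm new graph to graphical,thm oldnew graph map}. I would prove $\hat\varphi[\ell] = \hat\varphi'[\ell]$ for every $[\ell] \in \emb(G)$ by induction on $|V_L|$. If $V_L = \varnothing$, then $L = \exedge$, both $\hat\varphi[\ell]$ and $\hat\varphi'[\ell]$ are edges (by \cref{def new graph map}\eqref{new graph def edges} and \cref{thm oldnew graph map}, respectively), and they share their boundary by \cref{rem graph def boundary}; so \cref{mod1 prop 1.25} gives equality. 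If $L$ contains no internal edge but has a vertex, connectedness forces $L$ to be a star and $[\ell] = [\iota_{\ell(v)}]$, so both values equal $\varphi_1(\ell(v))$ by the first paragraph. Otherwise $L$ contains an internal edge and \cref{lem find deletable vertex} yields a $g \colon K \rat L$ with $|V_K| = |V_L|-1$, exhibiting $\ell$ as a union of the vertex-disjoint pair $\ell\iota_v, \ell g$. By the inductive hypothesis (together with the established equality on vertex embeddings), $\hat\varphi$ and $\hat\varphi'$ agree on each constituent, and since both functions preserve unions and vertex-disjoint pairs, $\hat\varphi[\ell]$ and $\hat\varphi'[\ell]$ are unions of the same vertex-disjoint pair. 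In particular they share a boundary, and their vertex images in $V_{G'}$ coincide (each equal to the union of the vertex images of the two constituents), so they are simultaneously edges or non-edges; \cref{mod1 prop 1.25} then forces $\hat\varphi[\ell] = \hat\varphi'[\ell]$.

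The main obstacle is the non-uniqueness of unions illustrated in \cref{fig no joins}: knowing two embeddings are unions of the same pair is strictly weaker than knowing they coincide. The resolution is the rigidity of \cref{mod1 prop 1.25}, which pins down an embedding from its boundary once one has separated the edge and non-edge strata; the union property supplies just enough information (a common boundary and matching vertex-image size) to apply it. For the final categorical statement, composition of new graph maps is defined componentwise, so combining the identity $(\psi \circ \varphi)_0 = \psi_0 \circ \varphi_0$ with $\widehat{\psi \circ \varphi} = \hat\psi \circ \hat\varphi$ from \cref{lem functoriality of hatting} yields $\oldnew(\psi \circ \varphi) = \oldnew(\psi) \circ \oldnew(\varphi)$; since $\oldnew$ is the identity on objects and a bijection on morphisms preserving identities, it is an isomorphism of categories.
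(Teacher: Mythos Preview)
Your proof is correct and follows the same overall strategy as the paper: establish $\newold\oldnew = \id$ via the active-inert factorization of $\varphi\circ\iota_v$, then for $\oldnew\newold = \id$ reduce to checking that $\hat\varphi$ and $\hat\varphi'$ agree on an arbitrary embedding by matching boundaries and ruling out the edge/non-edge ambiguity, invoking \cref{mod1 prop 1.25}.

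The one organizational difference is how you resolve the edge/non-edge ambiguity. The paper applies \cref{prop sum splitting new graph} twice to obtain $\varsigma\hat\varphi[f] = \sum_v \varsigma\hat\varphi[f\iota_v] = \sum_v \varsigma\hat\varphi'[f\iota_v] = \varsigma\hat\varphi'[f]$ in one stroke, so no induction appears in the main proof (the induction is hidden inside \cref{prop sum splitting new graph}). You instead inline that induction directly, peeling off one vertex at a time via \cref{lem find deletable vertex} and using the union and vertex-disjointness axioms to propagate agreement. Both routes are driven by the same underlying idea; yours is slightly more self-contained (it does not cite \cref{prop sum splitting new graph}), while the paper's is shorter in the main proof because the inductive work has already been packaged. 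Your explicit observation that ``vertex images coincide because both are unions of the same pair'' is exactly the content that \cref{prop sum splitting new graph} plus \cref{lem vertex sums} provide in the paper's version.
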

\begin{proof}
The conclusion will follow once we establish the bijection, as \cref{lem functoriality of hatting} implies that $\oldnew$ is an identity-on-objects functor from $\gcat$ to the category of new graph maps.

It is nearly immediate that $\newold\oldnew$ is an identity on $\gcat(G,G')$, by the same reasoning as in the first bullet point of Remark 1.45 of \cite{HRY-mod1}.
Namely, if $\varphi_v \colon H_v \rat G'$ represents $\varphi_1(v)$, then we have a factorization in $\gcat$
\[ \begin{tikzcd}
\medstar_v \rar[-act] \dar[tail,"\iota_v"] &  H_v \dar[tail,"\varphi_v"] \\
G \rar{\varphi}  & G'
\end{tikzcd} \]
so $\newold\oldnew(\varphi)_1(v) = \hat \varphi([\iota_v]) = [\varphi_v] = \varphi_1(v)$.

Now suppose that $\varphi = (\varphi_0, \hat \varphi) \colon G \to G'$ is a new graph map, $\newold(\varphi) = (\varphi_0, \varphi_1)$ is the associated graphical map, and $\oldnew\newold(\varphi) = (\varphi_0, \tilde \varphi)$, where $\tilde \varphi$ is a function from $\emb(G)$ to $\emb(G')$.
Let $f \colon H \rightarrowtail G$ be an arbitrary embedding; we wish to show that $\tilde \varphi [f] = \hat \varphi [f]$.
Notice that we already have a special case: 
if the domain of $f$ is a star, then $[f] = [\iota_v]$ for some $v \in V_G$, and we know $\tilde \varphi [\iota_v] \coloneqq \newold(\varphi)_1(v) \coloneqq \hat \varphi [\iota_v]$.
Let us turn the general case.

Since $\varphi$ and $\oldnew\newold(\varphi)$ are new graph maps, by \cref{rem graph def boundary}, we have the commutative diagram
\[ \begin{tikzcd}
\eth(\hat\varphi [f]) \dar[hook] & \eth([f]) \dar[hook] \lar[dashed, "\cong" swap] \rar[dashed, "\cong"] &  \eth(\tilde\varphi [f]) \dar[hook] \\
A_{G'} & A_G \lar[swap]{\varphi_0} \rar{\varphi_0} & A_{G'}
\end{tikzcd} \]
and we conclude that $\eth(\hat\varphi [f]) = \eth(\tilde\varphi [f])$.
This is nearly enough to conclude that $\hat\varphi [f]$ and $\tilde\varphi [f]$ are equal --- the only thing that could go wrong is if one of them is an edge while the other is not.
Applying \cref{prop sum splitting new graph} twice we have
\[
	\varsigma \hat \varphi [f] = \sum_{v\in V_H} \varsigma \hat \varphi [f\iota_v] = \sum_{v\in V_H} \varsigma \tilde \varphi[f\iota_v] = \varsigma \tilde \varphi [f]
\]
where the middle equality comes from the previously established fact that $\hat \varphi [f\iota_v] = \tilde \varphi [f\iota_v]$.
Since an embedding $h$ is an edge if and only if $\varsigma[h] = 0 $, we conclude that $\hat \varphi [f]$ is an edge if and only if $\tilde \varphi [f]$ is an edge.
By \cref{mod1 prop 1.25}, we conclude that $\hat \varphi[f] = \tilde \varphi[f]$.
Thus $\varphi = \oldnew\newold(\varphi)$.
\end{proof}

\begin{remark}\label{rmk factorization}
We can describe the active-inert factorization system on $\gcat$ from \cref{prop U ofs} in terms of new graph maps.
The active maps are precisely those maps $\varphi \colon G \ract G'$ so that $\hat \varphi([\id_G]) = [\id_{G'}] \in \emb(G')$, that is, maps where $\hat \varphi$ preserves the top element.
Likewise, the inert maps are those where $\hat \varphi$ admits a dashed arrow making the following diagram commute:
\[ \begin{tikzcd}
V_G \rar[hook] \dar[dashed] & \emb(G) \dar{\hat \varphi} \\
V_{G'} \rar[hook] & \emb(G').
\end{tikzcd} \]
The factorization of a map $\varphi \colon G \to G'$ can be obtained by taking $h\colon H \rat G'$ to be an embedding with $\hat \varphi [\id_G] = [h]$, and explicitly defining an active map $\psi \colon G \ract H$ so that $\varphi = h \circ \psi$.
This requires a bit of delicacy, since the embedding $h$ might not be injective on arcs.
\end{remark}

\section{Directed graphs and the wheeled properadic graphical category}
\label{sec directed}

We now turn our attention to the wheeled properadic graphical category.
This category controls wheeled properads \cite{MarklMerkulovShadrin} (which extend the notion of properad \cite{Vallette:KDP} to encode traces)
in the sense that there is a relevant nerve theorem.
Wheeled properads can be regarded as modular operads with additional directional structure, which was observed in \cite[Example 1.29]{Raynor:DLCSM}.

In this section we show how the wheeled properadic graphical category can be described in a similar manner to that of $\gcat$ above.
One drawback of the existing definition is that it relies on the notion of wheeled properad and maps between wheeled properads.
Our new definition does not have this requirement, though of course we will need to consider wheeled properads to prove the equivalence.
As one caveat: in this paper we consider only the version of the wheeled properadic graphical category from \cite{HRYfactorizations}, rather than the one from \cite{HRYbook}.
The latter can be obtained from the former by inverting a single map.
For brevity, we do not include the original definition of the wheeled properadic graphical category here, except indirectly in the proof of \cref{thm oriented wheeled equiv}.

\subsection{Directed graphs}
The following encoding of a directed graph with loose ends was introduced as Definition 1.1.1 of \cite{Kock:GHP}.

\begin{definition}[Directed graphs]\label{def dir graph}
Let $\mathscr{G}$ denote the category
\[
\begin{tikzcd}
\mathtt{e} & \mathtt{i} \dar \lar \\
\mathtt{o} \uar \rar & \mathtt{v}.
\end{tikzcd}
\]
A \mydef{directed graph} $G$ is a functor $\mathscr{G} \to \finset$ which sends the two maps with codomain $\mathtt{e}$ to monomorphisms, that is, a directed graph is a diagram of finite sets of the form
\[
\begin{tikzcd}[column sep=small]
	E_G & I_G \rar\lar[hook'] & V_G & O_G \rar[hook] \lar & E_G.	
\end{tikzcd}
\]
Each vertex $v\in V_G$ has a set of inputs $\inp(v) \subseteq I_G$ which is the preimage of $v$ under $I_G \to V_G$, and likewise has a set of outputs $\out(v) \subseteq O_G$.
The set of inputs of $G$ is defined to be $\inp(G) \coloneqq E_G \setminus O_G$ and the set of outputs of $G$ is $\out(G) \coloneqq E_G \setminus I_G$.
\end{definition}

Directed graphs in this sense turn out to be equivalent to others in the literature (this is mostly due to Batanin--Berger; see \cref{prop directed graph iso classes} and \cite[Remark 2.0.2]{ChuHackney}), with the caveat that it is not possible to encode graphs with components that are nodeless loops.
See \cref{def ext dir graph} below for a `hack' that fixes this problem, which is similar to that appearing in Definition 4.1 of \cite{HRY-mod1}.

A key upside to Kock's definition compared to other formalisms is that it is admits a straightforward notion of \'etale maps \cite[1.1.7]{Kock:GHP} as certain natural transformations, similar to that of \cref{def etale embed undirected}.
\begin{definition}\label{def dir embedding}
An \mydef{\'etale map} $f\colon G \to H$ between directed graphs is a commutative diagram of sets
\[ \begin{tikzcd}[sep=tiny]
E_G \ar[dd] & & I_G \ar[ll,hook'] \ar[dr] \ar[dd] \\
& O_G \ar[ul,hook'] \ar[rr,crossing over] & & V_G \ar[dd] \\
E_H & & I_H \ar[ll,hook'] \ar[dr] \\
& O_H \ar[ul,hook'] \ar[rr] \ar[from=uu, crossing over] & & V_H 
\end{tikzcd} \qquad \begin{tikzcd}[column sep=small]
E_G  \dar[swap]{f|_{E}} & I_G \rar\lar[hook']  \dar \ar[dr, phantom, "\lrcorner" very near start] & V_G  \dar& O_G \rar[hook] \lar  \dar \ar[dl, phantom, "\llcorner" very near start]& E_G \dar{f|_{E}} \\
E_H & I_H \rar\lar[hook'] & V_H & O_H \rar[hook] \lar & E_H	
\end{tikzcd} \]
so that the middle two squares are pullbacks.
An \mydef{embedding} is an \'etale map between \emph{connected} directed graphs which is injective on vertices.
\end{definition}

These embeddings of directed graphs correspond to those for undirected graphs in \cref{def etale embed undirected}. 
They are different from the `open subgraph inclusions' of \cite{Kock:GHP}, since the edge map $f|_E$ is not required to be injective and the domain and codomain must be connected.

We now relate the definitions of directed and undirected graphs.

\begin{definition}[Orientation presheaf]\label{def or presheaf}
The \mydef{orientation presheaf}, denoted $\opshf$, is the $\gcat$-presheaf so that
\[
\opshf_G \subseteq \prod_{a\in A_G}
	\{ +1, -1 \} 
\]
consists of those elements $x = (x_a)$ satisfying $x_a = -x_{a^\dagger}$ for all $a\in A_G$.
If $\varphi \colon H \to G$ is a graphical map and $x\in \opshf_G$, then $\varphi^*(x) \in \opshf_H$ is the element with 
\[
	\varphi^*(x)_a = x_{\varphi_0(a)}
\]
for all $a\in A_H$.
\end{definition}

Pairs $(G,x)$ consisting of a connected undirected graph $G \in \gcat$ and an element $x\in \opshf_G$ (equivalently a map of presheaves $x \colon G\to \opshf$) coincide with the connected directed graphs of \cref{def dir graph}.
This is a special case of \cite[1.1.13]{Kock:GHP}, and in order to fix conventions the following construction exhibits one direction of the correspondence; the reverse direction sends a directed graph to an undirected graph with $D = I \amalg O$ and $A = E\amalg E$. 
See also \cref{prop dir graph equivs}.

\begin{construction}\label{cnst gcato to kock}
If $G$ is a graph and $x\in \opshf_G$, then we have a splitting of $A_G$ as 
\begin{align*}
A_G^+ &= \{ a \in A_G \mid x_a = +1\} \\
A_G^- &= \{ a \in A_G \mid x_a = -1 \}
\end{align*}
and likewise for $D_G = D_G^+ \amalg D_G^-$. 
We thus obtain a diagram of sets
\[ \begin{tikzcd}[column sep=small]
A_G^- & \lar[hook'] D_G^- \rar & V_G & D_G^+ \lar \rar[hook] & A_G^+.
\end{tikzcd} \]
There are isomorphisms $A_G^- \xrightarrow{\cong} E_G \xleftarrow{\cong} A_G^+$, each sending an arc $a$ to the edge $[a,a^\dagger]$ it spans, so we conclude that $(G,x)$ determines a diagram
\[ \begin{tikzcd}[column sep=small]
E_G & \lar[hook'] D_G^- \rar & V_G & D_G^+ \lar \rar[hook] & E_G, 
\end{tikzcd} \]
as in \cref{def dir graph}.
\end{construction}

\begin{remark}\label{rmk inp out plus minus}
When working with this presentation, we have, for a vertex $v \in V_G$
\begin{align*}
	\inp(v) = \nbhd(v) \cap D_G^- &\subseteq D_G^-\\
	\out(v) = \nbhd(v) \cap D_G^+ &\subseteq D_G^+.
\end{align*}
We prefer to think about $\inp(G)$ and $\out(G)$ as subsets of $A_G^+$ and $A_G^-$, rather than as subsets of $E_G$:
\begin{align*}
	\inp(G) = \eth(G) \cap A_G^+ &\subseteq A_G^+\\
	\out(G) = \eth(G) \cap A_G^- &\subseteq A_G^-.
\end{align*}
Notice the discrepancy in signs.
This is because arcs in $\nbhd(v)$ point \emph{towards} the vertex while arcs in $\eth(G)$ point \emph{away from} the graph.
We think about input edges as coming in from above and output edges as coming in from below, so arcs of $\inp(v)$ point \emph{down} while arcs of $\inp(G)$ point \emph{up}.
\end{remark}

Every \'etale map of directed graphs induces an \'etale map of the corresponding undirected graphs (this was also mentioned in \cite[1.1.13]{Kock:GHP}). This restricts to embeddings, yielding the following equivalence.
\begin{proposition}\label{prop dir graph equivs}
There is an equivalence of categories between the category of connected directed graphs and embeddings (in the sense of \cref{def dir graph,def dir embedding}) and the category $\gcatemb \times_{\widehat{\gcat}} \widehat{\gcat}_{/\opshf} \eqqcolon \gcatembo$. \qed
\end{proposition}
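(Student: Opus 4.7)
The plan is to exhibit mutually inverse functors between the two categories, using \cref{cnst gcato to kock} to go from left to right and its obvious inverse to go from right to left; the bulk of the work is then checking the morphism-level statement.

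First I would describe the inverse of \cref{cnst gcato to kock} on objects: given a connected directed graph $(E_G,I_G,O_G,V_G)$ in the sense of \cref{def dir graph}, form the undirected graph with $A = E_G \amalg E_G$ (two signed copies), involution swapping the copies, $D = O_G \amalg I_G$ included into $A$ via the two given monomorphisms, and target map given by the two maps into $V_G$; the orientation $x$ assigns $+1$ to the first copy of $E_G$ and $-1$ to the second. This is manifestly connected (its topological realization agrees with that of the directed graph) and produces an object of $\gcatembo$. Round-trip composites on objects are equalities up to canonical identification via the bijections $A_G^\pm \xrightarrow{\cong} E_G$ already used in \cref{cnst gcato to kock}.

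Next I would set up the morphism correspondence. Starting from an embedding $f = (f_E, f_I, f_O, f_V)$ of directed graphs, define $F(f)$ by $F(f)_A \coloneqq f_E \amalg f_E$, $F(f)_D \coloneqq f_O \amalg f_I$, $F(f)_V \coloneqq f_V$; the two pullback squares of \cref{def dir embedding} assemble into the single pullback square $D \to V$ of \cref{def etale embed undirected}, using that $\nbhd(v)$ splits as $\inp(v) \amalg \out(v)$ along the sign decomposition (\cref{rmk inp out plus minus}), and $F(f)$ is compatible with orientations by construction. Conversely, given an embedding $f\colon H \to G$ in $\gcatemb$ with $f^*x_G = x_H$, the orientation-compatibility forces $f$ to restrict to $f|_{A^\pm}\colon A_H^\pm \to A_G^\pm$ and $f|_{D^\pm}\colon D_H^\pm \to D_G^\pm$; under the isomorphism $A_G^\pm \cong E_G$ these yield the required $f_E$ (well-defined because the involution on $A_G$ intertwines $A_G^+$ with $A_G^-$ equivariantly with respect to $f$), together with $f_I \coloneqq f|_{D^-}$, $f_O \coloneqq f|_{D^+}$, and $f_V$. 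Splitting the undirected pullback square along the sign decomposition recovers the two directed pullback squares, and injectivity on vertices is immediate in both directions.

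The main obstacle is not conceptual but bookkeeping: I have to be scrupulous about the sign-and-copy convention (whether inputs correspond to $A^+$ or $A^-$ depends on \cref{rmk inp out plus minus}) and about verifying that a single pullback square over $V$ in the undirected setting is equivalent to the pair of pullback squares demanded in \cref{def dir embedding}. Once those two checks are done, functoriality and full faithfulness are formal, and the round-trip natural isomorphisms are the canonical identifications $A \cong A^+ \amalg A^- \cong E \amalg E$ and $D \cong D^+ \amalg D^- \cong O \amalg I$, completing the equivalence.
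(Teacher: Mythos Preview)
Your proposal is correct and is essentially the approach the paper has in mind. In fact the paper does not supply a proof at all: the proposition is stated with a \qed and justified only by the sentence preceding it (that \'etale maps of directed graphs induce \'etale maps of the underlying undirected graphs, citing Kock, and that this restricts to embeddings). Your argument is a careful unpacking of exactly this, making explicit the inverse to \cref{cnst gcato to kock} and verifying that the single pullback square in \cref{def etale embed undirected} splits along the sign decomposition into the two pullback squares of \cref{def dir embedding}. The only thing to watch is the sign convention you flag yourself: with $D = O_G \amalg I_G$ mapping into the $+$ and $-$ copies of $E_G$ respectively, you recover $\out(v) = \nbhd(v)\cap D_G^+$ and $\inp(v) = \nbhd(v)\cap D_G^-$ in agreement with \cref{rmk inp out plus minus}.
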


This proposition can't be promoted as is to the category of graphs and \'etale maps, since \'etale maps between connected graphs which are not embeddings do not yield maps in $\gcat$. 

\begin{definition}[Inputs and outputs of embeddings]\label{def inout emb}
Suppose $G$ is a directed graph.
Then there are functions $\inp \colon \emb(G) \to \wp(A_G^+) \cong \wp(E_G)$ and $\out \colon \emb(G) \to \wp(A_G^-) \cong \wp(E_G)$ defined by \begin{align*} \inp([f]) &= \eth([f]) \cap A_G^+ \\
\out([f]) &= \eth([f]) \cap A_G^-. \end{align*}
Alternatively, we can consider $f\colon (H,f^*x) \rightarrowtail (G,x)$ as an embedding between directed graphs and we have
\begin{align*}
\inp(f) &= f(\inp(H)) \subseteq A_G^+ \\
\out(f) &= f(\out(H)) \subseteq A_G^-.
\end{align*}
\end{definition}

\subsection{Wheeled properads and the oriented graphical category}\label{subsec wheeled properads and oriented}
We now introduce the oriented graphical category $\gcato$, which we will show is equivalent to the wheeled properadic graphical category.
Thus, by the nerve theorem (Theorem 10.33 and Proposition 10.35 of \cite{HRYbook}), this category governs wheeled properads.
More precisely, the category of (set-based) wheeled properads is equivalent to the category of Segal presheaves on $\gcato$ (in the sense of \cref{def segal}).

\begin{definition}[Oriented graphical category]\label{def oriented graphical cat}
The \mydef{oriented graphical category} is the category $\gcato \coloneqq \gcat \times_{\widehat{\gcat}} \widehat{\gcat}_{/\opshf}$ whose objects are pairs consisting of an undirected graph $G$ and map of presheaves $x \colon G \to \opshf$ (equivalently an element $x \in \opshf_G$).
\end{definition}

\begin{remark}\label{remark pre-discrete fibration}
By \cref{def or presheaf}, if $G$ is a graph, then an element $x\in \opshf_G$ is the same thing as an involutive map from $A_G$ to $\{ +1, -1 \}$.
A map $(G,x) \to (G',x')$ in $\gcato$ is a just a map $G\to G'$ in $\gcat$ so that the triangle
\[ \begin{tikzcd}[column sep=tiny]
A_G \ar[rr,"\varphi_0"] \ar[dr] & & A_{G'} \ar[dl] \\ 
& \{ + 1 , -1 \}
\end{tikzcd} \] 
commutes.
\end{remark}

The following uses the two functions appearing in \cref{def inout emb} and the correspondence from \cref{cnst gcato to kock} to reinterpret \cref{def new graph map} in the oriented setting.
\begin{proposition}\label{prop oriented graph cat}
A map $\varphi \colon (G,x) \to (G',x')$ in the oriented graphical category $\gcato$ may be described as a pair $(\bar\varphi_0, \hat\varphi)$ consisting of functions $\bar\varphi_0 \colon E_G \to E_G'$ and $\hat \varphi \colon \emb(G) \to \emb(G')$ so that \eqref{new graph def edges}, \eqref{new graph def union}, and \eqref{new graph def intersect} from \cref{def new graph map} hold and, additionally,  
{\rm
\begin{enumerate}[label=(\roman*'),ref=\roman*',start=4]
	\item the diagram
\[ \begin{tikzcd}
\mathbb{N}E_G  \dar{\mathbb{N} \bar \varphi_0 } & \emb(G) \rar{\out}\lar[swap]{\inp} \dar{\hat \varphi}& \mathbb{N}E_G \dar{\mathbb{N} \bar \varphi_0 }\\
 \mathbb{N}E_{G'} & \emb(G') \rar{\out}\lar[swap]{\inp} & \mathbb{N}E_{G'}
\end{tikzcd} \]
commutes. \label{oriented graph map boundary} 
\hspace{\fill} \qedsymbol
\end{enumerate} }
\end{proposition}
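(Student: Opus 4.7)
The plan is to deduce the proposition from \cref{old new equivalence} (the equivalence of graphical maps with new graph maps) together with the pullback description $\gcato = \gcat \times_{\widehat{\gcat}} \widehat{\gcat}_{/\opshf}$. By \cref{remark pre-discrete fibration}, a map $(G,x) \to (G',x')$ in $\gcato$ is the same as a graphical map $\varphi \colon G \to G'$ satisfying $x_a = x'_{\varphi_0(a)}$ for every $a \in A_G$; equivalently, $\varphi_0$ restricts to functions $A_G^+ \to A_{G'}^+$ and $A_G^- \to A_{G'}^-$. Combined with \cref{old new equivalence}, such a map is the same as a pair $(\varphi_0, \hat \varphi)$ satisfying (i)--(iv) of \cref{def new graph map}, with the added constraint that $\varphi_0$ preserves signs.

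The next step is to replace the sign-preserving involutive function $\varphi_0 \colon A_G \to A_{G'}$ with a function $\bar\varphi_0 \colon E_G \to E_{G'}$. Given $\varphi_0$, involutivity means it descends to a function on $\dagger$-orbits, yielding $\bar\varphi_0$. Conversely, given $\bar\varphi_0$, I would use the isomorphisms $A_G^\pm \xrightarrow{\cong} E_G$ from \cref{cnst gcato to kock} (each sending $a$ to $[a,a^\dagger]$) to define $\varphi_0$ as follows: for $a \in A_G$, let $\varphi_0(a)$ be the unique arc of $\bar\varphi_0([a,a^\dagger])$ having the same sign as $a$. This is well-defined because the two arcs of any edge carry opposite signs, and the resulting $\varphi_0$ is automatically involutive and sign-preserving. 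These two assignments are clearly mutually inverse, establishing a natural bijection.

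Finally, conditions (i), (ii), and (iii) of \cref{def new graph map} do not involve $\varphi_0$ and so transfer verbatim, leaving only the equivalence of (iv) with (iv'). For any embedding $f$, \cref{def inout emb} gives $\eth([f]) = \inp([f]) \sqcup \out([f])$ as a disjoint union of subsets of $A_G^+$ and $A_G^-$, and sign-preservation of $\varphi_0$ forces $\mathbb{N}\varphi_0(\eth([f]))$ to split compatibly along the corresponding splitting of $\eth(\hat\varphi[f])$. Under the identifications $A_G^\pm \cong E_G$ and $A_{G'}^\pm \cong E_{G'}$, the restrictions of $\mathbb{N}\varphi_0$ to the positive and negative parts both agree with $\mathbb{N}\bar\varphi_0$ by construction. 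Hence the single commutative square of (iv) is equivalent to the two commutative squares making up (iv').

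I do not anticipate any serious obstacle here; the proof is essentially bookkeeping. The only point requiring care is checking that the two isomorphisms $A_G^\pm \cong E_G$ intertwine the sign-preserving restrictions of $\varphi_0$ with the single function $\bar\varphi_0$, which is immediate from the definition $a \mapsto [a, a^\dagger]$.
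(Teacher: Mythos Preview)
Your proposal is correct and follows exactly the approach the paper intends: the proposition is stated with a bare \qedsymbol, and the preceding sentence says it ``uses the two functions appearing in \cref{def inout emb} and the correspondence from \cref{cnst gcato to kock} to reinterpret \cref{def new graph map} in the oriented setting.'' You have simply spelled out the bookkeeping that the paper leaves implicit, invoking \cref{old new equivalence} and \cref{remark pre-discrete fibration} to reduce to the sign-preserving case and then splitting condition~(iv) along the decomposition $\eth = \inp \sqcup \out$.
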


\begin{definition}\label{def dendroidal category}
The Moerdijk--Weiss \mydef{dendroidal category}, denoted $\mathbf{\Omega}$, is the full subcategory of $\gcato$ consisting of those graphs $G$ so that $\out(v)$ is a singleton set for every $v\in V_G$.
Alternatively, it consists of those graphs $G$ so that $\out \colon \emb(G) \to \mathbb{N}E_G$ factors through $E_G \hookrightarrow \mathbb{N}E_G$.
\end{definition}

\begin{remark}\label{rem broad poset}
The equivalence of the usual definition of $\mathbf{\Omega}$ from \cite[\S3]{MoerdijkWeiss:DS} with \cref{def dendroidal category} follows, for instance, from \cref{cor sc equiv} and \cite[Remark 6.55]{HRYbook}.
But this may also be seen more directly by comparing \cref{prop oriented graph cat} with the presentation of the dendroidal category as the \emph{dendroidally ordered broad posets} from \cite[\S2.1.4]{Weiss:FODS}.
Indeed, it turns out that for each $G\in \mathbf{\Omega}$, we have $\out \times \inp \colon \emb(G) \hookrightarrow E_G \times \mathbb{N}E_G$ is an inclusion, giving a heterogeneous relation between $E_G$ and $\mathbb{N}E_G$. 
This relation is the relevant broad poset structure on $E_G$, and condition \eqref{oriented graph map boundary} is merely asserting that $\bar\varphi_0 \colon E_G\to E_G'$ is a map of broad posets.
\end{remark}

Our goal in this subsection is to prove the following theorem.

\begin{theorem}\label{thm oriented wheeled equiv}
The oriented graphical category $\gcato$ is equivalent to the wheeled properadic graphical category which excludes the nodeless loop.
\end{theorem}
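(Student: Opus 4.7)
The plan is to construct an explicit equivalence. On objects, $\gcato$ is identified with the category of connected directed graphs via \cref{cnst gcato to kock} and \cref{prop dir graph equivs}, which agrees on objects with the wheeled properadic graphical category of \cite{HRYfactorizations} once the nodeless loop is excluded; so the substance lies in matching morphisms.

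For morphisms, I would unpack the existing definition of a map in the wheeled properadic graphical category --- a certain map between the free wheeled properads associated to $G$ and $G'$ --- in terms of its action on generators. The edge/unit generators yield a map of colors $\bar\varphi_0 \colon E_G \to E_{G'}$, and each vertex generator $v \in V_G$ is sent to an element of the free wheeled properad on $G'$ whose input/output profile matches $\bar\varphi_0$ applied to that of $v$. By the standard graph-substitution description of free wheeled properads (e.g., \cite[Chapter 5]{YauJohnson:FPAM}), such an element is precisely an embedding class $\varphi_1(v) \in \emb(G')$ with inputs and outputs compatible with $\bar\varphi_0$. Extending $\varphi_1$ to all of $\emb(G)$ by the procedure of \cref{cns old to new} then yields the pair $(\bar\varphi_0, \hat\varphi)$ of \cref{prop oriented graph cat}.

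The heart of the argument is showing that the operadic relations in the free wheeled properad on $G$ translate exactly into the conditions of \cref{def new graph map}: parallel composition of vertex-operations becomes preservation of vertex-disjoint pairs \eqref{new graph def intersect}; dioperadic composition along an internal edge together with its wheeled trace counterpart become preservation of unions \eqref{new graph def union}; unit/identity relations become preservation of edges \eqref{new graph def edges}; and colored arity compatibility becomes the input/output boundary condition \eqref{oriented graph map boundary}. The inductive decomposition of embeddings underlying \cref{lem find deletable vertex} and \cref{prop sum splitting new graph} is what allows one to pass between iterated operadic compositions in a free wheeled properad and unions of embeddings; this parallels the undirected arguments of \cref{subsec new to old} and \cref{subsec old to new}, with orientation data carried along throughout. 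Functoriality then follows from the universal property of the free wheeled properad, mirroring \cref{lem functoriality of hatting}. The main obstacle is setting up the dictionary between free-wheeled-properad operations (parallel composition, dioperadic composition, and the trace/wheel) and their embedding-theoretic counterparts; once this is fixed, the rest proceeds as in the undirected case.
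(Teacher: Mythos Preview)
Your proposal has a genuine gap at the step ``such an element is precisely an embedding class $\varphi_1(v) \in \emb(G')$.'' Elements of the free wheeled properad $F(G')$ are represented by \'etale maps $K \to G'$ (see the sketch in \cref{prop functor to wproperad}), not by embeddings. For a general map of wheeled properads $F(G) \to F(G')$, the image $f_1(v)$ of a vertex generator need not be an embedding at all, so you cannot invoke \cref{cns old to new} to produce $\hat\varphi$. The additional constraint that singles out the wheeled properadic graphical category inside $\wproperad$ is exactly what forces these \'etale maps to be embeddings, and you never engage with it.

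Relatedly, your claim that ``the operadic relations in the free wheeled properad on $G$ translate exactly into the conditions of \cref{def new graph map}'' misidentifies what is being encoded. The operadic relations in $F(G)$ hold automatically for any wheeled properad map and impose no constraint; what conditions \eqref{new graph def union} and \eqref{new graph def intersect} actually capture is the \emph{extra} requirement defining morphisms in the wheeled properadic graphical category. The paper handles this by invoking the characterization from \cite[Theorem~9.62]{HRYbook}: a wheeled properad map $f \colon F(G) \to F(G')$ lies in the graphical category if and only if the image (in the sense of \cite[Definition~9.55]{HRYbook}) of every subgraph is again a subgraph. The paper then identifies subgraphs with embeddings via \cref{lem emb subgraph}, which in turn rests on the existence of graph complements (\cref{lem graph complement}), and observes that the image \'etale map \eqref{image of H} coincides with a representative of $\hat\varphi[h]$. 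None of this machinery appears in your outline, and without it the bijection on hom-sets cannot be established.
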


This version of the wheeled properadic graphical category was called $\mathcal{B}$ in \cite[\S2]{HRYfactorizations}.
The nodeless loop in question shows up as \cref{ex dir nodeless loop}, but does not appear among the graphs defined in \cref{def dir graph}.
An extension of the preceding theorem will be given in \cref{thm oriented wheeled equiv extended}.

\begin{remark}[A change to objects]\label{remark no listings}
The objects of the wheeled properadic category from \cite{HRYbook,HRYfactorizations} are (isomorphism classes of) graphs equipped with a \emph{listing}, that is, a total ordering on each of the subsets $\inp(v)$, $\out(v)$ (both as $v$ varies), $\inp(G)$, and $\out(G)$ of $E_G$.
Every graph admits such a listing, and if we consider two choices of listing for the same graph, then there is a canonical isomorphism between them in the wheeled properadic graphical category.
In what follows, we consider only the equivalent category whose objects coincide with the objects of $\gcato$ (resp., with the objects of $\egcato$ in \cref{subsec extended oriented}).
That is, we are considering all directed graphs rather than just isomorphism classes of such, and we are not imposing a listing.
The equivalence can be realized by choosing, for each directed graph $G \in \gcato$, some fixed listing and then passing to the associated isomorphism class.
The term `wheeled properadic graphical category' will refer to this equivalent category henceforth.
\end{remark}

We will need a few preliminary results before tackling the proof of \cref{thm oriented wheeled equiv}.
The main one is the following, which is the directed analogue of the construction from \S2.2 of \cite{HRY-mod2}.
As this is just a minor variation of the undirected case, we merely provide a sketch and refer the reader to \cite{HRY-mod2} for details.

\begin{proposition}\label{prop functor to wproperad}
There is a functor $F \colon \gcato \to \wproperad$ from the oriented graphical category to the category of (colored) wheeled properads \cite[Definition 9.1]{HRYbook} which takes a directed graph to the wheeled properad it freely generates.
\end{proposition}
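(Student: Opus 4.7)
The plan is to adapt the undirected construction from Section 2.2 of \cite{HRY-mod2} to the directed setting, leveraging the new description of maps in $\gcato$ afforded by \cref{prop oriented graph cat}. On objects, the free wheeled properad $F(G)$ on a directed graph $(G,x)$ has color set $E_G$, with its set of operations of profile $(\underline{c}; \underline{d})$ indexed (up to symmetry) by pairs consisting of $[h] \in \emb(G)$ together with bijections $\inp([h]) \cong \underline{c}$ and $\out([h]) \cong \underline{d}$; the unit on an edge $e$ is represented by $\exedge_e \rat G$, units correspond to edges under the inclusion $E_G \hookrightarrow \emb(G)$, and the wheeled properad structure (composition, trace, symmetry) is implemented via graph substitution at the level of representing embeddings.

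On morphisms, given $\varphi = (\bar\varphi_0, \hat\varphi)$, the action of $F(\varphi)$ on colors is $\bar\varphi_0$ and on generating operations (indexed by the vertices $v \in V_G$, represented by $\iota_v \colon \medstar_v \rat G$) is given by $v \mapsto \hat\varphi[\iota_v]$. Condition \eqref{oriented graph map boundary} of \cref{prop oriented graph cat} ensures that the input/output profiles match up under $\bar\varphi_0$, so this is well-defined on the generating corollas. Because $F(G)$ is \emph{free} on these corollas, the assignment extends uniquely to a morphism of wheeled properads. One then shows that this extension in fact sends an arbitrary class $[h] \in \emb(G)$ to $\hat\varphi[h]$, by induction on $|V_H|$ using \cref{prop sum splitting new graph} and \cref{lem find deletable vertex} to break $[h]$ into vertex-level pieces that are glued along shared boundary arcs.

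The main obstacle is verifying that $F(\varphi)$ respects each of the wheeled properad operations, and this is precisely what the axioms of a new graph map encode: edge preservation \eqref{new graph def edges} handles units; union preservation \eqref{new graph def union} handles grafting and (in the presence of a directed loop of colors) trace; and preservation of vertex disjoint pairs \eqref{new graph def intersect} handles parallel/disconnected composition that is subsequently assembled inside a single embedding. Symmetries are handled by the fact that $\hat\varphi$ is defined on isomorphism classes of embeddings. Functoriality, $F(\psi \circ \varphi) = F(\psi) \circ F(\varphi)$ and $F(\id) = \id$, is then immediate from \cref{prop oriented graph cat}, since composition in $\gcato$ is computed componentwise on the pairs $(\bar\varphi_0, \hat\varphi)$. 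The bulk of the work that is truly distinct from the undirected case amounts to keeping track of the splitting $A_G = A_G^+ \amalg A_G^-$ when translating between boundary data of embeddings and input/output profiles, which is bookkeeping rather than substantive new content.
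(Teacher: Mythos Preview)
Your overall strategy matches the paper's: define $F$ on objects as the free wheeled properad on the vertex corollas, specify $F(\varphi)$ on generators by $v \mapsto \varphi_1(v) = \hat\varphi[\iota_v]$, and extend uniquely by freeness. The paper then handles functoriality by appealing to the graph-substitution argument of \cite[Proposition~2.25]{HRY-mod2}, whereas you route it through the componentwise composition of pairs $(\bar\varphi_0, \hat\varphi)$; these amount to the same thing once one knows the extension sends each embedding $[h]$ to $\hat\varphi[h]$.

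There is one genuine error. Your description of $F(G)$ is incorrect: the operations of the free wheeled properad on $G$ are \emph{not} indexed by elements of $\emb(G)$, but by (isomorphism classes of) \'etale maps from connected directed graphs into $G$, as the paper's sketch indicates. An embedding must be injective on vertices, but in $F(G)$ one may compose a generator with itself (when profiles permit) or take a trace, producing operations represented by \'etale maps that repeat vertices of $G$. This does not actually break the rest of your argument, since the generators are still the stars and extension-by-freeness is unaffected; but the claim ``the extension sends $[h]$ to $\hat\varphi[h]$'' should be read as a statement about the subset of operations arising from embeddings, not as a description of $F(\varphi)$ on all operations.

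There is also some confusion in your third paragraph. Because $F(\varphi)$ is obtained from the universal property of the free wheeled properad, it \emph{automatically} respects all wheeled properad operations; nothing needs to be verified there. The axioms \eqref{new graph def edges}--\eqref{new graph def intersect} are instead what make the inductive identification of the freely-extended map with $\hat\varphi$ on embeddings go through: edge preservation is the base case; vertex disjointness guarantees the glued \'etale map in $F(G')$ is again an embedding; and union preservation together with boundary compatibility \eqref{new graph def boundary} lets you invoke \cref{mod1 prop 1.25} to identify that embedding with $\hat\varphi[h]$. Your references to \cref{lem find deletable vertex} and \cref{prop sum splitting new graph} give the right inductive scaffolding, but the decisive step at each stage is the boundary-plus-edge-detection uniqueness of \cref{mod1 prop 1.25}, not the vertex-sum formula per se.
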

\begin{proof}[Proof sketch]
On objects the value of $F$ is given in the same was as in \cite[\S9.2.1]{HRYbook}. 
Briefly, if $G$ is a directed graph then $F(G)$ is the free $E_G$-colored wheeled properad which has one generator for each vertex $v\in V_G$.
For each vertex $v$, choose orderings $\inp(v) = e_1, \dots, e_m$ and $\out(v) = e_1', \dots, e_n'$ for the inputs and output edges of $v$, and regard $v$ as a generator in the set
\begin{equation}\label{eq v location}
	v\in F(G)(e_1, \dots, e_m; e_1', \dots, e_n').
\end{equation}
If $H$ is a graph equipped with a total ordering of the sets $\inp(H) = \{i_1, \dots, i_k\}$ and $\out(H) = \{o_1, \dots, o_j\}$, then an \'etale map $\mu \colon H \to G$ determines an element in 
\[
	F(G)(\mu i_1, \dots, \mu i_k; \mu o_1, \dots, \mu o_j).
\]

Given a graphical map $\varphi \colon G \to G'$, using that $F(G)$ is a free wheeled properad we can specify a map of wheeled properads (see \cite[Lemma 9.23]{HRYbook})
\[
	F\varphi = f \colon F(G) \to F(G')
\]
by defining the color map $E_G \to E_{G'}$ to be $\bar \varphi_0$, and value on the generator \eqref{eq v location},
\[
	f_1(v) \in F(G)(fe_1, \dots, fe_m; fe_1', \dots, fe_n'),
\]
is given by the embedding $\varphi_1(v)$.
Composition is defined using graph substitution, and associativity of composition is proved as in \cite[Proposition 2.25]{HRY-mod2}.
\end{proof}

In \cref{lem emb subgraph} we will compare embeddings with the notion of `subgraph' from Definition 9.50 of \cite{HRYbook}, and to do so it will be convenient to have the following notion of `graph complement' at hand.
We show that such graph complements always exist in \cref{lem graph complement} (for completeness, we also prove they are unique in \cref{lem complements unique}).
See \cref{fig graph complement} for a pair of examples.

\begin{definition}\label{def graph compl}
Let $f \colon G \rightarrowtail H$ be an embedding of (undirected or directed) connected graphs (in $\gcatemb$ or $\gcatembo$).
A \mydef{graph complement} of $f$ consists of a triple $(K,v^G,\alpha)$ where $K$ is another connected graph, $v^G \in V_K$ is a distinguished vertex, and $\alpha \colon K \ract H$ is an active map.
This data must satisfy $\alpha_1(v^G) = [f]$ and, for all other vertices $v \in V_K\setminus \{v^G\}$, there is a vertex $w\in V_H$ with $\alpha_1(v) = [\iota_w]$.
\end{definition}

It is implicit that if $f$ is an embedding of directed graphs, then $K$ should be directed and $\alpha$ should be an active map in $\gcato$.
In either case, since $\alpha$ is active, every vertex $w$ of $H$ is either in $f(V_G)$ or comes from a unique $v\in V_K \setminus \{v^G\}$ where $\alpha_1(v) = [\iota_w]$.
Thus $\alpha$ induces a bijection $V_K \setminus \{v^G\} \cong V_H \setminus f(V_G)$.

\begin{figure}[tbh]
\labellist
\small\hair 2pt
 \pinlabel {$f$} [ ] at 185 285
 \pinlabel {$\rat$} [ ] at 185 270
 \pinlabel {$\alpha$} [ ] at 486 285
 \pinlabel {\reflectbox{$\ract$}} [ ] at 486 270
 \pinlabel {$f$} [ ] at 185 80
 \pinlabel {$\rat$} [ ] at 185 65
 \pinlabel {$\alpha$} [ ] at 486 80
 \pinlabel {\reflectbox{$\ract$}} [ ] at 486 65
\endlabellist
\centering
\includegraphics[width=0.9\textwidth]{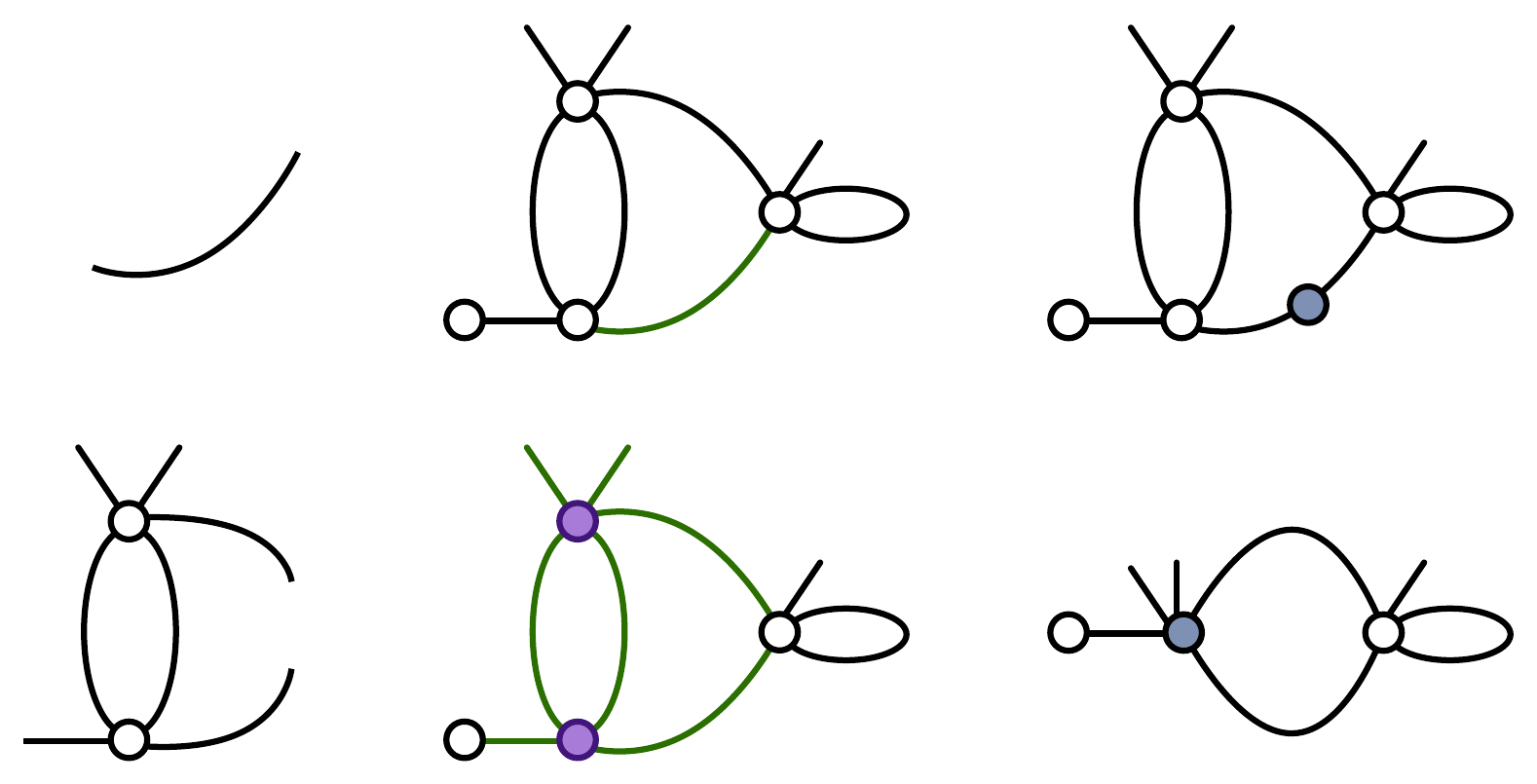}
\caption{Two examples of graph complement}
\label{fig graph complement}
\end{figure}

\begin{remark}[Relation to \'etale complement]
By deleting the vertex $v^G$ from $K$ (retaining all of the arcs, but removing $\nbhd(v^G)$ from $D_K$), we obtain another graph $K'$ which comes equipped with an \'etale map $K' \to H$.
The graph $K'$ will frequently be disconnected.
If the embedding $f \colon G \rat H$ is injective on arcs, then we could ask what the relationship is between $K'$ and the \emph{\'etale complement} of $f$ from \cite[1.6.2--1.6.4]{Kock:GHP}. 
These turn out to coincide if either $G$ contains at least one vertex or if $f$ classifies an internal edge of $H$.
If $f$ classifies an incoming or outgoing edge of $H$, then $K' \cong {\exedge} \amalg H$, whereas the \'etale complement is $H$.
\end{remark}

The following lemma is not strictly necessary for the developments below, but is included for completeness.

\begin{lemma}\label{lem complements unique}
Graph complements are unique up to isomorphism.
\end{lemma}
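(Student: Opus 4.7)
The plan is to construct an explicit isomorphism $\phi \colon K \to K'$ intertwining the complement data, and observe that such a $\phi$ is automatically unique. I build $\phi$ piece by piece, using the active-map structure to transport information through $H$.

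For the vertex component $\phi_V \colon V_K \to V_{K'}$, I apply Proposition 2.3 of \cite{HRY-mod1} to the active maps $\alpha$ and $\alpha'$: each induces a bijection of $V_K \setminus \{v^G\}$ (resp.\ $V_{K'} \setminus \{v'^G\}$) with $V_H \setminus f(V_G)$, sending $v$ to the unique $u$ in the image of $\alpha_1(v) = [\iota_u]$. Composing these bijections and sending $v^G \mapsto v'^G$ defines $\phi_V$, which by construction satisfies $\alpha_1(v) = \alpha'_1(\phi_V(v))$ for every $v \in V_K$.

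For the arc component $\phi_A \colon A_K \to A_{K'}$, I use local data: for each $v \in V_K$, the boundary condition (\cref{def graphical map}(ii), cf.\ \cref{rem graph def boundary}) gives bijections $\nbhd_K(v) \overset{\sim}{\to} \eth(\alpha_1(v))$ and $\nbhd_{K'}(\phi_V(v)) \overset{\sim}{\to} \eth(\alpha'_1(\phi_V(v)))$, compatible with the inclusions into $A_H$ via $\alpha_0$ and $\alpha'_0$. Since $\alpha_1(v) = \alpha'_1(\phi_V(v))$, these combine to a bijection $\nbhd_K(v) \overset{\sim}{\to} \nbhd_{K'}(\phi_V(v))$. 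Combining the resulting $\phi_D \colon D_K \overset{\sim}{\to} D_{K'}$ with the boundary bijection $\eth(K) \overset{\sim}{\to} \eth(H) \overset{\sim}{\leftarrow} \eth(K')$ supplied by activeness produces $\phi_A$, automatically compatible with $t$ and satisfying $\alpha'_0 \circ \phi_A = \alpha_0$.

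The main obstacle is verifying compatibility of $\phi_A$ with $\dagger$. Given $a \in A_K$, one immediately computes
\[\alpha'_0(\phi_A(a^\dagger)) = \alpha_0(a^\dagger) = \alpha_0(a)^\dagger = \alpha'_0(\phi_A(a)^\dagger),\]
so $\phi_A(a^\dagger)$ and $\phi_A(a)^\dagger$ share an $\alpha'_0$-image. The map $\alpha'_0$ is not globally injective, but it is injective on each piece of the partition $A_{K'} = \eth(K') \sqcup \bigsqcup_{v'} \nbhd_{K'}(v')$: the former by activeness, the latter since $\nbhd_{K'}(v') \cong \eth(\alpha'_1(v')) \hookrightarrow A_H$ is injective by \cite[Lemma 1.20]{HRY-mod1}. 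It therefore suffices to carry out a short case analysis on the location of $a^\dagger$ in the analogous partition of $A_K$, showing that $\phi_A(a^\dagger)$ and $\phi_A(a)^\dagger$ land in the same piece of $A_{K'}$ (the cases being: both in $\eth(K)$; one in $\eth(K)$ and one in $D_K$; both in $D_K$ at possibly distinct vertices). Once $\phi$ is established as a graph isomorphism, $\alpha' \circ \phi = \alpha$ holds by construction and $\phi(v^G) = v'^G$ by definition, giving the claimed uniqueness.
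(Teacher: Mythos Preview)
Your construction of $\phi_V$, $\phi_D$, and $\phi_A$ matches the paper's argument essentially line for line; the divergence is in the $\dagger$-compatibility check, and there your argument has a gap. You correctly reduce to showing that $\phi_A(a^\dagger)$ and $\phi_A(a)^\dagger$ lie in the same piece of the partition $A_{K'} = \eth(K') \sqcup \bigsqcup_{v'} \nbhd_{K'}(v')$, and you claim this follows from a case analysis on where $a$ and $a^\dagger$ sit in $A_K$. But that case analysis only tells you which piece $\phi_A(a)$ and $\phi_A(a^\dagger)$ land in (by the very definition of $\phi_A$); it gives no direct control over which piece $\phi_A(a)^\dagger$ lands in, since that is governed by the involution on $K'$---precisely the structure you are trying to determine. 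Piecewise injectivity of $\alpha'_0$ is true, but knowing the common $\alpha'_0$-image does not by itself locate $\phi_A(a)^\dagger$ in the partition.

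The paper sidesteps this by invoking \cite[Lemma~2.9]{HRY-mod1}: an active map whose $\alpha_1$ never takes an edge value has \emph{globally} injective $\alpha_0$. For a graph complement the only possible edge value is $\alpha_1(v^G) = [f]$, so $\alpha_0$ and $\alpha'_0$ are injective on all of $A_K$, $A_{K'}$ unless $G$ is itself an edge. In the non-edge case your piecewise argument is then unnecessary; in the edge case the paper treats the two-element set $\nbhd(v^G)$ by hand. Your approach could be repaired by computing the images $\alpha'_0(\eth(K'))$, $\alpha'_0(\nbhd_{K'}(v'))$ in $A_H$ and checking they are pairwise disjoint (which again reduces to the edge/non-edge dichotomy), but the ``short case analysis on the location of $a^\dagger$'' as stated does not close the gap.
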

\begin{proof}
Suppose $(K, v^G, \alpha)$ and $(J, u^G, \beta)$ are two graph complements of an embedding $f \colon G \rightarrowtail H$.
The isomorphisms
\[
	V_K \setminus \{v^G \} \xrightarrow{\cong} V_H \setminus f(V_G) \xleftarrow{\cong} V_J \setminus \{u^G \}
\]
extend to a bijection $z_V \colon V_K \to V_J$ sending $v^G$ to $u^G$.

By \cref{def graphical map}\eqref{old graph def vertices} we have
\[ \begin{tikzcd}
\nbhd(v) \rar[dashed,"\cong"] \dar["\dagger",hook] & \eth(\alpha_1(v)) = \eth(\beta_1(zv)) \dar[hook] & \nbhd(zv) \lar[dashed,"\cong"'] \dar["\dagger",hook] \\
A_K \rar{\alpha_0} & A_H & A_J \lar[swap]{\beta_0}
\end{tikzcd} \]
and these isomorphisms $\nbhd(v) \to \nbhd(z_V(v))$ assemble into a bijection
\[
	z_D \colon D_K = \coprod_{v\in V_K} \nbhd(v)  \to \coprod_{u \in V_J} \nbhd(u) = D_J
\]
over $z_V \colon V_K \cong V_J$.
Finally, since $\alpha$ and $\beta$ are active we have isomorphisms
\[ \begin{tikzcd}
\eth(K) \rar[dashed,"\cong"] \dar[hook] & \eth(H) \dar[hook] & \eth(J) \lar[dashed,"\cong"'] \dar[hook] \\
A_K \rar{\alpha_0} & A_H & A_J \lar[swap]{\beta_0}
\end{tikzcd} \]
which gives 
\[
	z_A \colon A_K = D_K \amalg \eth(K) \xrightarrow{\cong} D_J \amalg \eth(J) = A_J.
\]
If $a \in \nbhd(v)$ (resp.\ $a \in \eth(K)$) then $z_A(a) = z_D(a) \in \nbhd(z_V(v))$ (resp.\ $z_A(a) \in \eth(J)$) is the unique element satisfying $\alpha_0(a) = \beta_0(z_A(a))$.

The only thing to check is that $z_A\colon A_K \to A_J$ preserves the involution.

If $G$ is not an edge, then $\alpha_0$ and $\beta_0$ are injective by Lemma 2.9 of \cite{HRY-mod1}, so $z_A(a)$ is defined by the equation $\alpha_0(a) = \beta_0(z_A(a))$ with no stipulation about where $a$ or $z_A(a)$ live.
Using this equation for both $a$ and $a^\dagger$, we have
 \[ \beta_0(z_A(a^\dagger)) = \alpha_0(a^\dagger) = \alpha_0(a)^\dagger = \beta_0(z_A(a))^\dagger = \beta_0(z_A(a)^\dagger),\]
hence $z_A(a^\dagger) = z_A(a)^\dagger$.

If $G$ \emph{is} an edge, then again using Lemma 2.9 of \cite{HRY-mod1} we have $\alpha_0$ is injective when restricted to $A_K \setminus \nbhd(v^G)$.
Writing $\nbhd(v^G) = \{d_0,d_1\}$, the only identifications that $\alpha_0$ makes are $\alpha_0(d_0) = \alpha_0(d_1^\dagger)$ and $\alpha_0(d_1) = \alpha_0(d_0^\dagger)$.
A similar situation occurs with $\beta_0$ and $\nbhd(u^G)$.
Thus, as in the previous paragraph, $z_A(a^\dagger) = z_A(a)^\dagger$ so long as $a\notin \{d_0,d_1,d_0^\dagger,d_1^\dagger\}$.
But now $\nbhd(u^G) = \{ z_A(d_0), z_A(d_1) \}$, and the only way for our bijection to not be involutive would be if $z_A(d_0^\dagger) = z_A(d_1)^\dagger$ and $z_A(d_1^\dagger) = z_A(d_0)^\dagger$.
This cannot happen, as
\[
	\beta_0 (z_A(d_0^\dagger)) = \alpha_0(d_0^\dagger) = \alpha_0(d_1)
\neq \alpha_0(d_1)^\dagger= \beta_0(z_A(d_1))^\dagger. \qedhere 
\]
\end{proof}

\begin{lemma}\label{lem graph complement}
If $f \colon G \rightarrowtail H$ is an embedding between connected (directed or undirected) graphs, then a graph complement of $f$ exists.
\end{lemma}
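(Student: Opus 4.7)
Plan.

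The approach is to construct $K$ explicitly by ``contracting'' the embedded subgraph $f(G) \subseteq H$ to a single new vertex $v^G$ whose neighborhood is indexed by $\eth(G)$. The directed case reduces to the undirected one: from an undirected complement $(K, v^G, \alpha)$ of the underlying embedding, I would equip $K$ with the orientation pulled back along $\alpha_0$ from the orientation of $H$; by construction, $\alpha$ is then a morphism in $\gcato$, and activeness persists since it only concerns the boundary bijection.

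For the undirected construction when $V_G \neq \varnothing$, set
\[
  V_K = (V_H \setminus f(V_G)) \sqcup \{v^G\}, \qquad D_K = (D_H \setminus f(D_G)) \sqcup \eth(G)^{\ast},
\]
where $\eth(G)^{\ast}$ is a formal copy of $\eth(G)$ that forms the neighborhood of $v^G$. Define a prospective $\alpha_0 \colon D_K \to A_H$ by the inclusion on the first summand and by $b^{\ast} \mapsto f(b^{\dagger})$ on the second. The involution $\dagger_K$ and the full arc set $A_K$ are then forced by the identity $\alpha_0(d^{\dagger_K}) = \alpha_0(d)^{\dagger_H}$: when some element of $D_K$ already has the correct $\alpha_0$-value, use it; otherwise, adjoin a new boundary arc to $K$. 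A case analysis using the pullback property of étale maps, together with \cref{mod1 lem 1.22}, shows that $\alpha_0|_{D_K}$ is injective whenever $V_G \neq \varnothing$, so this prescription is unambiguous. The special case $G = \exedge$ must be handled separately, by directly subdividing the arc pair that $f$ classifies in $H$ with a new bivalent vertex $v^G$; the sub-cases depend on whether the arcs lie in $D_H$ or $\eth(H)$, with the completely degenerate instance $H = \exedge$ giving $K = \medstar_2$.

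With $K$ in hand, the remaining verifications are mostly routine: $K$ is connected, since $H$ is connected and the connected subgraph $f(G)$ has been collapsed to the single vertex $v^G$ while the rest of $H$ is left intact; the map $\alpha \colon K \to H$ is a graphical map, which reduces to checking the boundary-compatibility condition \eqref{old graph def boundary} of \cref{def graphical map} at each vertex of $K$; the map is active by construction; and $\alpha_1(v^G) = [f]$ while $\alpha_1(w) = [\iota_w]$ for $w \in V_H \setminus f(V_G)$. The main subtlety I anticipate is the possible non-injectivity of $f$ on arcs, which the construction accommodates by automatically producing loops at $v^G$ in $K$ in exactly the right places, but the bookkeeping needed to verify $\dagger_K$ is honestly an involution and that $K$ is genuinely connected is where the most care is required.
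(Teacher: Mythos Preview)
Your proposal is correct and follows essentially the same strategy as the paper: reduce the directed case to the undirected one via the orientation pulled back along $\alpha_0$, treat the edge case $G = {\exedge}$ separately by subdividing, and in the main case collapse $f(G)$ to a single vertex $v^G$ with neighborhood indexed by $\eth(G)$.

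The only notable difference is bookkeeping. You build $K$ bottom-up: first set $D_K = (D_H \setminus f(D_G)) \sqcup \eth(G)^{\ast}$, then grow $A_K$ by adjoining boundary arcs as dictated by the equation $\alpha_0(d^{\dagger_K}) = \alpha_0(d)^{\dagger_H}$. The paper instead carves $K$ directly out of $H$ by deleting the images of internal arcs of $G$:
\[
A_K = A_H \setminus \{ f(d) \mid d, d^\dagger \in D_G \}, \qquad D_K = D_H \setminus \{ f(d) \mid d, d^\dagger \in D_G \},
\]
with the involution and $\alpha_0$ simply inherited as subset inclusions. This makes $\eth(K) = \eth(H)$ immediate and sidesteps the case analysis you anticipate for defining $\dagger_K$. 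Your formal copies $b^{\ast}$ (with $\alpha_0(b^{\ast}) = f(b^\dagger)$) correspond exactly to the elements $f(b^\dagger)$ that survive in the paper's $D_K$, so the two descriptions are isomorphic; the paper's is just more economical.
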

This lemma is closely related to the notion of a `\emph{shrinkable pasting scheme},' which includes the pasting scheme for connected directed graphs by Proposition 3.3(i) of \cite{HRY-shrinkability}.
One can iteratively shrink away (in the graph $H$) all edges coming from internal edges of $G$ and identify their end vertices, and this will yield the graph complement (so long as $G$ is not an edge).
See the discussion following Remark 3.8 of \cite{HRY-shrinkability} for more on this viewpoint; in the following proof we simply give formulas for the resulting graph $K$.
\begin{proof}
We can ignore directionality when constructing the graph complement: if the embedding $f$ is a map in $\gcato$, then the produced undirected graph $K$ becomes directed via the composite map of presheaves $K \to H \to \opshf$.

We first address the special case when $G$ does not have a vertex.
If $G$ is an edge hitting arcs $a_0, a_1 \in A_H$ (with $a_0^\dagger = a_1$), we set $V_K = V_H \amalg \{ v^G \}$, $\nbhd(v^G)$ consists of two new elements $d_0$ and $d_1$, $D_K = D_H \amalg \nbhd(v^G)$, $A_K = A_H \amalg \{d_0,d_1\}$, and all structure the same except that $a_i^\dagger$ is redefined to be $d_i$ for $i=0,1$.
The definition of $\alpha \colon K \to H$ is forced by the requirements in \cref{def graph compl}.

Now suppose that $G$ has at least one vertex.
Define the following three sets
\begin{align*}
V_K &= (V_H \setminus f(V_G)) \amalg \{v^G\} \cong V_H / {\sim} \\
D_K &= D_H \setminus \{ f(d) \mid d, d^\dagger \in D_G \} \\
A_K &= A_H \setminus \{ f(d) \mid d, d^\dagger \in D_G \}.
\end{align*}
The quotient presentation for $V_K$ identifies all of the vertices of $f(V_G) \subseteq V_H$ to a single vertex $v^G$. We thus have a map $D_K \hookrightarrow D_H \to V_H \to V_H/{\sim} \cong V_K$. 
The set $A_K$ inherits an involutive structure from $A_H$.
This data defines a graph $K$.
Notice that $\eth(K) = A_K \setminus D_K = A_H \setminus D_H = \eth(H)$.

There is a clear candidate for a graphical map $\alpha \colon K \to H$ which is an inclusion on arcs and satisfies $\varphi_1(v) = [\iota_v]$ for $v\in V_H \setminus f(V_G)$ and $\varphi_1(v) = [f]$ for $v=v^G$. 
This satisfies the conditions from \cref{def graphical map}; the only thing to note is that $\nbhd(v^G) = f(\eth(G)^\dagger)$ which is used in showing \eqref{old graph def vertices}.
The triple $(K,v^G,\alpha)$ satisfies the conditions of \cref{def graph compl}.
\end{proof}

\begin{lemma}\label{lem emb subgraph}
The functor $F \colon \gcato \to \wproperad$ gives a bijection between embeddings $G \rat G'$ in $\gcatembo$ and subgraphs $F(G) \to F(G')$ in the sense of \cite[Definition 9.50]{HRYbook}.
\end{lemma}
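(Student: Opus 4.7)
The plan is to construct explicit mutually inverse assignments between $\gcatembo(G,G')$ and the collection of subgraphs $F(G) \to F(G')$. Going forward, an embedding $f \colon G \rat G'$ yields a wheeled properad map $F(f) \colon F(G) \to F(G')$ by functoriality of $F$ (\cref{prop functor to wproperad}). I would verify directly that $F(f)$ satisfies the axioms of a subgraph in the sense of [HRYbook, Definition 9.50]. This amounts to checking that the underlying color map $\bar f_0 \colon E_G \to E_{G'}$ together with the values of $F(f)$ on generating vertices respect the prescribed input/output profiles (which is condition \eqref{new graph def boundary} of \cref{def new graph map}, equivalently condition \eqref{old graph def boundary} of \cref{def graphical map}) and carry distinct vertices to vertex-disjoint pieces of $F(G')$ (which follows since $f$ is injective on vertices).

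Conversely, given a subgraph $\iota \colon F(G) \to F(G')$, I would extract a pair $(\bar\varphi_0, \varphi_1)$ by setting $\bar\varphi_0 \coloneqq \iota|_{E_G}$ and letting $\varphi_1(v) \in \emb(G')$ be the class of the \'etale map representing $\iota(v) \in F(G')$ for each generator $v$. Every element of the free wheeled properad $F(G')$ is represented by an \'etale map from a connected directed graph to $G'$, and the subgraph axioms should force this representing map to be an embedding whose boundary profile matches $\nbhd(v)$ under $\bar\varphi_0$. The vertex-disjointness clause of the subgraph definition then guarantees compatibility with \eqref{old graph def vertices}, so the pair $(\bar\varphi_0, \varphi_1)$ assembles into a graphical map $G \to G'$ which, by injectivity on vertices, lies in $\gcatembo$.

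That these two assignments are mutually inverse is essentially formal: one composition is tautological from $F$'s construction in \cref{prop functor to wproperad}, and the other uses that a map of free wheeled properads is determined by its action on colors together with its values on the set of generating vertices. The main obstacle lies in precisely matching the definition of subgraph from [HRYbook, Definition 9.50] with the definition of embedding in $\gcatembo$. In particular, one must take care that embeddings in our sense need not be injective on arcs --- they can identify boundary pairs as in \cref{fig contr corolla} and \cref{mod1 lem 1.22} --- and therefore the \'etale maps representing $\iota(v)$ can have their boundaries glued together in $G'$. Checking that the HRYbook subgraph axioms are flexible enough to accommodate this, while still producing genuine graph embeddings in $\gcatembo$, is the delicate part requiring careful attention to the details of the definition.
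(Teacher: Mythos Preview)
Your approach differs from the paper's, and there is a genuine gap in your reverse direction. When you extract $(\bar\varphi_0,\varphi_1)$ from a subgraph $\iota$ and claim the result ``by injectivity on vertices, lies in $\gcatembo$,'' you have not shown what is actually required: that each $\varphi_1(v)$ is a \emph{star} $[\iota_w]$ for some $w\in V_{G'}$, which is the definition of an inert map (\cref{def emb act}). A general graphical map with pairwise vertex-disjoint $\varphi_1(v)$'s is not an embedding in $\gcatembo$. Whether the subgraph axioms of \cite[Definition 9.50]{HRYbook} directly force $\iota(v)$ to be a generator of $F(G')$ is precisely the point you flag as delicate, and you do not resolve it.

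The paper sidesteps this by working through \cite[Theorem 9.52]{HRYbook} rather than Definition 9.50 directly. For the forward direction it builds the graph complement $(K,v^G,\alpha)$ of $f$ (\cref{lem graph complement}), so that $K\{G\}\cong G'$, and then invokes Theorem 9.52 to conclude $F(f)$ is a subgraph. For the reverse direction, Theorem 9.52 furnishes a graph substitution decomposition $K\{G\}\cong G'$, and then one simply uses the canonical embedding $G\rat K\{G\}\cong G'$, which is inert by construction. Thus the paper never has to argue axiom-by-axiom that the extracted $\varphi_1(v)$ are stars: the graph-substitution characterization does that work. Your route could likely be made to work, but it would require unpacking Definition 9.50 carefully and proving the ``generators to generators'' property you currently assume.
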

\begin{proof}
Suppose $f \colon G \rightarrowtail G'$ is an embedding,
which determines a map of wheeled properads $F(f) \colon F(G)\to F(G')$ by \cref{prop functor to wproperad}.
Let $\alpha \colon K \ract G'$ be the graph complement of $f$ from \cref{lem graph complement}.
Since $\alpha$ is active, it induces an isomorphism $K\{G\} \cong G'$ by Definition 1.39 and Proposition 2.3 of \cite{HRY-mod1}; here $K\{G\}$ is the graph substitution of $G$ into the vertex $v^G$ of $K$ (see Construction 1.18 of \cite{HRY-mod1}).
By Theorem 9.52 of \cite{HRYbook}, $F(G) \to F(G')$ is a subgraph.

Now suppose that $f \colon F(G) \to F(G')$ is a map of wheeled properads which is a subgraph.
By Theorem 9.52 of \cite{HRYbook} there is a graph substitution decomposition $K\{G\} \cong G'$ so that $f$ sends the edges and vertices in $G$ to their corresponding images in $K\{G\}$ through this isomorphism.
As we always have an embedding $G \rat K\{G\}$, we see that the composition $G \rat K\{G\} \cong G'$ is an embedding in $\gcato$. The two indicated constructions are inverses to one another.
\end{proof}

\begin{proof}[Proof of \cref{thm oriented wheeled equiv}]
Recall from \cref{remark no listings} that we are taking the objects of the wheeled properadic graphical category, denoted in this proof by $\mathbf{C}$, to be the same as the objects of $\gcato$.
This category comes with a faithful (but non-full) functor $\mathbf{C} \to \wproperad$ which has the same action on objects as the functor $F \colon \gcato \to \wproperad$ from \cref{prop functor to wproperad}. 
Our strategy is to exhibit a lift
\begin{equation}\label{eq whprpd triangle}
\begin{tikzcd}[column sep=tiny]
\gcato \ar[dr,"F"'] \ar[rr,dashed] & & \mathbf{C} \ar[dl] \\
 & \wproperad
\end{tikzcd} \end{equation}
which is the identity-on-objects, makes the triangle commute, and is fully-faithful.

Suppose $G$ and $G'$ are objects in $\gcato$, and let $f \colon F(G) \to F(G')$ be a map of wheeled properads.
The \emph{image} (Definition 9.55 of \cite{HRYbook}) of $f$ is the $G'$-decorated graph $(f_0G)\{f_1(u)\}$.
Namely, $f_1(u)$ is given by a connected graph $K_u$ together with an \'etale map $K_u \to G'$, and $(f_0G)\{f_1(u)\}$ is the (connected) graph substitution $G\{K_u\}_{u\in V_G}$ together with the induced \'etale map. 
An embedding $h\colon H \rat G$ determines a subgraph $F(H) \to F(G)$ by the equivalence of \cref{lem emb subgraph}, and there is an associated image of this subgraph given as the \'etale map 
\begin{equation}\label{image of H} H\{K_{hv}\}_{v\in V_H} \to G\{K_{u}\}_{u\in V_G} \to G'. \end{equation}

Leaning on the equivalence from \cref{lem emb subgraph}, by \cite[Theorem 9.62]{HRYbook} the map $f$ constitutes a morphism in $\mathbf{C}(G,G')$ if and only if for every embedding $h$ the \'etale map \eqref{image of H} is an embedding.
When $f= F\varphi$, by \cref{cns old to new} we have that \eqref{image of H} is an embedding representing $\hat \varphi [h]$, hence $F\varphi$ is in $\mathbf{C}(G,G')$.
On the other hand, if $f\in \mathbf{C}(G,G')$ is an map in the wheeled properadic graphical category, we can define a graphical map $\varphi \colon G \to G'$ by setting $\bar \varphi_0 = f_0 \colon E_G \to E_G$ and letting $\hat \varphi [h]$ be the embedding \eqref{image of H} (or using the special case when $h = \iota_v$ and letting $\varphi_1(v)$ be represented by the embedding $K_v \rat G'$ associated to $f_1(v)$).
Using properties of graph substitution, one checks that $(\bar \varphi_0, \hat \varphi)$ satisfies the conditions from \cref{prop oriented graph cat}, hence constitutes a morphism in $\gcato(G,G')$.

By looking at their actions on edges and vertices, we see that the two assignments $\varphi \mapsto F\varphi$ and $f \mapsto \varphi$ are inverses. 
Thus the desired identity-on-objects lift \eqref{eq whprpd triangle} of $F$ exists and is fully-faithful.
\end{proof}

\begin{remark}
An observant reader may have noticed that we glossed over a subtlety in the second paragraph of the preceding proof, namely the fact that our current formalism for graphs does not include \emph{nodeless loops} (\cref{ex dir nodeless loop}).
It is possible that either $K_u$ or one of the graph substitutions $G\{K_u\}$, $H\{K_{hv}\}$ is a nodeless loop, in which case one should be careful with the phrase \emph{\'etale map} (here it should mean a natural transformation satisfying \cref{def extended etale embedding}\eqref{item p-map} from \cref{sec extended}). 
This is not so important, but what is important is that we never regard the map \eqref{image of H} as an embedding when $H\{K_{hv}\}$ is a nodeless loop.
This follows from \cite[Theorem 9.52]{HRYbook} since $G'$ is not a nodeless loop, and also matches our later convention from \cref{def extended etale embedding}. 
\end{remark}

\section{Hypermoment categories, algebraic patterns, Segal presheaves}\label{sec hypermoment}
We now turn to the structure of graph categories that make them suitable for describing various kinds of operadic structures (operads, properads, cyclic operads, and so on).
This is all akin to how the inclusion of the simplicial indexing category $\mathbf{\Delta} \hookrightarrow \cat$ induces a fully-faithful functor $\cat \to \widehat{\mathbf{\Delta}}$, and we can identify its essential image as those simplicial sets $X$ satisfying a \emph{Segal condition} 
\[
	X_n \cong X_1 \times_{X_0} X_1 \times_{X_0} \cdots \times_{X_0} X_1
\]
(see \cref{def segal}).
We will situate our discussion within the (closely related) general frameworks of Berger's hypermoment categories \cite{Berger:MCO} and the algebraic patterns of Chu--Haugseng \cite{ChuHaugseng:HCASC}.

In this section, we will discuss all of the graph categories that appear in \cref{fig graph cat functors} of the introduction, though several such only make their appearances in appendices:
\begin{itemize}
\item The full subcategories $\gcatcyc \subset \gcatnought \subset \gcat$ consist of those undirected graphs which are  simply-connected, and graphs in $\gcatcyc$ have non-empty boundary. These categories are considered in \cref{sec tree cat}.
\item The properadic graphical category $\pgcat$ from \cref{def properadic gcat} (called $\mathbf{\Gamma}$ in \cite{HRYbook}), which is a non-full subcategory of $\gcato$ (see \cref{thm properadic gcat}) whose objects are acyclic directed graphs.
\item The dioperadic graphical category $\pgcatsc$ (called $\mathbf{\Theta}$ in \cite{HRYbook}), which is a full subcategory of both $\pgcat$ and $\gcato$ on the simply-connected graphs. 
\item The extended graphical category $\egcat$ and the extended oriented graphical category $\egcato$, which appear in \cref{sec extended}.
These include a single new object (up to isomorphism), the nodeless loop, which does not fit into our earlier graphs definition.
\end{itemize}
The appendices that the categories from the first three bullet points appear in are mainly devoted to comparing alternative descriptions of these categories, and these descriptions are not necessary for understanding this section.
In particular, the first and third bullet points give everything that's needed to think about $\gcatcyc$, $\gcatnought$, and $\pgcatsc$, while one can take \cref{def convex inclusion} and \cref{thm properadic gcat} as providing a definition of $\pgcat$.
A reader wanting a shortcut for the extended graphical categories can rely on \cref{facts about nodeless loop} and the statement of \cref{thm extended graph cat} for $\egcat$, and \cref{def extended oriented} for $\egcato$. 
Alternatively, the reader may elect to ignore all of these additional categories and focus on the particular cases of $\gcat$ and $\gcato$.

\begin{remark}[Factorization system on pointed finite sets]
Recall that the category of finite pointed sets, $\finset_*$, has an inert-active factorization system (see Remark 2.1.2.2 of \cite{Lurie:HA}).
A basepoint-preserving function $f \colon X \to Y$ is \mydef{inert} if $f^{-1}(y)$ has cardinality $1$ whenever $y$ is not the basepoint, and is \mydef{active} if the preimage of the basepoint of $Y$ is just the basepoint of $X$.

If $A$ is a finite set and $A_+$ is $A$ together with an added basepoint, then maps $A_+ \to B_+$ correspond to partial functions $A \rightsquigarrow B$.
Under this interpretation, active maps correspond to total functions $A\to B$. 
Inert maps are those which give a bijection when restricted to their domain of definition.
(In this interpretation, a partial map $A \rightsquigarrow B$ factors as $A \rightsquigarrow W \to B$, where $W\subseteq A$ is the domain of definition.)

It follows that $\finsetstarop$ has an active-inert factorization system; we also utilize a skeleton $\fstarop \subseteq \finsetstarop$ with objects $\lr{n} = \{1, \dots, n\}\amalg \{ \ast \}$ for $n\geq 0$.
The category $\fstarop$ is isomorphic to Segal's category $\Gamma$ \cite{Segal:CCT}.
Inert maps $\lr{1} \rat X$ in $\finsetstarop$ classify non-basepoint elements of $X$.
\end{remark}

\begin{definition}
Let $\CC$ be a category equipped with an active-inert factorization system and a functor $\gamma \colon \CC \to \finsetstarop$ which respects the factorization systems.
\begin{itemize}
	\item 
An object $u \in \CC$ is a \mydef{unit} just when
\begin{itemize}
\item $\gamma(u) \cong \lr{1}$, and
\item any active map with codomain $u$ has precisely one inert section.
\[ \begin{tikzcd}[column sep=small]
u \ar[rr, tail, dashed, "\exists !"] \ar[dr,"="'] & & c \ar[dl,-act] \\
& u
\end{tikzcd} \]
\end{itemize}
\item A \mydef{nilobject} is an object $c\in \CC$ with $\gamma(c) \cong \lr{0}$.
\end{itemize}
\end{definition}

The following is from \cite[Definition 3.1]{Berger:MCO}, and would be called a \emph{unital} hypermoment category there.

\begin{definition}
A \mydef{hypermoment category} consists of a category $\CC$, an active-inert factorization system on $\CC$, and a functor $\CC \to \fstarop$ respecting the factorization systems.
These data must satisfy the following:
\begin{enumerate}
\item For each $c \in \CC$ and each inert map $\lr{1} \rat \gamma(c)$ in $\fstarop$, there is an essentially unique inert lift $u \rat c$ where $u$ is a unit.\label{def hm inert lifts}
\item For each $c\in \CC$, there is an essentially unique active map $u \ract c$ whose domain is a unit.\label{def hm active}
\end{enumerate}
\end{definition}

\begin{remark}
One may replace the skeleton $\fstarop$ in the preceding definition with the larger category $\finsetstarop$ by weakening what is meant by `lift' in \eqref{def hm inert lifts}, i.e.\ by requiring composition with the unique isomorphism $\lr{1} \cong \gamma(u) \rat \gamma(c)$ yields the original map $\lr{1} \rat \gamma(c)$.
We will generally use this more expansive version without comment, and only provide functors $\gamma \colon \CC \to \finsetstarop$.
This is fine, as the inclusion from $\fstarop$ to $\finsetstarop$ is an equivalence.
\end{remark}

Examples of hypermoment categories include the dendroidal category $\mathbf{\Omega}$ from \cref{def dendroidal category} and the properadic graphical category $\pgcat$ from \cref{def properadic gcat}, each equipped with the functor $G \mapsto (V_G)_+$ (see \cite[3.4]{Berger:MCO}, \cite[2.2.22--23]{ChuHackney}). 

\begin{example}
The graphical category $\gcat$ (or the extended version $\egcat$) is a hypermoment category.
We define a functor $\gamma \colon \gcat \to \finsetstarop$ which on objects sends $G$ to $(V_G)_+$.
There is a composite function (see \cref{def vertex sum})
\[ \begin{tikzcd}
t_{\varphi} \colon V_G \rar[hook] & \emb(G) \rar["\hat\varphi"] & \emb(G') \rar["\varsigma"] & \wp(V_{G'}) 
\end{tikzcd} \]
and the desired map of finite pointed sets
\[
	\gamma(\varphi)^\oprm \colon (V_{G'})_+ \to (V_G)_+
\]
sends all elements of $t_{\varphi}(v) \subseteq V_{G'}$ to $v\in V_G$, and all elements in $V_{G'} \setminus \bigcup_v t_{\varphi}(v)$ to the basepoint.
This map is singly-defined by \cref{def graphical map}\eqref{old graph def vertices} (which implies the sets $t_\gamma(v)$ are pairwise disjoint), and $\gamma$ is a functor since graphical maps respect the partial orders on embedding sets.

Given any graph $G$, there is an essentially unique active map from an $n$-star to $G$, where $n$ is the size of the boundary $\eth(G)$ (see \cref{ex edge star,ex graphical star}). 
On the other hand, each vertex $v$ determines an embedding $\iota_v \colon \medstar_v \rat G$ from a star, and any other map from an $n$-star classifying $v$ is isomorphic to this one.
It thus remains to show that the units of $\gcat$ are precisely the stars.

If $H$ is a graph which has a unique vertex and at least one internal edge, then there is the active map $\medstar_n \ract G$ from above, and this map does not have an inert section since embeddings send internal edges to internal edges.
Thus $H$ is not a unit despite having $\gamma(H) \cong \lr{1}$.
On the other hand, if $G$ is an arbitrary connected graph and $\varphi \colon G \ract \medstar_n$ is an active map, then since $\gcatnought\subset \gcat$ is a sieve (Proposition 5.2 of \cite{HRY-mod1}), we deduce that $G \in \gcatnought$ is a tree.
(As in \cref{rmk nodeless maps}, it is not possible for $G$ to be the nodeless loop and have a map to $\medstar_n$, hence $\varphi$ is a map in $\gcat$.)
By considering the generalized Reedy structure (Proposition 5.5 of \cite{HRY-mod1}) on $\gcatnought$, we can conclude that $\varphi \in \gcatnought^-$ and is a composition of codegeneracy maps.
By inspection, this map has a unique inert section, hence $\medstar_n$ is a unit. 

The nilobjects of $\gcat$ are the edges, while the nilobjects of $\egcat$ are the edges and the nodeless loops.
\end{example}

\begin{remark}[Other graph categories]\label{rem other graph cat}
Each of the other graph categories considered in this paper ($\gcatnought$, $\gcatcyc$, $\pgcatsc$ $\pgcat$, $\gcato$, and $\egcato$) admits a functor to $\egcat$ sending an object to the corresponding (undirected) graph, as we see from \cref{fig graph cat functors} of the introduction. 
Each of these becomes a hypermoment category by composing with $\gamma \colon \egcat \to \finsetstarop$.
In all cases the units are again stars. 
\end{remark}

The graphical category $\gcat$ is \mydef{strongly unital} in the sense of \cite[Definition 3.12]{Berger:MCO}.
This means that the full subcategory of $\gcatemb$ on the units and nilobjects (that is, the stars and the edge) is dense, or in other words, every graph is a canonical colimit (in $\gcatemb$) of its vertex stars and its edges.
The same is true for all of the other graph categories from \cref{rem other graph cat}.
\begin{remark}\label{remark nilobjects}
Though it is true that $\egcat$ and $\egcato$ are also strongly unital, the notion of Segal object we obtain from \cite[3.11]{Berger:MCO} is different from what appeared in \cite[Definition 4.11]{HRY-mod1}, \cite[Notation 3.11]{HRY-mod2}, and \cite[\S2.1]{HRYfactorizations} in the non-reduced case.
In particular, if $X$ is a strict Segal presheaf, in the second set of references the values of $X$ on edges and nodeless loops will coincide, while in the first reference they may be distinct.
This coincidence was necessary for establishing the nerve theorems for these categories (relating to modular operads, resp.\ wheeled properads).
\end{remark}

\begin{definition}
A hypermoment category $\CC$ is \mydef{extensional} if, given a unit $u$, an active map $u\ract d$, and an inert map $u\rat c$, the pushout 
\begin{equation*}\label{eq pushout extensional} \begin{tikzcd}
u \rar[-act] \dar[tail] \ar[dr, phantom, "\ulcorner" very near end] & d \dar[tail] \\
c \rar & d' 
\end{tikzcd} \end{equation*}
exists in $\CC$ with $d \rat d'$ inert, and is sent by $\gamma$ to a pushout in $\fstarop$.
\end{definition}

\begin{example}\label{example extensional}
The graphical category $\egcat$ is extensional. 
Indeed, suppose that $v \in G$ has valence $n$, and $H$ is a graph with $n$-element boundary set.
Then any active map $\medstar_v \ract H$ gives a bijection between $\eth(H)$ and $\nbhd(v) \cong \eth(\medstar_v)$, so permits us to define a graph substitution $G\{H\}$.
This now fits into the diagram
\[\begin{tikzcd}
\medstar_v \rar[-act] \dar[tail,"\iota_v"] \ar[dr, phantom, "\ulcorner" very near end] & H \dar[tail] \\
G \rar[-act] & G\{H\} 
\end{tikzcd} \]
where the bottom map is active and the right map is an embedding.
We have $V_{G\{H\}} \cong (V_G \setminus \{v \}) \amalg V_H$, and this allows one to show the preceding square is a pushout by producing explicit maps out of $G\{H\}$ using \cref{def graphical map,def extended graphical cat}.
Our square maps via $\gamma$ to the square
\[ \begin{tikzcd}
((V_G\setminus \{v\}) \amalg V_H)_+ \rar[-act] \dar[tail] \ar[dr, phantom, "\lrcorner" very near start] & (V_G)_+ \dar[tail] \\
(V_H)_+ \rar[-act] & \{ v\}_+
\end{tikzcd} \]
in $\finset_\ast$ which is a pullback of finite sets since there is a bijection
\[
	(V_G\setminus \{v\}) \amalg V_H \cong \big\{ (v',\ast) \mid v'\in V_G \setminus \{v\} \big\} \amalg \big\{ (v,w) \mid w\in V_H \big\}.
\]
The other categories of graphs from \cref{rem other graph cat} are extensional by a similar argument.
\end{example}

The following notion was introduced in \cite[Definition 2.1]{ChuHaugseng:HCASC}.
\begin{definition}\label{def alg pattern}
An \mydef{algebraic pattern} is an $\infty$-category $\PP$ equipped with an inert-active factorization system $(\PP_\intrm, \PP_\actrm)$, along with some specified full subcategory $\PP_\elrm \subseteq \PP_\intrm$ whose objects are called \mydef{elementary}.
A \mydef{morphism of algebraic patterns} is a functor which preserves active maps, inert maps, and elementary objects.
\end{definition}
The notion of factorization system here reduces to that of orthogonal factorization system when $\PP$ is a 1-category, which is our only case of interest in this paper.

\begin{remark}
Every hypermoment category $\CC$ yields two canonical choices for algebraic pattern structure on $\CC^\oprm$, depending only on the choice of elementary objects.
If $(\CC_\actrm, \CC_\intrm)$ is the factorization system on $\CC$, then we utilize the factorization system $(\CC_\intrm^\oprm, \CC_\actrm^\oprm)$ on $\CC^\oprm$.
We use similar notation to \cite[\S3]{ChuHaugseng:HCASC}.
\begin{itemize}
\item The elementary objects of the algebraic pattern $\CC^{\oprm,\natural}$ are the units and the nilobjects of $\CC$. This is the choice made in \cite[Definition 3.12]{Berger:MCO}.
\item The elementary objects of the algebraic pattern $\CC^{\oprm,\flat}$ are the units of $\CC$. 
\end{itemize}
In what follows, we mostly consider the associated `natural' algebraic pattern associated to the hypermoment categories under consideration.
The exceptions are the categories $\egcat$ and $\egcato$, where we will exclude nodeless loops from our set of elementary objects (following the reasoning in \cref{remark nilobjects}).
It \emph{is} important that we do not work with the `flat' algebraic pattern when comparing directed and undirected graphs, as the flat analogue of \cref{thm lke segal} does not hold.
\end{remark}

\begin{definition}
Let $\CC$ be one of the graph categories under consideration, regarded as both a hypermoment category and the opposite of an algebraic pattern.
We write $\CC_\elrm \subseteq \CC_\intrm$ for the full subcategory on the units/stars and the edges, which we call `elementary' objects.
We will write $\CC^\intrm_{/c} \subseteq \CC_{/c}$ for the full subcategory of the slice on the inert maps with codomain $c$.
Likewise, we write $\CC^\actrm_{c/} \subseteq \CC_{c/}$ for the full subcategory on the coslice consisting of active maps with domain $c$.
By the factorization system, morphisms in $\CC^\intrm_{/c}$ will also be inert maps (likewise, morphisms in $\CC^\actrm_{c/}$ will be active maps).
Finally, we write $\CC^\elrm_{/c} \coloneqq \CC_\elrm \times_{\CC_\intrm} \CC^\intrm_{/c}$ for the category whose objects are inert maps with codomain $c$ and elementary domain.
\end{definition}

In this way, all of the graph categories appearing in \cref{fig graph cat functors} of the introduction are hypermoment categories, either by the above or by \cite{Berger:MCO}, and their opposites are algebraic patterns (many were listed in \cite{ChuHaugseng:HCASC}).
Each of the indicated functors yields a morphism of algebraic patterns in the sense of \cref{def alg pattern}.
Yet the point of this machinery is to study Segal objects, and morphisms of algebraic patterns are frequently poorly behaved with respect to such.

\begin{definition}\label{def segal}
Let $\CC$ be one of our graph categories.
A presheaf $Z \in \widehat{\CC}$ is \mydef{Segal} if, for each object $c\in \CC$, the canonical map
\[ 
Z_{c} \to \lim_{w \rat c} Z_w
\]
is an isomorphism, where the limit on the right is that for the composite 
\[ (\CC^{\elrm}_{/c})^\oprm \xrightarrow{\dom} \CC^\oprm \xrightarrow{Z} \set. \]
\end{definition}

We emphasize once again that this notion differs from that of \cite[3.11]{Berger:MCO} when $\CC$ is $\egcat$ or $\egcato$, since we do not consider \emph{all} nilobjects as elementary (see \cref{remark nilobjects} for an explanation).

\begin{remark}
A variety of nerve theorems say that the full subcategory of Segal presheaves is equivalent to the corresponding category of operadic structures.
A summary of these (anticipated) results is included in the following table.

\begin{center}
\begin{tabular}{lll}
\toprule
Category & Structure & Nerve Theorem
\\ \midrule 
$\mathbf{\Omega}$ & operads & \cite{MoerdijkWeiss:OIKCDS,Weber:F2FPRA,Kock:PFT,CisinskiMoerdijk:DSSIO} \\
$\gcatcyc$ & cyclic operads & \cite{Elliot:Thesis} \\
$\gcatnought$ & augmented cyclic operads & ? \\
$\gcat$ & modular operads & \cite{HRY-mod2} \\
$\pgcatsc$ & dioperads & ? \\
$\pgcat$ & properads & \cite{HRYbook} \\
$\gcato$ & wheeled properads & \cite{HRYbook,HRYfactorizations} \\
\bottomrule
\end{tabular}
\end{center}
The categories $\gcatnought$ and $\pgcatsc$ have blanks in the table, as I am unaware of anyone establishing the relevant nerve theorems in these cases. 
One expects that each of these should follow from Weber's abstract nerve theorem \cite[\S4]{Weber:F2FPRA} (though this theorem does not apply to $\gcat$, $\pgcat$, $\gcato$). 
Further, the nerve theorem for augmented cyclic operads implies that for dioperads, as explained in \cite{Hackney:SCGO}.
A closely related nerve theorem for the category $\mathbf{\Xi}$ and cyclic operads whose set of colors has a trivial involution appears in \cite{HRY-cyclic}.
\end{remark}

To simplify matters, we regard all vertical functors and the functor $\pgcat \to \gcato$ in \cref{fig graph cat functors} as replete subcategory inclusions. 
The following lemma does not hold for $\pgcat \to \gcato$ essentially because $\emb(G)$ and $\ssub(G)$ (used in the definition of $\pgcat$ in \cref{sec properadic gcat}) differ for many acyclic directed graphs $G$.

\begin{lemma}\label{lem discrete fibrations}
All displayed functors in \cref{fig graph cat functors} of the introduction, with the exception of $\pgcat \to \gcato$, are discrete fibrations.
\end{lemma}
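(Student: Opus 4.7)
The functors split into two groups. The first comprises the horizontal forgetful arrows $\gcato \to \gcat$, $\egcato \to \egcat$, ${\gcatnought}_{/\opshf} \to \gcatnought$, and (used in a factorization below) ${\gcatcyc}_{/\opshf} \to \gcatcyc$. Unwinding the pullback definition of $\gcato$ and applying the Yoneda lemma, each is the Grothendieck projection from the category of elements of $\opshf$, restricted to the appropriate full subcategory $\CC \subseteq \gcat$; such projections are always discrete fibrations, since given $(G',x')$ in the source and a morphism $\psi \colon H \to G'$ in $\CC$, the unique lift is $\psi \colon (H, \psi^* x') \to (G', x')$.

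The second group consists of the vertical replete full subcategory inclusions, each of which is a discrete fibration exactly when it is a sieve. Two cases are immediate: $\gcatnought \hookrightarrow \gcat$ is a sieve by Proposition~5.2 of \cite{HRY-mod1}, and $\gcat \hookrightarrow \egcat$ is a sieve because in $\egcat$ the nodeless loop admits no morphisms to a graph with vertices (cf.~\cref{rmk nodeless maps}). The inclusions $\gcato \hookrightarrow \egcato$, ${\gcatnought}_{/\opshf} \hookrightarrow \gcato$, and $\pgcatsc \hookrightarrow \pgcat$ are then sieves by pullback stability: the first two are base changes along forgetful functors from the first group, and the third is the intersection of ${\gcatnought}_{/\opshf}$ with $\pgcat$ inside $\gcato$.

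The substantive sieve verifications are for $\gcatcyc \hookrightarrow \gcatnought$ and $\mathbf{\Omega} \hookrightarrow \pgcatsc$, both of which exploit the new description of graphical maps. In the first, given $\varphi \colon H \to G'$ in $\gcatnought$ with $\eth(H) = \varnothing$, the boundary compatibility \cref{def new graph map}\eqref{new graph def boundary} applied to $[\id_H]$ yields $\eth \hat\varphi[\id_H] = \mathbb{N}\varphi_0(\varnothing) = \varnothing$, so \cref{lem boundary inclusion} forces a representative of $\hat\varphi[\id_H]$ to be an isomorphism onto $G'$, giving $\eth(G') = \varnothing$. In the second, given $\varphi \colon H \to G'$ with $G' \in \mathbf{\Omega}$ and $v \in V_H$, the analogous condition \cref{prop oriented graph cat}\eqref{oriented graph map boundary} applied to $[\iota_v]$ yields $\mathbb{N}\bar\varphi_0(\out(v)) = \out(\hat\varphi[\iota_v])$ in $\mathbb{N}E_{G'}$; since any embedded connected subgraph of a rooted tree is itself a rooted tree (the valence splitting of each vertex is preserved by \'etale maps) and hence has exactly one output, the right side is a singleton, forcing $|\out(v)| = 1$ and $H \in \mathbf{\Omega}$.

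The remaining horizontal arrows are handled by combining the previous ingredients. The functor $\mathbf{\Omega} \to \gcatcyc$ factors as the sieve $\mathbf{\Omega} \hookrightarrow {\gcatcyc}_{/\opshf}$ (itself a pullback of $\mathbf{\Omega} \hookrightarrow \pgcatsc$) followed by the Grothendieck projection ${\gcatcyc}_{/\opshf} \to \gcatcyc$, both discrete fibrations. The arrow $\pgcatsc \xrightarrow{\simeq} {\gcatnought}_{/\opshf}$ is in fact an equality of subcategories of $\gcato$, since in a directed tree the acyclicity condition is automatic; as the identity functor it is trivially a discrete fibration. The only real obstacle is the two boundary-compatibility sieve arguments above; everything else follows from pullback stability of discrete fibrations and the standard fact that projections from categories of elements are discrete fibrations.
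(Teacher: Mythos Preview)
Your proposal is correct and follows the same two-pronged strategy as the paper: horizontal functors are handled via the category-of-elements description attached to the orientation presheaf, and vertical functors are sieves. The paper's proof is two sentences---it simply asserts that the vertical functors are all sieves and that the four horizontal functors landing in the right-hand column are discrete fibrations by \cref{remark pre-discrete fibration}---whereas you supply the verifications the paper leaves to the reader (notably that $\gcatcyc \hookrightarrow \gcatnought$ and $\mathbf{\Omega} \hookrightarrow \pgcatsc$ are sieves) and you treat the edge cases $\mathbf{\Omega} \to \gcatcyc$ and $\pgcatsc \xrightarrow{\simeq} {\gcatnought}_{/\opshf}$ with more care than the paper does explicitly.
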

\begin{proof}
For the four horizontal functors landing in the right-hand column, this follows from \cref{remark pre-discrete fibration} since given a map $\varphi \colon G \to G'$ of undirected graphs and an element $x' \in \opshf_{G'}$, there is a unique element $x\in \opshf_G$ so that $\varphi \colon (G,x) \to (G',x')$ is a morphism.

The vertical functors are all sieves, hence discrete fibrations.
\end{proof}

\begin{lemma}\label{lem strong segal}
If $f \colon \CC \to \DD$ is one of the functors appearing in \cref{fig graph cat functors}, then 
\[
	\CC^\elrm_{/c} \to \DD^\elrm_{/f(c)}
\]
is an equivalence for each object $c\in \CC$.
In particular, $\CC^\oprm \to \DD^\oprm$ is a \emph{strong Segal morphism} in the sense of \cite{ChuHaugseng:HCASC}.
\end{lemma}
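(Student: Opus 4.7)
The plan is to handle the cases singled out by \cref{lem discrete fibrations} uniformly, then treat $\pgcat \to \gcato$ separately. Every functor in \cref{fig graph cat functors} except $\pgcat \to \gcato$ is either a discrete fibration or the equivalence $\pgcatsc \xrightarrow{\simeq} \gcatnought_{/\opshf}$; the equivalence presents no difficulty.

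For a discrete fibration $f \colon \CC \to \DD$, the induced functor $\CC_{/c} \to \DD_{/f(c)}$ is an isomorphism of categories: bijectivity on objects is the unique-lifting property itself, and bijectivity on morphisms in the slice follows from the same property applied to composites with the codomain map. One then restricts this isomorphism to the elementary subcategories, which requires that $f$ both preserve and reflect inert maps and elementary objects. Preservation is automatic for a morphism of our hypermoment categories. Reflection holds because in each of the graph categories under consideration an inert map is precisely an embedding and an elementary object is precisely a star or the edge; both conditions are detected by the underlying undirected graph, which is what the horizontal forgetful functors track and is left untouched by the vertical subcategory inclusions. Nodeless loops do not qualify as elementary objects of $\egcat$ or $\egcato$ by \cref{remark nilobjects}, so no complication arises there.

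For $f \colon \pgcat \to \gcato$ I would verify directly that $\pgcat^\elrm_{/c} \to \gcato^\elrm_{/f(c)}$ is an equivalence. Injectivity on objects and faithfulness on morphisms are immediate since $\pgcat \hookrightarrow \gcato$ is a (non-full) subcategory inclusion. Fullness of this restriction and essential surjectivity both reduce to the same point: an object on the right is an embedding $e \rat f(c)$ whose domain $e$ is a star or the edge, such a domain is trivially an acyclic directed graph and therefore lies in $\pgcat$, and by the description of $\pgcat$ from \cref{thm properadic gcat} any embedding in $\gcato$ with elementary domain and acyclic codomain is itself a morphism of $\pgcat$ (the convexity condition of \cref{def convex inclusion} is vacuous for embeddings out of a star or edge). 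The concluding ``in particular'' is then a matter of unfolding definitions: the strong Segal condition on a morphism of algebraic patterns in \cite{ChuHaugseng:HCASC} is precisely the equivalence on elementary slices that we have verified.

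The main obstacle is the $\pgcat \to \gcato$ case, since it is the only functor not already covered by the uniform discrete-fibration argument and therefore requires appealing to the specific combinatorial description of the morphisms of $\pgcat$ in order to see that every embedding from an elementary object into an acyclic graph is automatically a morphism there.
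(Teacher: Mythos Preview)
Your proof is correct and follows essentially the same approach as the paper: split into the discrete-fibration case (plus the equivalence $\pgcatsc \simeq \gcatnought_{/\opshf}$) and the exceptional $\pgcat \to \gcato$ case. The paper packages the discrete-fibration argument as a pullback square
\[ \begin{tikzcd}
\CC^\elrm_{/c} \dar \rar \ar[dr, phantom, "\lrcorner" very near start] & \DD^\elrm_{/f(c)} \dar \\
\CC_{/c} \rar & \DD_{/f(c)}
\end{tikzcd} \]
whose bottom map is an isomorphism by \cref{lem discrete fibrations}; your explicit preservation/reflection of inert maps and elementary objects is exactly what justifies this square being a pullback, so the content is identical. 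For $\pgcat \to \gcato$, the paper simply asserts that $\pgcat_\intrm(K,G) \to (\gcato)_\intrm(K,G)$ is a bijection for elementary $K$, whereas you spell out why via \cref{thm properadic gcat}. One small terminological point: calling convexity ``vacuous'' for stars and edges is slightly loose; it is not vacuous but rather automatically satisfied in an acyclic graph, which is precisely the content of the first item of \cref{example structured subgraphs}.
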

\begin{proof}
The elementary objects in $\pgcat$ and $\gcato$ coincide, and if $K$ is an elementary object and $G$ is an arbitrary acyclic graph, then $\pgcat_\intrm(K,G) \to (\gcato)_\intrm(K,G)$ is a bijection.
It follows that $\pgcat^\elrm_{/G} \to (\gcato)^\elrm_{/G}$ is an isomorphism.

For any of the other functors $f\colon \CC \to \DD$, the bottom map of the following pullback is an isomorphism by \cref{lem discrete fibrations}, hence the top map is an isomorphism.
\[ \begin{tikzcd}
\CC^\elrm_{/c} \dar \rar \ar[dr, phantom, "\lrcorner" very near start] & \DD^\elrm_{/f(c)} \dar \\
\CC_{/c} \rar & \DD_{/f(c)}
\end{tikzcd} \]
\par \vspace{-1.5\baselineskip} \qedhere
\end{proof}

The previous lemma tells us, in particular, that restricting Segal presheaves gives us Segal presheaves (this is precisely the point of the definition of strong Segal morphism in \cite{ChuHaugseng:HCASC}).

\begin{proposition}\label{prop restriction segal}
Suppose $f \colon \CC \to \DD$ is one of the functors appearing in \cref{fig graph cat functors} of the introduction.
If $M \in \widehat{\DD}$ is Segal, then so is $f^*M \in \widehat{\CC}$.
If $f$ is one of the functors \[ \mathbf{\Omega} \to \gcatcyc \qquad \pgcatsc \to \gcatnought \qquad \gcato \to \gcat \qquad \egcato \to \egcat,\] then if $M \in \widehat{\DD}$ is a presheaf and $f^*M \in \widehat{\CC}$ is Segal, then $M$ is Segal. 
\end{proposition}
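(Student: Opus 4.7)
The plan is to deduce both parts from \cref{lem strong segal}, which provides, for each functor $f \colon \CC \to \DD$ appearing in \cref{fig graph cat functors} and each $c \in \CC$, an equivalence of categories $\CC^\elrm_{/c} \xrightarrow{\simeq} \DD^\elrm_{/f(c)}$ sending $(w \rat c)$ to $(f(w) \rat f(c))$.  Because $f$ is a morphism of hypermoment categories, it carries stars to stars and edges to edges, so this equivalence commutes with the domain functors; consequently, for any presheaf $M \in \widehat{\DD}$ one has a canonical isomorphism
\[ \lim_{w' \in \DD^\elrm_{/f(c)}} M_{\dom(w')} \;\cong\; \lim_{w \in \CC^\elrm_{/c}} (f^*M)_{\dom(w)}. \]

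For the first implication, suppose $M$ is Segal and fix $c\in\CC$.  The Segal condition for $M$ at $f(c)$ combined with the above gives
\[ (f^*M)_c = M_{f(c)} \;\cong\; \lim_{w' \in \DD^\elrm_{/f(c)}} M_{\dom(w')} \;\cong\; \lim_{w \in \CC^\elrm_{/c}} (f^*M)_{\dom(w)}, \]
which is exactly the Segal condition for $f^*M$ at $c$.

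For the converse, restricted to the four listed functors, the additional input I need is that each such $f$ is surjective on objects.  In every case, an object of $\CC$ is an object of $\DD$ equipped with an orientation $x\in\opshf_G$, possibly subject to a further tree-orientation constraint. Such an $x$ always exists because the involution $\dagger\colon A_G\to A_G$ is fixpoint-free: choosing one arc from each $\dagger$-orbit and assigning it $+1$ (the rest $-1$) produces a valid element of $\opshf_G$. For the two tree cases $\mathbf{\Omega}\to\gcatcyc$ and $\pgcatsc\to\gcatnought$ this orientation can be arranged to be rooted (by picking a boundary arc for the root and orienting all remaining edges towards it) or acyclic (automatic for trees). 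Given $G' \in \DD$, pick $c \in \CC$ with $f(c) = G'$; the Segal condition for $f^*M$ at $c$, combined with the canonical limit isomorphism above, yields
\[ M_{G'} = (f^*M)_c \;\cong\; \lim_{w \in \CC^\elrm_{/c}} (f^*M)_{\dom(w)} \;\cong\; \lim_{w' \in \DD^\elrm_{/f(c)}} M_{\dom(w')}, \]
the Segal condition for $M$ at $G'$.  The main obstacle is mild: one must verify that $f$ is surjective on objects in these four cases (which fails for the other functors in \cref{fig graph cat functors}, such as $\mathbf{\Omega}\to\pgcatsc$, and explains the asymmetry between the two halves of the statement); the rest is a formal limit manipulation.
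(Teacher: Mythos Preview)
Your proof is correct and follows essentially the same approach as the paper: both use \cref{lem strong segal} to identify the two limits via the equivalence $\CC^\elrm_{/c} \simeq \DD^\elrm_{/f(c)}$, and then invoke surjectivity on objects for the four listed functors to run the argument backwards. Your additional justification for why these four functors are surjective on objects (existence of orientations, rootings for trees in $\gcatcyc$, automatic acyclicity for directed trees) is a welcome elaboration that the paper leaves implicit.
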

\begin{proof}
Let $c$ be an arbitrary object of $\CC$.
We have a commutative diagram
\[ \begin{tikzcd}
(f^*M)_c \rar{=} \dar & M_{f(c)} \dar \\
\lim\limits_{w \rat c} (f^*M)_w \rar{\cong} & \lim\limits_{z \rat f(c)} M_z 
\end{tikzcd} \]
whose bottom map is an isomorphism by the equivalence $\CC^\elrm_{/c} \simeq \DD^\elrm_{/f(c)}$ from \cref{lem strong segal}.
If $M$ is Segal, then the right-hand map in the square is a bijection, hence so is the left-hand map in the square.
Thus $f^*M$ is Segal.

Now suppose that $f$ is one of the four indicated functors, and that $f^*M$ is Segal.
Let $d \in \DD$ be an arbitrary object; since $f$ is surjective on objects, choose some $c\in \CC$ with $d=f(c)$.
Since the left-hand map in the above square is a bijection, so is the right-hand map, hence $M$ is Segal as well.
\end{proof}

\begin{lemma}\label{lem unique lifting inert}
If $f \colon \CC \to \DD$ is one of the functors appearing in \cref{fig graph cat functors} of the introduction, with the exception of $\pgcat \to \gcato$, then 
\[
	\CC^\intrm_{/c} \to \DD^\intrm_{/f(c)}
\]
is an equivalence for each object $c\in \CC$.
In particular, $\CC^\oprm \to \DD^\oprm$ has \emph{unique lifting of inert morphisms} in the sense of \cite[Definition 7.2]{ChuHaugseng:HCASC}.
\end{lemma}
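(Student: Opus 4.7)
The plan is to imitate the proof of \cref{lem strong segal}, substituting inert slices for elementary slices. By \cref{lem discrete fibrations}, each functor $f \colon \CC \to \DD$ in the list (all of \cref{fig graph cat functors} except $\pgcat \to \gcato$) is a discrete fibration, and this already yields an isomorphism $\CC_{/c} \cong \DD_{/f(c)}$. What remains is to show this isomorphism restricts to an isomorphism on inert subslices, equivalently that the square
\[ \begin{tikzcd}
\CC^\intrm_{/c} \dar \rar \ar[dr, phantom, "\lrcorner" very near start] & \DD^\intrm_{/f(c)} \dar \\
\CC_{/c} \rar{\cong} & \DD_{/f(c)}
\end{tikzcd} \]
is a pullback. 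Thus it suffices to check that a morphism $g$ in $\CC$ is inert if and only if $f(g)$ is inert in $\DD$.

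Preservation of inertness (that $g$ inert implies $f(g)$ inert) is part of $f$ being a morphism of the ambient algebraic patterns, and this has already been observed for each of these functors earlier in the section. For reflection, I plan to use that discrete fibrations reflect isomorphisms: suppose $f(g)$ is inert and factor $g = i \circ a$ via the orthogonal factorization system on $\CC$. Applying $f$ yields an active-inert factorization $f(g) = f(i) \circ f(a)$ of an already-inert morphism; by uniqueness up to unique isomorphism in $\DD$, $f(a)$ is an isomorphism. A direct check using the unique-lifting property of the discrete fibration $f$ then shows $a$ itself is an isomorphism in $\CC$ (lift $f(a)^{-1}$ to a morphism with the correct codomain; uniqueness of lifts of identities forces it to be a two-sided inverse of $a$). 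Hence $g$ is inert. The exception for $\pgcat \to \gcato$ in the statement matches the exception in \cref{lem discrete fibrations}; reflection genuinely fails there because $\pgcat$ is not a sieve of $\gcato$.

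For the final clause, one unpacks \cite[Definition 7.2]{ChuHaugseng:HCASC}: unique lifting of inert morphisms for $\CC^\oprm \to \DD^\oprm$ is the assertion that for each $c \in \CC$ and each inert morphism $d \rat f(c)$ of $\DD$, there is a unique inert lift with codomain $c$ in $\CC$, which is exactly the statement that $\CC^\intrm_{/c} \to \DD^\intrm_{/f(c)}$ is an equivalence. The main obstacle in the argument is the reflection of inertness outlined above; preservation and the final clause are essentially bookkeeping once the discrete-fibration machinery is in place.
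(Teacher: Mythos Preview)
Your proof is correct and follows the same overall architecture as the paper's: both reduce to showing the displayed square is a pullback by checking that $g$ is inert in $\CC$ if and only if $f(g)$ is inert in $\DD$, then invoke \cref{lem discrete fibrations} for the bottom isomorphism.

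The difference lies in how reflection of inertness is established. The paper appeals to the concrete uniform description of inert maps across all of these categories---a map is inert if and only if its image in $\egcat$ is an embedding---which immediately gives reflection since each $f$ factors through $\egcat$. You instead argue abstractly: factor $g$ as active-then-inert, push this through $f$ (using that morphisms of algebraic patterns preserve both classes), conclude $f(a)$ is an isomorphism by uniqueness of factorizations, and then use that discrete fibrations reflect isomorphisms. Your route is more general (it works for any discrete fibration that is a morphism of algebraic patterns) and avoids needing to recall the specific characterization of inert maps in each graph category; the paper's route is shorter because that characterization is already at hand.
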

\begin{proof}
In all of the categories, a map is inert if and only if it maps to an embedding in $\egcat$.
Thus in any of the discrete fibrations from \cref{lem discrete fibrations}, lifts of inert maps with codomain $f(c)$ are also inert.
The conclusion follows since the square
\[ \begin{tikzcd}
\CC^\intrm_{/c} \dar \rar \ar[dr, phantom, "\lrcorner" very near start] & \DD^\intrm_{/f(c)} \dar \\
\CC_{/c} \rar & \DD_{/f(c)}
\end{tikzcd} \]
is a pullback, whose bottom map is an isomorphism by \cref{lem discrete fibrations}.
\end{proof}

The previous lemma does not hold for $\pgcat \to \gcato$, as not every embedding (with codomain an acyclic directed graph) is a convex inclusion (see \cref{example structured subgraphs}). 

\begin{remark}
Despite this unique lifting of inert morphisms, several of these functors will not be \emph{extendable} morphisms of algebraic patterns in the sense of \cite[Definition 7.7]{ChuHaugseng:HCASC} as condition (2) often fails (for instance, when going from simply-connected graphs to more general graphs).
\end{remark}

\subsection{Left Kan extension and Segality}\label{subsec lke segal}
In \cref{prop restriction segal}, we showed that restriction of presheaves behaves well with respect to the Segal condition.
We now show that for functors going from directed to undirected graphs, the left Kan extension of presheaves has an appealing form.
We use this to show that left Kan extension behaves well with respect to the Segal condition.

\begin{lemma}\label{lem lke description}
Let $f\colon \CC \to \DD$ be one of the forgetful functors
\[
\pgcatsc \to \gcatnought  \qquad  
\gcato \to \gcat  \qquad 
\egcato \to \egcat.
\]
If $Z \in \widehat{\CC}$ is $\CC$-presheaf, then the left Kan extension $f_!Z \in \widehat{\DD}$ may be described by the formula
\[
	(f_!Z)_G = \coprod_{x\in \opshf_G} Z_{(G,x)}
\]
and the action on a morphism $\varphi \colon H \to G$ of $\DD$, is given on the $x\in \opshf_G$ summand by the following:
\[ 
\begin{tikzcd}
Z_{(G,x)} \rar{\varphi^*} \dar[hook] &  Z_{(H,\varphi^*x)} \rar[hook] & \coprod\limits_{y\in \opshf_H} Z_{(H,y)} \dar["="] \\
(f_!Z)_G \ar[rr,"\varphi^*", dashed] & & (f_!Z)_H.
\end{tikzcd}\]
\end{lemma}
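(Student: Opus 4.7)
The plan is to exploit that each of the three functors $f\colon \CC \to \DD$ under consideration is a discrete fibration, as recorded in \cref{lem discrete fibrations}. The fibre of $f$ over $G \in \DD$ is precisely $\opshf_G$, so via the Grothendieck construction this discrete fibration is classified by the presheaf $\opshf$, giving a canonical equivalence of presheaf categories
\[
\widehat{\CC} \xrightarrow{\ \simeq\ } \widehat{\DD}_{/\opshf}, \qquad Z \longmapsto (\tilde Z \to \opshf),
\]
where $\tilde Z_G = \coprod_{x \in \opshf_G} Z_{(G,x)}$ and the structure map sends the $x$-summand to $x \in \opshf_G$.

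Under this equivalence, the restriction functor $f^*\colon \widehat\DD \to \widehat\CC$ corresponds to pullback $W \mapsto (\opshf \times W \to \opshf)$, and its left adjoint $f_!$ therefore corresponds to the forgetful functor $\widehat{\DD}_{/\opshf} \to \widehat{\DD}$. The stated formula $(f_!Z)_G = \coprod_{x \in \opshf_G} Z_{(G,x)}$ is then just the value of $\tilde Z$ at $G$. The claim about functoriality in $\varphi\colon H \to G$ is dictated by naturality: $\tilde Z$ is a $\DD$-presheaf over $\opshf$, so $\varphi^*\colon \tilde Z_G \to \tilde Z_H$ sends the $x$-summand into the $\varphi^*x$-summand, where it must agree with the action of $\varphi\colon (H, \varphi^*x) \to (G, x)$ on $Z$ (this being the unique lift of $\varphi$ with codomain $(G,x)$, provided by the discrete fibration property).

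If one prefers to avoid the Grothendieck-construction formalism, the same conclusion follows by direct analysis of the pointwise formula
\[
(f_!Z)_G = \underset{(c,\alpha) \in f^{\oprm}/G}{\mathrm{colim}}\, Z_c.
\]
Using that $f$ is a discrete fibration, each $\alpha\colon G \to f(c)$ in $\DD$ (with $c = (G',x')$) lifts uniquely to a morphism $\tilde\alpha\colon (G, \alpha^* x') \to c$ in $\CC$, yielding a unique morphism $\bigl((G, \alpha^*x'), \mathrm{id}_G\bigr) \to (c, \alpha)$ in the comma category. This shows that the inclusion of the fibre $f^{-1}(G) \hookrightarrow f^{\oprm}/G$ is coinitial; since the fibre is a discrete category (identity morphisms being the only lifts of $\mathrm{id}_G$), the colimit collapses to the coproduct $\coprod_{x \in \opshf_G} Z_{(G,x)}$.

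The argument is formal once the discrete fibration property is in hand, so there is no substantial obstacle. The only thing requiring care is the bookkeeping in the comma-category description to check that distinct fibre elements yield distinct summands (i.e.\ that the colimit identifications never merge them), which follows from uniqueness of lifts of identities, and to verify that the induced action of $\varphi$ respects the stated splitting, which is just the naturality of the classifying correspondence.
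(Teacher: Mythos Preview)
Your proof is correct and takes a genuinely different route from the paper's. The paper argues by observing that the proposed formula $Z \mapsto \big(G \mapsto \coprod_{x} Z_{(G,x)}\big)$ is colimit-preserving in $Z$, and then verifies the claim on representables: for $Z = \CC(-,(H,y))$ one computes directly
\[
\coprod_{x\in \opshf_G} \CC\big((G,x),(H,y)\big) \;=\; \coprod_{x\in \opshf_G} \{\varphi \in \DD(G,H)\mid \varphi^*y = x\} \;\cong\; \DD(G,H),
\]
which is exactly $(f_! (H,y))_G$ since $f_!$ takes a representable to the representable on its image. Your argument instead invokes the discrete-fibration structure (\cref{lem discrete fibrations}) globally, either via the equivalence $\widehat{\CC}\simeq \widehat{\DD}_{/\opshf}$ identifying $f_!$ with the forgetful functor, or via a finality argument in the pointwise colimit formula.

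Both approaches ultimately rest on the same fact---that $\CC((G,x),(H,y))$ is the $\varphi^*y = x$ fibre of $\DD(G,H)$---but they organise it differently. The paper's version is a short explicit calculation requiring no abstract machinery; yours explains \emph{why} the formula must hold for any discrete fibration and makes the functoriality in $\varphi$ fall out of naturality rather than needing separate verification. One small point: in your second argument, the unique morphism in the comma category $f^{\oprm}/G$ runs from $(c,\alpha)$ \emph{to} the fibre object $\big((G,\alpha^*x'),\mathrm{id}_G\big)$ (not the reverse), so the fibre inclusion is \emph{final} rather than initial; this is the condition needed to restrict the colimit, so the conclusion is unaffected.
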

\begin{proof}
It is a classical fact that left Kan extension along a Grothendieck opfibration may be computed by taking colimits over the fibers.
The functor $\CC^\oprm \to \DD^\oprm$ is a discrete opfibration by \cref{lem discrete fibrations}, so the fibers $\opshf_G$ are discrete, and the result follows.
\end{proof}

Notice that a similar description of the right Kan extension as a product seems unlikely, as one cannot push forward orientations.

\begin{remark}\label{rem omega cyc}
Recall the dendroidal category $\mathbf{\Omega}$ from \cref{def dendroidal category}, which is a full subcategory of ${\gcatcyc}_{/\opshf}$.
Let $f\colon \mathbf{\Omega} \to \gcatcyc$ be the forgetful functor. 
As in the preceding lemma, we have
\[
	(f_!Z)_T = \coprod_{r\in \eth(T)} Z_{(T,r)}
\]
where we are writing $r$ for the unique orientation of $T$ so that the chosen boundary element is the unique element of $\out(T) = \eth(T) \cap A_T^-$ and each $\out(v) = \nbhd(v) \cap D_T^+$ is a singleton (see \cref{rmk inp out plus minus}).
This formula is related to Lemma 4.2 and Definition 4.8 of \cite{DrummondColeHackney:CERIMS}, and is also closely related to \S2.1 of \cite{HRY-cyclic}.
\end{remark}

The following lemma will be used in the proof of \cref{thm lke segal}, which states that $f_!$ preserves Segal objects.

\begin{lemma}\label{warm up lke segal}
Let $f\colon \CC \to \DD$ be one of the forgetful functors
\[
\pgcatsc \to \gcatnought  \qquad  
\gcato \to \gcat  \qquad 
\egcato \to \egcat.
\]
If $Z \in \widehat{\CC}$ is a presheaf and $G\in \DD$ is an undirected graph, then the map 
\begin{equation}\label{exchange coprod limit}
	\coprod\limits_{x\in \opshf_G} \lim\limits_{k \colon K \rat G} Z_{(K,k^*x)} \to \lim\limits_{k \colon K \rat G} \coprod\limits_{y\in \opshf_K} Z_{(K,y)} = \lim\limits_{k \colon K \rat G} (f_!Z)_K
\end{equation}
is a bijection, where the limits are taken over the opposite of $\DD^\elrm_{/G}$ (in particular, each $K$ is either an edge or a star).
\end{lemma}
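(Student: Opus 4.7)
The plan is to unwind the right-hand limit, separate out the orientation data from the $Z$-data, and then recognize the orientation component as being controlled by a single element of $\opshf_G$.

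First I will use the description of $f_!Z$ from \cref{lem lke description} to rewrite an element of the RHS as a compatible family $\{z_k\}$ indexed by inert maps $k\colon K\rat G$ with $K$ elementary, where $z_k = (y_k,\bar z_k)$ consists of $y_k\in \opshf_K$ and $\bar z_k \in Z_{(K,y_k)}$. Unwinding the functoriality of $f_!Z$ from \cref{lem lke description} along a morphism $\alpha\colon (K,k)\to (K',k')$ in $\DD^\elrm_{/G}$ (necessarily inert by the factorization system), the compatibility condition $\alpha^* z_{k'}=z_k$ separates into two conditions: $y_k = \alpha^*y_{k'}$ in $\opshf_K$, and $\bar z_k = \alpha^* \bar z_{k'}$ in $Z_{(K,y_k)}$.

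Next I will exploit strong unitality. Since $\DD$ is strongly unital (every connected graph is a canonical colimit of its vertex stars and its edges), every presheaf on $\DD$, and in particular $\opshf$, satisfies the Segal condition. Hence the canonical map
\[
  \opshf_G \;\to\; \lim_{k\colon K\rat G}\opshf_K
\]
(limit over $(\DD^\elrm_{/G})^\oprm$) is a bijection. The first compatibility condition from the previous paragraph says exactly that the family $\{y_k\}$ lies in this limit, so there is a unique $x\in \opshf_G$ with $k^*x = y_k$ for all inert elementary $k$. Fixing this $x$, the second compatibility condition says precisely that $\{\bar z_k\}$ is an element of $\lim_{k\colon K\rat G} Z_{(K,k^*x)}$, which we place in the $x$-summand of the LHS.

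Finally I will check that this assignment is inverse to the map in the statement. The map in the statement sends a pair $(x,\{\bar z_k\})$ in the $x$-summand of the LHS to the family $\{(k^*x,\bar z_k)\}$, which is a compatible family by naturality. My construction above is evidently the inverse, since the uniqueness of $x$ with $k^*x = y_k$ ensures both round-trips are identities. The only step that requires real content is the bijection $\opshf_G \cong \lim \opshf_K$, but this is immediate from strong unitality of $\DD$ (i.e.\ the density of stars and edges); the remainder is formal bookkeeping and commutes coproducts past limits only along the slices that come from a single orientation $x$.
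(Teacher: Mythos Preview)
Your overall strategy is exactly the paper's: peel off the orientation component of a compatible family on the right, show it comes from a unique global orientation $x\in\opshf_G$, and then observe that what remains lands in the $x$-summand of the left-hand side. The paper carries this out concretely by working with a skeleton $\{\iota_v\}\amalg\{\kappa_e\}$ of $\DD^\elrm_{/G}$ and checking the edge-to-star compatibilities directly.

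There is, however, a genuine gap in your justification. You write that ``since $\DD$ is strongly unital \dots\ every presheaf on $\DD$, and in particular $\opshf$, satisfies the Segal condition.'' This is false: strong unitality (density of the elementary objects in $\DD_\intrm$) says that each $G$ is the canonical colimit of its elementary slice \emph{in the inert subcategory}, which only tells you that \emph{representable} presheaves on $\DD_\intrm$ take this colimit to a limit. It says nothing about an arbitrary $\DD$-presheaf, and indeed the Segal condition is not vacuous (think of simplicial sets that are not nerves). What you actually need is the specific fact that $\opshf$ is Segal, i.e.\ that $\opshf_G\to\lim_{k}\opshf_K$ is a bijection. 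This is true, but it requires a short direct verification: an element of $\opshf_G$ is an involutive map $A_G\to\{\pm 1\}$, hence is the same as a choice of sign on each edge; a compatible family in the limit gives a sign on each $\exedge_e$, and the star components are forced by the edge-to-star maps. That argument is precisely the content of the paper's proof (the paragraph producing the unique $x$ from the $y^{\kappa_e}$'s). Replace your appeal to strong unitality with this explicit check and the proof goes through.
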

\begin{proof}
For concreteness, we prefer to take a skeleton of $\DD^\elrm_{/G}$, with objects 
\[
  \{ \iota_v \colon \medstar_v \rat G \mid v\in V_G \} \amalg \{ \kappa_e \colon {\exedge}_e \rat G \mid e\in E_G \}
\]
(see \cref{example embeddings}).

We first observe that $\opshf$ from \cref{def or presheaf} is Segal. 
This means that the function
\begin{equation}\label{eq opshf segal}
  \opshf_G \to \lim_{k\colon K \rat G} \opshf_K,
\end{equation}
sending $x\in \opshf_G$ to the element $(k^*x)$, is a bijection.
It is automatically an injection since the composite $\opshf_G \to \lim_k \opshf_K \to \prod_{\kappa_e} \opshf_{{\exedge}_e}$ 
is a bijection, as both $\opshf_G$ and $\prod_{\kappa_e} \opshf_{{\exedge}_e}$ are isomorphic to the set of involutive maps $A_G \to \{+1,-1\}$.
On the other hand, if $(y^k) \in \lim_k \opshf_K$, then by using the bijection $\opshf_G \cong \prod_{\kappa_e} \opshf_{{\exedge}_e}$ we obtain $x \in \opshf_G$ with $\kappa_e^* x = y^{\kappa_e}$ for all $e\in E_G$.
We wish to show that $k^* x = y^k$ for all $k\in \DD^\elrm_{/G}$.
If $G$ does not have any vertices, then we are done.
Otherwise, let $v$ be a vertex. 
For each $d\in \nbhd(v) = D_{\medstar_v}$ spanning an edge $e \in E_G$, there is an embedding $j_d \colon {\exedge}_e \rat \medstar_v$ sending $d$ to $d$ and making the following triangle commute.
\[ \begin{tikzcd}[column sep =small]
& G \\
\medstar_v \ar[ur,tail,"\iota_v"]  & & {\exedge}_e \ar[ll,"j_d"',tail] \ar[ul,tail,"\kappa_e"']
\end{tikzcd} \]
Then $j_d^*(y^{\iota_v}) = y^{\kappa_e} = \kappa_e^* x = j_d^*(\iota_v^*x)$. 
Since $\opshf_{\medstar_v} \cong \prod_{\nbhd(v)} \opshf_{{\exedge}_e}$, we conclude that $y^{\iota_v} = \iota_v^*x$.
We have now established that $y^k = k^* x$ for all $k\in \DD^\elrm_{/G}$, proving that \eqref{eq opshf segal} is a bijection.

Elements on the left-hand side of \eqref{exchange coprod limit} are of the form $(x,(z^k))$ where $x\in \opshf_G$ and $z^k \in Z_{(K, k^*x)}$ for each $k\colon K \rat G$ in $\DD^\elrm_{/G}$.
These are subject to the compatibility condition $j^*(z^k) = z^{kj}$ whenever $j \colon K' \rat K$ is an embedding. 
Likewise, elements on the right-hand side of \eqref{exchange coprod limit} are of the form $(y^k,s^k)$ with $y^k \in \opshf_K$, $s^k \in Z_{(K,y^k)}$ for each $k\colon K \rat G$ in $\DD^\elrm_{/G}$. 
These are subject to the compatibility conditions $j^*(y^k) = y^{kj}$ and $j^*(s^k) = s^{kj}$ whenever $j \colon K' \rat K$ is an embedding.
The map \eqref{exchange coprod limit} takes $(x, (z^k))$ to $(k^*x, z^k)$.
Bijectivity of \eqref{exchange coprod limit} follows from that of \eqref{eq opshf segal}.
\end{proof}

\begin{theorem}\label{thm lke segal}
Let $f\colon \CC \to \DD$ be one of the forgetful functors
\[
\pgcatsc \to \gcatnought  \qquad  
\gcato \to \gcat  \qquad 
\egcato \to \egcat.
\]
Then the functor $f_! \colon \widehat{\CC} \to \widehat{\DD}$ preserves Segal objects.
\end{theorem}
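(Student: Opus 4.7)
The plan is to chain together three facts: the explicit coproduct formula for $f_!Z$ from \cref{lem lke description}, the Segal condition on $Z$ applied pointwise for each orientation, and the exchange of coproduct and limit from \cref{warm up lke segal}. Fix a graph $G \in \DD$; we must show that the canonical map $(f_!Z)_G \to \lim_{k\colon K \rat G}(f_!Z)_K$ (limit over $\DD^\elrm_{/G}$) is a bijection.

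First I would rewrite the left-hand side using \cref{lem lke description}, so that $(f_!Z)_G = \coprod_{x\in \opshf_G} Z_{(G,x)}$. For each fixed orientation $x \in \opshf_G$, the presheaf $Z$ being Segal provides a bijection
\[
Z_{(G,x)} \xrightarrow{\ \cong\ } \lim_{k'\colon K' \rat (G,x)} Z_{K'},
\]
where the limit is taken over $\CC^\elrm_{/(G,x)}$. By \cref{lem strong segal}, the forgetful functor induces an equivalence $\CC^\elrm_{/(G,x)} \simeq \DD^\elrm_{/G}$, and by \cref{lem discrete fibrations} an elementary inert map with codomain $(G,x)$ is nothing other than an elementary inert map $k \colon K \rat G$ in $\DD$ equipped with the pulled-back orientation $k^*x$. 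Consequently,
\[
Z_{(G,x)} \;\cong\; \lim_{k\colon K \rat G} Z_{(K,k^*x)}.
\]

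Taking the coproduct over $x\in \opshf_G$ and then applying \cref{warm up lke segal} to swap the coproduct and the limit gives
\[
(f_!Z)_G \;\cong\; \coprod_{x\in \opshf_G}\,\lim_{k\colon K\rat G} Z_{(K,k^*x)} \;\xrightarrow{\ \cong\ }\; \lim_{k\colon K\rat G}\,\coprod_{y\in\opshf_K} Z_{(K,y)} \;=\; \lim_{k\colon K\rat G}(f_!Z)_K.
\]
The only real check is that this composite agrees with the canonical Segal comparison map for $f_!Z$, but this is immediate from naturality: the action of $k^*$ on $(f_!Z)_G$ described in \cref{lem lke description} sends the $x$-summand into the $k^*x$-summand of $(f_!Z)_K$, which is precisely how both intermediate bijections were constructed.

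I do not expect a serious obstacle, since the heavy lifting has already been done in \cref{lem lke description,lem strong segal,warm up lke segal}; the only thing requiring minor care is confirming the identification of $\CC^\elrm_{/(G,x)}$ with $\DD^\elrm_{/G}$ in the three specific cases listed (the directed and undirected elementary objects being stars and edges, with a unique orientation lift once one is fixed on $G$), but this is exactly the content of \cref{lem strong segal} combined with the fact that the forgetful functors are discrete fibrations. The argument works uniformly for all three functors $\pgcatsc \to \gcatnought$, $\gcato \to \gcat$, and $\egcato \to \egcat$.
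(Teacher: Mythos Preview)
Your proof is correct and follows essentially the same approach as the paper: both use \cref{lem lke description} to write $(f_!Z)_G$ as a coproduct, apply the Segal condition together with \cref{lem strong segal} to identify each summand as a limit, and then invoke \cref{warm up lke segal} to exchange coproduct and limit. The paper verifies that the resulting bijection coincides with the canonical Segal comparison map by drawing out an explicit commutative diagram, whereas you argue this by naturality of the action described in \cref{lem lke description}; these amount to the same check.
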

\begin{proof}
Let $Z\in \widehat{\CC}$ be a Segal object, that is, suppose that 
\[
	Z_{(G,x)} \to \lim_{(\CC^\elrm_{/(G,x)})^\oprm} Z_{(K,y)}
\]
is a bijection for each $(G,x)\in \CC$.
We know that $\CC^\elrm_{/(G,x)} \to \DD^\elrm_{/G}$ is an equivalence by \cref{lem strong segal}, so this becomes
\[
	Z_{(G,x)} \cong \lim_{k \colon K \rat G} Z_{(K,k^*x)}
\]
instead, where $K$ ranges over elementary objects of $\DD$.
The function
\begin{equation}\label{eq segal map for lke}
	(f_!Z)_G \to \lim_{k \colon K \rat G} (f_!Z)_K
\end{equation}
is such that the following diagram commutes for every $x_0$ in $\opshf_G$ and $k_0$ in $\DD^\elrm_{/G}$.
\[ \begin{tikzcd}
\lim\limits_{k \colon K \rat G} Z_{(K,k^*x_0)} \dar[hook,"i_{x_0}"] \ar[rr,"\pi_{k_0}"] & & 
	Z_{(K_0,k_0^*x_0)} \dar[hook,"i_{k_0^*x_0}"]
\\
\coprod\limits_{x\in \opshf_G} \lim\limits_{k \colon K \rat G} Z_{(K,k^*x)} \rar & 
	\lim\limits_{k \colon K \rat G} \coprod\limits_{y\in \opshf_K} Z_{(K,y)} \rar["\pi_{k_0}"] &
	\coprod\limits_{y\in \opshf_{K_0}} Z_{(K_0,y)}
	\\
(f_!Z)_G \uar{\cong} \rar & \lim\limits_{k \colon K \rat G} (f_!Z)_K \uar{=}
\end{tikzcd} \]
Applying \cref{warm up lke segal}, we see that \eqref{eq segal map for lke} is a bijection, hence $f_!Z$ is Segal.
\end{proof}

This proof (including the corresponding statement for \cref{warm up lke segal}) can be readily adapted by using \cref{rem omega cyc} in place of \cref{lem lke description} to give the following.

\begin{proposition}\label{prop lke segal omega}
If $f$ is the functor $\mathbf{\Omega} \to \gcatcyc$ that forgets about the directed structure, then $f_! \colon \widehat{\mathbf{\Omega}} \to \widehat{\gcatcyc}$ takes Segal objects to Segal objects. \qed
\end{proposition}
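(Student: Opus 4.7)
The proof plan is to duplicate the two-step strategy used for \cref{thm lke segal}, with \cref{rem omega cyc} playing the role that \cref{lem lke description} played there: one first establishes an exchange of a coproduct and a limit (the analog of \cref{warm up lke segal}), and then the same diagram chase finishes the job.

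First I would prove: for $Z \in \widehat{\mathbf{\Omega}}$ and any tree $T \in \gcatcyc$, the canonical map
\[
    \coprod_{r\in \eth(T)} \lim_{k \colon K \rat T} Z_{(K, k^*r)} \longrightarrow \lim_{k \colon K \rat T} \coprod_{s\in \eth(K)} Z_{(K, s)} = \lim_{k \colon K \rat T} (f_!Z)_K
\]
is a bijection, with the limits ranging over the elementary inert maps $K \rat T$ in $\gcatcyc$ (the edges and vertex stars). Injectivity is straightforward: two distinct choices of root $r \neq r'$ in $\eth(T)$ yield orientations of the tree that disagree on every edge along the unique path from $r$ to $r'$, and hence already differ when restricted along at least one edge ${\exedge}_e \rat T$.

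Surjectivity is the main obstacle. Given a compatible family $\vec{s}=(s^k)$, the compatibility along each diagram
\[ \begin{tikzcd}[column sep=small]
& T \\
\medstar_v \ar[ur,tail,"\iota_v"] &
{\exedge}_e \rar[tail] \lar[tail] \uar[tail,"\kappa_e"] &
\medstar_w \ar[ul,tail,"\iota_w"']
\end{tikzcd} \]
(one for each internal edge $e$ between $v$ and $w$) glues the local roots into a single orientation of $T$ which, at each vertex $v$, distinguishes exactly one outgoing arc (namely the $\dagger$-partner of $s^{\iota_v}$). According to \cref{rem omega cyc}, the orientations that arise from boundary elements $r \in \eth(T)$ are exactly those for which $\out(v)$ is a singleton at every vertex, so it remains to check that the assembled orientation satisfies $|\out(T)| = 1$. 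This is a dart count on the tree: the $n$ outputs (one per vertex) consume exactly the $n-1$ internal-edge outputs of the tree, leaving precisely one boundary output. This produces the desired root $r\in \eth(T)$ with $k^*r = s^k$ for every elementary $k\colon K\rat T$.

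With this warm-up in hand, the proposition follows by the same diagram as in the proof of \cref{thm lke segal}: using \cref{lem strong segal} to identify $\mathbf{\Omega}^\elrm_{/(T,r)}$ with $\gcatcyc^\elrm_{/T}$, and the Segal condition for $Z$ to rewrite $Z_{(T,r)} \cong \lim_{k} Z_{(K, k^*r)}$, the warm-up bijection identifies $(f_!Z)_T = \coprod_r Z_{(T,r)}$ with $\lim_k (f_!Z)_K$, establishing the Segal condition for $f_!Z$.
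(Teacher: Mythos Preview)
Your proposal is correct and follows exactly the approach the paper indicates: adapt the proofs of \cref{warm up lke segal} and \cref{thm lke segal} using \cref{rem omega cyc} in place of \cref{lem lke description}. The paper gives no further details beyond that sentence, and you have correctly supplied them, including the one genuinely new ingredient: in the surjectivity step of the warm-up lemma the assembled orientation must be shown to lie in the restricted summand set $\eth(T)$ rather than all of $\opshf_T$, and your dart count (equivalently, $|D_T^+| = \sum_v |\out(v)| = n$ while internal edges contribute $n-1$, forcing $|\out(T)|=1$) is the right way to see this.
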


\begin{remark}
\Cref{thm lke segal} (and \cref{prop lke segal omega}) can also be recovered from a theorem of Haugseng--Kock \cite{HaugsengKock}, which concerns a left fibration over an algebraic pattern whose unstraightening is Segal. 
We briefly explain this alternate method.
We know that $\CC^\oprm \to \DD^\oprm$ is a discrete opfibration \cref{lem discrete fibrations} and $\opshf$ is Segal by \cref{warm up lke segal}. 
Proposition 3.2.5 of \cite{HaugsengKock} implies that left Kan extension along $f$ induces an equivalence of $\infty$-categories
\begin{equation}\label{eq haugseng kock equiv} \mathrm{Seg}_{\CC^\oprm}(\spaces) \xrightarrow{\sim} \mathrm{Seg}_{\DD^\oprm}(\spaces)_{/\opshf}, \end{equation}
where $\spaces$ denotes the $\infty$-category of spaces.
The full subcategory of $\widehat{\CC}$ on the Segal objects is equivalent to the subcategory of discrete objects of $\mathrm{Seg}_{\CC^\oprm}(\spaces)$ by \cite[Lemma 1.12]{BeardsleyHackney:LCC}.
The equivalence \eqref{eq haugseng kock equiv} above preserves discrete objects, and since $\opshf$ is discrete, $\mathrm{Seg}_{\DD^\oprm}(\spaces)_{/\opshf} \to \mathrm{Seg}_{\DD^\oprm}(\spaces)$ preserves discrete objects as well \cite[Lemma 5.5.6.14]{Lurie:HTT}.
We now have that left Kan extension along $f$ restricts to a functor $\mathrm{Seg}_{\CC^\oprm}(\spaces) \to \mathrm{Seg}_{\DD^\oprm}(\spaces)$ which preserves discrete objects, so $\widehat{\CC} \to \widehat{\DD}$ preserves Segal objects.
The argument for \cref{prop lke segal omega} (where $f\colon \CC \to \DD$ is $\mathbf{\Omega} \to \gcatcyc$) is analogous, using the rooting presheaf instead of $\opshf$.
\end{remark}

\appendix

\section{Categories of trees for cyclic operads}\label{sec tree cat}
In this section we specialize \cref{def new graph map} to the case when the codomain $G'$ is simply-connected, which implies that the domain is also a tree by \cite[Proposition 5.2]{HRY-mod1}.
The resulting categories of graphs are related to cyclic operads \cite{GetzlerKapranov:COCH} and higher cyclic operads \cite{HRY-cyclic,Walde:2SSIIO}.
Our goal is to give a description of new graph maps between trees that is mirrors the `complete morphisms' from Definition 1.12 of \cite{HRY-cyclic}.
This description appears as \cref{thm on trees} below.

In this section, all graphs are undirected.

\begin{definition}[Paths and trees]\label{def cycles paths}
Let $G$ be a graph.
\begin{itemize}\label{def paths and trees}
\item A \mydef{path} in $G$ is a finite alternating sequence of edges and vertices of $G$ so that if $e$ and $v$ are adjacent then some arc of $e$ appears in $\nbhd(v)$, and so that the pattern $vev$ can only appear in the path if \emph{both} arcs of $e$ are in $\nbhd(v)$.
\item A \mydef{cycle} is a path of length strictly greater than one that begins and ends at the same edge or same vertex.
\item The graph $G$ is a \mydef{tree} if and only if it is connected and does not have any paths which are cycles.
\end{itemize}
We write $\gcatnought \subset \gcat$ for the full subcategory on the trees, and $\gcatcyc \subset \gcatnought$ for the full subcategory consisting of those trees with non-empty boundary.
\end{definition}

Note that a graph $G$ is a tree just when its associated topological space is simply-connected.
A graph $G$ is connected if and only if for each pair \[ (x_1,x_2) \in (V_G \amalg E_G)^{\times 2},\] there is a path containing both $x_1$ and $x_2$.

\begin{lemma}\label{lem tree injective arcs}
If $T$ is a tree and $f \colon H \rightarrowtail T$ is an embedding, then $A_H \to A_T$ is injective.
\end{lemma}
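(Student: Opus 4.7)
The plan is to argue by contradiction: suppose $f$ is not injective on arcs, producing distinct $a_1 \neq a_2$ in $A_H$ with $f(a_1) = f(a_2) =: b$. First I would apply \cref{mod1 lem 1.22} together with the accompanying remark that $i \neq j$ to arrange, after relabeling, that $a_1, a_2^\dagger \in \eth(H)$ while $a_1^\dagger, a_2 \in D_H$. Then $b$ and $b^\dagger$ both lie in $D_T$, so $e = [b, b^\dagger]$ is an \emph{internal} edge of $T$ connecting $f(v_1)$ to $f(v_2)$, where I write $v_1 \coloneqq t(a_1^\dagger)$ and $v_2 \coloneqq t(a_2)$. Trees contain no loop edges (a loop at $w$ produces a length-two cycle $w, e, w$), so $f(v_1) \neq f(v_2)$ and hence $v_1 \neq v_2$.

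Using that $\gcatnought \subseteq \gcat$ is a sieve (\cite[Proposition 5.2]{HRY-mod1}), $H$ is itself a tree and in particular connected, so there exists a path in $H$ from $v_1$ to $v_2$, say $v_1 = w_0, e_1', w_1, \ldots, e_n', w_n = v_2$ with $n \geq 1$. A boundary edge has only one of its arcs in any $\nbhd(w_i)$ and so can only occur at an endpoint of a path between two vertices; thus every $e_i'$ is internal. The main step, and the only real obstacle, is to verify that $f(e_i') \neq e$ for all $i$. Writing $e_i' = [c, c^\dagger]$ with $c, c^\dagger \in D_H$, I observe that $D_H \to D_T$ is injective --- being the pullback of the injective map $V_H \hookrightarrow V_T$ --- so the fibres satisfy $f^{-1}(b) \cap D_H = \{a_2\}$ and $f^{-1}(b^\dagger) \cap D_H = \{a_1^\dagger\}$. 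If we had $f(e_i') = e$, then $c$ would lie in $\{a_2, a_1^\dagger\}$, forcing $c^\dagger \in \{a_2^\dagger, a_1\} \subseteq \eth(H)$, contrary to $c^\dagger \in D_H$.

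I would conclude by assembling the closed path
\[
f(v_1),\, f(e_1'),\, f(w_1),\, \ldots,\, f(e_n'),\, f(v_2),\, e,\, f(v_1)
\]
in $T$. This is a valid path (adjacency is preserved by $f$, the edge $e$ is incident to both $f(v_1)$ and $f(v_2)$ via $b^\dagger$ and $b$, and by the previous paragraph $e$ is distinct from each $f(e_i')$), it has length $n+1 \geq 2$, and it begins and ends at $f(v_1)$. Hence it is a cycle in $T$, contradicting the tree hypothesis.
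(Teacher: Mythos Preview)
Your argument is correct. The overall strategy matches the paper's—produce a cycle in $T$ from a path in $H$ using the collision $f(a_1)=f(a_2)$—but the paper streamlines the construction. Instead of taking a path between the \emph{vertices} $v_1=t(a_1^\dagger)$ and $v_2=t(a_2)$ and then closing it up with the extra edge $e=[b,b^\dagger]$, the paper takes a path in $H$ between the \emph{edges} $e_0=[a_1,a_1^\dagger]$ and $e_n=[a_2,a_2^\dagger]$, and then $fP$ is already a cycle because $f(e_0)=f(e_n)$. This bypasses your preliminary work: there is no need to rule out loop edges in $T$, no need to argue that the intermediate $e_i'$ are internal, and no need to show $f(e_i')\neq e$. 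In fact this last verification is unnecessary even in your own argument, since the definition of path in \cref{def cycles paths} places no restriction on repeated edges (only the $vev$ pattern is constrained), so your closed walk is a valid cycle regardless of whether some $f(e_i')$ happens to equal $e$. What your route does buy is an explicit appeal to injectivity of $D_H\to D_T$, which is a nice observation in its own right.
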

\begin{proof}
Suppose $a_1 \neq a_2$ are distinct arcs of $H$ with $f(a_1) = f(a_2)$.
Without loss of generality, by \cref{mod1 lem 1.22} we may suppose that $a_1, a_2^\dagger \in \eth(H)$ and $a_1^\dagger, a_2 \in D_H$ (in particular, $H$ is not an edge).
Since $H$ is connected, there exists a path 
\[ P = e_0 v_1 e_1 v_2 e_2 \dots v_n e_n\]
in $H$ from $e_0 = [a_1,a_1^\dagger]$ to $e_n = [a_2, a_2^\dagger]$.
Since $a_1 \neq a_2^\dagger$ (equality would imply that $f(a_1)$ is a $\dagger$-fixed point of $A_T$), we know $e_0 \neq e_n$ so it follows that $n\geq 1$.
There is an associated path $fP$ of $T$ obtained by applying $f$ to each edge and vertex.
Since $fe_0 = fe_n$, the path $fP$ is a cycle, which is impossible since $T$ is a tree.
\end{proof}

In an arbitrary graph $G$, elements in $\emb(G)$ are not necessarily uniquely determined by their boundary (for example, consider $f$ and $fk$ from Example 1.23 of \cite{HRY-mod1}).
This complication disappears when $G$ is a tree.

\begin{lemma}\label{lem tree emb injective}
If $T$ is a tree, then $\eth \colon \emb(T) \to \wp(A_T)$ is injective.
\end{lemma}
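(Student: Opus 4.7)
The plan is to reduce the statement to the existing injectivity facts recorded in \cref{mod1 prop 1.25} by showing that the two cases (edge embeddings vs.\ non-edge embeddings) cannot collide in the boundary map.

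First I would observe that by \cref{lem tree injective arcs}, every embedding $f\colon H\rat T$ is injective on arcs, so the restriction $\eth(H) \hookrightarrow A_T$ is injective and $\eth([f])$, a priori an element of $\mathbb{N}A_T$, is in fact a genuine subset of $A_T$ of cardinality $|\eth(H)|$. Thus $\eth$ really does take values in $\wp(A_T)$, and moreover the $\mathbb{N}A_T$-valued map and the $\wp(A_T)$-valued map carry exactly the same information.

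Now suppose $\eth([f]) = \eth([g])$ as subsets of $A_T$, and split into cases using the partition $\emb(T) = E_T \amalg (\emb(T)\setminus E_T)$. If $[f],[g] \in E_T$, then each is determined by its pair of boundary arcs, so $[f]=[g]$; if $[f],[g] \in \emb(T)\setminus E_T$, then \cref{mod1 prop 1.25} immediately yields $[f]=[g]$. The only remaining case, which is the heart of the argument, is to rule out the possibility that $[f]\in E_T$ and $[g]\in \emb(T)\setminus E_T$ share the same boundary.

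In this final case, say $[f]$ classifies an edge $\{a,a^\dagger\}$ and $g\colon H \rat T$ is a non-edge embedding with $g(\eth(H)) = \{a,a^\dagger\}$. Since $g$ is injective on arcs, there are unique $b_1,b_2\in \eth(H)$ with $g(b_1)=a$ and $g(b_2)=a^\dagger$. Naturality of $g$ with respect to the involution forces $g(b_1^\dagger) = g(b_1)^\dagger = a^\dagger = g(b_2)$, and injectivity then gives $b_1^\dagger = b_2$. Hence both $b_1$ and $b_1^\dagger$ lie in $\eth(H) = A_H \setminus D_H$, so $\{b_1,b_1^\dagger\}$ is an isolated edge component of $H$; connectedness of $H$ then forces $H \cong \exedge$, contradicting that $[g]$ is not an edge. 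I expect no serious obstacle here — the only delicate point is the involution-matching step, which is handled by the natural-transformation condition in \cref{def etale embed undirected}.
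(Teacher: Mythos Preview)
Your proof is correct and follows essentially the same approach as the paper: both reduce via \cref{mod1 prop 1.25} to the mixed edge/non-edge case and then derive a contradiction using \cref{lem tree injective arcs}. The only cosmetic difference is that you invoke injectivity upfront and deduce that $H$ must be an edge, whereas the paper starts from the assumption that $H$ has a vertex and concludes the embedding cannot be injective --- these are contrapositives of the same implication.
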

\begin{proof}
By \cref{mod1 prop 1.25}, the only way this can fail is if there is a pair $h,k$ consisting of an embedding $h \colon H \rightarrowtail T$ where $H$ contains at least one vertex and an embedding $k \colon K \rightarrowtail T$ where $K$ is an edge, satisfying $\eth(h) = \eth(k)$.
This implies that $\eth(H) = \{ b_0, b_1 \}$ has two elements and $h(b_0)^\dagger = h(b_1)$.
Since $b_0^\dagger \in D_H$ it cannot be equal to $b_1$, hence $h$ is not injective.
This is prohibited by \cref{lem tree injective arcs}.
\end{proof}

\begin{definition}\label{def subtrees}
Suppose $T$ is a tree.
A \mydef{subtree} $S$ of $T$ is a triple of subsets $A_S \subseteq A_T$, $D_S \subseteq D_T$, and $V_S \subseteq V_T$ so that the following hold.
\begin{itemize}
\item There is a commutative diagram as displayed 
\[ \begin{tikzcd}
A_S  \dar[hook] & A_S \lar[dashed]  \dar[hook] & D_S \lar[dashed] \rar[dashed]  \dar[hook] \ar[dr, phantom, "\lrcorner" very near start] & V_S \dar[hook]  \\
A_T & A_T \lar["\dagger"'] & D_T \lar[hook'] \rar & V_T 
\end{tikzcd} \]
whose vertical maps are the inclusions and whose right square is a pullback.
\item The graph $S$ is connected.
\end{itemize}
\end{definition}

Each subtree of $T$ determines an embedding, and by \cref{lem tree injective arcs} any embedding $h\colon H \rightarrowtail T$ determines a subtree using the subsets $h(A_H), h(D_H)$, and $h(V_H)$. 
This produces a bijection between the set of subtrees of $T$ and the set $\emb(T)$.
In light of this, we use the following shorthand.

\begin{notation}
Suppose that $S$ and $T$ are trees and $\hat \varphi \colon \emb(S) \to \emb(T)$ is a function.
If $R$ is a subtree of $S$, we will write $\hat \varphi(R)$ for the subtree of $T$ associated to the element $\hat \varphi [R \to S] \in \emb(T)$.
\end{notation}

\begin{remark}[Uniqueness of unions]\label{remark overlap}
Suppose $R$ and $S$ are two subtrees of $T$.
We say that $R$ and $S$ \mydef{overlap} if $R\cap S$ (meaning the triple $(A_R \cap A_S, D_R \cap D_S, V_R \cap V_S)$) is non-empty.
As in \cite[\S1.2]{HRY-cyclic}, the graphs $R\cap S$ and $R\cup S$ are connected, and hence subtrees, if and only if $R$ and $S$ overlap.
Unions in the sense of \cref{def a union} are unique in this context, and coincide with the union of subtrees.
\end{remark}

We now turn to our desired characterization of graphical maps between trees.
By Proposition 5.2 of \cite{HRY-mod1}, $\gcatnought$ is a sieve in $\gcat$, so any graphical map with codomain a tree also has a tree as its domain.

\begin{theorem}\label{thm on trees}
Suppose that $G$ and $G'$ are trees, and $\varphi = (\varphi_0,\hat \varphi)$ is a pair consisting of an involutive function $\varphi_0 \colon A_G \to A_{G'}$ and a function $\hat \varphi \colon \emb(G) \to \emb(G')$ which satisfy condition {\rm \eqref{new graph def boundary}} of \cref{def new graph map}.
Then $\varphi$ is a new graph map if and only if 
\rm
\begin{enumerate}[label=(\roman*),ref=\roman*, start=5]
	\item If two subtrees $S,T$ of $G$ overlap, then so do $\hat \varphi (S)$ and $\hat \varphi (T)$.
	In this case, we have \label{new graph subtrees}
	\begin{enumerate}[label=(\alph*),ref=\alph*] 
		\item $\hat \varphi(S\cap T) = \hat\varphi(S) \cap \hat \varphi(T)$ and \label{v intersect}
		\item $\hat \varphi(S\cup T) = \hat\varphi(S) \cup \hat \varphi(T)$. \label{v union}
	\end{enumerate}
\end{enumerate}
\end{theorem}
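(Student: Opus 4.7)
The plan is to exploit three structural features of the tree setting. First, by \cref{lem tree emb injective} and \cref{def subtrees}, $\emb(T)$ is in bijection with the set of subtrees of $T$, and the partial order corresponds to containment. Second, by \cref{remark overlap}, two subtrees have a union in the sense of \cref{def a union} if and only if they overlap, and in that case the union is the set-theoretic union of subtrees. Third, a subtree with at least one vertex is determined by its vertex set (since $D=t^{-1}(V)$ and $A=D\cup D^\dagger$ whenever $V\neq\varnothing$), while a subtree with empty vertex set is just an edge. These three facts let me translate between the conditions of \cref{def new graph map} and those of \eqref{new graph subtrees}.

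For the forward direction, assume $\varphi=(\varphi_0,\hat\varphi)$ is a new graph map. Condition \eqref{new graph def union} combined with \cref{example order} makes $\hat\varphi$ order-preserving, so for overlapping subtrees $S,T$ the subtree $\hat\varphi(S\cap T)$ sits below both $\hat\varphi(S)$ and $\hat\varphi(T)$, forcing them to overlap. For \eqref{v union}, the set-theoretic union $S\cup T$ is a union of $S$ and $T$ in the sense of \cref{def a union}, so by \eqref{new graph def union} the subtree $\hat\varphi(S\cup T)$ is such a union of $\hat\varphi(S)$ and $\hat\varphi(T)$; comparing vertex sets $V_{\hat\varphi(S\cup T)}=V_{\hat\varphi(S)}\cup V_{\hat\varphi(T)}=V_{\hat\varphi(S)\cup\hat\varphi(T)}$ and invoking the third tree fact yields the desired equality. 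For \eqref{v intersect}, I apply \cref{prop sum splitting new graph} to write $V_{\hat\varphi(S)}=\bigsqcup_{v\in V_S}\varsigma\hat\varphi[\iota_v]$ (the disjointness follows from \eqref{new graph def intersect} applied to star inclusions at distinct vertices), and similarly for $T$ and for $S\cap T$; intersecting disjoint unions yields $V_{\hat\varphi(S)}\cap V_{\hat\varphi(T)}=V_{\hat\varphi(S\cap T)}$, and the third tree fact then identifies the subtrees, with a small separate argument when $V_{S\cap T}=\varnothing$.

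For the reverse direction, assume \eqref{new graph subtrees}. Condition \eqref{new graph def edges} falls out of \eqref{new graph def boundary} alone: for an edge $e$, \cref{rem graph def boundary} shows that $\eth(\hat\varphi(e))$ is a two-element $\dagger$-paired subset, but in any tree a connected subtree with a vertex cannot have two $\dagger$-paired boundary arcs (since $b\in\eth(H)$ forces $b^\dagger\in D_H$ when $V_H\neq\varnothing$), so $\hat\varphi(e)$ is itself an edge. Applying \eqref{v intersect} with $S\subseteq T$ gives that $\hat\varphi$ is order-preserving, after which \eqref{new graph def union} reduces to \eqref{v union} since \cref{def a union}-unions in a tree exist only for overlapping pairs. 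For \eqref{new graph def intersect}, given vertex disjoint $H,K$, the case when one has empty vertex set is handled by the just-proven \eqref{new graph def edges}; otherwise either $H\cap K$ is already a vertex-free edge, in which case \eqref{v intersect} and \eqref{new graph def edges} make $\hat\varphi(H)\cap\hat\varphi(K)$ an edge, or $H$ and $K$ share nothing and I extend $H$ along the unique path in $G$ to $K$ to a subtree $\tilde H\supseteq H$ with $\tilde H\cap K$ a single edge, reducing to the previous case via order preservation.

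The main obstacle is the edge-bookkeeping: the identification of a subtree by its vertex set fails when that set is empty, so both the forward direction of \eqref{v intersect} and the construction of $\tilde H$ in the reverse direction require case splits to handle edge subtrees. A secondary subtlety is that \cref{prop sum splitting new graph} is an identity in $\mathbb{N}V_{G'}$ rather than in $\wp(V_{G'})$, and extracting the set-level disjointness used above depends crucially on \eqref{new graph def intersect}.
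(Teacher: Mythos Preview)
Your proposal is correct and follows essentially the same route as the paper's proof. The forward direction matches closely: overlap preservation via order-preservation, \eqref{v union} via \eqref{new graph def union}, and \eqref{v intersect} by tracking which vertex star each image vertex belongs to (you package this through \cref{prop sum splitting new graph}, the paper argues vertex-by-vertex, but it is the same mechanism). In the reverse direction you and the paper both deduce \eqref{new graph def edges} from \eqref{new graph def boundary} alone (you argue directly that a subtree with a vertex cannot have $\dagger$-paired boundary arcs, the paper invokes \cref{lem tree emb injective}), and for \eqref{new graph def intersect} you both enlarge one of the two subtrees until it meets the other in a single edge; the paper does this by a maximality argument on $\{S \supseteq S' : V_S \cap V_T = \varnothing\}$ while you gesture at extending along the unique path, which amounts to the same construction but is less precise as stated (you should check that your $\tilde H$ really is connected and vertex disjoint from $K$). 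The only genuine bookkeeping you flag that the paper handles implicitly is the edge case $V_{S\cap T}=\varnothing$ in \eqref{v intersect}, and your treatment of it is fine.
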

\begin{proof}
In this proof, \eqref{new graph def edges}--\eqref{new graph def boundary} refer to the conditions from \cref{def new graph map}.

Suppose $\varphi$ is a new graph map and let $S$ and $T$ be overlapping subtrees of $G$.
As $\hat\varphi$ preserves order, we have the following subtrees
\[ \begin{tikzcd}
\hat\varphi(S) \ar[dr,tail] & \lar[tail] \hat\varphi(S\cap T) \rar[tail] \dar[tail] & \hat\varphi(T) \ar[dl,tail] \\
& G'
\end{tikzcd} \]
which implies that $\hat\varphi(S)$ and $\hat\varphi(T)$ overlap.

In the next two paragraphs, we will write $\medstar_v'$ for the subtree containing the single vertex $v$.
(That is, $\medstar_v'$ is the subtree associated to the embedding $\iota_v \colon \medstar_v \rightarrowtail G$.)

To show that \eqref{v intersect} holds, first note that $\hat\varphi(S\cap T)$ is a subtree of $\hat\varphi(S) \cap \hat\varphi(T)$.
To show these are equal, it suffices to show that every vertex in $\hat\varphi(S) \cap \hat\varphi(T)$ is also in $\hat\varphi(S\cap T)$.
But if $v$ is vertex in $\hat\varphi(S) \cap \hat\varphi(T)$, then we can find unique vertices $u\in V_S \subseteq V_G$ and $w\in V_T \subseteq V_G$ so that $v$ is in $\hat \varphi (\medstar_u')$ and in $\hat \varphi (\medstar_w')$.
By \eqref{new graph def intersect} the equality $u = w$ holds, so $v$ is in $\hat\varphi(S\cap T)$.

To see that \eqref{v union} holds, note that $\hat \varphi(S) \cup \hat\varphi(T)$ is a subtree of $\hat \varphi(S\cup T)$.
Given a vertex $v$ of $\hat \varphi(S\cup T)$ there is a unique vertex $u \in V_{S\cup T} \subseteq V_G$ with $v$ in $\hat\varphi (\medstar_u')$.
If $u$ is in $S$, then $v$ is in $\hat\varphi(S)$, and if $u$ is in $T$ then $v$ is in $\hat \varphi(T)$.
Hence the subtree $\hat \varphi(S) \cup \hat\varphi(T)$ contains all vertices of $\hat \varphi(S\cup T)$, so these are equal.

Now suppose $\varphi$ satisfies conditions \eqref{new graph def boundary} and \eqref{new graph subtrees}.
Condition \eqref{new graph def union} follows immediately from \eqref{v union}.

We next address \eqref{new graph def edges}, which says that $\hat \varphi$ sends edges to edges. 
If $T$ is an edge of $G$ consisting of the arcs $a,a^\dagger$, then by \eqref{new graph def boundary} the boundary of $\hat \varphi (T)$ is the set $\{ \varphi_0(a), \varphi_0(a^\dagger) \}$.
There is only one subtree having this boundary by \cref{lem tree emb injective}, namely the edge consisting of the arcs $\{ \varphi_0(a), \varphi_0(a)^\dagger \}$.
Thus \eqref{new graph def edges} holds.

Suppose 
\[ \begin{tikzcd}
S' \rar[tail,"h'"] & G & \lar[tail, "k"' ] T
\end{tikzcd} \]
are subtrees of $G$ with $V_{S'} \cap V_{T}$ empty.
Our goal is to show that $\hat \varphi (S')$ and $\hat \varphi (T)$ are vertex disjoint.
If either of $T$ or $S'$ is an edge then the same is true for $\hat \varphi (T)$ or $\hat \varphi (S')$ by \eqref{new graph def edges}, in which case $\hat \varphi(S')$ and $\hat \varphi(T)$ are vertex disjoint.
We thus suppose that both $S'$ and $T$ contain a vertex.
Consider the subset $\mathfrak{X}$ of $\emb(G)$ consisting of those subtrees $R$ with $V_R \cap V_T$ empty and $S' \subseteq R$.
Since $\mathfrak{X}$ is inhabited, we choose a maximal element $S\in \mathfrak{X}$ and then argue that the set $\eth(S) \cap A_T$ is inhabited. 
Suppose $\eth(S) \cap A_T$ is empty. 
If $a \in \eth(S) \cap D_G \subset D_G \setminus D_T$, then $t(a)$ is in $V_G \setminus (V_S \cup V_T)$, contradicting maximality of $S$. 
We conclude that $\eth(S) \cap D_G$ must also be empty, hence $\eth(S) \subseteq \eth(G)$.
By \cref{lem boundary inclusion} this implies that $S = G$, hence $T\subseteq S$, a contradiction to vertex disjointness.

Taking this $S$ and $T$, we have $S\cap T$ is an edge and thus the same is true for $\hat \varphi(S \cap T)$ by \eqref{new graph def edges}. 
Using \eqref{v intersect}, we have \[ \hat \varphi(S \cap T) = \hat \varphi(S) \cap \hat \varphi(T) \supseteq \hat \varphi(S') \cap \hat \varphi(T),\]
so $\hat \varphi(S')$ and $\hat \varphi(T)$ are vertex disjoint.
Thus \eqref{new graph def intersect} holds.
\end{proof}

This theorem makes wholly transparent the functor $\gcatcyc \to \mathbf{\Xi}$ into the category of trees from \cite[Definition 1.12]{HRY-cyclic}. 

\begin{remark}
Condition \eqref{v intersect} of \cref{thm on trees}\eqref{new graph subtrees} is automatic given the other conditions. 
We prove this in \cref{prop redundancy}.
\end{remark}

\section{Acyclic graphs and the properadic graphical category}\label{sec properadic gcat}

In this section, we turn to the connected directed graphs which are \emph{acyclic} (from now on only in the directed sense) and control \emph{properads}.
Properads were introduced by Vallette in his thesis \cite{Vallette:KDP} and, independently and in the form we use them, under the name compact symmetric polycategories in Duncan's thesis \cite{Duncan:TQC}.
Properads are the connected parts of props \cite{MacLane:CA}.

One major goal is to prove \cref{thm properadic gcat}, which characterizes the properadic graphical category $\pgcat$ from \cref{def properadic gcat} as a particular subcategory of $\gcato$, giving a more explicit description of Theorem 9.66 from \cite{HRYbook}.

\begin{example}\label{ex linear graph}
A connected directed graph $L$ is \mydef{linear} if each vertex has exactly one input and one output, and if the graph itself has exactly one input and one output.
We write ${\exedge}_i \rat L$ (resp.\ ${\exedge}_o \rat L$) for the embedding classifying the unique input (resp.\ output) edge.
\end{example}

\begin{definition}\label{def directed paths cycles}
Let $G$ be a directed graph.
\begin{itemize}
	\item A \mydef{directed path} in $G$ is a map $L \to G$ in the functor category $\finset^{\mathscr{G}}$.
	\item A \mydef{directed cycle} is $G$ is a path $L\to G$ from a non-edge graph $L$ so that the two composites ${\exedge}_i \rat L \to G$ and ${\exedge}_o \rat L \to G$ are equal.
	\item The graph $G$ is \mydef{acyclic} if and only if it is connected (as an undirected graph) and it does not have any cycles.
\end{itemize}
\end{definition}
The reader who compares this definition with \cref{def cycles paths} will notice that here we require our directed paths start and end at edges, rather than possibly at vertices.
When discussing directed graphs, acyclicity will always refer to this directed notion, rather than asking that the underlying undirected graph is a tree (which of course implies acyclicity).

\begin{warning}[Regarding \cref{sec extended}]
In this section, we do not utilize the extended directed graphs from \cref{def ext dir graph}, and all directed graphs are as in \cref{def dir graph}.
We should morally regard the directed nodeless loops from \cref{ex dir nodeless loop} as having a cycle, but it is easier to exclude them from the discussion entirely.
\end{warning}

Notice that if $H$ has a cycle and $H \to G$ is any map in $\finset^{\mathscr{G}}$, then $G$ has a cycle as well.
Hence any embedding with acyclic codomain also has acyclic domain.

Item \eqref{eq conv inclusion} in the following is the same concept as \cite[1.6.5]{Kock:GHP}, but restricted to the connected graphs.
\begin{definition}\label{def convex inclusion}
Let $G$ be an acyclic directed graph.
\begin{enumerate}
\item 
We say an embedding $h\colon H \rat G$ is a \mydef{convex inclusion} if\label{eq conv inclusion}
\begin{enumerate}
\item it is injective, and
\item it has the right lifting property in $\finset^{\mathscr{G}}$ with respect to the maps \[ \downarrow_i \amalg \downarrow_o \to L\] as $L$ ranges over the linear graphs from \cref{ex linear graph}.
\end{enumerate}
\item A \mydef{unstructured subgraph} $H$ of $G$ is a pair of subsets $E_H \subseteq E_G$ and $V_H \subseteq V_G$ so that \[ \bigcup_{v\in V_H} \big[ \inp(v) \cup \out(v) \big] \subseteq E_H.\]
Then $H$ is a graph by defining $I_H \coloneqq \bigcup_{V_H} \inp(v)$ and $O_H \coloneqq \bigcup_{V_H} \out(v)$ 
and the subset inclusions constitute an \'etale map $H \to G$. 
\item We write $\ssub(G) \subseteq \emb(G)$ for set of equivalence classes which represent convex inclusions, and call the elements of $\ssub(G)$ \mydef{structured subgraphs} of $G$.
That is, given a convex inclusion $h\colon H' \rat G$, the subsets $E_{H'} \cong h(E_{H'}) \subseteq E_G$ and $h(V_{H'})\subseteq V_G$ determine an unstructured subgraph $H$ whose inclusion is isomorphic to $h$.
We will use the notation $H\in \ssub(G)$ for such a structured subgraph.
\end{enumerate}
\end{definition}
The terminology of structured subgraphs agrees with Definition 2.2.2 (via Remark 2.2.4) of \cite{ChuHackney}. 
These were called \emph{convex open subgraphs} in \cite[1.6.5]{Kock:GHP} and \emph{subgraphs} in \cite[Definition 6.32]{HRYbook}.

\begin{remark}
The acyclicity of a directed graph $G$ typically depends on its directed structure. 
Further, the set of structured subgraphs $\ssub(G)$ also depends on this structure, while the set of embeddings $\emb(G)$ does not.
\end{remark}

\begin{example}\label{example structured subgraphs}
\leavevmode
\begin{itemize}
	\item Every edge and every vertex determines a structured subgraph (see \cref{example embeddings}).
	Hence we have the following commutative diagram.
	\[ \begin{tikzcd}
	E_G \ar[dr, hook] \rar[dashed] & \ssub(G) \dar[hook] & V_G \ar[dl, hook] \lar[dashed] \\ & \emb(G)
	\end{tikzcd} \]
	\item The graph $G$ is itself in $\ssub(G)$.
	\item The subgraph in \cref{figure not structured}, with all edges pointing down, is not a structured subgraph as there is no directed path from $e_0$ to $e_1$.
\end{itemize}
\end{example}

\begin{figure}
\labellist
\small\hair 2pt
 \pinlabel {$e_0$} at 114 130
 \pinlabel {$e_0$} at 262 130
 \pinlabel {$e_1$} at 114 57
 \pinlabel {$e_1$} at 262 57
\endlabellist
\centering
\includegraphics[scale=0.5]{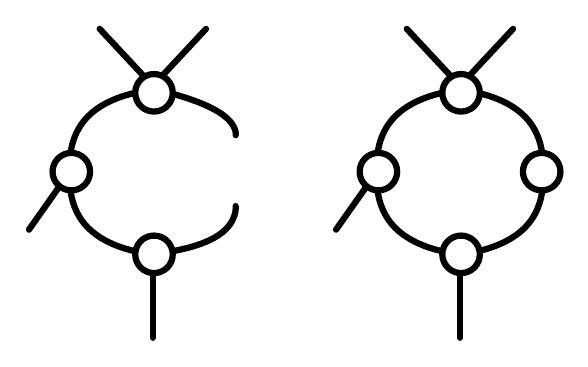}
\caption{A subgraph which is not structured}
\label{figure not structured}
\end{figure}

\begin{remark}\label{rem unions}
If $H$ and $K$ are unstructured subgraphs of $G$, then their union $H\cup K$ with
\begin{align*}
	E_{H \cup K} &= E_H \cup E_K \subseteq E_G \\
	V_{H \cup K} &= V_H \cup V_K \subseteq V_G
\end{align*}
is also an unstructured subgraph of $G$.
If $H$ and $K$ are two \emph{structured} subgraphs of $G$, then $H\cup K$ may or may not be a structured subgraph.
One obvious possiblity for failure is that $H\cup K$ may be disconnected, but even if it is connected its inclusion may not be convex.
(\Cref{figure not structured} provides one example, by taking $H$ to be spanned by the valence three vertices, and $K$ to be spanned by the valence four vertex.)
We will write $H\cup K \in \ssub(G)$ when this union happens to be a structured subgraph.
\end{remark}

If $H, K$ are structured subgraphs of $G$, then they may have several unions when considered as elements of $\emb(G)$. 
But there is at most one union that is again a structured subgraph.

\begin{lemma}\label{lem a union vs union of subgraphs}
Suppose $H,K\in \ssub(G)$, write $h$ and $k$ for the two inclusions, and 
let $U \subseteq \emb(G)$ denote the set of all unions of $h$ and $k$ (in the sense of \cref{def a union}).
If $U$ is inhabited, then $U \cap \ssub(G) = \{ [H \cup K \rat G] \}$.
\end{lemma}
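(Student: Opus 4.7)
The plan is to show that $M \coloneqq H \cup K$, viewed as the unstructured subgraph of $G$ with $V_M = V_H \cup V_K$ and $E_M = E_H \cup E_K$ (see \cref{rem unions}), is actually a structured subgraph whenever $U$ is inhabited; that $[M \rat G]$ lies in $U$; and that any element of $U \cap \ssub(G)$ agrees with $M$. The second claim will be nearly immediate from the construction, since $M \rat G$ is an upper bound for $[h]$ and $[k]$ in $\emb(G)$ and the vertex condition of \cref{def a union} holds by the definition of $V_M$.

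To verify that $M$ is structured, I would fix an arbitrary $\ell \colon L \rat G$ in $U$ together with the factorizations $f \colon H \rat L$ and $g \colon K \rat L$. The degenerate case $V_M = \varnothing$ forces $H$ and $K$ to be edges and $V_L = \varnothing$, so $L$ is an edge and $M$ coincides with this common edge; the inclusion is then trivially convex. In the generic case $V_M \neq \varnothing$, connectedness of $M$ will follow by pushing any path in $L$ between two vertices $u, u' \in V_L$ through $\ell$: each internal edge of such a path is incident to two vertices of $L$, so its image under $\ell$ is incident to two vertices of $\ell(V_L) = V_M$ and hence lies in $E_M$, producing a path in $M$ from $\ell(u)$ to $\ell(u')$.

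The main obstacle will be establishing that $M \rat G$ is convex. Given a directed path $P \colon L' \to G$ whose input and output edges $e_0, e_n$ both lie in $E_M$, the strategy is to lift $P$ through $\ell$. First choose $\tilde{e}_0 \in E_L$ with $\ell(\tilde{e}_0) = e_0$, which is possible because $E_M \subseteq \ell(E_L)$. The pullback squares in \cref{def dir embedding} then produce successive lifts: since $e_0 \in I_G$ one gets $\tilde{e}_0 \in I_L$ with target $\tilde{v}_1 \in V_L$ such that $\ell(\tilde{v}_1) = v_1$; the bijection $\out_L(\tilde{v}_1) \cong \out_G(v_1)$ supplies $\tilde{e}_1$; and so on through $\tilde{e}_n$. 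Projecting the resulting path back via $\ell$ shows every intermediate vertex of $P$ lies in $\ell(V_L) = V_M$ and every intermediate edge is an input or output of some vertex of $V_M$, hence lies in $E_M$; thus $P$ factors through $M$.

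For uniqueness, suppose $N \in \ssub(G)$ represents an element of $U$. Condition (2) of \cref{def a union} gives $V_N = V_H \cup V_K = V_M$. If $V_N$ is empty then $N$ is an edge, and the factorizations $H \rat N$ and $K \rat N$ force $H = K = N = M$. Otherwise, connectedness of $N$ forces $E_N = \bigcup_{v \in V_N}(\inp(v) \cup \out(v)) = E_M$ (any extra edges in $E_N$ would split off as separate components), so $N = M$ as subgraphs of $G$.
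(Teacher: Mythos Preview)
Your convexity argument for $M = H \cup K$ contains a genuine error, and in fact the claim that $M$ is a structured subgraph whenever $U$ is inhabited is false. The problematic step is the path-lifting: you assert that because $e_0 \in I_G$ (being an input to $v_1$), the chosen lift $\tilde e_0 \in E_L$ must lie in $I_L$. This does not follow from the pullback squares of \cref{def dir embedding}. Those squares identify $I_L$ with $I_G \times_{V_G} V_L$, so one can only conclude $\tilde e_0 \in I_L$ once one knows the target vertex $v_1$ already lies in $\ell(V_L)$ --- which is precisely what you are trying to establish. Nothing prevents $\tilde e_0$ from being an output boundary edge of $L$ whose image happens to be an input to a vertex of $G$ outside $\ell(V_L)$.

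Here is a counterexample to the convexity claim. Let $G$ have vertices $a,b,c$ and edges $e_1 \colon a \to b$, $e_2 \colon a \to c$, $e_3 \colon c \to b$, and take $H = \medstar_a$, $K = \medstar_b$. Then $M = H \cup K$ has $V_M = \{a,b\}$ and $E_M = \{e_1,e_2,e_3\}$; it is connected and $M \rat G$ is an embedding which is a union of $h$ and $k$, so $U$ is inhabited. But the directed path $e_2\, c\, e_3$ in $G$ has both endpoints in $E_M$ while $c \notin V_M$, so $M \rat G$ is not convex. One checks that every union of $h$ and $k$ has this same vertex set, hence $U \cap \ssub(G) = \varnothing$. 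The lemma as literally stated is therefore too strong.

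The paper's own proof does not attempt to show $M \in \ssub(G)$. It argues only that (i) if $H\cup K$ happens to be a structured subgraph then its inclusion is a union, and (ii) any $J \in U \cap \ssub(G)$ satisfies $V_J = V_H \cup V_K$ and $E_J = E_H \cup E_K$, hence $J = H \cup K$. Together these give $U \cap \ssub(G) \subseteq \{[H\cup K \rat G]\}$, which is all that is actually invoked in \cref{prop restriction} (there one already has a specific element of $U \cap \ssub(G')$ in hand). Your uniqueness paragraph is essentially this argument and is correct; the trouble lies solely in the attempt to prove the stronger existence claim.
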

\begin{proof}
If $H\cup K \in \ssub(G)$ then the inclusion $H \cup K \rat G$ is an upper bound for $h$ and $k$ and by \cref{rem unions} we know $V_{H\cup K} = V_H \cup V_K$, so the inclusion is a union.

Now suppose $J \in \ssub(G)$ and the inclusion $J \rat G$ is a union of the inclusions $h$ and $k$.
By \cref{def a union}\eqref{def a union cup}, we know that $V_J = V_H \cup V_K,$ hence $V_J = V_{H\cup K}$.
Further, we have $H \rat J \rat G$ and $K \rat J \rat G$, so $E_H \cup E_K \subseteq E_J$. 
If $J$ is an edge, then this must be an equality.
If $J$ is not an edge, then every $e\in E_J$ is incident to some vertex $v \in V_J = V_{H\cup K}$, hence must already be in $E_H \cup E_K$. Thus $E_{H\cup K} = E_J$, so we conclude that $H \cup K = J$.
\end{proof}

Now that we have structured subgraphs and their unions, we can present the version of the properadic graphical category that appeared in  Definitions 2.2.11 and 2.2.14 of \cite{ChuHackney}.
\begin{definition}\label{def properadic gcat}
The \mydef{properadic graphical category}, denoted $\pgcat$, has objects the acyclic directed graphs.
A morphism $\varphi \colon G \to G'$ consists of two functions $\bar\varphi_0 \colon E_G \to E_{G'}$ and $\check \varphi \colon \ssub(G) \to \ssub(G')$ so that 
\begin{enumerate}[label=(\arabic*),ref=\arabic*]
\item The diagram
\[ \begin{tikzcd}
\mathbb{N}E_G  \dar{\mathbb{N} \bar \varphi_0 } & \ssub(G) \rar{\out}\lar[swap]{\inp} \dar{\check \varphi}& \mathbb{N}E_G \dar{\mathbb{N} \bar \varphi_0 }\\
 \mathbb{N}E_{G'} & \ssub(G') \rar{\out}\lar[swap]{\inp} & \mathbb{N}E_{G'}
\end{tikzcd} \]
commutes.\label{def properadic gcat bdry}
\item Suppose that $H_1, H_2 \in \ssub(G)$. If $H_1 \cup H_2 \in \ssub(G)$, then $\check \varphi(H_1 \cup H_2) = \check \varphi(H_1) \cup \check \varphi (H_2)$.\label{def properadic gcat union}
\end{enumerate}
Composition in $\pgcat$ is given by composition of pairs.
\end{definition}
The equivalence with the original definition of the  properadic graphical category from \cite{HRYbook} is proved in Theorem A.1 of \cite{ChuHackney}.

Our goal is to prove \cref{thm properadic gcat}, which characterizes $\pgcat$ as a subcategory of $\gcato$.
We give a proof that does not directly rely on the notions of properad or wheeled properad, instead relying on properties of graphs and the notions developed here.
It is also possible to prove this using (wheeled) properads, by comparing the two notions of subgraph from \S6.3.2 and \S9.4.1 of \cite{HRYbook}.

We begin by exhibiting a function $\pgcat(G,G') \to \gcato(G,G')$ which we later prove (in \cref{thm properadic gcat}) is part of an injective-on-objects functor.

\begin{proposition}\label{prop nonfunctor pgcat to gcat}
Suppose $\varphi = (\bar \varphi_0, \check \varphi) \colon G \to G'$ is a properadic graphical map between two acyclic directed graphs.
Then $\varphi$ determines graphical map in $\gcato$ using $\varphi_0 = \varphi_0^- \amalg \varphi_0^+ \colon A_G \to A_{G'}$ defined as follows
\[ \begin{tikzcd}
A_G^- \dar[dashed,"\varphi_0^-"] \rar["\cong"] & E_G \dar["\bar \varphi_0"] &  A_G^+ \dar[dashed,"\varphi_0^+"] \lar["\cong"'] \\
A_{G'}^- \rar["\cong"] & E_{G'}  &  A_{G'}^+ \lar["\cong"']
\end{tikzcd} \]
and using the composite function
\[ \begin{tikzcd}
V_G \rar[dashed, "\varphi_1"] \dar[hook] & \emb(G') \\
\ssub(G) \rar{\check \varphi} & \ssub(G'). \uar[hook]
\end{tikzcd} \]
\end{proposition}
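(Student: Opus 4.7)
The plan is to verify the three conditions of \cref{def graphical map} together with the orientation-compatibility of \cref{remark pre-discrete fibration}. Orientation compatibility and involutivity of $\varphi_0$ are immediate from its piecewise definition on $A_G^\pm$: the involution $\dagger$ swaps $A_G^+$ and $A_G^-$ while preserving the underlying edge in $E_G$, so $\varphi_0(a^\dagger) = \varphi_0(a)^\dagger$ holds by construction. For condition \eqref{old graph def boundary}, I fix $v \in V_G$ and split $\nbhd(v) = \inp(v) \amalg \out(v)$ and $\eth(\medstar_v) = \nbhd(v)^\dagger = \inp(v)^\dagger \amalg \out(v)^\dagger$ by sign; under the identifications $A_{G'}^\pm \cong E_{G'}$, the required bijection $\nbhd(v) \to \eth(\check\varphi(\medstar_v))$ compatible with $\varphi_0$ reduces to the two equalities $\inp(\check\varphi(\medstar_v)) = \bar\varphi_0(\inp(v))$ and $\out(\check\varphi(\medstar_v)) = \bar\varphi_0(\out(v))$, which is exactly what diagram \eqref{def properadic gcat bdry} of \cref{def properadic gcat} asserts when evaluated at the structured subgraph $\medstar_v \in \ssub(G)$.

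The main obstacle is condition \eqref{old graph def vertices}, which requires that $\check\varphi(\medstar_v)$ and $\check\varphi(\medstar_w)$ have disjoint vertex sets in $V_{G'}$ whenever $v \neq w$. The plan is to exploit the equivalence between $\pgcat$-morphisms $G\to G'$ and certain properad maps $F(G) \to F(G')$ from \cite[Theorem 9.66]{HRYbook}: such a map presents a structured subgraph of $G'$ as a graph substitution $H\{\check\varphi(\medstar_u)\}_{u \in V_G}$, whose vertex set is tautologically the disjoint union $\coprod_{u} V_{\check\varphi(\medstar_u)}$, giving the desired disjointness. A direct combinatorial alternative would iterate \eqref{def properadic gcat union} along a sequence of structured subgraphs of $G$ built up one vertex at a time, using connectedness of $G$ and tracking vertex set cardinalities inductively against the boundary rigidity supplied by \eqref{def properadic gcat bdry}.

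For condition \eqref{old graph def collapse}, suppose $\eth(G) = \varnothing$ and, for contradiction, that $\check\varphi(\medstar_v)$ is an edge for every $v \in V_G$. Condition \eqref{old graph def boundary} (handled above) then forces $|\inp(v)| = |\out(v)| = 1$ for every vertex; combined with $\inp(G) = \out(G) = \varnothing$, every edge is simultaneously the unique output of one vertex and the unique input of another, yielding a permutation of $V_G$ whose orbits are directed cycles in $G$. This contradicts acyclicity whenever $V_G \neq \varnothing$, so some $\varphi_1(v)$ must fail to be an edge, completing the verification.
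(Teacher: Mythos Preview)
Your proof is correct and follows the same overall structure as the paper's: verify conditions \eqref{old graph def vertices}--\eqref{old graph def collapse} of \cref{def graphical map}, with \eqref{old graph def vertices} deferred to the properad-level description in \cite{HRYbook}, \eqref{old graph def boundary} coming from \cref{def properadic gcat}\eqref{def properadic gcat bdry}, and \eqref{old graph def collapse} from acyclicity of $G$. The paper's treatment of \eqref{old graph def collapse} is the contrapositive of yours (it directly asserts that an acyclic $G$ with empty boundary must have a vertex whose in- or out-degree differs from one), and you add the explicit check of involutivity and orientation compatibility that the paper leaves implicit.
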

\begin{proof}
One checks the conditions of \cref{def graphical map}.
Condition \eqref{old graph def vertices} is known for properadic graphical maps using the definition in \cite{HRYbook}. 
Condition \eqref{old graph def boundary} follows from \cref{def properadic gcat}\eqref{def properadic gcat bdry}.

If $\eth(G) = \varnothing$, then since $G$ is acyclic there must be some vertex $v$ so that at least one of $\inp(v)$ or $\out(v)$ is not a one-element set. Hence condition \eqref{old graph def collapse} holds.
\end{proof}

The following useful proposition appears as Corollary 6.62 of \cite{HRYbook}.

\begin{proposition}\label{prop forgetful}
The forgetful functor $\pgcat \to \set$ that sends $G$ to $E_G$ and $\varphi$ to $\bar \varphi_0$ is faithful. \qed
\end{proposition}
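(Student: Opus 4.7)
The plan is to show that two morphisms $\varphi, \psi \colon G \to G'$ of $\pgcat$ with $\bar\varphi_0 = \bar\psi_0$ must satisfy $\check\varphi = \check\psi$. Fix $H \in \ssub(G)$. Condition \eqref{def properadic gcat bdry} of \cref{def properadic gcat}, applied to both morphisms, yields
\[
\inp(\check\varphi(H)) = \mathbb{N}\bar\varphi_0(\inp(H)) = \inp(\check\psi(H))
\]
and the analogous equality for $\out$. Since structured subgraphs embed injectively into $G'$, these equalities actually hold as subsets of $E_{G'}$. The task therefore reduces to the following lemma, which is the main step of the proof: for any acyclic directed graph $G'$, a structured subgraph $K \in \ssub(G')$ is determined by the pair $(\inp(K), \out(K)) \in \wp(E_{G'}) \times \wp(E_{G'})$.

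To prove the lemma, I would first dispose of the edge/non-edge dichotomy by observing that $K$ is an edge if and only if $\inp(K) \cap \out(K) \neq \varnothing$ in $E_{G'}$. The forward direction is immediate from $\inp({\exedge}_e) = \{e\} = \out({\exedge}_e)$. Conversely, if $K$ contains a vertex then, since $K$ is connected, every $e \in E_K$ is incident to some vertex and thus lies in $I_K \cup O_K$; hence $\inp(K) \cap \out(K) = E_K \setminus (I_K \cup O_K) = \varnothing$. Consequently, whether $K$ is an edge is a function of $(\inp(K), \out(K))$ alone, and in the edge case $K$ is the unique element of the intersection.

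For the non-edge case, the bijections $A_{G'}^+ \cong E_{G'} \cong A_{G'}^-$ of \cref{rmk inp out plus minus} let us recover the boundary $\eth(K) \subseteq A_{G'}$ as the disjoint union of the lifts of $\inp(K) \subseteq A_{G'}^+$ and $\out(K) \subseteq A_{G'}^-$. By \cref{mod1 prop 1.25}, $K$ is the unique non-edge element of $\emb(G')$ with this boundary, which completes the lemma. Applying it to $\check\varphi(H)$ and $\check\psi(H)$, which agree on inputs and outputs by the first paragraph and therefore fall into the same case of the dichotomy, gives $\check\varphi(H) = \check\psi(H)$ for every $H \in \ssub(G)$, so $\check\varphi = \check\psi$. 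The main obstacle is really just the key lemma, and within it the edge/non-edge bookkeeping; once the intersection criterion above is in hand, everything reduces to an invocation of \cref{mod1 prop 1.25}.
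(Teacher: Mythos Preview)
Your proof is correct. The paper does not actually give a proof of this proposition: it is stated with an immediate \qed and is simply imported from an external reference (Corollary 6.62 of \cite{HRYbook}). So there is no in-paper argument to compare against.

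Your approach is nonetheless worth noting, since it provides a self-contained argument using only tools already developed in the present paper. The key lemma --- that a structured subgraph $K \in \ssub(G')$ is determined by the pair $(\inp(K),\out(K))$ --- is proved cleanly by your edge/non-edge dichotomy: the criterion $\inp(K)\cap\out(K)\neq\varnothing$ detects edges (using connectedness for the converse), and in the non-edge case the pair recovers $\eth(K)\subseteq A_{G'}$, after which \cref{mod1 prop 1.25} finishes. This makes the faithfulness statement independent of the cited source, which is a genuine gain in self-containment over the paper's treatment.
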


\begin{remark}\label{remark additional axiom}
In the definition of properadic graphical maps $\varphi \colon G \to G'$, we could have forgotten about the function $\bar \varphi_0$ had we been willing to add an additional axiom on $\check \varphi$:
\begin{enumerate}[label=(\arabic*),ref=\arabic*,start=0]
\item The function $\check \varphi \colon \ssub(G) \to \ssub(G')$ sends edges to edges. \label{rmk properadic gcat edge}
\end{enumerate}
Indeed, doing so would let us define $\bar \varphi_0$ to fit into the following diagram
\[ \begin{tikzcd}
E_G \rar[dashed,"\bar \varphi_0"] \dar[hook] & E_{G'} \dar[hook] \\
\ssub(G) \rar{\check \varphi} & \ssub(G').
\end{tikzcd} \]
In the original definition, \eqref{rmk properadic gcat edge} is a consequence of \eqref{def properadic gcat bdry}.
\end{remark}

\begin{lemma}\label{lem dir embed inje}
Suppose $k \colon K \rat G$ is an embedding in $\gcato$.
If $k(e) = k(e')$ for distinct edges $e,e'\in E_K$, then $k(e)$ is an internal edge of $G$ and one of $e$ or $e'$ is in $\inp(K)$ while the other is in $\out(K)$.
As a consequence, if $k(\inp(K)) \subseteq \inp(G)$ or $k(\out(K)) \subseteq \out(G)$, then $k$ is injective.
\end{lemma}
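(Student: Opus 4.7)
The plan is to combine \cref{mod1 lem 1.22}, which constrains how embeddings of undirected graphs can identify arcs, with the orientation-preservation property of $\gcato$-maps from \cref{remark pre-discrete fibration}. Suppose $e \neq e'$ are edges of $K$ with $k(e) = k(e')$. Pick an arc $a \in e$; since $k(e)$ and $k(e')$ are the same edge of $G$, there is some arc $b \in e'$ with $k_0(b) = k_0(a)$ (after replacing $b$ by $b^\dagger$ if necessary), and $a \neq b$ because $e \neq e'$. Applying \cref{mod1 lem 1.22} to the pair $a,b$, and relabeling the pair if necessary, I may assume $a, b^\dagger \in \eth(K)$ and $a^\dagger, b \in D_K$.

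Now I use that $k_0$ respects the sign decomposition (\cref{remark pre-discrete fibration}), so $a$ and $b$ must share the same sign. If both are positive, then $a \in \eth(K) \cap A_K^+ = \inp(K)$ while $b^\dagger \in \eth(K) \cap A_K^- = \out(K)$; using the bijections $A_K^\pm \cong E_K$ from \cref{rmk inp out plus minus}, this says the edge $e$ represents an input of $K$ and the edge $e'$ represents an output. If both are negative, an entirely symmetric argument swaps the roles of $e$ and $e'$. Either way, one of $\{e,e'\}$ is an input and the other is an output. To see that $k(e)$ is internal, note $k_0(a) = k_0(b) \in D_G$ since $b \in D_K$ and \'etale maps send $D$ into $D$, and similarly $k_0(a)^\dagger = k_0(a^\dagger) \in D_G$ since $a^\dagger \in D_K$; hence neither arc of $k(e)$ lies in $\eth(G)$.

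For the consequence, I will treat the hypothesis $k(\inp(K)) \subseteq \inp(G)$, the case for $\out$ being symmetric. If $k$ failed to be injective on $E_K$, the first part would produce an edge $e \in \inp(K)$ with $k(e)$ internal in $G$, contradicting that $\inp(G)$ contains only boundary edges. So $k$ is injective on edges, which promotes to injectivity on arcs: a collision $k_0(a_1) = k_0(a_2)$ for distinct arcs must, by edge-injectivity, come from $a_2 = a_1^\dagger$, but then $k_0(a_1) = k_0(a_1)^\dagger$ would be a fixpoint of the involution on $A_G$, impossible in a graph. Combined with injectivity on vertices (automatic for embeddings) and hence on $D_K = A_K \cap D_K$, this yields injectivity of $k$ as a natural transformation. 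The main obstacle I anticipate is only book-keeping: carefully keeping the two parallel descriptions of $\inp/\out$ (as subsets of $E$ versus subsets of $A^\pm$) in sync throughout the sign-analysis in the middle paragraph.
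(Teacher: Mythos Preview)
Your argument is correct and follows the same approach as the paper's proof, which simply cites \cref{mod1 lem 1.22} together with the orientation correspondence from \cref{cnst gcato to kock} and \cref{rmk inp out plus minus}. You have just written out those details explicitly; the sign analysis you do via \cref{remark pre-discrete fibration} is exactly the content of those references.
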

\begin{proof}
This follows from \cref{mod1 lem 1.22}, using \cref{cnst gcato to kock,rmk inp out plus minus}.
\end{proof}

\begin{lemma}\label{lem partially grafted corollas}
Suppose $J$ and $G$ are acyclic directed graphs, $J$ has two vertices $V_J = \{ u , v\}$, and $\psi \colon J \ract G$ is an active map in $\gcato$.
Then \[ \psi_1(u), \psi_1(v)\in \ssub(G) \subseteq \emb(G).\]
\end{lemma}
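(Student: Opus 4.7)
I would verify both parts of \cref{def convex inclusion}\eqref{eq conv inclusion} for the embedding $h_u \colon H_u \rat G$ representing $\psi_1(u)$ (the argument for $\psi_1(v)$ is symmetric), by showing that any failure forces a directed $2$-cycle in $J$ and hence contradicts acyclicity. The standing ingredients are: $\psi$ active with $V_J = \{u, v\}$ gives $V_G = h_u(V_{H_u}) \amalg h_v(V_{H_v})$ via \cref{def graphical map}\eqref{old graph def vertices}; a sign-reversing bijection $\beta_u \colon \nbhd(u) \xrightarrow{\sim} \eth(H_u)$ satisfying $\psi_0(d^\dagger) = h_u(\beta_u(d))$ from \cref{def graphical map}\eqref{old graph def boundary}; and the graph-substitution presentation $G \cong J\{H_u, H_v\}$ of \cref{remark composition}.

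For injectivity of $h_u$ on arcs, I would argue by contradiction. If $h_u$ is not injective, \cref{lem dir embed inje} produces distinct edges $e \in \inp(H_u)$ and $e' \in \out(H_u)$ with $h_u(e) = h_u(e')$ an internal edge of $G$. Pulling the relevant boundary arcs back through $\beta_u$ gives arcs $d \in \inp(u)$ and $d' \in \out(u)$ of $J$; the internality of $h_u(e), h_u(e')$ together with $\psi$ being active (so $\eth(J) \to \eth(G)$ bijectively, and in particular no arc of $\eth(J)$ can land in $D_G$ under $\psi_0$) forces $d^\dagger \in \out(v)$ and $(d')^\dagger \in \inp(v)$. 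Hence $J$ contains both a $v \to u$ edge $[d,d^\dagger]$ and a $u \to v$ edge $[d',(d')^\dagger]$, and these two internal arcs clearly assemble into a directed path $L \to J$ from a length-two linear graph whose input and output agree, which is a directed cycle in the sense of \cref{def directed paths cycles}.

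For the convex-lifting condition, suppose a directed path $L \to G$ has $e_0, e_n \in h_u(E_{H_u})$ but does not lift to $H_u$. Using injectivity, and the fact that lifts of arcs along $h_u$ are determined by lifts of their incident vertices, the obstruction must be some vertex $w_k$ of the path with $h_u(w_k) \in h_v(V_{H_v})$. Walking along the path, every intermediate transition $H_u \to H_v$ (respectively $H_v \to H_u$) produces a joined edge in the substitution coming from a $u \to v$ (respectively $v \to u$) internal arc of $J$. The subtler case is ``absorption'' at an endpoint: if $e_0 \in h_u(E_{H_u})$ is attached in $G$ to $w_1 \in h_v(V_{H_v})$, then its head-side arc is the $h_u$-image of a boundary arc of $H_u$ glued to an $H_v$-arc in the substitution, and combining the orientation conventions of \cref{rmk inp out plus minus} with the sign-flipping of $\beta_u$ shows this supplies a $u \to v$ arc in $J$; symmetrically, absorption at $e_n$ supplies a $v \to u$ arc. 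A brief case split on whether $w_1$ and $w_n$ lie in $h_u(V_{H_u})$ or $h_v(V_{H_v})$ exhibits in every scenario one arc of $J$ in each direction, yielding a directed $2$-cycle and contradicting acyclicity. The main obstacle is organizing this case analysis cleanly, particularly the endpoint absorption subcase, but once the transition/absorption dichotomy is set up the conclusion is uniform across all failure modes.
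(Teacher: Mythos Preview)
Your argument is correct, but it is organized differently from the paper's proof.  The paper begins by observing that acyclicity of $J$ forces exactly one of $\inp(u)\cap\out(v)$ and $\inp(v)\cap\out(u)$ to be empty, and then fixes a WLOG so that $u$ sits ``above'' $v$ (meaning $\inp(u)\subseteq\inp(J)$ and $\out(v)\subseteq\out(J)$).  With that asymmetry in hand, injectivity of $\psi_1(u)$ is immediate from the consequence clause of \cref{lem dir embed inje}, and convexity of $K_v$ is deduced from a single structural claim (no edge of $K_v$ can be an input to a vertex of $K_u$), which shows a directed path can never exit $K_v$.  The paper also isolates the degenerate case where one of $K_u,K_v$ is an edge.

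By contrast you keep $u$ and $v$ on equal footing and run both parts by contradiction, in each case producing edges of $J$ in both directions $u\to v$ and $v\to u$ and hence a directed $2$-cycle.  This is a perfectly valid route and has the pleasant feature of being uniform: the same contradiction closes both the injectivity and convexity arguments, and even absorbs the ``$H_u$ is an edge'' case without a separate discussion.  The cost is the case split on the positions of $w_1$ and $w_n$ and the transition/absorption dichotomy; the paper's WLOG avoids this bookkeeping entirely.  Two small points worth making explicit when you write it up: in the injectivity step you are silently using acyclicity of $J$ a second time to rule out $d^\dagger\in\nbhd(u)$ (a self-loop at $u$), and in the convexity step the $u\to v$ and $v\to u$ edges you extract are automatically distinct in $J$ because they carry opposite orientations.
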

\begin{proof}
Notice that exactly one of the sets $\inp(u) \cap \out(v)$ and $\inp(v) \cap \out(u)$ is inhabited, for otherwise $J$ would have a directed cycle or be disconnected. 
Without loss of generality, we suppose that the first of these is empty, that is, we suppose $\inp(u) \subseteq \inp(J)$, $\out(v) \subseteq \out(J)$, and $\inp(v) \cap \out(u) \neq \varnothing$.

We first show that $\psi_1(u)$ and $\psi_1(v)$ are injective, hence represented by inclusions of unstructured subgraphs $K_u$ and $K_v$.
Write $\psi_u \colon K_u' \rat G$ for a representative of $\psi_1(u)$.
Since $\psi_u (\inp (K_u')) = \bar \psi_0 (\inp(u)) \subseteq \bar \psi_0 (\inp(J)) = \inp(G)$, by \cref{lem dir embed inje} we have $\psi_u$ is injective.
We write $K_u \rat G$ for the associated inclusion of (unstructured) subgraphs, and likewise $\psi_1(v) = [K_v \rat G]$.

If $K_u$ is an edge (necessarily in the set $\inp(G)$), then since the unique element of $\out(u)$ is a member of the set $\inp(v)$, the function $\bar\psi_0$ gives bijections $\inp(v) \cong \inp(G)$ and $\out(v) = \out(J) \cong \out(G)$, hence $K_v = G$. Likewise, if $K_v$ is an edge then $K_u = G$; in either case $K_u$ and $K_v$ are both in $\ssub(G)$.

We now suppose each of $K_u$ and $K_v$ has a vertex.
If $e\in E_{K_v}$, we now show that there does not exist a vertex $w$ of $K_u$ with $e\in \inp(w)$.
Suppose, to the contrary, that $e\in \inp(w)$ for some $w\in V_{K_u}$.
Then since $e\in E_{K_u}$, it is also an element of
\[
	E_{K_u} \cap E_{K_v} = (\inp (K_u) \cup \out(K_u)) \cap (\inp(K_v) \cup \out(K_v))
\]
since $K_u$ and $K_v$ are vertex disjoint.
But $e\notin \out(K_u)$, hence $e\in \inp(K_u) \subseteq \inp(G)$.
Since $G$ contains a vertex, $e\notin \out(G) \supseteq \out(K_v)$, hence $e\in \inp(K_v)$.
As $K_v$ was assumed to contain a vertex, this tells us $w \in V_{K_v}$, which is impossible since $K_u$ and $K_v$ are vertex disjoint.

As a consequence of the previous paragraph, if $P$ is any path in $G$ from one edge of $K_v$ to another, then every vertex and every edge on this path must also be in $K_v$.
We conclude by \cref{def convex inclusion} that $K_v$ is a structured subgraph of $G$, and a flipped argument shows that $K_u$ is a structured subgraph of $G$ as well.
\end{proof}

\begin{lemma}\label{almost isolated lemma}
Let $H$ be an acyclic directed graph with at least two vertices, let $u\in V_H$ be an almost isolated vertex \cite[Definition 2.60]{HRYbook} and let $J \in \ssub(H)$ be the structured subgraph induced by all of the remaining vertices.
Suppose $G$ is another acyclic directed graph.
If $\varphi \colon H \to G$ is a graphical map and $\hat\varphi(H) \in \ssub(G)$, then $\hat\varphi(\medstar_u)$ and $\hat\varphi(J)$ are structured subgraphs as well.
\end{lemma}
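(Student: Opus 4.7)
The plan is to reduce to \cref{lem partially grafted corollas} via a two-vertex intermediate graph that realizes $H$ as a graph substitution. First, using that $u$ is almost isolated and $J\in\ssub(H)$ is its complement, I would construct an acyclic directed graph $P$ with $V_P=\{u,w_J\}$: the corolla at $u$ matches $\medstar_u\subseteq H$, the corolla at $w_J$ has inputs $\inp(J)$ and outputs $\out(J)$, and the edges shared between $u$ and $w_J$ in $P$ are precisely those shared between $\medstar_u$ and $J$ in $H$. The graph $P$ comes with a canonical active map $\psi\colon P\ract H$ satisfying $\psi_1(u)=[\iota_u\colon\medstar_u\rat H]$ and $\psi_1(w_J)=[J\rat H]$, exhibiting $H$ as a graph substitution into $P$.

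Second, I would form the composite $\varphi\circ\psi\colon P\to G$ and take its active-inert factorization $P\xrightarrow{\psi'}K\xrightarrow{k}G$ from \cref{prop U ofs}. By \cref{cns old to new} together with \cref{lem functoriality of hatting},
\[ [k]=\widehat{\varphi\circ\psi}\,[\id_P]=\hat\varphi\,\hat\psi\,[\id_P]=\hat\varphi\,[\id_H]=\hat\varphi(H), \]
which is a structured subgraph of $G$ by hypothesis. Hence I may take $K$ to be the structured subgraph $\hat\varphi(H)$ of $G$ and $k$ its convex inclusion; in particular $K$ is acyclic. Applying \cref{lem partially grafted corollas} to the two-vertex active map $\psi'\colon P\ract K$ then gives $\psi'_1(u),\psi'_1(w_J)\in\ssub(K)$.

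Third, using $\widehat{\varphi\circ\psi}=\hat k\circ\hat{\psi'}$ together with the fact that for an embedding $k$ one has $\hat k[f]=[k\circ f]$, I would identify
\[ \hat\varphi(\medstar_u)=\hat k(\psi'_1(u)),\qquad \hat\varphi(J)=\hat k(\psi'_1(w_J)). \]
Each of these is represented by the composite of a convex inclusion into $K$ with the convex inclusion $k\colon K\rat G$. Since injectivity and the right lifting property against the maps $\downarrow_i\amalg\downarrow_o\to L$ are each preserved under composition, such composites are themselves convex inclusions. It follows that $\hat\varphi(\medstar_u),\hat\varphi(J)\in\ssub(G)$.

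The main obstacle is the clean construction of the partially grafted corollas $P$: one must unpack the definition of ``almost isolated'' from \cite[Definition 2.60]{HRYbook} to check that the shared edges between $\medstar_u$ and $J$ in $H$ assemble into a single acyclic two-vertex directed graph for which the canonical map $\psi\colon P\ract H$ really is active with the stated values of $\psi_1$. Everything downstream of this construction is a formal manipulation of the active-inert factorization system together with \cref{lem partially grafted corollas}.
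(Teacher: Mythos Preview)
Your proposal is correct and follows essentially the same route as the paper. The only difference is packaging: rather than constructing $P$ by hand, the paper invokes the graph complement (\cref{def graph compl}, \cref{lem graph complement}) of the embedding $J\rat H$ to produce the two-vertex graph $P$ and its active map $\alpha\colon P\ract H$ with $\alpha_1(u)=[\iota_u]$ and $\alpha_1(v)=[J\rat H]$, thereby dispatching what you flag as the ``main obstacle'' in one line; the paper also factors $\varphi$ first (obtaining $\beta\colon H\ract\hat\varphi(H)$) and sets $\psi=\beta\alpha$, which is the same active map as your $\psi'$ by uniqueness of the active-inert factorization.
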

An almost isolated vertex in a graph with two or more vertices is one whose inputs (or outputs) are contained in the inputs (resp.\ outputs) of the graph so that deleting the vertex and all of its inputs (resp.\ outputs) leaves behind a connected graph.
\begin{proof}
Let $\alpha \colon P \ract H$ be the active map in $\gcato$ from an acyclic directed graph with two vertices $u,v$ which acts on vertices by $\alpha_1(v) = J \in \ssub(H)$ and $\alpha_1(u) = \medstar_u \in \ssub(H)$ (that is, $\alpha$ is the graph complement of $J$ from \cref{def graph compl}).
Using the active-inert factorization of $\varphi$, we have the following diagram
\[ \begin{tikzcd}
P \ar[rr,-act, "\alpha"] \ar[drrr, -act,bend right=12, "\psi" swap] & &  H \ar[rr,"\varphi"] \ar[dr,-act,"\beta"] & & G \\
& & & \hat\varphi(H) \ar[ur,tail]
\end{tikzcd} \]
and by \cref{lem partially grafted corollas} we have $\psi_1(u) = K_u$ and $\psi_1(v) = K_v$ in $\ssub(\hat\varphi(H))$.
But $\psi_1(u) = \hat \beta([\iota_u])$ and $\psi_1(v) = \hat \beta([J \rat H])$, so since the composition of convex inclusions are again convex inclusions, we have $K_u, K_v \in \ssub(G)$.
\end{proof}

\begin{lemma}\label{baby restriction lemma}
Suppose $\varphi = (\bar \varphi_0, \hat \varphi) \colon G \to G'$ is a map in $\gcato$, with $G$ and $G'$ acyclic.
The restriction
\[ \begin{tikzcd}
\ssub(G) \rar[hook] \dar[dashed, "\check \varphi = \hat \varphi|_{\ssub(G)}"] & \emb(G) \dar{\hat \varphi} \\
\ssub(G') \rar[hook] & \emb(G')
\end{tikzcd} \]
exists if and only if $\hat \varphi([\id_G]) \in \ssub(G')$.
\end{lemma}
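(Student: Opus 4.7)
The easy direction is immediate: $[\id_G] \in \ssub(G)$ (the identity is convex), so if $\check\varphi$ exists as a restriction of $\hat\varphi$ through $\ssub(G')$, then $\hat\varphi([\id_G]) = \check\varphi([\id_G]) \in \ssub(G')$.

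For the nontrivial direction, assume $\hat\varphi([\id_G]) \in \ssub(G')$ and pick $H \in \ssub(G)$; I want $\hat\varphi(H) \in \ssub(G')$. The edge case is handled by \cref{def new graph map}\eqref{new graph def edges}, so I may assume $H$ has a vertex. The plan is to reduce to the case where $H$ is a single star via the graph complement construction. Take $(K, v^H, \alpha)$ from \cref{lem graph complement}. One first checks that $K$ is acyclic: any directed cycle in $K$ through $v^H$ would produce a path in $G$ between two vertices of $H$ whose interior lies outside $V_H$, violating convexity (\cref{def convex inclusion}\eqref{eq conv inclusion}), while any cycle in $K$ avoiding $v^H$ would already be a cycle in $G$. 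Set $\psi \coloneqq \varphi \circ \alpha \colon K \to G'$. Using \cref{lem functoriality of hatting} together with the characterization of active maps as those $\hat\alpha$ preserving the top element (\cref{rmk factorization}), one computes
\[ \hat\psi([\id_K]) = \hat\varphi \bigl(\hat\alpha[\id_K]\bigr) = \hat\varphi([\id_G]) \in \ssub(G'), \qquad \psi_1(v^H) = \hat\varphi \bigl(\alpha_1(v^H)\bigr) = \hat\varphi(H). \]
So it suffices to prove the stronger claim: \emph{whenever $\psi \colon K \to G'$ has $K$ acyclic and $\hat\psi([\id_K]) \in \ssub(G')$, one has $\psi_1(v) \in \ssub(G')$ for every $v \in V_K$.}

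I would prove this stronger claim by induction on $|V_K|$. The base cases $|V_K| \leq 1$ are trivial: if $K$ is an edge there is nothing to check, and if $K = \medstar_v$ then $\psi_1(v) = \hat\psi([\id_K])$ is in $\ssub(G')$ by hypothesis. For the inductive step with $|V_K| \geq 2$, select an almost isolated vertex $u$ of $K$ (whose existence in any acyclic connected directed graph with at least two vertices is a standard fact recorded in \cite{HRYbook}), and let $J \in \ssub(K)$ denote the complementary structured subgraph. Applying \cref{almost isolated lemma} to $\psi$ yields both $\psi_1(u) \in \ssub(G')$ and $\hat\psi(J) \in \ssub(G')$. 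The composite $\psi' \coloneqq \psi \circ (J \rat K) \colon J \to G'$ then satisfies $\hat{\psi'}([\id_J]) = \hat\psi(J) \in \ssub(G')$, so the inductive hypothesis applies to $\psi'$ and gives $\psi_1(v) = \psi'_1(v) \in \ssub(G')$ for every $v \in V_J$. Combined with the value at $u$, this exhausts $V_K$.

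The two external inputs are the acyclicity of the graph complement (an unpacking of convexity) and the existence of an almost isolated vertex in any non-trivial acyclic directed graph; beyond these, the argument is a clean composition of \cref{lem graph complement}, \cref{lem functoriality of hatting}, and \cref{almost isolated lemma}. I anticipate the main subtlety will be the careful bookkeeping in the reduction step, in particular verifying functoriality identities such as $\psi_1(v^H) = \hat\varphi(H)$ and ensuring that stripping an almost isolated vertex really does preserve the hypothesis on $\hat\psi([\id_K])$; no further obstacle is expected.
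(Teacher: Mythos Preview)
Your proof is correct, but it takes a somewhat different route from the paper's. The paper argues directly inside $G$: given a structured subgraph $K \in \ssub(G)$, it uses the presentation of the inclusion $K \rat G$ as a composite of outer coface maps
\[
K = K_0 \rat K_1 \rat \cdots \rat K_m = G,
\]
where each $K_{i+1}$ is obtained from $K_i$ by adjoining a single almost isolated vertex. Then \cref{almost isolated lemma} is applied iteratively down this chain, starting from $\hat\varphi(G) \in \ssub(G')$ and concluding $\hat\varphi(K) \in \ssub(G')$. No graph complement is formed, and no acyclicity check beyond the hypotheses is needed.

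Your argument instead passes through the graph complement $(K, v^H, \alpha)$ of $H \rat G$, which collapses $H$ to a single vertex, then peels almost isolated vertices off of $K$. The two inductions are essentially dual (vertices of $V_G \setminus V_H$ are removed one at a time in both), but your route incurs the overhead of constructing the complement and verifying its acyclicity from convexity. That verification is correct as you sketch it, and the functoriality identities $\hat\psi([\id_K]) = \hat\varphi([\id_G])$ and $\psi_1(v^H) = \hat\varphi(H)$ follow as you indicate from \cref{lem functoriality of hatting} and the definition of graph complement. What your formulation buys is a cleanly stated auxiliary claim (all star values $\psi_1(v)$ lie in $\ssub(G')$) that is slightly stronger than needed; the paper's approach is more economical but less self-contained, relying on the coface factorization from \cite{HRYbook}.
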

\begin{proof}
Since $G \in \ssub(G)$, we see that the condition is necessary.

Now suppose $\hat \varphi (G)$ is in $\ssub(G')$.
Let $K\in \ssub(G)$.
If $K$ is an edge, then $\hat \varphi(K)$ is an edge as well, hence a structured subgraph. 
We thus assume that $K$ is has a vertex. 
We also assume that $K$ has strictly fewer vertices than $G$, otherwise $K = G$ gets sent to a structured subgraph.

Using the characterization of structured subgraph from \cite[Definition 6.32]{HRYbook}, the inclusion $K \rat G$ admits a presentation as a composition of outer coface maps 
\[
	K = K_0 \rat K_1 \rat \cdots \rat K_m = G
\]
where $m\geq 0$ is the number of elements of $V_G \setminus V_K$.
This means (see Definition 2.60 and \S6.1.2 of \cite{HRYbook}) that $K_{i+1}$ has an almost isolated vertex $v_i$ and $K_i\in \ssub(K_{i+1})$ is induced by all of the remaining vertices.
If $\hat \varphi(K_{i+1})$ is in $\ssub(G')$, then $\hat \varphi(K_i)$ is in $\ssub(G')$ by \cref{almost isolated lemma}. 
But we knew  $\hat \varphi(K_m) = \hat \varphi(G) \in \ssub(G')$, and we conclude that $\hat \varphi(K) = \hat\varphi(K_0)$ is in $\ssub(G')$.
\end{proof}

\begin{proposition}\label{prop restriction}
Suppose $\varphi = (\bar \varphi_0, \hat \varphi) \colon G \to G'$ is a map in $\gcato$, with $G$ and $G'$ acyclic.
If $\hat \varphi (G) \in \ssub(G')$, then the restriction $(\bar\varphi_0, \check \varphi)$  from \cref{baby restriction lemma} is a properadic graphical map.
\end{proposition}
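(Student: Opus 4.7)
The plan is to verify the two axioms of \cref{def properadic gcat} for the pair $(\bar\varphi_0, \check\varphi)$, using the fact from \cref{baby restriction lemma} that the restriction $\check\varphi = \hat\varphi|_{\ssub(G)}$ is well-defined as a map $\ssub(G) \to \ssub(G')$ under our hypothesis $\hat\varphi(G) \in \ssub(G')$.

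First I would establish the boundary condition \eqref{def properadic gcat bdry}. This is essentially immediate: the $\inp$ and $\out$ functions on $\ssub(G)$ are just the restrictions of the corresponding functions on $\emb(G)$ along $\ssub(G) \hookrightarrow \emb(G)$ (this is clear from \cref{def inout emb} together with the fact that a structured subgraph inclusion is in particular an embedding). Since $(\bar\varphi_0, \hat\varphi)$ is a map in $\gcato$, the boundary square of \cref{prop oriented graph cat}\eqref{oriented graph map boundary} commutes for $\hat\varphi$, and restricting along the inclusions $\ssub(G) \hookrightarrow \emb(G)$ and $\ssub(G') \hookrightarrow \emb(G')$ yields the required diagram for $\check\varphi$.

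Next I would verify the union condition \eqref{def properadic gcat union}. Suppose $H_1, H_2 \in \ssub(G)$ with $H_1 \cup H_2 \in \ssub(G)$. The inclusion $H_1 \cup H_2 \rat G$ is a union of the inclusions $H_i \rat G$ in $\emb(G)$ by \cref{lem a union vs union of subgraphs} (or directly, since $V_{H_1 \cup H_2} = V_{H_1} \cup V_{H_2}$). By \cref{def new graph map}\eqref{new graph def union}, $\hat\varphi$ preserves unions of embeddings, so $\check\varphi(H_1 \cup H_2) = \hat\varphi([H_1 \cup H_2 \rat G])$ is a union of $\hat\varphi([H_1 \rat G]) = \check\varphi(H_1)$ and $\check\varphi(H_2)$ in $\emb(G')$. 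Now $\check\varphi(H_1 \cup H_2)$ lies in $\ssub(G')$, so the set $U \cap \ssub(G')$ from \cref{lem a union vs union of subgraphs} is nonempty; the lemma then tells us this intersection is precisely $\{[\check\varphi(H_1) \cup \check\varphi(H_2) \rat G']\}$, and hence $\check\varphi(H_1 \cup H_2) = \check\varphi(H_1) \cup \check\varphi(H_2)$ in $\ssub(G')$.

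I do not expect any serious obstacle: the previous two lemmas together with the two definitions have essentially done all the work. The only mild point to keep track of is that the union condition in \cref{def properadic gcat} requires the structured union on the right-hand side to exist in $\ssub(G')$, but this is guaranteed precisely by the fact that $\check\varphi(H_1 \cup H_2)$ itself provides such a structured union, as packaged by \cref{lem a union vs union of subgraphs}.
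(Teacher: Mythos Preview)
Your proposal is correct and follows essentially the same approach as the paper's proof: both verify \eqref{def properadic gcat bdry} by restricting the boundary diagram of \cref{prop oriented graph cat}\eqref{oriented graph map boundary}, and both verify \eqref{def properadic gcat union} by combining union preservation of $\hat\varphi$ with \cref{lem a union vs union of subgraphs}. Your write-up is slightly more explicit in citing \cref{def new graph map}\eqref{new graph def union} for the union preservation step, but the argument is the same.
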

\begin{proof}
By \cref{baby restriction lemma}, we know the restriction exists.
We verify the two conditions from \cref{def properadic gcat}.
Condition \eqref{def properadic gcat bdry} follows from \cref{prop oriented graph cat}\eqref{oriented graph map boundary}.

Suppose $H$ and $K$ are structured subgraphs of $G$ so that $H \cup K \in \ssub(G)$.
Then $\check \varphi(H\cup K) \in \ssub(G')$ is a union of $\check \varphi(H)$ and $\check \varphi(K)$, so 
\[
	\check \varphi(H\cup K) = \check\varphi(H) \cup \check\varphi(K)
\]
by \cref{lem a union vs union of subgraphs}.
Thus $(\bar\varphi_0, \check \varphi)$ satisfies condition \eqref{def properadic gcat union}, hence is a properadic graphical map.
\end{proof}

\begin{theorem}\label{thm properadic gcat}
The properadic graphical category $\pgcat$ may be identified with the subcategory of $\gcato$ with
\begin{enumerate}
\item objects the acyclic directed graphs, and \label{properadic gcat objs}
\item morphisms those $\varphi \colon G \to G'$  so that $\hat \varphi (G) \in \ssub(G')$. \label{properadic gcat mors}
\end{enumerate}
\end{theorem}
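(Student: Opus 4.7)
The plan is to identify $\pgcat$ with the described subcategory $\mathbf{S} \subseteq \gcato$ by constructing identity-on-objects, mutually inverse functors $R \colon \mathbf{S} \to \pgcat$ and $I \colon \pgcat \to \mathbf{S}$. Before anything else, I would check that $\mathbf{S}$ is actually closed under composition: identities are immediate since $\widehat{\id_G} = \id_{\emb(G)}$, and given composable morphisms $\varphi \colon G \to G'$ and $\psi \colon G' \to G''$ in $\mathbf{S}$, \cref{baby restriction lemma} yields a restriction $\check\psi \colon \ssub(G') \to \ssub(G'')$ of $\hat\psi$, so \cref{lem functoriality of hatting} gives $\widehat{\psi\varphi}(G) = \hat\psi(\hat\varphi(G)) = \check\psi(\hat\varphi(G)) \in \ssub(G'')$. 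The functor $R$ is then supplied by \cref{prop restriction}, with functoriality following from the fact that restriction of a composite to a common domain is the composite of the restrictions. The candidate $I$ is the assignment of \cref{prop nonfunctor pgcat to gcat}.

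The main obstacle is showing that $I$ lands in $\mathbf{S}$. I would deduce this from the stronger claim that, for every properadic graphical map $\varphi$ and every $H \in \ssub(G)$, the equality $\widehat{I(\varphi)}(H) = \check\varphi(H)$ holds in $\emb(G')$, proved by induction on $|V_H|$. The base cases ($H$ an edge or a star) follow from \cref{def new graph map}\eqref{new graph def edges} and the construction of $I(\varphi)_1$ in \cref{prop nonfunctor pgcat to gcat}. For $|V_H| \geq 2$, pick an almost isolated vertex $u \in V_H$ and let $J \in \ssub(H)$ be the structured subgraph induced by $V_H \setminus \{u\}$, so that $H = \medstar_u \cup J$ in $\ssub(H)$. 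By \cref{def properadic gcat}\eqref{def properadic gcat union}, $\check\varphi(H) = \check\varphi(\medstar_u) \cup \check\varphi(J) \in \ssub(G')$, while \cref{def new graph map}\eqref{new graph def union} presents $\widehat{I(\varphi)}(H)$ as some union in $\emb(G')$ of $\widehat{I(\varphi)}(\medstar_u) = \check\varphi(\medstar_u)$ and $\widehat{I(\varphi)}(J) = \check\varphi(J)$ (the latter by the inductive hypothesis). Both candidates share the boundary $\varphi_0(\eth(H))$, via \cref{rem graph def boundary} on one side and \cref{def properadic gcat}\eqref{def properadic gcat bdry} on the other; and both have vertex sum $\varsigma(\check\varphi(\medstar_u)) + \varsigma(\check\varphi(J))$ by \cref{lem vertex sums}, using the vertex-disjointness of $\check\varphi(\medstar_u)$ and $\check\varphi(J)$ inherited from \cref{def new graph map}\eqref{new graph def intersect} via the inductive hypothesis. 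Hence they are simultaneously edges or non-edges, and \cref{mod1 prop 1.25} forces $\widehat{I(\varphi)}(H) = \check\varphi(H)$.

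Taking $H = G$ in the preceding claim shows $I(\varphi) \in \mathbf{S}$. Functoriality of $I$ is then straightforward: the composite $I(\psi) \circ I(\varphi)$ has the same edge function as $I(\psi\varphi)$, and by \cref{lem functoriality of hatting} together with the just-established identity, its vertex assignment $v \mapsto \check\psi(\check\varphi(\medstar_v))$ agrees with $\widehat{I(\psi\varphi)}(\medstar_v)$. Finally, $R \circ I = \id_\pgcat$ is precisely the equality $\widehat{I(\varphi)}|_{\ssub(G)} = \check\varphi$ (edge functions agree by construction), while $I \circ R = \id_\mathbf{S}$ holds because $\varphi$ and $I(R(\varphi))$ share arc function $\varphi_0$ and vertex assignment $v \mapsto \hat\varphi(\medstar_v)$, and so coincide as graphical maps by \cref{old new equivalence}.
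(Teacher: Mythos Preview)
Your proof is correct and follows the same overall architecture as the paper's: construct mutually inverse identity-on-objects functors between $\pgcat$ and the indicated subcategory, using \cref{prop restriction} for one direction and \cref{prop nonfunctor pgcat to gcat} for the other, after first checking closure under composition via \cref{baby restriction lemma}.

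The genuine difference lies in how you show that $I$ lands in the subcategory and how you establish $R \circ I = \id$. The paper only verifies the single equality $\hat\varphi(G) = \check\varphi(G)$ at the top element, using boundary matching together with the edge/non-edge dichotomy from \cref{mod1 prop 1.25}; it then appeals to the faithfulness result \cref{prop forgetful} (imported from \cite{HRYbook}) to conclude that the composite $\pgcat \to \mathbf{S} \to \pgcat$ is the identity. You instead prove the stronger claim $\widehat{I(\varphi)}(H) = \check\varphi(H)$ for every $H \in \ssub(G)$ by induction on $|V_H|$, peeling off an almost isolated vertex and using the union axiom on both sides. This buys you $R \circ I = \id$ for free and makes functoriality of $I$ explicit, at the cost of the inductive argument; the paper's route is shorter but leans on an external faithfulness statement. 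One small simplification in your inductive step: since both candidates are unions (in the sense of \cref{def a union}) of the \emph{same} pair $\check\varphi(\medstar_u)$, $\check\varphi(J)$, they automatically share the same vertex image by \cref{def a union}\eqref{def a union cup}, so the appeal to vertex disjointness via \cref{lem vertex sums} is not actually needed there.
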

\begin{proof}
First observe that \eqref{properadic gcat mors} is closed under composition: $\widehat{\psi\varphi}(G) = \hat \psi(\hat \varphi (G))$ is a structured subgraph by applying \cref{baby restriction lemma} to $\psi$.
We write $\mathbf{M} \subseteq \gcato$ for the indicated subcategory.

By \cref{prop restriction} there is a function $f_{G,G'} \colon \mathbf{M}(G,G') \to \pgcat(G,G')$ given by restriction.
Since composition in each of $\mathbf{M}$ and $\pgcat$ is given by composition of pairs of functions, this constitutes a functor $\mathbf{M} \to \pgcat$.

We also have a function 
\begin{equation}\label{eq function pgcat to gcato}
\begin{aligned} 
\pgcat(G,G') &\to \gcato(G,G') \\
(\bar \varphi_0, \check \varphi) & \mapsto (\varphi_0, \varphi_1) \sim (\bar \varphi_0, \hat \varphi)
\end{aligned}
\end{equation}
from \cref{prop nonfunctor pgcat to gcat} (using \cref{prop oriented graph cat}).
We have isomorphisms
\[ \begin{tikzcd}[row sep=tiny]
\inp (\hat \varphi (G)) & \lar["\cong"',"\bar \varphi_0"] \inp (G) \rar["\cong","\bar \varphi_0"'] & \inp (\check \varphi (G)) \\
\out (\hat \varphi (G)) & \lar["\cong"',"\bar \varphi_0"] \out (G) \rar["\cong","\bar \varphi_0"'] & \out (\check \varphi (G))
\end{tikzcd} \]
so that $\inp (\hat \varphi (G)) = \inp (\check \varphi (G))$ and $\out (\hat \varphi (G)) = \out (\check \varphi (G))$ as subsets of $E_{G'}$.
To see that $\hat \varphi(G) = \check \varphi(G)$ in $\emb(G')$, it suffices by \cref{mod1 prop 1.25} to check that $\hat \varphi(G)$ is an edge if and only if $\check \varphi(G)$ is an edge; but these both occur if and only if $\varphi_1(v)$ is an edge for every $v$.
Thus $\hat \varphi(G) = \check \varphi(G) \in \ssub(G')$, and we conclude that the function \eqref{eq function pgcat to gcato} factors through $d_{G,G'} \colon \pgcat(G,G') \to \mathbf{M}(G,G')$.

Since maps in $\pgcat$ are uniquely determined by their actions on edge sets (\cref{prop forgetful}) and these are preserved by $f$ and $d$, we immediately have $fd$ is the identity on $\pgcat(G,G')$. 
On the other hand, if $df\varphi = \psi$ then $\psi_1$ is given by the left-bottom composite and $\varphi_1$ is given by the right-top composite in the commutative diagram
\[ \begin{tikzcd}
V_G \dar[hook] \ar[dr,hook] \\
\ssub(G) \rar[hook] \dar{\check \varphi = \hat \varphi|_{\ssub(G)}} & \emb(G) \dar{\hat \varphi} \\
\ssub(G') \rar[hook] & \emb(G'),
\end{tikzcd} \]
hence $\varphi_1 = \psi_1$.
Thus $df$ is the identity on $\mathbf{M}(G,G')$, and we conclude that $\pgcat$ is isomorphic to $\mathbf{M}$.
\end{proof}

\begin{corollary}\label{cor pgcatact}
The functor $\pgcat_\actrm \to \gcato^\actrm$ is fully faithful.
\end{corollary}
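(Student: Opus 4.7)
Since $\pgcat \hookrightarrow \gcato$ is a subcategory inclusion by \cref{thm properadic gcat}, the induced functor $\pgcat_\actrm \to \gcato^\actrm$ is automatically faithful, so my plan is to establish fullness: every active map $\varphi \colon G \ract G'$ in $\gcato$ between acyclic directed graphs should already lie in $\pgcat_\actrm(G,G')$.

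The key step is to verify that $\varphi$ is a morphism of $\pgcat$ at all; by the characterization in \cref{thm properadic gcat}\eqref{properadic gcat mors}, this amounts to showing $\hat\varphi([\id_G]) \in \ssub(G')$. Pick a representative $h \colon H \rat G'$ of $\hat\varphi([\id_G])$. The boundary compatibility from \cref{rem graph def boundary} supplies a bijection $\eth(G) \xrightarrow{\cong} \eth(H)$ whose postcomposition with the inclusion $\eth(H) \hookrightarrow A_{G'}$ agrees with $\varphi_0|_{\eth(G)}$. Since $\varphi$ is active, the right-hand side lands inside $\eth(G')$, so $\eth(H) \subseteq \eth(G')$ as subsets of $A_{G'}$. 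Then \cref{lem boundary inclusion} forces $h$ to be an isomorphism, whence $\hat\varphi([\id_G]) = [\id_{G'}]$, which is manifestly an element of $\ssub(G')$.

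It remains to observe that $\varphi$ is still active when viewed in $\pgcat$ rather than in $\gcato$. In either category, activity is detected by the underlying involutive function $\varphi_0$ (equivalently $\bar\varphi_0$): it amounts to asking that $\bar\varphi_0$ restrict to bijections $\inp(G) \cong \inp(G')$ and $\out(G) \cong \out(G')$, since $\pgcat$ inherits its hypermoment structure from $\egcat$ along the same forgetful functor that equips $\gcato$ with its hypermoment structure (\cref{rem other graph cat}). This is the same condition on the same piece of data in both categories, so $\varphi \in \pgcat_\actrm(G,G')$ and the proof is complete. I expect no serious obstacle: the whole argument is a one-line deduction from \cref{lem boundary inclusion} once \cref{thm properadic gcat} is in hand, mirroring the reasoning already used in the proof of \cref{thm new graph to graphical} to handle the empty-boundary case.
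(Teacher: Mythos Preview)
Your proof is correct and follows the same approach as the paper's. The paper streamlines your argument by citing \cref{rmk factorization} directly for $\hat\varphi([\id_G]) = [\id_{G'}]$ (rather than rederiving it via \cref{lem boundary inclusion}), and for the final step observes that $\check\varphi(G) = \hat\varphi([\id_G]) = G'$ is precisely the characterization of activity in $\pgcat$ from \cite[Definition~2.2.17]{ChuHackney}, avoiding the detour through $\bar\varphi_0$.
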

\begin{proof}
If $G$ is an acyclic directed graph, then the inclusion $\ssub(G) \to \emb(G)$ takes the unique maximal element $G$ to the unique maximal element $[\id_G]$.
Since active maps in both categories are those maps preserving the top element (\cite[Definition 2.2.17]{ChuHackney} and \cref{rmk factorization}), the functor $\pgcat \to \gcato$ restricts.
We already know that this functor is faithful.

If $\varphi \colon G \ract G'$ is an active map in $\gcato$, then $\hat \varphi([\id_G]) = [\id_{G'}] \in \ssub(G')$, hence by \cref{thm properadic gcat} we know $\varphi$ is a map in $\pgcat$.
It is a map in $\pgcat_\actrm$ because it takes $G\in \ssub(G)$ to $G'\in \ssub(G')$.
\end{proof}

\begin{remark}
Given a graphical category, we expect the subcategory of active maps to be closely related to a corresponding operadic category \cite{BataninMarkl:OCDDC} (see Proposition 3.2 of \cite{Berger:MCO}).
If an operadic category for properads exists, the previous corollary suggests that it should simply be a full subcategory of the operadic category for wheeled properads from \cite{BataninMarkl:OCNEKD,BataninMarklObradovic:MMGROA}.
\end{remark}

Recall that there is a full subcategory $\pgcatsc \subset \pgcat$ (equivalent to the category called $\mathbf{\Theta}$ in \cite[\S6.3.5]{HRYbook}) whose objects are trees when considered as undirected graphs, that is, those graphs mapping to objects in the subcategory $\gcatnought \subset \gcat$ from \cref{def paths and trees}.
For such a directed tree $T$, every embedding with codomain $T$ is a convex inclusion.
In particular, we have $\ssub(T) = \emb(T)$ is the set of subtrees of $T$ in the sense of \cref{def subtrees} (forgetting direction).

\begin{corollary}\label{cor sc equiv}
The induced functor $\pgcatsc \to (\gcatnought)_{/\opshf}$ is an equivalence.
\end{corollary}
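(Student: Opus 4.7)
The plan is to show that the functor $\pgcatsc \to (\gcatnought)_{/\opshf}$ induced by the embedding $\pgcat \hookrightarrow \gcato$ of \cref{thm properadic gcat} is both bijective on objects and fully faithful.

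First, I would check essential surjectivity (in fact, bijectivity) on objects. An object of $\pgcatsc$ is a directed acyclic graph whose underlying undirected graph is a tree, while an object of $(\gcatnought)_{/\opshf}$ is a pair $(T,x)$ with $T$ an undirected tree and $x \in \opshf_T$ an orientation. By \cref{cnst gcato to kock} these correspondences agree at the underlying-data level, so I only need to observe that every orientation $x$ of an undirected tree $T$ produces a directed graph with no directed cycle. This is immediate: a directed cycle in $(T,x)$ would in particular be a cycle in the undirected graph $T$ (in the sense of \cref{def paths and trees}), contradicting the assumption that $T$ is a tree.

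Next, I would address fullness. Given a morphism $\varphi \colon G \to G'$ in $(\gcatnought)_{/\opshf}$ with $G,G' \in \pgcatsc$, the task is to verify that $\varphi$ is in the subcategory identified with $\pgcat$ by \cref{thm properadic gcat}, i.e.\ that $\hat\varphi([\id_G]) \in \ssub(G')$. But as noted in the paragraph preceding the statement of the corollary, for a directed graph $T$ whose underlying undirected graph is a tree, every embedding with codomain $T$ is a convex inclusion, so $\ssub(T) = \emb(T)$. Hence the condition $\hat\varphi([\id_G]) \in \ssub(G')$ is automatic, and $\varphi$ lies in $\pgcatsc$. Faithfulness is inherited from the faithful subcategory inclusion $\pgcat \hookrightarrow \gcato$ established in \cref{thm properadic gcat} (or more directly from \cref{prop forgetful} combined with the construction of \cref{prop nonfunctor pgcat to gcat}).

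There is essentially no obstacle here beyond correctly invoking \cref{thm properadic gcat}; the result is a clean consequence of two facts: that orientations on a tree automatically avoid directed cycles, and that every embedding into a tree is a convex inclusion. The conclusion combining these two points gives the equivalence $\pgcatsc \simeq (\gcatnought)_{/\opshf}$.
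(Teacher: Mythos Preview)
Your proof is correct and follows essentially the same approach as the paper. The paper's own argument is even more terse: it simply notes that $\ssub(T) = \emb(T)$ for directed trees $T$, so every map in $\gcato(S,T)$ automatically satisfies condition~\eqref{properadic gcat mors} of \cref{thm properadic gcat}, yielding the hom-set bijection; your version spells out the object-level correspondence (that an orientation of a tree is automatically acyclic) and separates fullness from faithfulness, but the substance is the same.
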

\begin{proof}
If $S$ and $T$ are directed trees, then since $\ssub(T) = \emb(T)$, every map in $\gcato(S,T)$ satisfies condition \eqref{properadic gcat mors} of \cref{thm properadic gcat}. 
Hence
\[
\pgcatsc(S,T) = \pgcat(S,T) \to \gcato(S,T) = (\gcatnought)_{/\opshf}(S,T)
\]
is a bijection.
\end{proof}

We can use this corollary to establish the following fact about undirected trees.

\begin{proposition}\label{prop redundancy}
\Cref{thm on trees}(\ref{new graph subtrees}.\ref{v intersect}) is redundant.
That is, suppose $G$ and $G'$ are undirected trees and $\varphi = (\varphi_0,\hat \varphi)$ is a pair consisting of an involutive function $\varphi_0 \colon A_G \to A_{G'}$ and a function $\hat \varphi \colon \emb(G) \to \emb(G')$.
Then $\varphi$ is a new graph map (that is, a map in $\gcatnought$) if and only if it satisfies the following conditions:
\rm
\begin{enumerate}[label=(\roman*),ref=\roman*, start=4]
\item (\cref{def new graph map}) The diagram
\[ \begin{tikzcd}
\emb(G) \rar{\eth} \dar{\hat \varphi}& \mathbb{N}A_G \dar{\mathbb{N} \varphi_0 }\\
\emb(G') \rar{\eth} & \mathbb{N}A_{G'}
\end{tikzcd} \]
commutes. \label{new graph def boundary again}
	\item (\cref {thm on trees}) If two subtrees $S,T$ of $G$ overlap, then so do $\hat \varphi (S)$ and $\hat \varphi (T)$.
	In this case, we have \label{new graph subtrees again}
	\begin{enumerate}[label=(\alph*),ref=\alph*, start=2] 
		\item $\hat \varphi(S\cup T) = \hat\varphi(S) \cup \hat \varphi(T)$. \label{v union again}
	\end{enumerate}
\end{enumerate}
\end{proposition}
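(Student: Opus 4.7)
The plan is to reduce the redundancy of (v)(a) to \cref{cor sc equiv}, exploiting the fact that the definition of a properadic graphical map in \cref{def properadic gcat} contains a union axiom but no intersection axiom. The only nontrivial direction of the proposition is to show that a pair $(\varphi_0, \hat\varphi)$ satisfying (iv), the overlap-preservation clause, and (v)(b) is already a new graph map; then the forward direction of \cref{thm on trees} supplies (v)(a) for free.

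First I would equip both graphs with compatible orientations. Pick any $x' \in \opshf_{G'}$ (arbitrary, since orientations on a graph amount to an independent choice of direction on each edge) and set $x \coloneqq \varphi_0^* x' \in \opshf_G$. Because $G$ and $G'$ are trees, and hence have no directed cycles regardless of orientation, both $(G,x)$ and $(G',x')$ are objects of $\pgcatsc$. The construction of $x$ ensures that $\varphi_0$ carries $A_G^\pm$ to $A_{G'}^\pm$, so it descends under $E_G \cong A_G^+$ to a function $\bar\varphi_0 \colon E_G \to E_{G'}$.

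Next I would verify that the pair $(\bar\varphi_0, \hat\varphi)$ — recalling $\ssub = \emb$ for trees — is a morphism $(G,x) \to (G',x')$ in $\pgcatsc$ in the sense of \cref{def properadic gcat}. Condition (1), the input/output compatibility, follows from (iv) by splitting $\eth$ as $\inp \amalg \out$, since $\varphi_0$ preserves signs. Condition (2) is exactly (v)(b): the hypothesis $H_1 \cup H_2 \in \ssub(G)$ means (by \cref{remark overlap}, applied to trees) that $H_1$ and $H_2$ overlap, and then (v)(b) gives $\hat\varphi(H_1 \cup H_2) = \hat\varphi(H_1) \cup \hat\varphi(H_2)$.

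Finally I would invoke \cref{cor sc equiv} to convert this properadic graphical map into a morphism $(G,x) \to (G',x')$ in $(\gcatnought)_{/\opshf}$, and hence in particular a morphism in $\gcatnought$. Tracing through the construction of \cref{prop nonfunctor pgcat to gcat} (which is the inverse of the restriction $\hat\varphi \mapsto \check\varphi$ used in \cref{thm properadic gcat}), the underlying involutive map on arcs is $\varphi_0^+ \amalg \varphi_0^- = \varphi_0$ and the function on embeddings is $\hat\varphi$ itself. Thus $(\varphi_0, \hat\varphi)$ is indeed a new graph map, completing the argument. The main obstacle is really just this final bookkeeping step: one must be sure that \cref{cor sc equiv} recovers the precise pair $(\varphi_0, \hat\varphi)$ one started with rather than merely some new graph map between $G$ and $G'$, but this is transparent once the constructions of \cref{prop nonfunctor pgcat to gcat} and \cref{thm properadic gcat} are unwound.
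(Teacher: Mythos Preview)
Your proposal is correct and follows essentially the same route as the paper: orient $G'$ arbitrarily, pull back to orient $G$, verify that $(\bar\varphi_0,\hat\varphi)$ satisfies the two axioms of \cref{def properadic gcat} (using that $\ssub=\emb$ for trees), and then invoke \cref{cor sc equiv} to conclude that $(\varphi_0,\hat\varphi)$ is a map in $\gcatnought$. The only difference is cosmetic, in how carefully you spell out the final bookkeeping that the equivalence returns the original pair.
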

\begin{proof}
We only need to prove the reverse implication. 
Suppose that $(\varphi_0, \hat \varphi) \colon G \to G'$ satisfies \eqref{new graph def boundary again} and \eqref{new graph subtrees again} just above. 
Choose an arbitrary element $y \in \opshf_{G'}$ (where $\opshf$ is the orientation presheaf from \cref{def or presheaf}), and define the element $x\in \opshf_G$ by $x_a \coloneqq y_{\varphi_0(a)}$.
Define, as usual, $\bar \varphi_0 \colon E_G \to E_{G'}$ to be the map obtained from $\varphi_0$ by passing to $\dagger$-orbits.
We claim that 
\[ (\bar \varphi_0, \hat \varphi) \colon (G,x) \to (G',y)\]
is a map in $\pgcat$, hence in $\pgcatsc$.
Once this is established, we will be done, since $(\bar \varphi_0, \hat \varphi)$ maps to  $(\varphi_0, \hat \varphi) \colon (G,x) \to (G',y)$ under the equivalence $\pgcatsc \to (\gcatnought)_{/\opshf}$ from \cref{cor sc equiv}, hence $(\varphi_0, \hat \varphi)$ must be a map in $\gcatnought$.

Using the chosen orientations, the commutative diagram from \eqref{new graph def boundary again} splits into the middle two squares of
\[ \begin{tikzcd}
\mathbb{N}E_G \dar{\mathbb{N} \bar \varphi_0 } & 
	\mathbb{N}A_G^+  \lar[swap]{\cong} \dar{\mathbb{N} \varphi_0}& 
	\emb(G) \rar{\out}\lar[swap]{\inp} \dar{\hat \varphi} & 
	\mathbb{N}A_G^- \rar{\cong} \dar{\mathbb{N} \varphi_0}& 
	\mathbb{N}E_G \dar{\mathbb{N} \bar \varphi_0 }\\
\mathbb{N}E_{G'} & 
	\mathbb{N}A_{G'}^+ \lar[swap]{\cong} & 
	\emb(G') \rar{\out}\lar[swap]{\inp} & 
	\mathbb{N}A_{G'}^- \rar{\cong} & 
	\mathbb{N}E_{G'}
\end{tikzcd} \]
using the conventions of \cref{def inout emb}.
Hence \cref{def properadic gcat}\eqref{def properadic gcat bdry} holds.

On the other hand, we have that subtrees of $G$, structured subgraphs of $G$, and embeddings with codomain $G$ all coincide.
Further, the union of two subtrees $S$ and $T$ in the sense of \cref{remark overlap} is the same as the union of structured subgraphs in the sense of \cref{rem unions}, and this union is a structured subgraph if and only if $S$ and $T$ overlap.
Thus \eqref{new graph def boundary again} implies \cref{def properadic gcat}\eqref{def properadic gcat union}, and we conclude that $\varphi$ is a morphism in $\pgcatsc$, as desired.
\end{proof}

\begin{remark}
It would be interesting to know whether or not the corresponding condition (2a) about intersections from \cite[Definition 1.12]{HRY-cyclic} is superfluous.
\end{remark}

\section{Nodeless loops and the extended graphical category}\label{sec extended}
Whenever we talk about the graphical category $\gcat$ or the oriented graphical category $\gcato$, we would like to include the nodeless loop in our category. 
This is a graph without any vertices whose geometric realization is a circle.
We hope for this graph to show up, as it is used to represent the self-gluing of an identity element in a modular operad or a wheeled properad. 
(Note, however, that Raynor showed in \cite[\S7]{Raynor:DLCSM} that it is possible to give a monadic definition of these objects which does not require the nodeless loop to be regarded as a graph.)
The reason we have mostly avoided the issue up until now is that \cref{def jk graphs,def dir graph} do not include nodeless loops, and in alternative combinatorial models for graphs (such as those found in \cite{BataninBerger:HTAPM} and \cite{YauJohnson:FPAM}) the notion of `\'etale map,' and hence `embedding,' is not a first-class concept.
One can modify the earlier definitions of graph to handle nodeless loops, but they become a good deal less elegant and not as easy to work with, which is why we did not do so from the start.
Thus we are in the unfortunate state of affairs where there is no completely suitable combinatorial definition of graph with loose ends for our purposes.

Nevertheless, in this section we treat the extended graphical category $\egcat$ and show that \cref{def new graph map} also describes maps in this category, whereas in \cite{HRY-mod1} we did not have a uniform definition for $\gcat$ and $\egcat$ (compare \cref{def graphical map} and \cref{def extended graphical cat}).
We emphasize that we are not out to describe a different operadic structure here: the nerve theorem of \cite{HRY-mod2} holds equally well whether we have the nodeless loop or not, so $\gcat$ and $\egcat$ both describe modular operads (though perhaps they do not describe the same notion of $\infty$-modular operads).

In Definition 4.1 of \cite{HRY-mod1}, the following refinement of the notion of graph from \cref{def jk graphs} was presented that included explicit boundary data; that is, instead of having $\eth(G) = A_G \setminus D_G$, it should merely be a subset satisfying certain properties.
\begin{definition}[Extension of undirected graphs]\label{refined graph}
A \mydef{graph} $G$ consists of the data from \cref{def jk graphs} together with a subset $\eth(G) \subseteq A_G$ satisfying
\begin{itemize}
	\item $D_G^\dagger \setminus D_G \subseteq \eth(G) \subseteq A_G \setminus D_G$, and 
	\item $\eth(G) \setminus D_G^\dagger$ is a $\dagger$-closed subset of $A_G$.
\end{itemize}
\end{definition}
This allows one to encode graphs that have nodeless loops as connected components, but only adds (up to isomorphism) a single new \emph{connected} graph:
\begin{example}[{\cite[Definition 4.2]{HRY-mod1}}]\label{ex undirected nodeless loop}
A graph $K$ is a \mydef{nodeless loop} if it has two arcs, an empty vertex set, and an empty boundary $\eth(K) = \varnothing$.
Such a graph $K$ has exactly one internal edge, and $K$ is not isomorphic to $\exedge$.
\end{example}

Rather than repeat the general extended definition of embedding from Definition~4.6 of \cite{HRY-mod1}, we will simply import the properties of nodeless loops that we need:

\begin{proposition}\label{facts about nodeless loop}
Suppose $K$ is a nodeless loop.
\begin{enumerate}
\item 
If $f \colon K \rat G$ is an embedding, then $f$ is an isomorphism (in particular, $G$ is a nodeless loop as well).
\item 
If $h\colon H \rat K$ is an embedding, then $H$ is either isomorphic to the edge $\exedge$ or $H$ is a nodeless loop.
\item
The set $\emb(K)$ consists of exactly two elements, $[\exedge] < [\id_K]$, and the boundaries of these are $\eth([\exedge]) = A_K$ and $\eth([\id_K]) = \varnothing$. \qed
\end{enumerate}
\end{proposition}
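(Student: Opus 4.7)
The plan is to verify each of the three claims by exploiting the minimal combinatorial data of a nodeless loop $K$: by \cref{ex undirected nodeless loop}, $A_K$ has exactly two arcs forming a single $\dagger$-orbit, while $V_K = \varnothing$, $D_K = \varnothing$ (since $D_K$ lands in $V_K$), and $\eth(K) = \varnothing$.

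For \textup{(1)}, since $\eth(K) = \varnothing$ we trivially have $f(\eth(K)) \subseteq \eth(G)$, and the conclusion is morally \cref{lem boundary inclusion}. The proof of that lemma splits on cases \emph{edge} versus \emph{vertex-inhabited}, neither of which applies to $K$, so I would give a direct argument adapted to the extended-graph setting of [HRY-mod1, Def.\ 4.6]: étaleness forces $V_H \to V_G$ to be pulled back from $D_H \to D_G$, so the empty source data from $K$ implies that $V_G$ can only contain vertices disjoint from the image $f(A_K)$; but $f(A_K)$ is already a full $\dagger$-orbit whose endpoints sit outside $V_G$, so connectedness of $G$ forces $V_G = \varnothing$, $D_G = \varnothing$, and $A_G = f(A_K)$. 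Thus $G$ is a nodeless loop and $f$ is an isomorphism.

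For \textup{(2)}, the injection $V_H \hookrightarrow V_K = \varnothing$ forces $V_H = \varnothing$, and hence $D_H = \varnothing$. So $H$ is determined by its involutive arc set $A_H$ together with its chosen boundary $\eth(H) \subseteq A_H$. Connectedness of $H$ forces $A_H$ to consist of a single $\dagger$-orbit of size two, and the admissible choices $\eth(H) = A_H$ or $\eth(H) = \varnothing$ yield respectively the edge $\exedge$ or a nodeless loop.

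For \textup{(3)}, by \textup{(2)} every class in $\emb(K)$ is represented by an embedding whose domain is $\exedge$ or a nodeless loop. The nodeless-loop case yields a single class $[\id_K]$, since by \textup{(1)} every such embedding is an isomorphism (and any two isomorphisms agree up to the automorphism of $K$ swapping the two arcs). The edge case also yields a single class $[\exedge]$: the canonical edge embeddings $\exedge_e \rat K$ from \cref{example embeddings} associated to the unique edge of $K$ are identified under the nontrivial automorphism of $\exedge$. The boundary computation is immediate from \cref{def vertex sum}: $\eth([\exedge]) = A_K$ and $\eth([\id_K]) = \eth(K) = \varnothing$. The relation $[\exedge] < [\id_K]$ follows from the factorization $\exedge \rat K = \id_K \circ (\exedge \rat K)$, and the two classes are distinct because their boundaries differ. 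The main obstacle will be adapting \cref{lem boundary inclusion} — and more generally checking that the étale-map machinery of [HRY-mod1, \S4] is strong enough — to the nodeless-loop case, since several earlier arguments in the paper implicitly dichotomize on edge versus vertex-inhabited and treat the boundary as a derived rather than specified piece of data.
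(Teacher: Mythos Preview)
The paper does not prove this proposition: it is stated with a terminal \qed and explicitly introduced as an import (``we will simply import the properties of nodeless loops that we need''), deferring to Definition~4.6 of \cite{HRY-mod1} for the underlying machinery. So there is no paper-proof to compare against; your proposal supplies an argument where the paper provides none.

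Your reasoning for (2) and (3) is clean and correct. For (1), the argument is morally right but the key step is stated imprecisely: you say ``the empty source data from $K$ implies that $V_G$ can only contain vertices disjoint from the image $f(A_K)$,'' but the pullback condition on the $D$--$V$ square alone does not force this --- nothing in naturality prevents $f$ from sending an arc of $K$ into $D_G$. What actually does the work is the second condition in the extended \'etale/p-map definition (the undirected analogue of \cref{def extended etale embedding}(2)): the ``loop arcs'' $L_K = A_K \setminus (D_K \cup \eth(K)) = A_K$ must land in $L_G$, so $f(A_K)$ avoids both $D_G$ and $\eth(G)$. Then $f(A_K)$ is a $\dagger$-closed subset of $A_G$ disjoint from $D_G$ and $\eth(G)$, and connectedness of $G$ forces $A_G = f(A_K)$, $D_G = \varnothing$, $V_G = \varnothing$, $\eth(G) = \varnothing$. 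You flag exactly this concern in your final sentence, so you are aware of the gap; it just needs to be filled by invoking that loop-arc condition explicitly rather than appealing to \cref{lem boundary inclusion}, whose proof genuinely does not cover this case.
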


We now recall the extension, from Definition 4.7 of \cite{HRY-mod1}, to \cref{def graphical map} to include the nodeless loops.

\begin{definition}[Extended graphical category]\label{def extended graphical cat}
Let $G$ and $G'$ be connected (undirected) graphs, including the possibility that one or both is a nodeless loop.
A \mydef{graphical map} $\varphi \colon G \to G'$ is a pair $(\varphi_0, \varphi_1)$ consisting of an involutive function $\varphi_0 \colon A_G \to A_{G'}$ and a function $\varphi_1 \colon V_G \to \emb(G')$, so that \eqref{old graph def vertices} and \eqref{old graph def boundary} of \cref{def graphical map} hold, as well as 
\begin{enumerate}[label=(\roman*'),ref=\roman*',start=3]
\item If the boundary of $G$ is empty and $\varphi_1(v)$ is an edge for every $v$, then $G'$ is a nodeless loop.\label{extended graph def collapse}
\end{enumerate}
These maps assemble into the \mydef{extended graphical category}, denoted $\egcat$.
\end{definition}

\begin{remark}[Maps involving nodeless loops]\label{rmk nodeless maps}
There are few maps in $\egcat$ involving a nodeless loop $K$.
Indeed, there is a non-trivial automorphism $K \to K$ which interchanges the two arcs, and any map $K \to G$ with domain a nodeless loop is an isomorphism between nodeless loops. 
There are more maps with codomain $K$.
If $G$ has a single vertex and no arcs (that is, $G$ is isomorphic to $\medstar_0$), then there is a unique map $G \to K$, and it sends the vertex to $[\id_K] \in \emb(K)$. 
If all vertices of $G$ have valence two, then there are exactly two maps $\varphi \colon G \to K$ --
\cref{def graphical map}\eqref{old graph def boundary} forces $\varphi_1(v) = [\exedge]$ for each vertex $v$, and also shows that $\nbhd(v) \hookrightarrow A_G \to A_K$ is a bijection of sets.
As $\varphi_0$ is an involutive function, $A_G\to A_K$ is determined by the choice of where to send a single arc.
There are no other maps with codomain $K$, see \cite[Remark 4.8]{HRY-mod1}.
\end{remark}

When $K$ is a nodeless loop, the notion of a union from \cref{def a union} and the notion of vertex disjoint from \cref{def vertex disjoint} extend immediately to $\emb(K)$.
Thus the new graph maps from \cref{def new graph map} still make sense when one of $G$ or $G'$ is a nodeless loop.

\begin{theorem}\label{thm extended graph cat}
If $G$ and $G'$ are connected graphs (including the possibility of a nodeless loop), then new graph maps $\varphi \colon G \to G'$ from \cref{def new graph map} are in bijection with the maps $G \to G'$ in the extended graphical category.
This identifies the category of graphs (including the nodeless loops) and new graph maps with the extended graphical category.
\end{theorem}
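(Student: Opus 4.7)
The plan is to extend the bijection of \cref{old new equivalence} by reusing the assignments $\newold$ and $\oldnew$ of \cref{cns new to old,cns old to new}, and to verify they remain mutually inverse when one or both of $G, G'$ is permitted to be a nodeless loop. Functoriality will then follow exactly as in the proof of \cref{old new equivalence}, using the active-inert factorization on $\egcat$ (analogous to \cref{prop U ofs}) in place of the non-extended version. The main structural fact I would lean on is \cref{facts about nodeless loop}: an embedding with non-loop codomain cannot have a nodeless loop as its domain, so $\emb(G)$ coincides with the set used throughout \cref{sec new graph maps} whenever $G$ itself is not a nodeless loop.

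I would handle three cases. When neither $G$ nor $G'$ is a nodeless loop, all the embedding sets and ancillary results of \cref{sec new graph maps} apply verbatim; one need only observe that condition (iii') of \cref{def extended graphical cat} collapses to condition (iii) of \cref{def graphical map} under this hypothesis, so the proof of \cref{old new equivalence} transfers unchanged. When $G$ is a nodeless loop, I would argue that both formalisms force $G'$ to be a nodeless loop as well: for extended graphical maps this is \cref{rmk nodeless maps}, while for new graph maps, condition \eqref{new graph def boundary} combined with \cref{lem boundary inclusion} forces $\hat\varphi[\id_G] = [\id_{G'}]$, and then condition \eqref{new graph def intersect} applied to the self-pair $([\id_G],[\id_G])$ (which is vertex-disjoint since $V_G = \varnothing$) forces $V_{G'} = \varnothing$. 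With both objects nodeless loops, both sides of the bijection reduce to the two involutive bijections of a two-element set, and the correspondence is evident.

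The substantive case is $G$ non-loop with $G'$ a nodeless loop. I would first note that \cref{prop sum splitting new graph} applies here, since its proof depends only on \cref{lem find deletable vertex}, which applies to any graph in the sense of \cref{def jk graphs}---in particular to cycles. Given a new graph map $\varphi$, I could then deduce condition (iii') of \cref{def extended graphical cat} for $\newold\varphi$: if $\eth(G) = \varnothing$ and each $\varphi_1(v)$ is an edge, then $\varsigma\hat\varphi[\id_G] = \sum_v \varsigma\hat\varphi[\iota_v] = 0$, while \eqref{new graph def boundary} gives $\eth\hat\varphi[\id_G] = \varnothing$, so $\hat\varphi[\id_G]$ is an embedding with empty vertex set and empty boundary---necessarily $[\id_{G'}]$ with $G'$ a nodeless loop. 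Conversely, the extended graphical maps into a nodeless loop are given explicitly in \cref{rmk nodeless maps}, and matching them with new graph map data proceeds as in the proof of \cref{old new equivalence}, exploiting that $\hat\varphi$ is determined by its values on edges and on the $[\iota_v]$ via \cref{mod1 prop 1.25} and the vertex-splitting formula.

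The main obstacle I anticipate is precisely this last case: verifying that conditions \eqref{new graph def union} and \eqref{new graph def intersect} are compatible with $\hat\varphi[\id_G] = [\id_{G'}]$ precisely when the combinatorics of $G$ match one of the options allowed by \cref{rmk nodeless maps} (namely $G \cong \medstar_0$ or $G$ a connected bivalent graph with empty boundary). Once all three cases are verified, the remainder of the argument---showing that $\oldnew$ and $\newold$ are mutually inverse on hom-sets and respect composition---is a direct adaptation of the corresponding steps in the proof of \cref{old new equivalence}.
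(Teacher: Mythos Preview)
Your proposal is correct and arrives at the same result, but the paper takes a more direct route: rather than extending the constructions $\oldnew$ and $\newold$ and re-verifying that they are mutual inverses in the extended setting, the paper simply enumerates all maps in $\egcat$ involving a nodeless loop (using \cref{rmk nodeless maps}) and all new graph maps involving a nodeless loop, then observes that both lists are determined entirely by $\varphi_0$ and hence match. This avoids having to re-run the machinery of \cref{subsec new to old,subsec old to new} in the extended case. Your approach is more systematic but, as you note in your final paragraph, the verification in the case $G$ non-loop, $G'$ a nodeless loop would in practice collapse to the same enumeration the paper performs. One genuinely different step: to show $G'$ has no vertices when $G$ is a nodeless loop, you apply condition \eqref{new graph def intersect} to the vertex-disjoint self-pair $([\id_G],[\id_G])$, whereas the paper instead observes that $[\id_K]$ is a union of $[\exedge]$ with itself and invokes conditions \eqref{new graph def edges} and \eqref{new graph def union}. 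Both arguments are valid; yours is marginally slicker.
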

\begin{proof}
In light of \cref{old new equivalence}, we need only consider maps involving a nodeless loop $K$, and to establish the bijections it is enough to enumerate such maps.
We then must show that any compositions $\varphi \circ \psi$ with a nodeless loop appearing as a domain or codomain of $\varphi$ or $\psi$ behave the same on both sides.

Suppose $\varphi \colon K \to G'$ is a new graphical map with $K$ a nodeless loop.
By \cref{def new graph map}\eqref{new graph def boundary}, $\hat \varphi ([\id_K])$ has empty boundary, so by \cref{lem boundary inclusion} we have that the boundary of $G'$ is empty and $\hat \varphi([\id_K]) = [\id_{G'}]$.
On the other hand, $[\id_K]$ is a union of $[\exedge]$ and $[\exedge]$, so by conditions \eqref{new graph def edges} and  \eqref{new graph def union} of \cref{def new graph map} we have $[\id_{G'}]$ is the union of edges, hence $G'$ does not have any vertices.
The only connected graphs with no vertices and an empty boundary are the nodeless loops.

Now that we've established $G'$ is a nodeless loop and $\hat \varphi$ is an isomorphism, we conclude that the only data in the new graph map is the involutive function $\varphi_0 \colon A_K \to A_{G'}$, and there are exactly two such maps.
By \cref{rmk nodeless maps} we have established a bijection between new graph maps and extended graphical maps from $K$ to $G'$.

Now suppose $G \to K$ is a new graph map.
As the boundaries of elements of $\emb(K)$ have cardinality zero and two, \cref{def new graph map}\eqref{new graph def boundary} implies that any vertices of $G$ must have valence zero or two.
In the first case, there is a unique map $G \to K$. Indeed, if $G$ has a vertex of valence zero, then it is isomorphic to $\medstar_0$. The set $\emb(\medstar_0) \cong \emb(G)$ is a singleton and preservation of boundary means that $\hat \varphi[\id_G]$ must be $[\id_K]$.

If every vertex of $G$ has valence two, then preservation of boundary implies that $\nbhd(v) \to A_K$ is a bijection for every $v \in V_G$, so the map $\varphi_0 \colon A_G \to A_K$ is determined by where it sends any individual arc (as in \cref{rmk nodeless maps}).
There is a unique function $\hat \varphi \colon \emb(G) \to \emb(K)$ satisfying condition \eqref{new graph def boundary} of \cref{def new graph map}, namely 
\[
	\hat \varphi ([h]) = \begin{cases}
		[\exedge] & \text{if } \eth([h]) \neq \varnothing \\
		[\id_K] & \text{if } \eth([h]) = \varnothing.
	\end{cases}
\]
Thus there are precisely two maps $G \to K$ in this second case.

By \cref{rmk nodeless maps} we have established the desired bijections for maps having the nodeless loop as its domain or codomain.
It remains to show that compositions involving such a map behave the same whether considered as new graph maps or maps in the extended graphical category. 
But all maps involving the nodeless loop are completely determined by $\varphi_0$, and the correspondence does not change the arc map.
\end{proof}

\subsection{The extended oriented graphical category}\label{subsec extended oriented}
We now return to the oriented graphical category from \cref{sec directed}.
The original version of the wheeled properadic graphical category from \cite{HRYbook} included a (directed) nodeless loop.
To cut down on enumerating multiple special cases, and in order to use the efficient \cref{def dir graph}, we previously avoided the nodeless loop. We now add it back in.

The following is an extension of \cref{def dir graph}, and is the directed version (and a simple translation using \cref{cnst gcato to kock,rmk inp out plus minus}) of \cref{refined graph}.
\begin{definition}[Extension of directed graphs]\label{def ext dir graph}
A \mydef{directed graph} consists of a diagram of finite sets
\[
\begin{tikzcd}[column sep=small]
	E_G & I_G \rar\lar[hook'] & V_G & O_G \rar[hook] \lar & E_G	
\end{tikzcd}
\]
along with a pair of subsets $\inp(G)$ and $\out(G)$ of $E_G$ so that 
\begin{itemize}
\item $I_G \setminus O_G \subseteq \inp(G) \subseteq E_G \setminus O_G$,
\item $O_G \setminus I_G \subseteq \out(G) \subseteq E_G \setminus I_G$, and 
\item $\inp(G) \setminus I_G = \out(G) \setminus O_G$.
\end{itemize}
\end{definition}

Notice that the graphs from \cref{def dir graph} are graphs in this sense.
Indeed, if $\inp(G) = E_G \setminus O_G$ and $\out(G) = E_G \setminus I_G$,  then \[ \inp(G) \setminus I_G = E_G \setminus (O_G \cup I_G) = \out(G) \setminus O_G.\] 
Below, we will write $L_G\subseteq E_G$ for the complement of $\inp(G) \cup \out(G) \cup I_G \cup O_G$.

\begin{remark}\label{remark no loops}
For a graph $G$, the following are equivalent: 
$L_G =\varnothing$, 
$\inp(G) = E_G\setminus O_G$, and
$\out(G) = E_G \setminus I_G$.
In particular, the graphs of \cref{def dir graph} are precisely those graphs with $L_G = \varnothing$.
We will indicate the equivalence of the first two, as the equivalence of the first and third is identical.
If 
$\inp(G) = E_G \setminus O_G$ then
\[
	E_G = \inp(G) \cup O_G \subseteq \inp(G) \cup \out(G) \cup I_G \cup O_G = E_G \setminus L_G
\]
and we conclude that $L_G$ is empty.
On the other hand, suppose that $L_G = \varnothing$.
Then we have \[ E_G \setminus O_G = (\inp(G) \cup \out(G) \cup I_G) \setminus O_G, \]
but $\out(G) \setminus O_G = \inp(G) \setminus I_G \subseteq \inp(G)$ and $I_G \setminus O_G \subseteq \inp(G)$ by assumption, hence $E_G \setminus O_G \subseteq \inp(G) \subseteq E_G \setminus O_G$.
\end{remark}

This more expansive definition of graph adds only a single new connected graph.
\begin{example}\label{ex dir nodeless loop}
The \mydef{directed nodeless loop} is the directed graph with $E_G = \ast$ and \[ I_G = V_G = O_G = \inp(G) = \out(G) = \varnothing.\]
\end{example}

The following is an extension of \cref{def dir embedding}, and is the directed analogue of Definition 4.6 of \cite{HRY-mod1}.

\begin{definition}[Embeddings]\label{def extended etale embedding}
Suppose that $G$ and $H$ are directed graphs in the sense of \cref{def ext dir graph}.
A \mydef{p-map} from $G$ to $H$ is a natural transformation of underlying functors $\mathscr{G} \to \finset$ so that 
\begin{enumerate}
	\item  
	the two squares
\[
\begin{tikzcd}[column sep=small]
I_G \rar  \dar \ar[dr, phantom, "\lrcorner" very near start] & V_G  \dar& O_G  \lar  \dar \ar[dl, phantom, "\llcorner" very near start] \\
I_H \rar & V_H & O_H \lar 	
\end{tikzcd}
\]
	are pullbacks, 
	and \label{item p-map}
	\item
	if $L_G \subseteq E_G$ is the complement of $\inp(G) \cup \out(G) \cup I_G \cup O_G$, then $L_G$ maps into $L_H$.
\end{enumerate} 
A p-map between connected graphs is an \mydef{embedding} just when $V_G \to V_H$ is injective.	
\end{definition}

\begin{remark}[Terminology]
Notice that if $L_G = \varnothing = L_H$ (see \cref{remark no loops}), then a p-map is the same thing as an \'etale map from \cref{def dir embedding}.
Further, the undirected analogue of p-map was called \emph{\'etale} in \cite[Definition 4.6]{HRY-mod1}, but we are deliberately avoiding that terminology here. 
This is because a p-map (or an \'etale map from \cite{HRY-mod1}) does not have much to do with either the geometric or algebraic situations.
From the geometric perspective, we would expect \'etale maps to correspond to deformation classes of oriented local homeomorphisms of the associated topological graphs (see \cref{rem etale local homeo}), which implies that the nodeless loop should have a self-\'etale map of degree $n$ for each $n\in \mathbb{N}$, corresponding to the degree $n$ map of the circle $S^1$.
Algebraically, elements of the free wheeled properad generated by $H$ will be represented by natural transformations (with connected domain) satisfying only \eqref{item p-map}.
We are unconvinced that the notion of p-map is widely useful on its own, except in the cases covered by \cref{def dir embedding}.
On the other hand, the notion of embedding is still geometrically meaningful, as there is essentially only one injective local homeomorphism into the circle from either the open interval or the circle. 
\end{remark}

The orientation $\gcat$-presheaf $\opshf$ from \cref{def or presheaf} can be extended in a natural way to a $\egcat$-presheaf (also called $\opshf$).
That is, if $K$ is a nodeless loop with arc set $\{ a, a^\dagger \}$, then 
\[ \opshf_K = \big\{ (x_a, x_{a^\dagger}) \mid x_a \in \{+1,-1\}, x_{a^\dagger} = -x_a \big\} \subseteq \prod_{A_K} \{+1,-1\} \] 
contains two elements.

\begin{proposition}\label{prop directed graph iso classes}
Isomorphism classes of directed graphs in the sense of \cref{def ext dir graph} are in bijective correspondence with isomorphism classes of
\begin{itemize}
\item maps of $\egcat$-presheaves of the form $\coprod_{i=1}^n G_i \to \opshf$ (with each $G_i$ a representable presheaf associated to an undirected connected graph),
\item directed Yau--Johnson graphs \cite[Definition 1.21]{YauJohnson:FPAM}, and
\item 
directed Batanin--Berger graphs \cite[13.4]{BataninBerger:HTAPM}.
\end{itemize} 
\end{proposition}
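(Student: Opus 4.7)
The plan is to reduce each of the three claimed bijections to its connected-graph version (which is already established in or can be pieced together from the excerpt and cited literature for graphs \emph{without} a nodeless loop), and then to verify by hand that the nodeless loop slots in correctly on each side.

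First I would observe that on all four sides there is a natural notion of connected component, and that each class of objects decomposes uniquely up to reindexing as a finite disjoint union of connected pieces. On the left, \cref{def ext dir graph} applies componentwise (one must also split the extra subsets $\inp(G)$ and $\out(G)$ across components, but this is unambiguous). In the presheaf formulation, the coproduct $\coprod_i G_i \to \opshf$ already exhibits the decomposition, with each summand a connected representable. Directed Yau--Johnson and Batanin--Berger graphs likewise admit canonical connected decompositions. Consequently it suffices to establish each bijection on connected graphs and to check that the assembled bijections are compatible with disjoint unions (which is immediate once the componentwise statement is in hand).

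Next I would invoke the existing theory on the connected, non-nodeless-loop case. For such graphs, \cref{cnst gcato to kock} together with \cref{prop dir graph equivs} already yields a bijection between isomorphism classes of connected directed graphs (in the sense of \cref{def dir graph}) and isomorphism classes of pairs $(G,x\colon G\to \opshf)$ with $G$ a connected undirected graph that is not a nodeless loop. The Yau--Johnson and Batanin--Berger cases in this range are handled by the correspondences due to Batanin--Berger referenced in \cite[Remark 2.0.2]{ChuHackney}. So the only new task is to verify each bijection when the connected component in question is a nodeless loop.

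For this nodeless-loop case, on the directed side (\cref{def ext dir graph}) the unique isomorphism class is the graph of \cref{ex dir nodeless loop}, where $E_G$ is a singleton and all other sets are empty. On the presheaf side, an undirected nodeless loop $K$ has $A_K = \{a, a^\dagger\}$ and $\opshf_K$ contains exactly two elements, distinguished by the sign of $a$; the nontrivial automorphism of $K$ that swaps the two arcs identifies these two orientations, giving a \emph{single} isomorphism class of map $K \to \opshf$, which matches the directed nodeless loop. The Yau--Johnson and Batanin--Berger formalisms each contain their own version of a nodeless loop (a single edge identified with its reverse, respectively a graph with one edge and no vertex incidences), and in each case one checks directly that this contributes exactly one isomorphism class, which matches \cref{ex dir nodeless loop} under the respective correspondence. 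Assembled over all components, this yields the three bijections.

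The main obstacle is not any deep point but a careful bookkeeping step: tracing the definitions of the Yau--Johnson and Batanin--Berger formalisms to confirm that the nodeless loop contributes a single isomorphism class, and that no pathology is introduced by the asymmetric way $\inp(G)$ and $\out(G)$ are specified in \cref{def ext dir graph} (where they properly contain $I_G\setminus O_G$ and $O_G\setminus I_G$) versus how boundary/leaf data appears in those other frameworks. One must in particular be careful that the equivalences respect the distinction between an edge and a nodeless loop, both of which are vertex-free.
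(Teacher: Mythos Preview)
Your proposal is correct and follows essentially the same strategy as the paper: both reduce to existing equivalences in the literature and extend them to the more general class of graphs that includes nodeless loops. The paper's proof is terser---it simply cites \cite[1.1.13]{Kock:GHP} and \cref{cnst gcato to kock} for the first bullet and Propositions~15.2 and~15.6 of \cite{BataninBerger:HTAPM} for the remaining two, all ``for the more general class of graphs''---whereas you make the connected-component decomposition and the nodeless-loop verification explicit. Your more detailed bookkeeping is a reasonable unpacking of what the paper leaves implicit.
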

\begin{proof}
This combines versions of \cite[1.1.13]{Kock:GHP} and \cref{cnst gcato to kock} for the more general class of graphs, and the equivalences Proposition 15.6 and (a variation of) Proposition 15.2 from \cite{BataninBerger:HTAPM}.
\end{proof}

\begin{definition}\label{def extended oriented}
The \mydef{extended oriented graphical category}, denoted $\egcato$, is the category whose objects are morphisms $G \to \opshf$ from a representable presheaf to the orientation presheaf.
\end{definition}

\begin{theorem}\label{thm oriented wheeled equiv extended}
The extended oriented graphical category $\egcato$ is equivalent to the wheeled properadic graphical category which includes the nodeless loop.
Morphisms may be described exactly as in \cref{prop oriented graph cat}.
\end{theorem}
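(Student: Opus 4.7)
The plan is to mirror the proof of \cref{thm oriented wheeled equiv}, leveraging \cref{thm extended graph cat} (which handles the undirected case with nodeless loops) to reduce the morphism description to a routine check, and then extending the functor $F$ of \cref{prop functor to wproperad} to obtain the equivalence with the wheeled properadic graphical category.

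First I would establish the analogue of \cref{prop oriented graph cat} in the extended setting. By \cref{thm extended graph cat}, maps in $\egcat$ are equivalent to pairs $(\varphi_0, \hat\varphi)$ satisfying the four conditions of \cref{def new graph map}. The category $\egcato$ is the pullback $\egcat \times_{\widehat{\egcat}} \widehat{\egcat}_{/\opshf}$, so \cref{remark pre-discrete fibration} applies verbatim: a map $(G,x) \to (G',x')$ in $\egcato$ is the same as a map $\varphi \colon G \to G'$ in $\egcat$ over the orientation presheaf. Splitting $A_G = A_G^+ \amalg A_G^-$ according to $x$ (including the case when $G$ is a directed nodeless loop, where $A_G^\pm$ each have one element) and using $A_G^\pm \cong E_G$, the involutive function $\varphi_0$ corresponds to an edge function $\bar\varphi_0 \colon E_G \to E_{G'}$, and condition \eqref{new graph def boundary} of \cref{def new graph map} splits into condition \eqref{oriented graph map boundary} of \cref{prop oriented graph cat} using \cref{def inout emb}. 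This gives the second claim of the theorem.

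Next I would extend the functor $F \colon \gcato \to \wproperad$ from \cref{prop functor to wproperad} to a functor $\widetilde{F} \colon \egcato \to \wproperad$ sending the directed nodeless loop $K$ to the free wheeled properad it generates (which has one color and no generating operations, but whose trace structure contributes a nontrivial element via the contracted unit). On morphisms not involving $K$, $\widetilde{F}$ coincides with $F$. For morphisms out of $K$, \cref{facts about nodeless loop} together with \cref{rmk nodeless maps} (in its directed form) shows that the only maps $K \to G'$ in $\egcato$ are the two isomorphisms to directed nodeless loops, and these correspond to the two obvious wheeled properad maps. For morphisms into $K$, one enumerates them exactly as in the proof of \cref{thm extended graph cat}: either $G \cong \medstar_0$ (giving a unique map, determined by sending the sole vertex to $[\id_K]$), or every vertex of $G$ has valence two, in which case there are precisely two new graph maps $G \to K$ in $\egcato$ (one for each orientation choice on $A_K$). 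On the wheeled properad side, the corresponding maps $\widetilde{F}(G) \to \widetilde{F}(K)$ are similarly classified: they arise from contracting the one generator of $\widetilde{F}(\medstar_0)$ (for the $G \cong \medstar_0$ case) or by iterated contractions of valence-two generators to obtain the traced identity in $\widetilde{F}(K)$.

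Finally I would assemble these pieces. The lift
\[ \begin{tikzcd}[column sep=tiny]
\egcato \ar[dr,"\widetilde{F}"'] \ar[rr,dashed] & & \widetilde{\mathbf{C}} \ar[dl] \\
 & \wproperad
\end{tikzcd} \]
into the extended wheeled properadic graphical category $\widetilde{\mathbf{C}}$ (now including the nodeless loop) is constructed exactly as in the proof of \cref{thm oriented wheeled equiv} on hom-sets not involving $K$, using images of subgraphs and \cref{lem emb subgraph}. On hom-sets involving $K$, the enumerations above directly match the corresponding hom-sets in $\widetilde{\mathbf{C}}$ as described in \cite[\S10]{HRYbook}; one checks both $\widetilde{F}$ lands in $\widetilde{\mathbf{C}}$ and that the inverse assignment $f \mapsto \varphi$ produces a well-defined new graph map. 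The main obstacle will be the bookkeeping around the directed nodeless loop: since its vertex set is empty, the image-of-subgraph machinery of \cite[Theorem 9.62]{HRYbook} requires care, and one must verify by hand that the two obvious maps from a valence-two-vertex graph $G$ into $K$ correspond precisely to the two wheeled properad maps obtained by contracting all generators along a chosen cyclic ordering of arcs. Once this direct enumeration is matched, the fully-faithful, identity-on-objects nature of the lift follows by the same argument as in the proof of \cref{thm oriented wheeled equiv}.
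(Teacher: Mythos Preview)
Your overall strategy---establish the morphism description via \cref{thm extended graph cat}, then extend the equivalence of \cref{thm oriented wheeled equiv} by enumerating maps involving the nodeless loop on both sides---is exactly the paper's approach. However, your enumeration contains a systematic miscount.

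You claim there are \emph{two} isomorphisms $K \to G'$ between directed nodeless loops in $\egcato$, and \emph{two} maps $G \to K$ when all vertices of $G$ have valence two. This is the count in $\egcat$, not in $\egcato$. In $\egcato$ the objects carry fixed orientations, and maps must lie over $\opshf$; of the two $\egcat$-maps, only one respects the given orientations. The paper is explicit about this: there is a \emph{unique} map between two directed nodeless loops, and if a map $G \to K$ exists it is unique. Correspondingly, on the wheeled properadic side the nodeless loop has a single automorphism (the identity), not two. Your mismatched counts would prevent you from establishing the claimed bijection on hom-sets.

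A second, smaller gap: your treatment of hom-sets into $K$ on the $\widetilde{\mathbf{C}}$ side is too vague. The paper does not rely on the image-of-subgraph machinery here (which, as you note, is awkward when the codomain has no vertices); instead it uses the Reedy factorization from \cite{HRYfactorizations} to decompose any $f \colon G \to K$ as $f^+ \circ f^-$ and argues separately that $f^+$ and $f^-$ are each unique. You would need some such argument to pin down the $\widetilde{\mathbf{C}}$ side precisely.
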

This version of the wheeled properadic graphical category was called $\mathcal{A}$ in \cite[\S2]{HRYfactorizations}.
Our strategy below is similar to that from \cref{thm extended graph cat}, namely to enumerate all maps involving nodeless loops.
\begin{proof}
First, the characterization of morphisms in the extended oriented graphical category follows just like in \cref{prop oriented graph cat}.

Let $\widetilde{\mathbf{C}}$ denote the wheeled properadic category which includes nodeless loops from \cite[\S2]{HRYfactorizations}, and let $\mathbf{C}$ be the full subcategory which excludes them (using the conventions of \cref{remark no listings}; note that nodeless loops have a unique listing).
By sending a nodeless loop $K$ to the $E_K$-colored wheeled properad having just an identity morphism and its contraction, we obtain the extension to the functor from \cref{prop functor to wproperad} below left.
\begin{equation}\label{eq extending isomorphism}
\begin{tikzcd}[column sep=small]
\gcato \ar[rr,"\simeq"] \dar[hook] & & \mathbf{C} \dar[hook] \\
\egcato \ar[dr] \ar[rr,dashed] & & \widetilde{\mathbf{C}} \ar[dl] \\
& \wproperad
\end{tikzcd} \end{equation}
To prove existence of the dashed equivalence, it suffices to compare hom-sets.
We already have everything we need in \cref{rmk nodeless maps}. 
In $\egcato$ there is a unique map between two nodeless loops, and no other maps with a nodeless loop as the domain. 
Likewise, if $K$ is a nodeless loop then there is a map $G \to K$ if and only if every vertex of $G$ has one input and one output or the unique vertex of $G$ has valence zero.
If there is a map $G\to K$, it is unique.

On the other hand, in the second paragraph of page 220 of \cite{HRYfactorizations} it was observed that only isomorphisms in $\widetilde{\mathbf{C}}$ can have a nodeless loop as their domain, and a nodeless loop possesses a single automorphism (the identity).
If $f \colon G \to K$ is a map whose codomain is a nodeless loop, we can form the (essentially unique) Reedy factorization guaranteed by \cite[Theorem 1.2]{HRYfactorizations}.
\[ \begin{tikzcd}[column sep=small]
G \ar[rr,"f"] \ar[dr,"f^-"'] & & K \\
& H \ar[ur,"f^+"'] &
\end{tikzcd} \]
By the characterization of plus maps in Proposition 3.3 of \cite{HRYfactorizations}, every vertex of $H$ maps to a subgraph which is not an edge.
Thus either $H$ has a single vertex of valence zero (that is, $H\cong \medstar_0$) mapping the vertex to $K$ or $H$ does not have any vertices.
In the latter case, $H$ is either an edge or a nodeless loop.
In any of these situations, the map $f^+ \colon H \to K$ is unique.

We now show that $f^-$ is unique as well.
For each of these three graphs $H$, if $f^- \colon G \to H$ is an isomorphism then by \cite[Lemma 3.9]{HRYfactorizations} it is the unique such.
This finishes the case when $H\cong \medstar_0$, as the only maps with this graph as their codomain are isomorphisms.
Now when $H$ is an edge or a nodeless loop, there is at most one minus map $G \to H$ (that is, isomorphic to a composition of codegeneracy maps \cite[Definition 9.39]{HRYbook}), which sends every vertex to the edge subgraph and preserves boundaries. 
Thus the map $f^-$ is unique, and hence if there is a map $G \to K$ it is unique as well.
Further, by our analysis of $f^-$, every vertex of $G$ has either precisely one input and one output, or $G$ has a unique vertex of valence zero.

We have now established (unique) bijections between the sets of maps $G\to K$ or $K\to G$ in both categories whenever $K$ is a nodeless loop. 
Since the vertical inclusions in \eqref{eq extending isomorphism} are both sieves, we conclude that we can extend the identity-on-objects equivalence $\gcato \simeq \mathbf{C}$ to the dashed functor, which is again an identity-on-objects equivalence.
\end{proof}

If $F$ is a functor whose domain category has an orthogonal factorization system, and $d$ is any object of the codomain, then there is a canonical orthogonal factorization system on $F\downarrow d$ (or $d\downarrow F$), generalizing the usual fact that a factorization system on a category induces one on any (co)slice. 
We thus obtain the following from Theorem 4.9 of \cite{HRY-mod1}, which lifts the factorization system from \cref{prop U ofs} to the extended graphical category.
See also the related result \cite[Proposition 9.75]{HRYbook}.
\begin{corollary}
The wheeled properadic graphical category has an active-inert orthogonal factorization system. \qed
\end{corollary}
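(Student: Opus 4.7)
The plan is to use \cref{thm oriented wheeled equiv extended} to identify the wheeled properadic graphical category with $\egcato$, and then transport the active-inert orthogonal factorization system on $\egcat$ (given by Theorem~4.9 of \cite{HRY-mod1}, which extends \cref{prop U ofs}) along the forgetful projection $\pi \colon \egcato \to \egcat$. By \cref{lem discrete fibrations}, $\pi$ is a discrete fibration, which is exactly what is needed to lift an orthogonal factorization system.

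Concretely, I would declare a morphism of $\egcato$ to be active (resp.\ inert) just when its image under $\pi$ is active (resp.\ inert). Both classes obviously contain the isomorphisms. Given $\varphi \colon (G,x) \to (G',x')$, first factor its image in $\egcat$ as $G \xrightarrow{\alpha} K \xrightarrow{\iota} G'$ with $\alpha$ active and $\iota$ inert. Setting $y \coloneqq \iota^\ast x'$ produces an object $(K,y) \in \egcato$ and makes $\iota$ an inert morphism $(K,y) \to (G',x')$, and the identity $\alpha^\ast y = \alpha^\ast \iota^\ast x' = \varphi^\ast x' = x$ makes $\alpha$ an active morphism $(G,x) \to (K,y)$. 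This yields the required factorization.

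For uniqueness of this factorization and for the orthogonality (unique diagonal filler) property, the argument reduces to the corresponding property in $\egcat$: any competing factorization of $\varphi$, or any lifting problem between an active and an inert map, projects along $\pi$ to a factorization or lifting problem in $\egcat$. By the OFS on $\egcat$, this projected problem has a unique solution (an isomorphism between intermediaries, or a diagonal filler), and since $\pi$ is a discrete fibration this unique solution lifts uniquely to a morphism of $\egcato$ solving the original problem.

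The only step requiring real care is the general fact that orthogonal factorization systems pull back along discrete fibrations; but this is essentially formal, and given \cref{lem discrete fibrations} it reduces to the bookkeeping just described. Thus no significant obstacle is anticipated, and the corollary follows directly from \cref{thm oriented wheeled equiv extended}, \cref{lem discrete fibrations}, and Theorem~4.9 of \cite{HRY-mod1}.
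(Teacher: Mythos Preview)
Your proposal is correct and takes essentially the same approach as the paper. The paper phrases the argument in terms of comma categories (an orthogonal factorization system on the domain of a functor $F$ induces one on $F\downarrow d$), while you phrase it in terms of discrete fibrations; since $\egcato$ is the category of elements of $\opshf$, these are the same mechanism under two standard descriptions, and both reduce to lifting the factorization system of Theorem~4.9 of \cite{HRY-mod1} along $\egcato \to \egcat$ and then invoking \cref{thm oriented wheeled equiv extended}.
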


\renewcommand{\MR}[1]{}


\bibliographystyle{amsalpha}
\bibliography{modular}
\end{document}